\DeclareFontFamily{U}{mathx}{\hyphenchar\font45}
\DeclareFontShape{U}{mathx}{m}{n}{<5> <6> <7> <8> <9> <10>
<10.95> <12> <14.4> <17.28> <20.74> <24.88> mathx10}{}
\DeclareSymbolFont{mathx}{U}{mathx}{m}{n}
\DeclareMathAccent{\widecheck}{0}{mathx}{"71}
\DeclareMathOperator{\Ker}{Ker}
\DeclareMathOperator{\dist}{dist}
\DeclareMathOperator*{\Res}{Res}
\renewcommand{\Re}{\operatorname{Re}}
\renewcommand{\Im}{\operatorname{Im}}
\newcommand{\abs}[1]{\lvert#1\rvert}
\newcommand{\Abs}[1]{\left\lvert#1\right\rvert}
\newcommand{\norm}[1]{\lVert#1\rVert}
\newcommand{\Norm}[1]{\left\lVert#1\right\rVert}
\newcommand{\jap}[1]{\langle#1\rangle}
\newcommand{\bbR}{{\mathbb R}}
\newcommand{\bbC}{{\mathbb C}}
\newcommand{\bbZ}{{\mathbb Z}}
\newcommand{\bH}{\mathbf{H}}
\newcommand{\calM}{\mathcal{M}}
\newcommand{\calU}{\mathcal{U}}
\numberwithin{equation}{section}
\theoremstyle{plain}
\newtheorem{theorem}{\bf Theorem}[section]
\newtheorem*{theorem*}{Theorem}
\newtheorem{lemma}[theorem]{\bf Lemma}
\newtheorem{proposition}[theorem]{\bf Proposition}
\newtheorem*{proposition*}{\bf Proposition}
\theoremstyle{definition}
\newtheorem{definition}[theorem]{\bf Definition}
\theoremstyle{remark}
\newtheorem*{remark*}{\bf Remark}
\newtheorem{remark}[theorem]{\bf Remark}
\newcommand{\wt}{\widetilde}
\begin{document}

\title[Hankel operators and elliptic functions]{Hankel operators with band spectra and elliptic functions}

\author{Alexander Pushnitski}
\address{Department of Mathematics, King's College London, Strand, London, WC2R~2LS, U.K.}
\email{alexander.pushnitski@kcl.ac.uk}

\author{Alexander Sobolev}
\address{Department of Mathematics, University College London, Gower Street, London WC1E 6BT, U.K.}
\email{a.sobolev@ucl.ac.uk}

\subjclass[2020]{47B35}

\keywords{Hankel operator, Floquet-Bloch decomposition, band spectrum, spectral band, periodic operator, elliptic function}


\begin{abstract}
We consider the class of bounded self-adjoint Hankel operators $\bH$, realised as integral operators on the positive semi-axis, that commute with dilations by a fixed factor. 
By analogy with the spectral theory of periodic Schr\"{o}dinger operators, we develop a Floquet-Bloch decomposition for this class of Hankel operators $\bH$, which represents $\bH$ as a direct integral of certain compact fiber operators. As a consequence, $\bH$ has a band spectrum. We establish main properties of the corresponding band functions, i.e. the eigenvalues of the fiber operators in the Floquet-Bloch decomposition. A striking feature of this model is that one may have flat bands that co-exist with non-flat bands; we consider some simple explicit examples of this nature. Furthermore, we prove that the analytic continuation of the secular determinant for the fiber operator is an elliptic function; this link to elliptic functions is our main tool. 
\end{abstract}

\maketitle

\setcounter{tocdepth}{1}
\tableofcontents

\section{Introduction}
\label{sec.a}

\subsection{Overview}

Hankel operators can be described in several different (but unitarily equivalent) ways, see e.g. \cite[Chapter 1]{Peller} or \cite[Part B, Chapter 1]{Nikolski}. In this paper, we view Hankel operators as integral operators in $L^2(\bbR_+)$ of the form 
\[
\mathbf H: 
f\mapsto \int_0^\infty h(t+s)f(s)ds, \quad f\in L^2(\bbR_+),
\]
where $h$ is a fixed function on $\bbR_+$, which in this context will be called the \emph{kernel function}. 
In general, $h$ may be a distribution, but in concrete situations that are of interest to us below, $h$ will be a  function sufficiently regular away from zero. 

We mention the classical example $h(t)=1/t$, in which case 
$\mathbf H$ is known as the \emph{Carleman operator}. This operator is bounded and self-adjoint, 
its spectrum is purely absolutely continuous, 
coincides with the interval $[0,\pi]$ and has multiplicity two (see e.g. \cite[Section~10.2]{Peller}). For future reference we display the boundedness of the Carleman operator as the quadratic form estimate
\begin{equation}
\Abs{\int_0^\infty \int_0^\infty\frac{f(t)\overline{f(s)}}{t+s}dt\, ds}
\leq
\pi \norm{f}_{L^2}^2. 
\label{eq:carleman}
\end{equation}

We will be interested in spectral properties of a class 
of \emph{bounded} \emph{self-adjoint} 
Hankel operators $\mathbf H$. 
Boundedness in $L^2(\bbR_+)$ arises from the condition 
$$
\abs{h(t)}\leq C/t,\quad t>0
$$
that we assume throughout the paper; indeed, by \eqref{eq:carleman} it implies
\begin{equation}
\norm{\mathbf H}\leq \pi C
\label{eq:carleman1}
\end{equation}
for the operator norm of $\mathbf H$. 
Self-adjointness means that the Hankel kernel $h$ is real-valued. 

For a \emph{period} $T>0$ (that we fix once and for all), let us denote by $V_T$ the unitary dilation operator in $L^2(\bbR_+)$, 
$$
(V_Tf)(t)=e^{-T/2}f(e^{-T}t), \quad t>0, \quad f\in L^2(\bbR_+).
$$
We are interested in the class of Hankel operators $\mathbf H$ that commute with $V_T$:
\begin{equation}
\mathbf HV_T=V_T\mathbf H.
\label{a1}
\end{equation}
We call such operators $\mathbf H$ \emph{periodic Hankel operators}; the usage of this term, perhaps unexpected at this point, will be justified below. 
It is not difficult to see that in terms of Hankel kernels the periodicity condition \eqref{a1} means that 
$$
e^T h (e^Tt) = h(t), \quad t>0.
$$
Alternatively, such Hankel kernels can be written as
$$
h(t)=\frac{p(\log t)}{t},
$$
where $p$ is a $T$-periodic function on $\bbR$, i.e. $p(\xi+T)=p(\xi)$. Thus, we consider kernel functions that are multiplicative log-periodic perturbations of the Carleman kernel $h(t)=1/t$. 

The simplest example is
\begin{equation}
p(\xi)=A+\cos(\omega \xi), 
\label{a2}
\end{equation}
with some $A\in\bbR$, where
$$
\omega=2\pi/T
$$
is the dual period.
Already in this simple example, the spectral analysis of the corresponding periodic Hankel operator is an interesting and non-trivial problem, which will be discussed in Section~\ref{sec.e} below. We call the corresponding periodic Hankel operator $\mathbf H$ the \emph{Mathieu-Hankel} operator, by analogy with the Mathieu operator 
$$
-\frac{d^2}{dx^2}+\cos x\quad\text{ in $L^2(\bbR)$,}
$$
see e.g. \cite[Section~XIII.16]{RS4}.
We discuss this analogy in more detail in Section~\ref{sec.a4} below.

\subsection{Main results}
For a $T$-periodic function $a$ on the real line, we denote by $\widetilde a_m$ the Fourier coefficients,
$$
\widetilde a_m=\frac1T\int_0^T e^{-im\omega x}a(x)dx, \quad m\in\bbZ,
$$
where, as above, $\omega$ is the dual period.
To be precise, we consider the following class of periodic Hankel operators:
\begin{definition}\label{def.smooth}
Let $h(t)=p(\log t)/t$, where $p$ is a real-valued $T$-periodic function with the  Fourier coefficients $\widetilde p_\ell$ satisfying 
\begin{align}\label{eq:pdec}
\sum_{\ell\in\bbZ} |\widetilde p_\ell|\, \jap{\ell}^{1/2} < \infty,
\end{align}
where $\jap{\ell}=\sqrt{1+\ell^2}$. 
In this case we will say that $\mathbf H$ is a \emph{smooth} periodic Hankel operator. 
\end{definition}
In particular, under the assumption \eqref{eq:pdec} the Fourier series for $p$ 
converges absolutely and therefore $p$ is a continuous function. By \eqref{eq:carleman1} it follows that $\mathbf H$ is bounded. In Section~\ref{sec.b6}  we will see that if a periodic Hankel operator is positive semi-definite, then it is automatically smooth. 

Our main results are as follows:

\begin{theorem}\label{thm.main}
Let $\mathbf H$ be a smooth periodic Hankel operator. Then: 
\begin{enumerate}[\rm (i)]
\item
All eigenvalues of $\mathbf H$ have infinite multiplicity. There are either finitely many (possibly none) eigenvalues of $\mathbf H$ or infinitely many with $0$ being one and the only accumulation point. 
The eigenvalues of $\mathbf H$ obey the reflection symmetry: if $\lambda\not=0$ is an eigenvalue, then $-\lambda$ is also an eigenvalue. 
\item
The absolutely continuous spectrum of $\mathbf H$ has multiplicity two and can be represented as  
\begin{align}\label{eq:ac}
\sigma_{\text{\rm ac}}(\mathbf H)={\rm Closure}(\cup_n\sigma_n),
\end{align}
where $\{\sigma_n\}$ is a finite (possibly empty) or countable collection of closed  
bounded intervals such that:
\begin{itemize}
\item
neither of the intervals $\sigma_n$ contains zero;
\item
if $\sigma_n$ and $\sigma_m$ are two intervals in this collection, then $\sigma_n\cap\sigma_m$ is either empty or consists of one point;
\item 
if $\sigma_n$ and $\sigma_m$ are two intervals in this collection, then $(-\sigma_n)\cap\sigma_m=\varnothing$;
\item 
if the collection $\{\sigma_n\}$ is infinite, then these intervals accumulate at zero (and at no other point). 
\end{itemize}
\item
The singular continuous spectrum of $\mathbf H$ is absent.
\end{enumerate}
\end{theorem}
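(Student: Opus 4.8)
The plan is to strip off the dilation symmetry by means of the Mellin transform (equivalently the substitution $t=e^x$), which turns $V_T$ into translation by $T$ and hence turns $\bH$ into an operator commuting with multiplication by every $\omega$-periodic function of the Mellin variable. First I would expand the kernel as $h(t)=\sum_{\ell}\wt p_\ell\,t^{i\ell\omega-1}$ and evaluate the beta integral $\int_0^\infty(t+s)^{i\ell\omega-1}s^{-1/2-i\eta}\,ds$; this shows that $\bH$ is unitarily equivalent to a direct integral $\int_{[0,\omega)}^\oplus\bH(\xi_0)\,d\xi_0$ over the Brillouin zone $\xi_0\in[0,\omega)$, where the fibre $\bH(\xi_0)$ acts on $\ell^2(\bbZ)$ by the Hermitian matrix
\[
[\bH(\xi_0)]_{m,n}=\wt p_{n-m}\,\frac{\Gamma(\tfrac12-i(\xi_0+n\omega))\,\Gamma(\tfrac12+i(\xi_0+m\omega))}{\Gamma(1-i(n-m)\omega)}.
\]
The decay \eqref{eq:pdec} of $\wt p_\ell$, combined with $\abs{\Gamma(\tfrac12+ix)}^2=\pi/\cosh(\pi x)$, shows that each $\bH(\xi_0)$ is compact (in fact Hilbert--Schmidt), that the family $\xi_0\mapsto\bH(\xi_0)$ is real-analytic, and that its eigenvalues decay uniformly in $\xi_0$, so that for every $\eps>0$ only finitely many band functions exceed $\eps$ in modulus. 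This direct-integral decomposition, with compact analytic fibres, is the backbone of the whole argument.

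Granting this, I would deduce parts (i) and (iii) from the general theory of fibred operators. The band functions $\lambda_j(\xi_0)$, i.e.\ the eigenvalues of $\bH(\xi_0)$, are piecewise real-analytic (Kato), and by the identity theorem a band that is constant on a set of positive measure is constant throughout; such \emph{flat bands} are exactly the eigenvalues of $\bH$, with eigenspace $\int_{[0,\omega)}^\oplus\Ker(\bH(\xi_0)-\lambda)\,d\xi_0$, which is infinite-dimensional, giving infinite multiplicity. The uniform eigenvalue decay forces the eigenvalues of $\bH$ to be finite in number outside any interval $[-\eps,\eps]$, hence to accumulate only at $0$. Non-constant analytic bands contribute purely absolutely continuous spectrum (the push-forward of Lebesgue measure under a real-analytic non-constant map is absolutely continuous), which rules out singular continuous spectrum and gives (iii). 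The multiplicity-two claim in (ii) I would obtain from the unitary equivalence $\bH(-\xi_0)\cong\bH(\xi_0)$, realised by the index reflection $n\mapsto-n$, which makes each band function even in $\xi_0$ and hence traversed twice over $[0,\omega)$.

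The reflection symmetry in (i) and the finer geometry in (ii) are where the link to elliptic functions enters. Using the recurrence $\Gamma(z+1)=z\Gamma(z)$ one checks the exact identity $\bH(\xi_0+i)=-D(\xi_0)^{-1}\bH(\xi_0)D(\xi_0)$ with the diagonal $D(\xi_0)=\operatorname{diag}\big(\tfrac12-i(\xi_0+k\omega)\big)_{k\in\bbZ}$; equivalently, the Carleman factor obeys $\cosh(\pi(\xi_0+i))=-\cosh(\pi\xi_0)$. Consequently I expect the analytic continuation of the secular determinant $\Delta(\lambda,\xi_0)=\det\big(I-\lambda^{-1}\bH(\xi_0)\big)$ to be elliptic in $\xi_0$, with real period $\omega$ and purely imaginary period $2i$, and to satisfy the half-period identity $\Delta(-\lambda,\xi_0)=\Delta(\lambda,\xi_0+i)$. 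A value $\lambda\neq0$ is an eigenvalue of $\bH$ precisely when its band is flat, which happens precisely when $\Delta(\lambda,\cdot)$ vanishes on the real axis; since an elliptic function vanishing on a line vanishes identically, the shift identity then yields $\Delta(-\lambda,\cdot)\equiv0$ as well, so $-\lambda$ is also an eigenvalue, which is the claimed reflection symmetry. The same ellipticity should locate the real zeros of $\Delta(\lambda,\cdot)$ and show that band edges occur only at $\xi_0\in\{0,\omega/2\}$, whence each band function is strictly monotone on the reduced zone $(0,\omega/2)$; combined with the position of the zero locus on the lines $\Im\xi_0\in\{0,1\}$, this monotonicity delivers the non-overlap statement that $\sigma_n\cap\sigma_m$ is empty or a single point and the asymmetry $(-\sigma_n)\cap\sigma_m=\varnothing$.

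The hard part will be the third step: proving that the analytically continued secular determinant is genuinely doubly periodic — which requires establishing trace-class convergence of $\bH(\xi_0)$ off the real axis and telescoping the $\Gamma$-recurrences through the entire determinant — and then extracting from its zero structure the monotonicity and edge behaviour of the band functions that underlie the delicate parts of (ii). By contrast, the abstract fibred-operator conclusions (infinite multiplicity of eigenvalues, accumulation only at zero, and absence of singular continuous spectrum) are comparatively routine once the compact, real-analytic fibre decomposition of the first step is in place.
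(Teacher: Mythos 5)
Your architecture is essentially the paper's own: the Mellin/Floquet--Bloch decomposition with the Gamma--Beta fiber matrices, trace-class fibers, the elliptic continuation of the secular determinant with the half-period identity $\Delta(s+i;\lambda)=\Delta(s;-\lambda)$, and the extraction of band geometry from order-two ellipticity. However, there are two genuine gaps. First, you work only with \emph{piecewise} real-analytic band functions from Kato's theory, whereas the entire fine analysis requires Theorem~\ref{thm.branches}: each eigenvalue branch continues to \emph{all} of $\bbR$ as a \emph{non-vanishing} real-analytic function. Analytic perturbation theory cannot continue a branch through the value $0$, since $0$ is the accumulation point of the spectrum of the compact fibers, and nothing in your elliptic machinery excludes a non-flat band crossing zero: the determinant $\Delta(k;\lambda)$ is defined only for $\lambda\ne0$ and is blind to such a crossing. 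The paper closes this with an argument you are missing entirely: splitting off the unimodular factor $v_n(k)=g_n(k)e^{i\varphi_n(k)}$, proving $\norm{G'(k)f}\le\tfrac{\pi}{2}\norm{G(k)f}$ (Lemma~\ref{lma.gronwall}), hence $\abs{E'(k)}\le\pi\abs{E(k)}$, and invoking Gronwall to get $E(k)\geq E(k_*)e^{-\pi\abs{k-k_*}}$, which keeps each branch away from zero and allows indefinite continuation. Without this, the claims that no band $\sigma_n$ contains zero, the global factorization $\Delta(k;\lambda)=\prod_{E}(1-E(k)/\lambda)$, and your monotonicity and band-edge analysis are all unproven.

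Second, you apply the order-two zero-counting to $\Delta$ itself, but whenever $\lambda_*$ is a flat-band value one has $\Delta(\cdot;\lambda_*)\equiv0$ (as your own reflection-symmetry argument exploits), so the counting is vacuous exactly where flat and non-flat bands might meet; worse, $\Delta(\cdot;\lambda)$ can be constant in $s$ for \emph{every} $\lambda$ (the Mathieu--Hankel operator with $A=0$, Theorem~\ref{thm:flat}). The paper first factors out all flat bands, $\Delta=\Delta^\#\,\Delta^\flat$, and runs the elliptic argument on the residual determinant $\Delta^\#$, which never vanishes identically (Lemma~\ref{lem:resid}); since the paper cannot even exclude a flat band intersecting a non-flat one, this factorization is not cosmetic but essential to Theorem~\ref{thm:anbr}, on which parts (ii) of the statement rest. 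Finally, your multiplicity-two argument contains an error: individual band functions are \emph{not} even in $k$ --- for the Carleman operator $E_n(k)=\pi/\cosh\pi(k+\omega n)$ is not even for $n\ne0$. The correct statement (last part of Theorem~\ref{thm.branches}) is that the \emph{list} $\mathcal E$ is invariant under $k\mapsto-k$, which is what makes the direct sums over the half-zones $(-\omega/2,0)$ and $(0,\omega/2)$ unitarily equivalent and yields multiplicity two.
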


In line with the terminology widely used in the theory of periodic Schr\"odinger operators (see Section~\ref{sec.a4} below), we call the intervals $\sigma_n$ of absolutely continuous spectrum the \emph{spectral bands}, and the non-zero eigenvalues of $\mathbf H$ of infinite multiplicity \emph{flat bands}. 

\begin{remark*}
\begin{enumerate}[1.]
\item
Examples show that non-trivial spectral bands may coexist with flat bands. In other words, there are smooth periodic Hankel operators $\mathbf H$ that have both non-zero eigenvalues and non-trivial absolutely continuous spectrum. In fact, such examples are given by \eqref{a2} with some special choices of $A$, see Section~\ref{sec.e} below. 
\item
Examples show  that there are smooth periodic Hankel operators with empty absolutely continuous spectrum, i.e. for these operators \emph{all spectral bands are flat}. 
One such example is furnished again by \eqref{a2} with $A=0$, see Section~\ref{sec.e} below. 
\item
From the reflection symmetry of the eigenvalues is follows that if $\mathbf H$ is a positive semi-definite smooth periodic Hankel operator, then it has no flat bands. 
\item
Examples show that the point $\lambda=0$ may or may not be an eigenvalue of $\mathbf H$; see Section~\ref{sec.d9} and Section~\ref{sec.trivker}. 

\item
If two spectral bands $\sigma_n$ and $\sigma_m$ 
have a common point (i.e. if they \emph{touch}), 
then from the point of view of the description of the spectrum of $\mathbf H$ they can be ``merged'' to form a  larger spectral band. It is, however, useful to think of them 
as two separate bands, because they correspond to different \emph{band functions}, see Theorem~\ref{thm.branches} and the discussion afterwards. 
\item
The curious property $(-\sigma_n)\cap\sigma_m=\varnothing$ can be rephrased by saying that he spectral bands of $-\mathbf H$ and $\mathbf H$ do not overlap, i.e. the absolutely continuous spectrum of the operator $\abs{\mathbf H}$ has multiplicity two. 
\end{enumerate}
\end{remark*}

The motivation for this paper comes from two distinct sources, which we describe below.

\subsection{Motivation I: The Megretskii-Peller-Treil theorem}
\begin{theorem}\cite{MPT}\label{thm.MPT}
A bounded self-adjoint operator $\mathbf H$ with a scalar spectral measure $\mu$ and the spectral multiplicity function $\nu$ is unitarily equivalent to a Hankel operator if and only if the following conditions are satisfied:
\begin{enumerate}[\rm (i)]
\item
either $\Ker \mathbf H=\{0\}$ or $\dim\Ker \mathbf H=\infty$;
\item
$\mathbf H$ is not invertible;
\item
$\abs{\nu(t)-\nu(-t)}\leq2$ almost everywhere with respect to the absolutely continuous part of $\mu$;
\item
$\abs{\nu(t)-\nu(-t)}\leq1$ almost everywhere with respect to the singular part of $\mu$. 
\end{enumerate}
\end{theorem}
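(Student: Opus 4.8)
The plan is to work in the discrete model, where $\mathbf H$ is realised as a Hankel matrix $\Gamma=\{\gamma_{j+k}\}_{j,k\ge0}$ on $\ell^2(\bbZ_+)$ (equivalently, after the standard Mellin/Laplace unitary equivalence, as a Hankel operator on $H^2$); passing between this model and the integral realisation on $L^2(\bbR_+)$ is routine. The single structural fact driving everything is the intertwining relation with the unilateral shift $S$,
\begin{equation}
\Gamma S=S^*\Gamma,
\label{eq:intertwine}
\end{equation}
which, together with $\Gamma=\Gamma^*$, characterises self-adjoint Hankel operators. I would first dispose of necessity of (i) and (ii). For (i), if $\Gamma f=0$ then \eqref{eq:intertwine} gives $\Gamma(Sf)=S^*\Gamma f=0$, so $\Ker\Gamma$ is $S$-invariant; by Beurling's theorem every nonzero $S$-invariant subspace is infinite-dimensional, whence $\Ker\Gamma$ is either $\{0\}$ or infinite-dimensional. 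For (ii), were $\Gamma$ invertible, \eqref{eq:intertwine} would give $S=\Gamma^{-1}S^*\Gamma$, making $S$ similar to $S^*$; but $\Ker S=\{0\}$ while $\Ker S^*\ne\{0\}$, a contradiction.

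The substance of necessity is (iii) and (iv), the two-sided control of $\nu(t)$ against $\nu(-t)$; since the spectrum of $-\Gamma$ at $t$ equals that of $\Gamma$ at $-t$, these are exactly quantitative measures of how far $\Gamma$ is from being unitarily equivalent to $-\Gamma$. Here I would exploit that $S$ is an isometry whose defect $I-SS^*$ has rank one, so that \eqref{eq:intertwine} compares the spectral structure of $\Gamma$ with a reflected copy of itself up to a finite-rank error, while the real-symmetric structure $J\Gamma J=\Gamma$ supplies the extra symmetry channel. Heuristically, the rank-one defect is what produces the bound on $\abs{\nu(t)-\nu(-t)}$, with the sharper constant $1$ on the singular part, where the identification is genuinely one-sided, against the constant $2$ on the absolutely continuous part, where the identification operator may fail to be unitary by up to two dimensions. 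Making this precise for the a.c.\ part is the delicate point: one must pass to the spectral representation of $\Gamma$ and show that the relevant identification (wave) operators are well defined and unitary on the a.c.\ subspaces with controlled deficiency, which is where trace-class scattering theory or the Sz.-Nagy--Foias functional model enters.

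For sufficiency one must \emph{construct}, from an abstract self-adjoint $\mathbf H$ with spectral data $(\mu,\nu)$ obeying (i)--(iv), a Hankel operator unitarily equivalent to it. The strategy is to build, on a common space, a unilateral shift $S$ and a self-adjoint $\Gamma$ unitarily equivalent to $\mathbf H$ so that \eqref{eq:intertwine} holds \emph{exactly}: realise the a.c.\ part as a direct integral over the spectrum and prescribe the off-diagonal coupling between the fibres at $t$ and $-t$ so that the rank-one defect of $S$ is respected, then splice in the singular part and the (trivial or infinite-dimensional) kernel allowed by (i). The inequalities (iii)--(iv) are precisely the arithmetic that makes this gluing consistent. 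I expect this construction to be the main obstacle: one must verify that the assembled $S$ is a genuine unilateral shift and that $\Gamma$ is genuinely Hankel — not merely shift-intertwining modulo compacts — while simultaneously matching the prescribed multiplicities on both the a.c.\ and singular parts and enforcing the non-invertibility (ii). The bulk of the difficulty lies in this explicit model-theoretic synthesis rather than in the necessity direction.
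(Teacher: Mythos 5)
First, a point of comparison: the paper does \emph{not} prove Theorem~\ref{thm.MPT} at all --- it is imported verbatim from \cite{MPT}, and the authors only remark that the proof of the ``if'' part is very difficult. So there is no in-paper argument to measure you against; your proposal has to be judged as a free-standing proof of the Megretskii--Peller--Treil theorem, and as such it has a genuine gap --- in fact the gap is the entire substance of the theorem. Your necessity arguments for (i) and (ii) are correct and standard: $\Gamma S=S^*\Gamma$ makes $\Ker\Gamma$ invariant under the unilateral shift, Beurling forces it to be trivial or infinite-dimensional, and invertibility of $\Gamma$ would make $S$ similar to $S^*$, contradicting $\dim\Ker S\neq\dim\Ker S^*$. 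But these are the two easy items. For (iii) and (iv) you offer only the heuristic that ``the rank-one defect produces the bound,'' with no mechanism actually relating $\nu(t)$ to $\nu(-t)$. The heuristic does not by itself explain the dichotomy $2$ versus $1$; the known route exploits that $\Gamma_1:=\Gamma S$ is again self-adjoint (since $(\Gamma S)^*=S^*\Gamma=\Gamma S$) and Hankel, with $\Gamma_1^2=\Gamma^2-(\,\cdot\,,\Gamma e_0)\Gamma e_0$ a rank-one perturbation of $\Gamma^2$, and then extracts the multiplicity inequalities from a careful interplay of rank-one perturbation theory (Aronszajn--Donoghue for the singular part, Kato--Rosenblum-type invariance for the absolutely continuous part) between $\Gamma$ and $\Gamma_1$. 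None of this is in your sketch, and ``trace-class scattering theory or the functional model enters'' is a placeholder, not an argument.

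The sufficiency direction is where the proposal is furthest from a proof. Your plan --- realise the a.c.\ part as a direct integral, couple the fibres at $t$ and $-t$ ``so that the rank-one defect of $S$ is respected,'' splice in the singular part, and then check that the assembled $S$ is a shift and $\Gamma$ is Hankel --- does not engage with the actual obstruction: the relation $\Gamma S=S^*\Gamma$ with $S$ a multiplicity-one unilateral shift is globally rigid and is not compatible with any naive fibrewise gluing, because $S$ mixes all the spectral fibres. You yourself flag exactly this verification as ``the main obstacle,'' which is an accurate self-diagnosis: in \cite{MPT} the sufficiency proof does not proceed by such a synthesis at all, but through realization theory for linear dynamical systems (balanced realizations of Hankel operators) combined with a long iterative approximation scheme to match the prescribed spectral measure and multiplicity function. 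So the proposal should be read as a correct disposal of conditions (i)--(ii) plus a research programme for (iii)--(iv) and for the converse; as a proof of the stated theorem it is incomplete at every point where the theorem is hard.
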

The proof of the ``if'' part of this theorem is very difficult. Although in principle this proof is constructive, in practice is it very hard to exhibit an \emph{explicit} example of a Hankel operator with a prescribed spectral measure and spectral multiplicity function. 

The Carleman operator yields an example of a Hankel operator whose spectrum is purely absolutely continuous and coincides with a single interval. 
There are other constructions of Hankel operators whose absolutely continuous spectrum is the union of \emph{overlapping} intervals; see \cite{Howland,PYaf}  (and the foundational work \cite{Power}). 
It would be natural to ask whether there are explicit examples of Hankel operators whose spectrum is purely absolutely continuous and consists of the union of several \emph{disjoint} intervals. 
In this paper, we construct a natural family of examples of this type with explicit kernel 
functions $h(t)$. 

We note that the symmetry of eigenvalues in Theorem~\ref{thm.main}(i) agrees with Theorem~\ref{thm.MPT}(iv) and could be derived from it (even though our proof is independent of Theorem~\ref{thm.MPT}). At the same time, the curious property $(-\sigma_n)\cap\sigma_m=\varnothing$ of spectral bands in Theorem~\ref{thm.main}(ii) has a different nature.

\subsection{Motivation II: periodic Schr\"odinger operators}\label{sec.a4}
A Schr\"odinger operator in $L^2(\bbR)$ is the appropriately defined unbounded self-adjoint operator
$$
-\frac{d^2}{dx^2}+Q
$$
where $Q$ stands for the operator of multiplication by the real-valued function $q=q(x)$. If $q$ is periodic, i.e.  $q(x+T)=q(x)$, then the above operator is termed the \emph{periodic Schr\"odinger operator}. 
Periodicity of $q$ enables one to use the powerful method known as the \emph{Floquet-Bloch decomposition}, see \cite{Kuchment}, which leads to the band structure of the spectrum.

There are some obvious analogies between the theory of periodic Hankel operators that we develop in this paper and the theory of periodic Schr\"odinger operators in $L^2(\bbR)$; we will refer to them as the \emph{Hankel case} and \emph{Schr\"odinger case} for short. 
The group of dilations in $L^2(\bbR_+)$ in the Hankel case plays the same role as the group 
of shifts in $L^2(\bbR)$ in the Schr\"odinger case. As a consequence, 
some of our results for the Hankel case are completely analogous to the Schr\"odinger case. For example, for \emph{positive semi-definite} periodic Hankel operators all spectral bands are non-flat and the band functions exhibit the same \emph{alternation pattern} as in the Schr\"odinger case, see Theorem~\ref{thm:alterpos}. But the existence of flat bands for general smooth periodic Hankel operators is a new and rather unexpected phenomenon.

One should bear in mind one difference between the Schr\"odinger and Hankel cases: the Schr\"odinger operators are unbounded, while the Hankel operators that we consider are bounded. So a more accurate analogy would be to compare periodic Hankel operators with \emph{resolvents} of periodic Schr\"odinger operators. 
In particular, the spectral bands in the Schr\"odinger case accumulate at infinity, while the spectral bands in the Hankel case (if there are infinitely many of them) accumulate at zero. 

\subsection{The Floquet-Bloch decomposition and the band functions} 
In full analogy with the Schr\"odinger case, we rely on the Floquet-Bloch decomposition. 
This decomposition allows us to represent a smooth periodic Hankel operator $\mathbf H$ as a direct integral of compact self-adjoint \emph{fiber operators} $H(k)$ in $\ell^2(\bbZ)$, where the parameter $k$ ranges over the 
\emph{dual period cell} $\Omega=[-\omega/2,\omega/2)$:
\begin{align}\label{eq:fb}
\calU\mathbf H\calU^*=\int^{\oplus}_{\Omega}\,H(k)dk, \quad \Omega = [-\omega/2,\omega/2),
\end{align}
for a suitable unitary operator $\calU$. 
See Section~\ref{sec.d} for the precise definitions and statements. 
In the Schr\"odinger case, the fiber operators analogous to $H(k)$ have discrete spectrum but are not compact; instead they have compact resolvent. 

The operators $H(k)$ in $\ell^2(\bbZ)$ have the explicit matrix representation
\begin{equation}
[H(k)]_{n,m} = 
B\big(\tfrac12-i\omega n-ik, \tfrac12+i\omega m+ik\big)\, \wt p_{n-m}, \quad  n,m\in\bbZ,
\label{a3}
\end{equation}
where $B$ is the Beta function and $\wt p_n$ are the Fourier 
coefficients of the function $p$ in the kernel function $h(t)={p(\log t)}/{t}$. 
It is clear from \eqref{a3} that the operator valued function  
$H(k)$ can be extended to the complex plane as a meromorphic function with poles at the points of the lattice 
\begin{align}\label{eq:perl}
{\sf\Lambda}=\{ \omega n + i(m + \tfrac{1}{2}): n,m\in\bbZ\}.
\end{align} 
In particular, $H(k)$ is a self-adjoint analytic family 
(in the sense of \cite[Ch. VII, \S 3]{Kato}) 
in the strip $\abs{\Im\, k}<1/2$. 
It follows that the non-zero eigenvalues of $H(k), k\in\bbR,$ 
are real-valued real analytic functions of $k$, see Section~\ref{sec.dd}. 
We will prove that 
these can be extended to the whole real line as non-vanishing functions:
\begin{theorem}\label{thm.branches}
There is a finite or countable list $\mathcal E$ 
of \underline{non-vanishing} real-valued real analytic 
functions 
$E = E(k)$ on $\bbR$ that represent all non-zero eigenvalues of $H(k)$ in the following sense:
\begin{itemize}
\item
for each $k\in\bbR$ and each eigenvalue 
$E_*\not = 0$ of $H(k)$  of multiplicity $m\geq1$ 
there are exactly $m$ functions 
$E_1, E_2, \dots, E_m \in \mathcal E$ 
(not necessarily distinct, i.e. a function can be repeated on the list $\mathcal E$ according to multiplicity)
such that 
$E_* = E_1(k) = E_2(k) = \dots = E_m(k)$; 
\item
conversely, for each $k\in\bbR$ 
and each $E\in\mathcal E$ there is an eigenvalue of $H(k)$ that coincides with $E(k)$. 
\end{itemize}
Furthermore, if $E\in\mathcal E$, then the functions 
$E(-k)$ and $E(k+\omega n)$ are also in $\mathcal E$ for 
all $n$ (they are not necessarily distinct from $E$).
\end{theorem}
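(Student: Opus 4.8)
The plan is to build $\mathcal E$ by global analytic continuation of eigenvalues, the only serious issue being that the continued branches avoid the value $0$. Since $H(k)$ is a self-adjoint analytic family of compact operators in the strip $\abs{\Im k}<1/2$, Rellich's theorem (see \cite[Ch.~VII, \S 3]{Kato}) applies on the real axis: in a neighbourhood of each $k_0\in\bbR$ every non-zero eigenvalue of $H(k)$, together with its eigenprojection, is represented by finitely many real analytic functions counted with multiplicity, and—because there is a single real parameter and the family is self-adjoint—there are no branch points, so these local representatives continue analytically through eigenvalue crossings. I would continue each branch maximally along $\bbR$. The uniform bound $\norm{H(k)}\leq\pi C$ keeps every branch bounded, so the only way a maximal branch can fail to extend is by tending to $0$; thus the whole statement reduces to showing that a branch representing a non-zero eigenvalue never attains the value $0$ at a finite real $k$.

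To get at this I would exploit the structure hidden in \eqref{a3}. Writing $B(a,b)=\Gamma(a)\Gamma(b)/\Gamma(a+b)$ factorises the fiber operator as
\[
H(k)=D(k)\,P\,D(k)^*,\qquad D(k)=\mathrm{diag}\big(\Gamma(\tfrac12-i\omega n-ik)\big)_{n\in\bbZ},
\]
where $P$ is the \emph{fixed} self-adjoint convolution operator with entries $P_{n,m}=\wt p_{n-m}/\Gamma(1-i\omega(n-m))$. As $\Gamma$ has no zeros, $D(k)$ is injective, so for $E\neq0$ the problem $H(k)\psi=E\psi$ is equivalent (via $\psi=D(k)\phi$) to $P\Delta(k)\phi=E\phi$, where
\[
\Delta(k)=D(k)^*D(k)=\mathrm{diag}\big(\pi/\cosh(\pi(\omega n+k))\big)_{n\in\bbZ},
\]
using $\Gamma(\tfrac12-iz)\Gamma(\tfrac12+iz)=\pi/\cosh(\pi z)$. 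These weights are strictly positive and non-vanishing on $\bbR$ and extend meromorphically with poles exactly on ${\sf\Lambda}$.

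Since $D(k)$ lies in every Schatten class, $H(k)$ is trace class and the secular determinant $F(k,E)=\det\big(I-E^{-1}H(k)\big)=\det\big(I-E^{-1}P\Delta(k)\big)$ is well defined for $E\neq0$ and meromorphic in $k$. Here the elliptic structure enters, and this is the step I expect to carry the proof: the shift $S_\ell\colon e_m\mapsto e_{m+\ell}$ gives $S_\ell^*P\Delta(k)S_\ell=P\Delta(k+\omega\ell)$, so $F(\cdot,E)$ is $\omega$-periodic, while the $2i$-periodicity of $\cosh$ gives $F(k+2i,E)=F(k,E)$; hence $F(\cdot,E)$ is elliptic with periods $\omega$ and $2i$. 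A local computation shows that in a fundamental cell only the diagonal weight $\Delta_{0,0}$ is singular, contributing simple poles at $k=i/2,3i/2$, so $F(\cdot,E)$ has order at most two and therefore at most two zeros per cell, counted with multiplicity; the zeros in $k$ are exactly the values with $E\in\sigma(H(k))$. I would now argue non-vanishing by a counting contradiction: the small eigenvalues are governed by the tail $E\approx\wt p_0\,\pi/\cosh(\pi(\omega n+k))$, $\abs{n}\to\infty$, which are non-vanishing and produce two real zeros of $F(\cdot,E)$ for small $E$ of sign $\wt p_0$ lying in one of these tail bands. If a further branch vanished at some $k_0$, the equation $E(k)=E$ would contribute extra real zeros near $k_0$ for suitable small $E$, violating the bound two; the reflection symmetry (forcing a matching zero near $-k_0$) removes all cases except a tangential touch from the side opposite to $\wt p_0$ at the symmetric points $k_0\in\{0,\omega/2\}$, which is the residual obstacle and would need a dedicated local analysis (and the degenerate case $\wt p_0=0$, where the tail leading term vanishes, treated likewise).

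Granting non-vanishing, the rest is routine. Each maximal branch is then defined on all of $\bbR$, real analytic and non-vanishing, and $\mathcal E$ is their collection. Countability follows by exhaustion: for each $\delta>0$ and each compact interval only finitely many branches have $\abs{E}\geq\delta$, so letting $\delta\downarrow0$ leaves at most countably many. The two-sided representation property is immediate from the local Rellich description, and the symmetries come from explicit equivalences read off from \eqref{a3}: with $R\colon e_n\mapsto e_{-n}$ one checks $R\,H(k)\,R=H(-k)^{\top}$, giving $\sigma(H(k))=\sigma(H(-k))$ and hence $E(-k)\in\mathcal E$, while $S_\ell^*H(k)S_\ell=H(k+\omega\ell)$ gives $\sigma(H(k))=\sigma(H(k+\omega\ell))$ and hence $E(k+\omega\ell)\in\mathcal E$. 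As these are unitary, respectively transposition, equivalences, they preserve real analyticity, multiplicity and non-vanishing, so they merely permute the list $\mathcal E$.
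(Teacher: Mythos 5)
There is a genuine gap, and it sits exactly at the heart of the theorem: the non-vanishing of the continued branches, which you yourself flag as a ``residual obstacle''. Your zero-counting argument via the elliptic secular determinant cannot be completed as sketched, for several reasons. First, the uncovered case is not a corner case: a branch tending to $0$ at $k_0\in\{0,\omega/2\}$ with values of sign opposite to $\wt p_0$ produces, for small $E$ of that sign, only a double zero at the symmetric point --- perfectly consistent with an elliptic function of order two --- so no counting contradiction arises, and this is precisely the scenario that must be excluded. Second, your tail asymptotics $E\approx \wt p_0\,\pi/\cosh(\pi(\omega n+k))$ is nowhere justified, and the degenerate case $\wt p_0=0$ is not vacuous: for the Mathieu--Hankel operator with $A=0$ one has $\wt p_0=0$ and \emph{all} bands are flat (Theorem~\ref{thm:flat}), so the small-eigenvalue structure is genuinely different there. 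Third, if $E$ happens to be a flat-band value, then $F(\cdot,E)\equiv0$ and the ``at most two zeros per cell'' budget is meaningless; removing flat-band factors requires the residual determinant $\Delta^\#$, which in the paper is constructed in Section~\ref{sec.ddd} \emph{using} Theorem~\ref{thm.branches}, so invoking that machinery here risks circularity. Finally, a lesser but real point: for general smooth $p$ the convolution operator you call $P$ (the paper's $\mathfrak S$) is ill-defined, since $\wt{\mathfrak s}_n=\wt p_n/\Gamma(1-i\omega n)$ can grow exponentially; your determinant manipulations are legitimate only for trigonometric polynomials and need the paper's approximation scheme before they make sense.

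The paper closes the non-vanishing step by an entirely different and quite robust mechanism that your proposal misses: write $H(k)=\Phi(k)^*G(k)H(0)G(k)\Phi(k)$, where $\Phi(k)$ is a unimodular (hence spectrally irrelevant) diagonal factor and $G(k)$ is multiplication by $g_n(k)=\sqrt{\cosh(\pi\omega n)/\cosh(\pi(\omega n+k))}$; the key estimate is $\norm{G'(k)f}\leq\tfrac{\pi}{2}\norm{G(k)f}$ (Lemma~\ref{lma.gronwall}), which via the Feynman--Hellmann formula yields $\abs{E'(k)}\leq\pi\abs{E(k)}$ along any branch, and Gronwall's inequality then gives the two-sided bound $E(k_*)e^{-\pi\abs{k-k_*}}\leq E(k)\leq E(k_*)e^{\pi\abs{k-k_*}}$, keeping every branch uniformly away from zero on bounded intervals and allowing continuation past any putative finite endpoint. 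Your Rellich set-up and the symmetry verifications at the end ($R\,H(k)\,R=H(-k)^{\top}$ and $S_\ell^*H(k)S_\ell=H(k+\omega\ell)$) are sound and essentially equivalent to the paper's derivation of the symmetries from evenness and $\omega$-periodicity of the secular determinant, but without a substitute for the Gronwall argument the central claim remains unproved.
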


The last statement means that for every function $E$, 
either it is even or the reflected function 
$E(-k)$ is also on the list $\mathcal E$ 
(with the same interpretation for the shifted functions $E(k+\omega n)$).

We emphasize that we consider $\mathcal E$ as a \emph{list} rather than a \emph{set}, 
i.e. the same function can be listed several times in $\mathcal E$, 
which corresponds to multiplicities of eigenvalues. 
(In particular, a flat band can have multiplicity $>1$, see Section~\ref{sec.ddd-flat}.) 
However, we do not fix any specific order in the list $\mathcal E$.
We call the functions $E\in\mathcal E$ the \emph{band functions}. 
Their study is the main focus of this paper. 

It is a standard consequence of the Floquet-Bloch decomposition 
\eqref{eq:fb} that 
the operator $\left.\mathbf H\right|_{(\ker\, \mathbf H)^\perp}$ 
is unitarily equivalent to the direct sum of the operators of multiplication by 
the band functions $E\in\mathcal E$ in $L^2(\Omega)$, see Theorem~\ref{thm:osum}. 
Therefore the non-zero spectrum of $\mathbf H$ is the union of 
intervals formed by images of the functions $E\in\mathcal E$: 
\begin{align*}
\sigma = \{ E(k): k\in [0, \omega/2]\}. 
\end{align*}
Here we take only a half of the period cell $\Omega=[-\omega/2,\omega/2)$ because of the last part of Theorem~\ref{thm.branches}, i.e. because of the property $E(-k)\in\mathcal E$ for every $E\in\mathcal E$.
The intervals $\sigma$ are exactly the spectral bands featured in Theorem~\ref{thm.main}, see Section~\ref{subsect:proofmain} for details.
The flat bands are produced by constant (\textit{flat}) band functions, whereas 
the bands $\sigma_n$ of the absolutely continuous spectrum correspond to non-constant 
(\textit{non-flat}, or NF for short) 
band functions.   
The proof of Theorem~\ref{thm.main}(ii) is based on the 
fine properties of the NF band functions that are established in Sections~\ref{sec.dd} 
and \ref{sec.ddd}.

\subsection{The secular determinant and elliptic functions}
The centerpiece of our approach is the \emph{secular determinant}
$$
\Delta(k;\lambda)=\det(I-\lambda^{-1}H(k));
$$
initially, we consider it for $\lambda\in\bbR$, $\lambda\not=0$ 
and $k\in \bbR$. 
For smooth periodic Hankel operators, 
the operator $H(k)$ is trace class and so the determinant is well-defined. It is clear that the band functions $E(k)$ correspond to the zeros of the secular determinant, i.e. each band function is a 
real analytic branch of the solution of the equation $\Delta(k; E(k))=0$.

The matrix representation \eqref{a3} for $H(k)$ suggests that for any $\lambda\not=0$, the secular determinant can be extended to the whole complex plane as a meromorphic function of $k$
with simple poles at the points of the lattice \eqref{eq:perl}. 
This is indeed the case, and this function turns out to be \emph{elliptic with the period lattice}
\begin{align}\label{eq:perm} 
{\sf M} = \{\omega m + 2in, \ m, n\in\bbZ\}.
\end{align}
As $k\mapsto \Delta(k; \lambda)$ has exactly two poles (at $i/2$ and $-i/2$) 
in the fundamental domain $\bbC/{\sf M}$ 
and they are simple, $\Delta(k; \lambda)$ is an elliptic function of 
order two, see Section~\ref{sec.cc}.

The properties of this elliptic function translate into the properties of the band functions. For example, since an elliptic function of order two takes every value 
exactly twice on the fundamental domain, the non-flat band functions 
are strictly monotone (with a non-vanishing derivative) 
on $(0,\omega/2)$, and the spectral bands $\sigma_n$ and $\sigma_m$ for $n\not = m$ in 
Theorem~\ref{thm.main} may have at most one 
common point, see Sections~\ref{sec.dd} and \ref{sec.ddd}. These facts eventually ensure that 
the absolute continuous spectrum of $\mathbf H$ has multiplicity two, as stated in Theorem~\ref{thm.main}.

\subsection{The structure of the paper}
In Section~\ref{sec.d} we develop the Floquet-Bloch decomposition for $\bH$ 
and prove the matrix representation formula for $H(k)$. 
In Section~\ref{sec.c} we show that $H(k)$ extends to the 
complex plane as a meromorphic function. 
In Section~\ref{sec.cc} we introduce the secular 
determinant $\Delta(k;\lambda)$ and prove its key properties. 
Sections~\ref{sec.dd} and \ref{sec.ddd} are central to the paper; 
here we prove Theorems~\ref{thm.main} and \ref{thm.branches} and establish finer properties of the band functions. 
In Section~\ref{sec.alternation} we establish a global property of the band functions that we call the alternation pattern; it is related to the sign of monotonicity (i.e. is it increasing or decreasing?) of a given band function on the interval $(0,\omega/2)$. We prove that this sign of monotonicity alternates from band to band.
 In Section~\ref{sec.e} we consider in detail the Mathieu-Hankel operator \eqref{a2}. 

\subsection{Notation and terminology}
Fourier transform in $L^2(\bbR)$: we denote
\begin{equation}
\widehat f(\xi)=\frac1{\sqrt{2\pi}}\int_{-\infty}^\infty f(x)e^{-i\xi x}dx
\quad\text{ and }\quad
\widecheck{g}(x)=\frac1{\sqrt{2\pi}}\int_{-\infty}^\infty g(\xi)e^{i\xi x}d\xi.
\label{eq:Fourier}
\end{equation}
We use $\norm{\cdot}$ for the operator norm and $\norm{\cdot}_{\mathbf S_1}$ for the trace norm. 
The period $T>0$ is fixed and $\omega=2\pi/T$ is the dual period. $\Gamma$ is the Gamma function and $B$ is the Beta function. For $x\in\bbR$, we write $\jap{x}=\sqrt{1+x^2}$.

If $v = \{v_n\}_{n\in\bbZ}$ is a bounded (double-sided) sequence 
of complex numbers, we will denote by $\calM[v]$ the operator of multiplication 
by this sequence in $\ell^2(\bbZ)$. In other words, the operator $\calM[v]$ 
is the diagonal matrix with the entries $v_n$ on the diagonal.  
We denote by $J$, $S$ and $K$ the following operators in $\ell^2(\bbZ)$:
\begin{equation}
(J u)_n = u_{-n}, \quad (Su)_n = u_{n-1},\quad (Ku)_n=(-1)^nu_n, \quad u\in \ell^2(\bbZ).
\label{eq:defJS}
\end{equation}
By a slight abuse of notation, we also denote by $\calM[a]$ the operator of multiplication by a bounded function  
$a$ in the corresponding $L^2$ space with respect to the Lebesgue measure.

By $C, c$ with or without indices, we denote positive constants whose precise value is unimportant. The value of $C, c$ may vary from line to line.

\subsection{Acknowledgements}
We are grateful to Jean Lagac\'e and Gerald Teschl for useful discussions. 
 We are also grateful to the referees for the careful reading of the manuscript and many constructive suggestions.

\section{The Floquet-Bloch decomposition}
\label{sec.d}
\subsection{Overview}
In Sections~\ref{sec.d2}--\ref{sec.d3}, we define the unitary operators that give a direct integral decomposition of $L^2(\bbR_+)$, adapted to operators commuting with dilations.  In Section~\ref{sec.d4} we state the main result, which is the matrix representation formula \eqref{a3} for $H(k)$. The proof is given in Sections~\ref{sec.d5}--\ref{subsect:spho}. In Section~\ref{sec.b6} we focus on the case of positive semi-definite Hankel operators, and in Section~\ref{sec.d9} we consider an example.

\subsection{Background: standard Floquet-Bloch decomposition of $L^2(\bbR)$} 
\label{sec.d2}
First, as a warm-up we display the formulas corresponding to the standard Floquet-Bloch decomposition of $T$-periodic operators on $L^2(\bbR)$.

Let $\ell^2=\ell^2(\bbZ)$ and let us consider the constant fiber direct integral of Hilbert spaces:
\[
\mathfrak H=\int_{\Omega}^\oplus \ell^2\,  dk. 
\]
In other words, $\mathfrak H$ is the $L^2$-space of $\ell^2$-valued functions $F=F(k)$ on $\Omega$ with respect to the Lebesgue measure.
Clearly, $\mathfrak H$ is isomorphic to $L^2(\bbR)$ in a natural way: a function $f\in L^2(\bbR)$ corresponds to $F(k)=\{f(\omega m+k)\}_{m\in\bbZ}$, $k\in \Omega$. 

We define the unitary map $U:L^2(\bbR)\to\mathfrak H$ by 
\[
[(Uf)(k)]_n=\widehat f(k+\omega n)=\frac1{\sqrt{2\pi}}\int_{-\infty}^\infty f(x)e^{-ikx-i\omega nx}dx.
\]
The map $U$ is the combination of the Fourier transform $f\mapsto \widehat f$ and the natural isomorphism between $L^2(\bbR)$ and $\mathfrak H$, referred to in the previous paragraph. 
Observe that $U$ transforms the unitary operator in $L^2(\bbR)$ corresponding to the shift of variable $x\mapsto x+T$ into the operator of multiplication by $e^{ikT}$ in $\mathfrak H$.  

It follows that the map $U$ effects the Floquet-Bloch decomposition of $T$-periodic (i.e. invariant under shifts $x\mapsto x+T$) operators on $L^2(\bbR)$. The precise statement is as follows:
\begin{proposition}\label{prp.b1}
Let $A$ be a bounded self-adjoint operator in $L^2(\bbR)$ that commutes with the shift operator $f(x)\mapsto f(x+T)$. Then $A$ is decomposable as
\begin{equation}
U AU^*=\int_{\Omega}^\oplus A(k)\, dk,
\label{directint}
\end{equation}
with some bounded self-adjoint fiber operators $A(k)$ in $\ell^2$. 
\end{proposition}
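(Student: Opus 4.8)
The plan is to reduce the statement to the standard characterisation of decomposable operators on a direct integral as the commutant of the algebra of diagonal (scalar) multiplication operators. Recall from the discussion preceding the proposition that $U$ conjugates the shift $f(x)\mapsto f(x+T)$ into the operator $\calM[e^{ikT}]$ of multiplication by the scalar function $k\mapsto e^{ikT}$ on $\mathfrak H$. Setting $\wt A = UAU^*$, the hypothesis that $A$ commutes with the shift becomes the statement that $\wt A$ commutes with $\calM[e^{ikT}]$; since the shift is unitary, commuting with it is automatically equivalent to commuting with both $\calM[e^{ikT}]$ and its adjoint $\calM[e^{-ikT}]$ (from $AV=VA$ one gets $V^*AV=A$).

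First I would identify the von Neumann algebra generated by $\calM[e^{ikT}]$ inside $\mathcal B(\mathfrak H)$. The key point is that, as $k$ ranges over $\Omega=[-\omega/2,\omega/2)$ with $\omega=2\pi/T$, the product $kT$ ranges over $[-\pi,\pi)$, so $k\mapsto e^{ikT}$ is a Borel bijection from $\Omega$ onto the unit circle. Consequently every bounded Borel function of $e^{ikT}$ is again a bounded scalar function of $k$, and conversely every $\varphi\in L^\infty(\Omega)$ arises in this way. Applying the functional calculus to the normal operator $\calM[e^{ikT}]$ (whose spectral projections are themselves diagonal multiplication operators), the generated von Neumann algebra therefore coincides with the full diagonal algebra $\mathcal Z=\{\calM[\varphi]:\varphi\in L^\infty(\Omega)\}$. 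In particular $\wt A$ lies in the commutant $\mathcal Z'$.

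Next I would invoke von Neumann's decomposition theorem (see e.g. \cite[Section~XIII.16]{RS4}), according to which a bounded operator on $\int_\Omega^\oplus \ell^2\,dk$ is decomposable, i.e. of the form $\int_\Omega^\oplus A(k)\,dk$ with measurable bounded fibers $A(k)$, precisely when it commutes with every element of $\mathcal Z$. Applying this to $\wt A$ produces measurable fiber operators $A(k)$ with $UAU^*=\int_\Omega^\oplus A(k)\,dk$. Finally, since the adjoint and the norm of a decomposable operator are computed fibrewise, self-adjointness of $\wt A$ forces $A(k)=A(k)^*$ for almost every $k$, and boundedness gives $\norm{A(k)}\le\norm{\wt A}=\norm{A}$ for almost every $k$; after modifying $A(k)$ on a null set we may take all fibers self-adjoint and uniformly bounded, as claimed.

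The only genuine subtlety, and the step I would treat most carefully, is the identification of the algebra generated by the single operator $\calM[e^{ikT}]$ with the entire diagonal algebra $\mathcal Z$. This rests entirely on the surjectivity of $k\mapsto e^{ikT}$ onto the circle, i.e. on the matching $\omega T=2\pi$ of the cell length with the period: had $\Omega$ been chosen too short, the generated algebra would be strictly smaller and $\wt A$ would fail to be decomposable in the stated sense. Everything else is a routine appeal to the direct-integral machinery.
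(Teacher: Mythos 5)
Your proof is correct and takes essentially the same route as the paper: both reduce the statement to showing that $UAU^*$ commutes with the full diagonal algebra $\{\calM[\varphi]:\varphi\in L^\infty(\Omega)\}$ and then invoke the standard criterion (Reed--Simon, Theorem~XIII.84) identifying the commutant of that algebra with the decomposable operators. Where the paper passes from $e^{ikT}$ to all bounded functions by trigonometric approximation in operator norm followed by strong limits, you package the identical step as the bounded Borel functional calculus of the unitary $\calM[e^{ikT}]$ combined with the observation that $k\mapsto e^{ikT}$ is a Borel bijection of $\Omega$ onto the circle --- a harmless repackaging, and your explicit remark that commuting with a unitary forces commuting with its adjoint makes precise a point the paper leaves implicit.
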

\begin{proof} 
As $A$ commutes with shifts, we find that $UAU^*$ commutes with the operator of multiplication by $e^{ikT}$ in $\mathfrak H$. By taking linear combinations and operator norm limits, we find that $UAU^*$ commutes with the operators of multiplication by any \emph{continuous} function of $k$ in $\mathfrak H$. 
Moreover, by taking limits in strong operator topology, we find that $UAU^*$ also commutes with the operators of multiplication by any \emph{bounded} function of $k$ in $\mathfrak H$.
The proof is now complete by reference to \cite[Theorem XIII.84]{RS4}. 
\end{proof}
Moreover, if $A$ is a differential or pseudodifferential operator, the fiber operators $A(k)$ can be explicitly computed, which will be useful for us. In order to recall the corresponding formulas, it is convenient to introduce the standard pieces of notation: let $X$ and $D$ be the (unbounded) self-adjoint operators on $L^2(\bbR)$, defined by
$$
(Xf)(x)=xf(x), \quad (Df)(x)=-if'(x)
$$ 
on suitable dense domains. For any bounded measurable function $g$ on $\bbR$,  one can define the operators $g(X)$ (multiplication by $g(x)$) and $g(D)$ (convolution with $(2\pi)^{-1/2}\widecheck g$, see \eqref{eq:Fourier}). The formulas below are well-known in the theory of periodic Schr\"odinger operators, see e.g.  \cite[Theorem~XIII.87]{RS4}:

\begin{proposition}\label{prp.b2}
\begin{enumerate}[\rm (i)]
\item
If $b$ is a real-valued Schwartz class function and $A=b(D)$, then the fiber operator $A(k)$ in \eqref{directint} is the operator of multiplication by the sequence $\{b(k+\omega n)\}_{n\in\bbZ}$ in $\ell^2$. 
\item
If $a$ is a real-valued $T$-periodic continuous function on $\bbR$ and $A=a(X)$, then the fiber operator $A(k)$ in \eqref{directint} is the the constant (i.e. $k$-independent) operator in $\ell^2$ with the matrix
\[
\bigl[A(k)]_{n,m}= \widetilde a_{n-m}, \quad n,m\in\bbZ,
\]
where $\widetilde a_m$ are the Fourier coefficients of $a$.
\end{enumerate}
\end{proposition}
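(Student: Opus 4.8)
The plan is to verify both formulas by direct computation, using the explicit form of the unitary $U$ together with Proposition~\ref{prp.b1} to guarantee that the decomposition \eqref{directint} exists in the first place. Indeed, $b(D)$ is a convolution operator and hence commutes with all translations, while $a(X)$ commutes with the shift $x\mapsto x+T$ precisely because $a$ is $T$-periodic; in both cases the operator is bounded (for (i) because $b$ is bounded, being Schwartz) and self-adjoint (as $b$ and $a$ are real-valued), so Proposition~\ref{prp.b1} applies and it remains only to identify the fibers $A(k)$. The two underlying facts I would use are that $b(D)$ acts as multiplication by $b(\xi)$ on the Fourier transform side, i.e. $\widehat{b(D)f}(\xi)=b(\xi)\widehat f(\xi)$, and that multiplication by the character $e^{im\omega x}$ produces a frequency shift, $\widehat{e^{im\omega x}f}(\xi)=\widehat f(\xi-m\omega)$.

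For part (i), I would simply apply $U$ to $Af=b(D)f$ and read off the components. By the definition of $U$ and the multiplier property,
\begin{equation*}
[(UAf)(k)]_n=\widehat{Af}(k+\omega n)=b(k+\omega n)\,\widehat f(k+\omega n)=b(k+\omega n)\,[(Uf)(k)]_n.
\end{equation*}
This exhibits $UAU^*$ acting fiberwise as multiplication of the $n$-th component by $b(k+\omega n)$, which is exactly the claimed multiplication by the sequence $\{b(k+\omega n)\}_{n\in\bbZ}$. The sequence is bounded since $b$ is, so the fiber operators are bounded, consistent with \eqref{directint}.

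For part (ii), the computation is clean when $a$ is a trigonometric polynomial $a(x)=\sum_{\abs{m}\le N}\widetilde a_m e^{im\omega x}$: using the frequency-shift identity and the definition of $U$,
\begin{equation*}
[(UAf)(k)]_n=\widehat{af}(k+\omega n)=\sum_{m}\widetilde a_m\,\widehat f(k+\omega(n-m))=\sum_{m}\widetilde a_m\,[(Uf)(k)]_{n-m},
\end{equation*}
and reindexing by $\ell=n-m$ identifies the $k$-independent matrix with entries $[A(k)]_{n,\ell}=\widetilde a_{n-\ell}$, as required. To reach a general continuous $T$-periodic $a$, I would approximate it uniformly by trigonometric polynomials $a_j$ — for instance by the Fej\'er (Ces\`aro) means of its Fourier series — and pass to the limit: uniform convergence $a_j\to a$ gives $\norm{a_j(X)-a(X)}=\sup_x\abs{a_j(x)-a(x)}\to0$, hence $Ua_j(X)U^*\to Ua(X)U^*$ in operator norm, while the matrix entries, being the Fourier coefficients of $a_j$, converge pointwise to the corresponding coefficients $\widetilde a_{n-\ell}$ of $a$; comparing the two limits yields the formula for $a$.

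The only real obstacle is this last passage to general continuous $a$, where one must be mindful that the Fourier series of a merely continuous function need not converge absolutely, so the formal manipulation $\widehat{af}=\sum_m\widetilde a_m\,\widehat f(\,\cdot-m\omega)$ cannot be justified termwise in general. Routing through trigonometric-polynomial approximation together with the operator-norm continuity of the decomposition map sidesteps this issue entirely, and everything else is a mechanical unwinding of the definition of $U$.
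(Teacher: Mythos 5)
Your proof is correct; note that the paper itself offers no proof of Proposition~\ref{prp.b2}, simply citing \cite[Theorem~XIII.87]{RS4}, and your direct computation is precisely the standard argument behind that reference. The two ingredients are handled properly: the multiplier and frequency-shift identities give (i) and the trigonometric-polynomial case of (ii) by unwinding the definition of $U$, and your Fej\'er-mean approximation, combined with the fact that the norm of a decomposable operator is the essential supremum of its fiber norms (so that $\norm{a_j(X)-a(X)}\to0$ forces $A_j(k)\to A(k)$ in norm for a.e.\ $k$, and hence entrywise), correctly closes the only genuine gap — the failure of absolute convergence of the Fourier series of a merely continuous $a$, which invalidates the termwise identity $\widehat{af}=\sum_m\widetilde a_m\,\widehat f(\cdot-m\omega)$ in general.
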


\subsection{The exponential change of variable}
\label{sec.d3}
Let $N:L^2(\bbR_+)\to L^2(\bbR)$ be the unitary operator corresponding to the exponential change of variable:
$$
(Nf)(x)=e^{x/2}f(e^x), \quad x\in\bbR.
$$
Clearly, $N$ maps dilations by the factor of $e^T$ in $L^2(\bbR_+)$ to shifts by $T$ in $L^2(\bbR)$. 

We use the operator 
\begin{align*}
\calU&=UN: L^2(\bbR_+)\to \mathfrak H, 
\\
[(\calU f)(k)]_m&=\frac1{\sqrt{2\pi}}\int_0^\infty f(t)t^{-\frac12-ik-i\omega m}dt
\end{align*}
in order to effect the Floquet-Bloch decomposition of periodic Hankel operators in $L^2(\bbR_+)$. 
Combining Proposition~\ref{prp.b1} with the exponential change of variable, we obtain

\begin{lemma}\label{lma.b2}
Let $A$ be a bounded self-adjoint operator in $L^2(\bbR_+)$ that commutes with the dilation operator $V_T$. Then $A$ is decomposable as
$$
\calU A\calU^*=\int_{\Omega}^\oplus A(k)\, dk,
$$
with some bounded self-adjoint fiber operators $A(k)$.
\end{lemma}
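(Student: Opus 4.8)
The plan is to reduce the statement to Proposition~\ref{prp.b1} by conjugating with the exponential change of variable $N$, which turns the dilation picture on $L^2(\bbR_+)$ into the shift picture on $L^2(\bbR)$ where the Floquet--Bloch decomposition is already available. Concretely, I would introduce the operator $A' = NAN^*$ on $L^2(\bbR)$; since $N$ is unitary and $A$ is bounded and self-adjoint, so is $A'$.

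The key step is to verify the intertwining relation between $N$, the dilation $V_T$, and the shift. Using $(Nf)(x)=e^{x/2}f(e^x)$ together with its inverse $(N^*g)(t)=t^{-1/2}g(\log t)$, a direct computation gives $N V_T N^* g = g(\,\cdot\,-T)$, i.e. $N V_T N^*$ is the shift operator by $T$. Because $A$ commutes with $V_T$ by hypothesis, conjugating \eqref{a1} by $N$ shows that $A'$ commutes with this shift (equivalently, upon taking adjoints, with $g(x)\mapsto g(x+T)$). Having established that $A'$ is a bounded self-adjoint operator on $L^2(\bbR)$ commuting with the shift, I would then invoke Proposition~\ref{prp.b1} to obtain $U A' U^* = \int_\Omega^\oplus A'(k)\,dk$ with bounded self-adjoint fiber operators $A'(k)$ in $\ell^2$. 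Finally, since $\calU = UN$, we conclude
$$
\calU A \calU^* = U(NAN^*)U^* = U A' U^* = \int_\Omega^\oplus A'(k)\,dk,
$$
so the assertion holds with $A(k):=A'(k)$.

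I do not expect any genuine obstacle here: the substance of the decomposition is contained entirely in Proposition~\ref{prp.b1}, and the present lemma merely transports it along the unitary $N$. The only computation requiring care is the intertwining identity $N V_T N^* = (\text{shift by }T)$, which is already asserted informally in the text; everything else is bookkeeping with unitary equivalences.
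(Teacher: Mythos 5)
Your proof is correct and is essentially identical to the paper's own argument: the paper obtains Lemma~\ref{lma.b2} precisely by combining Proposition~\ref{prp.b1} with the exponential change of variable $N$, having noted that $N$ carries dilation by $e^T$ on $L^2(\bbR_+)$ into translation by $T$ on $L^2(\bbR)$. Your explicit check of the intertwining relation $NV_TN^*g = g(\cdot - T)$ (and the remark that commuting with a unitary is equivalent to commuting with its adjoint, so that Proposition~\ref{prp.b1} applies as stated) simply fills in a computation the paper asserts without proof.
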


\subsection{The fiber operators: introduction}
\label{sec.d4}
Let $\mathbf H$ be a smooth periodic Hankel operator (see Definition~\ref{def.smooth}) with the kernel function $h(t)=p(\log t)/t$. Since $\abs{h(t)}\leq C/t$, the operator $\mathbf H$ is bounded on $L^2(\bbR_+)$. 
Since $p$ is periodic, the operator $\mathbf H$ commutes with dilations $V_T$ and therefore, by Lemma~\ref{lma.b2}, we have 
\begin{equation}
\calU\mathbf H\calU^*=\int_{\Omega}^\oplus H(k)\, dk,
\label{b4}
\end{equation}
for some bounded operator $H(k)$ in $\ell^2(\bbZ)$. 
The operator $H(k)$ can be considered as a doubly-infinite matrix 
$[H(k)]_{n,m}$ with $n,m\in\bbZ$. 
Our aim in this section is to identify this matrix. 
Below $\Gamma$ is the gamma function and 
\begin{equation}
B(\zeta_1, \zeta_2) = \frac{\Gamma(\zeta_1) \Gamma(\zeta_2)}{\Gamma(\zeta_1+\zeta_2)},\quad \zeta_1, \zeta_2\in\mathbb C, 
\label{eq:beta}
\end{equation}
is the beta function. The main result of this section is

\begin{theorem}\label{thm.b3}
Let $\bH$ be a smooth periodic Hankel operator. 
Then the fiber operator $H(k)$ in \eqref{b4} is a continuous trace class valued function of $k\in\Omega$ and has the matrix representation 
\begin{equation}
\label{b1}
[H(k)]_{n,m} = 
B\big(\tfrac12-i\omega n-ik, \tfrac12+i\omega m+ik\big)\, \wt p_{n-m}, \quad  n,m\in\bbZ.
\end{equation}
\end{theorem}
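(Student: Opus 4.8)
The plan is to exploit the Fourier expansion of $p$ to reduce the problem to a family of explicitly solvable ``twisted Carleman'' operators, to compute their fibers by hand via the Beta integral, and then to control the resulting series in trace norm using \emph{exactly} the summability hypothesis \eqref{eq:pdec}.

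First I would write $p(\xi)=\sum_{\ell\in\bbZ}\wt p_\ell\, e^{i\omega\ell\xi}$, so that $h(t)=\sum_\ell \wt p_\ell\, t^{-1+i\omega\ell}$ and correspondingly $\mathbf H=\sum_\ell \wt p_\ell\,\mathbf H_\ell$, where $\mathbf H_\ell$ is the Hankel operator with kernel $(t+s)^{-1+i\omega\ell}$. Since $\abs{(t+s)^{-1+i\omega\ell}}=(t+s)^{-1}$, the Carleman bound \eqref{eq:carleman} gives $\norm{\mathbf H_\ell}\leq\pi$ uniformly in $\ell$, so $\sum_\ell\abs{\wt p_\ell}<\infty$ (a consequence of \eqref{eq:pdec}) makes the series converge in operator norm. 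Because the fiber map of Lemma~\ref{lma.b2} is norm-contractive, $\operatorname{ess\,sup}_k\norm{A(k)}=\norm{A}$, the fibers add up, $H(k)=\sum_\ell \wt p_\ell\, H_\ell(k)$ for a.e.\ $k$, and it suffices to compute each $H_\ell(k)$ and estimate it.

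Second, I would compute $H_\ell(k)$ directly in the Mellin picture. Writing $[(\calU f)(k)]_m=(\calM f)(k+\omega m)$ with $(\calM f)(\xi)=(2\pi)^{-1/2}\int_0^\infty f(t)t^{-1/2-i\xi}dt$, I would, for $f$ in a dense class such as $C_c^\infty(\bbR_+)$, interchange the order of integration by Fubini and use the Beta integral $\int_0^\infty (t+s)^{-1+i\omega\ell}t^{-1/2-i\xi}\,dt=s^{-1/2-i(\xi-\omega\ell)}\,B\big(\tfrac12-i\xi,\tfrac12+i\xi-i\omega\ell\big)$ to obtain $(\calM\mathbf H_\ell f)(\xi)=B\big(\tfrac12-i\xi,\tfrac12+i\xi-i\omega\ell\big)\,(\calM f)(\xi-\omega\ell)$. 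Evaluating at $\xi=k+\omega n$ shows that $H_\ell(k)$ is the weighted shift $[H_\ell(k)]_{n,m}=B\big(\tfrac12-ik-i\omega n,\tfrac12+ik+i\omega m\big)\,\delta_{m,n-\ell}$; summing against $\wt p_\ell$ (so that $\ell=n-m$) produces precisely \eqref{b1}.

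Third --- and this is where the real work lies --- I would establish the trace-class bound and continuity. Since $H_\ell(k)$ is a shift by $\ell$ with weights $c_n=B\big(\tfrac12-ik-i\omega n,\tfrac12+ik+i\omega n-i\omega\ell\big)$, its trace norm equals $\sum_n\abs{c_n}$. Here $\abs{B(a,b)}=\abs{\Gamma(a)}\,\abs{\Gamma(b)}/\abs{\Gamma(a+b)}$ with $a+b=1-i\omega\ell$ \emph{independent of $n$}; using $\abs{\Gamma(\tfrac12+it)}^2=\pi/\cosh(\pi t)$ and $\abs{\Gamma(1+it)}^2=\pi t/\sinh(\pi t)$ one gets an exponentially decaying summand in $n$, whose sum over $n$ is $O(\jap{\ell}^{1/2})$ uniformly for $k\in\Omega$. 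The main obstacle is to carry out this asymptotic estimate with the sharp exponent $\tfrac12$: the weight $\jap{\ell}^{1/2}$ in \eqref{eq:pdec} is calibrated exactly to this growth, and it is precisely this matching that gives $\sum_\ell\abs{\wt p_\ell}\,\norm{H_\ell(k)}_{\mathbf S_1}\leq C\sum_\ell\abs{\wt p_\ell}\,\jap{\ell}^{1/2}<\infty$. Being uniform in $k$, this bound shows that $\sum_\ell\wt p_\ell H_\ell(k)$ converges in $\mathbf S_1$ uniformly on $\Omega$; since each $H_\ell(k)$ is $\mathbf S_1$-continuous in $k$ (the Beta function has no poles on the real axis, the lattice \eqref{eq:perl} lying off $\bbR$), the uniform limit $H(k)$ is a continuous trace-class-valued function, which completes the proof.
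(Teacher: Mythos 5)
Your proposal is correct, and it reaches \eqref{b1} by a genuinely different route from the paper. The paper first proves the formula under the auxiliary hypothesis $\sum_n\abs{\wt{\mathfrak s}_n}<\infty$ (Theorem~\ref{thm.b1}) by passing through the Laplace-type representation \eqref{eq:laplace} and the pseudodifferential factorization $N\mathbf HN^*=\Gamma(\tfrac12-iD)\,\mathfrak s(X)\,\Gamma(\tfrac12-iD)^*$ (Yafaev's trick), then reads off the fibers from the standard Floquet formulas of Proposition~\ref{prp.b2}, and finally removes the auxiliary hypothesis by truncating $p$ to trigonometric polynomials and invoking the trace bound of Lemma~\ref{lem:bpoly}, which is itself proved in Section~\ref{sec.c} via Stirling's formula. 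You instead decompose $\mathbf H=\sum_\ell\wt p_\ell\,\mathbf H_\ell$ into ``twisted Carleman'' pieces and compute the fiber of each $\mathbf H_\ell$ directly in the Mellin picture from the Beta integral $\int_0^\infty(t+s)^{-1+i\omega\ell}t^{-1/2-i\xi}\,dt=s^{-1/2-i(\xi-\omega\ell)}B(\tfrac12-i\xi,\tfrac12+i\xi-i\omega\ell)$, which is legitimate for $f\in C_c^\infty(\bbR_+)$ by Fubini and extends by density since $\sup_{n,k}\abs{c_n(k)}<\infty$; this bypasses the Laplace representation, the function $\mathfrak s$, and Proposition~\ref{prp.b2} entirely, and your series $\sum_\ell\wt p_\ell H_\ell(k)$ plays the role of the paper's truncation argument. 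At the fiber level your weighted-shift decomposition is the same as the paper's \eqref{eq:mb}, and your trace bound $\sum_n\abs{c_n}=O(\jap{\ell}^{1/2})$ uniformly in $k$ is exactly Lemma~\ref{lem:bpoly}; your derivation via the exact identities $\abs{\Gamma(\tfrac12+it)}^2=\pi/\cosh\pi t$ and $\abs{\Gamma(1+it)}^2=\pi t/\sinh\pi t$ is cleaner than Stirling on the real line (the transition zone $0\leq\omega n+k\leq\omega\ell$ contributes about $\ell$ terms of size $\sim\jap{\ell}^{-1/2}$ after division by $\abs{\Gamma(1-i\omega\ell)}$, and the tails decay exponentially --- do write out this three-region splitting, as the paper does in Section~\ref{sec.c}, since the sharp exponent $\tfrac12$ is the crux). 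What your shortcut gives up is reusability: the exact modulus formulas work only on the lines $\Re=\tfrac12$ and $\Re=1$, so your estimate is confined to real $k$, whereas the paper's Stirling-based Theorem~\ref{thm:btrace} covers complex $s$ in punctured strips, which is indispensable later for the meromorphic continuation and the elliptic-function machinery; likewise the factorization \eqref{b10}/\eqref{eq:bk} that you avoid is used again in Section~\ref{sec.cc} (cyclicity of the determinant) and Section~\ref{sec.b6}. Two small points of hygiene: the operators $\mathbf H_\ell$ are not self-adjoint, so rather than quoting Lemma~\ref{lma.b2} you should note that your Mellin computation itself exhibits $\calU\mathbf H_\ell\calU^*$ as acting fiberwise (or that decomposability needs only commutation with multiplications, \cite[Theorem XIII.84]{RS4}); and the identification $H(k)=\sum_\ell\wt p_\ell H_\ell(k)$ holds a priori only for a.e.\ $k$, after which the uniform trace-norm convergence lets you select the continuous representative asserted in the theorem.
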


In Section~\ref{sec.c} below we shall see that in fact $H(k)$ is not only continuous in $k$, but real analytic and extends to complex $k$. 

Before embarking on the proof of Theorem~\ref{thm.b3}, we discuss an equivalent form of the matrix representation \eqref{b1}. Using \eqref{eq:beta}, we rewrite this representation as 
\begin{equation}
\label{eq:bk}
[H(k)]_{n,m} = \overline{\gamma_n(k)} \widetilde{\mathfrak s}_{n-m}\gamma_m(k), 
\end{equation}
where
\begin{align}\label{eq:sfunct}
\wt{\mathfrak s}_n = \frac{\wt p_n}{\Gamma(1-i\omega n)}, 
\quad
\gamma_n(k)  = \Gamma\big(\tfrac12 + i(\omega n + k)\big). 
\end{align}
(Notation $\wt{\mathfrak s}_n$ is motivated in a few lines below.)
The utility of \eqref{eq:bk} comes from the fact that, at least formally, 
it represents $H(k)$ as the product of three operators in $\ell^2(\bbZ)$: multiplication by 
$\gamma_n(k)$, convolution with the sequence $\widetilde{\mathfrak s}_n$ 
and again multiplication by $\overline{\gamma_n(k)}$. 
The technical difficulty here is that $\Gamma(1-i\omega n)\to0$ 
exponentially fast as $\abs{n}\to\infty$ and therefore the sequence 
$\widetilde{\mathfrak s}_n$ may grow exponentially; it follows that 
the operator of convolution with this sequence  is in general ill-defined. 
We circumvent this difficultly by first considering the case of coefficients 
$\widetilde p_n$ of rapid decay, so that 
\begin{align}\label{eq:conv}
\sum_{n=-\infty}^\infty |\wt{\mathfrak s}_n| <\infty
\end{align}
and then passing to the limit. Under the condition \eqref{eq:conv}, the periodic function 
\begin{align}\label{eq:sfunct1}
\mathfrak s(\xi)=\sum_{\ell=-\infty}^\infty \wt{\mathfrak s}_\ell e^{i\omega\ell\xi} 
\end{align}
is bounded. We recall that the operator of convolution with the sequence $\wt{\mathfrak s}_n$, i.e. the operator $\mathfrak S$ in $\ell^2(\mathbb Z)$ with the matrix entries
\begin{align}\label{eq:umn}
{\mathfrak S}_{n, m} = \wt{\mathfrak s}_{n-m}, \quad n, m\in\mathbb Z, 
\end{align}
is unitarily equivalent to the operator of multiplication by $\mathfrak s(\xi)$ in $L^2(0,T)$; in particular, this operator is bounded in $\ell^2(\bbZ)$. 
Furthermore, 
due to the well-known formula 
\begin{align}\label{eq:gamco}
\Abs{\Gamma\big(\tfrac{1}{2} + ix\big)}^2 = \frac{\pi}{\cosh \pi x}, \quad x\in\bbR,
\end{align}
the operator $\calM(\gamma)$ of multiplication by the sequence $\gamma$ in $\ell^2(\bbZ)$ belongs to trace class and depends continuously on $k\in\bbR$. 
Therefore, under the condition \eqref{eq:conv} the operator 
\eqref{eq:bk} is trace class and continuous in $k$.

\subsection{The case of $\widetilde p_n$ of rapid decay}
\label{sec.d5}

\begin{theorem}\label{thm.b1}
Let $\mathbf H$ be a smooth periodic Hankel operator, and let $\wt{\mathfrak s}_n$ be as defined in \eqref{eq:sfunct}. Assume that the series \eqref{eq:conv} converges. Then the kernel function $h(t)$ has the integral representation 
\begin{align}\label{eq:laplace}
h(t)=
\frac{p(\log t)}{t}
= \int_0^\infty e^{-\lambda t}\mathfrak s(\log(1/\lambda))d\lambda, \quad t>0,
\end{align}
with the $T$-periodic function $\mathfrak s$ given by the absolutely convergent series \eqref{eq:sfunct1}. 
Furthermore, the fiber operator $H(k)$ in the direct integral \eqref{b4} can be represented as in \eqref{eq:bk} or \eqref{b1}, and $H(k)$ is a continuous trace-class valued function of $k\in\Omega$. 
\end{theorem}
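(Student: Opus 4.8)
The plan is to establish the integral representation \eqref{eq:laplace} first, then exploit it to factorise $\mathbf H$ into three elementary pieces whose Floquet--Bloch fibers are computable, and finally to read off \eqref{eq:bk} by composing these fibers. First, for the \emph{Laplace representation}: since under \eqref{eq:pdec} the Fourier series of $p$ converges absolutely, $h(t)=p(\log t)/t=\sum_\ell \wt p_\ell\, t^{-(1-i\omega\ell)}$ for $t>0$. The elementary identity $t^{-(1-i\omega\ell)}=\Gamma(1-i\omega\ell)^{-1}\int_0^\infty e^{-\lambda t}\lambda^{-i\omega\ell}\,d\lambda$ (valid because $\Re(1-i\omega\ell)=1>0$) turns each term into an integral and the prefactor $\Gamma(1-i\omega\ell)^{-1}$ produces exactly the coefficient $\wt{\mathfrak s}_\ell$ of \eqref{eq:sfunct}. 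Interchanging the sum and the integral is legitimate by \eqref{eq:conv}, as $\sum_\ell|\wt{\mathfrak s}_\ell|\int_0^\infty e^{-\lambda t}\,d\lambda=t^{-1}\sum_\ell|\wt{\mathfrak s}_\ell|<\infty$; recognising $\sum_\ell \wt{\mathfrak s}_\ell\lambda^{-i\omega\ell}=\mathfrak s(\log(1/\lambda))$ via \eqref{eq:sfunct1} then yields \eqref{eq:laplace}.

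Next I would introduce the Laplace transform $(\calL f)(\lambda)=\int_0^\infty e^{-\lambda s}f(s)\,ds$. Its symmetric kernel gives $\calL^2=\text{Carleman operator}$, so $\norm{\calL f}^2=\jap{\calL^2 f,f}\leq\pi\norm f^2$ by \eqref{eq:carleman}; hence $\calL$ is bounded and self-adjoint on $L^2(\bbR_+)$ with $\norm\calL=\sqrt\pi$. Inserting \eqref{eq:laplace} into the quadratic form of $\mathbf H$ and applying Fubini (justified since $\mathfrak s$ is bounded and $\calL$ is bounded on $L^2$) I obtain the factorisation
\[
\mathbf H=\calL\,\calM[w]\,\calL,\qquad w(\lambda)=\mathfrak s(\log(1/\lambda)),
\]
where $\calM[w]$ is multiplication by the bounded function $w$. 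This is the conceptual origin of the three-factor structure anticipated for \eqref{eq:bk}.

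I would then compute the fibers of the two kinds of factors. The function $w$ is invariant under dilation by $e^T$ (it is log-periodic), so $\calM[w]$ commutes with $V_T$; under $N$ it becomes multiplication by the $T$-periodic function $\mathfrak s(-\,\cdot\,)$, and Proposition~\ref{prp.b2}(ii) gives a $k$-independent fiber equal to $J\mathfrak S J$. The Laplace transform instead intertwines $V_T$ with $V_T^{-1}$, so it is not itself decomposable; but a direct computation from the definition of $\calU$ together with $\int_0^\infty e^{-\lambda s}s^{\beta-1}\,ds=\Gamma(\beta)\lambda^{-\beta}$ ($\Re\beta>0$) shows that $\calU\calL\calU^*$ sends a field $F$ to $k\mapsto\calM[\overline{\gamma(k)}]\,J\,F(-k)$, i.e. it combines the reflection $k\mapsto-k$, the component flip $J$, and multiplication by $\overline{\gamma(k)}$. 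Writing $\calU\mathbf H\calU^*=(\calU\calL\calU^*)(\calU\calM[w]\calU^*)(\calU\calL\calU^*)$ and composing these three actions, the two reflections cancel, one adjacent pair of flips gives $J^2=I$, and the identity $\gamma_{-n}(-k)=\overline{\gamma_n(k)}$ absorbs the remaining flips; the fiber collapses to $H(k)=\calM[\overline{\gamma(k)}]\,\mathfrak S\,\calM[\gamma(k)]$, which is exactly \eqref{eq:bk}, hence \eqref{b1}. The trace-class property and continuity in $k$ then follow as in the discussion preceding this subsection: $\calM[\gamma(k)]$ is trace class because \eqref{eq:gamco} gives $\sum_n|\gamma_n(k)|<\infty$, $\mathfrak S$ is bounded, and $k\mapsto\gamma_n(k)$ is continuous with a summable $k$-uniform bound.

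I expect the main obstacle to lie not in the algebra but in the analysis underpinning the middle steps: giving the Laplace transform and the Mellin-type manipulations a rigorous meaning on $L^2(\bbR_+)$ (the relevant integrals are only conditionally convergent, so the formula for $\calU\calL\calU^*$ should be proved on a dense subspace and extended by boundedness), justifying the Fubini interchanges, and carefully tracking the reflection $k\mapsto-k$ induced by $\calL$ so that the two copies of $\calL$ recombine correctly. An alternative that avoids introducing $\calL$ is to verify \eqref{eq:bk} directly at the level of the quadratic form $\jap{\mathbf H f,g}$ for $f,g$ whose transforms $\calU f,\calU g$ have only finitely many nonzero components, inserting \eqref{eq:laplace} and matching against $\int_\Omega\jap{H(k)(\calU f)(k),(\calU g)(k)}\,dk$; this is the same computation organised around test fields rather than the factorisation.
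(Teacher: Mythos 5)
Your proof is correct, and its overall skeleton is the paper's: the derivation of \eqref{eq:laplace} from the identity $t^{-(1-i\omega\ell)}=\Gamma(1-i\omega\ell)^{-1}\int_0^\infty e^{-\lambda t}\lambda^{-i\omega\ell}\,d\lambda$ with summation justified by \eqref{eq:conv} is word-for-word the paper's first step, and the trace-class and continuity conclusions are drawn from the same observations (summability of $|\gamma_n(k)|$ via \eqref{eq:gamco}, boundedness of $\mathfrak S$) in both cases. Where you genuinely diverge is the middle step. The paper conjugates by $N$ and computes the integral kernel of $N\bH N^*$ directly, obtaining the factorisation \eqref{b10}, $N\bH N^*=\Gamma(\tfrac12-iD)\,\mathfrak s(X)\,\Gamma(\tfrac12-iD)^*$, in which every factor is shift-invariant and hence decomposable, so the fiber matrix \eqref{eq:bs} falls out of Proposition~\ref{prp.b2} with no further work. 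You instead factor on the half-line as $\bH=\calL\,\calM[w]\,\calL$, where $\calL$ is \emph{not} decomposable (it intertwines $V_T$ with $V_T^*$), and you pay for this with an extra lemma --- the formula $[(\calU\calL\calU^*F)(k)]_n=\overline{\gamma_n(k)}\,[JF(-k)]_n$ --- plus reflection bookkeeping resolved by $\gamma_{-n}(-k)=\overline{\gamma_n(k)}$. All of your new ingredients check out: $\calL$ is bounded and self-adjoint with $\norm{\calL}=\sqrt{\pi}$ since $\calL^2$ is the Carleman operator and \eqref{eq:carleman} applies; the fiber of $\calM[w]$ is indeed the constant matrix $J\mathfrak S J=\mathfrak S^T$ (the Fourier coefficients of $\mathfrak s(-\,\cdot\,)$ are $\wt{\mathfrak s}_{-\ell}$); your formula for $\calU\calL\calU^*$ follows from $\widehat{(N\calL N^*g)}(u)=\Gamma(\tfrac12-iu)\,\widehat g(-u)$; and the composition does collapse to $\calM[\overline{\gamma(k)}]\,\mathfrak S\,\calM[\gamma(k)]$, which is \eqref{eq:bk}. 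In fact the two factorisations are the same object: $N\calL N^*$ has kernel $\beta(x+y)$, i.e.\ it is convolution with $\beta$ composed with the reflection $x\mapsto-x$, so $\calL\calM[w]\calL$ is precisely \eqref{b10} with the two reflections kept explicit rather than cancelled at the kernel level, where the paper disposes of them silently. What your route buys is conceptual transparency (the Laplace transform as the square root of Carleman) and independence from Proposition~\ref{prp.b2}(i); what it costs is the $k\mapsto-k$ bookkeeping, which you track correctly, and the analytic caveats you flag (Fubini on a dense class, extension by boundedness, the operator $\calL\calM[w]\calL$ having norm at most $\pi\norm{\mathfrak s}_\infty$) are exactly the right ones and are routine to discharge.
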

\begin{proof} 
By the integral representation for the Gamma function, for every $\ell\in\bbZ$ we have
\begin{align*}
\Gamma(1-i\omega\ell)&=\int_0^\infty e^{-\lambda}\lambda^{-i\omega\ell}d\lambda
\\
&=t^{1-i\omega\ell}\int_0^\infty e^{-\lambda t}\lambda^{-i\omega\ell}d\lambda, \quad t>0.
\end{align*}
Let us rewrite this as 
\begin{equation}
\frac{e^{i\omega\ell \log t}}{t}
=
\frac1{\Gamma(1-i\omega\ell)}\int_0^\infty e^{-\lambda t}e^{i\omega\ell\log(1/\lambda)}d\lambda.
\label{eq:gamma}
\end{equation}
Multiplying this by $\wt  p_\ell$ and summing over $\ell$ (the series converges absolutely for every $t>0$ by \eqref{eq:conv}), we immediately obtain the  representation \eqref{eq:laplace}. After the exponential change of variable $\lambda=e^{-\xi}$ this rewrites as  
$$
h(t)=\int_{-\infty}^\infty e^{-te^{-\xi}}\mathfrak s(\xi)e^{-\xi}d\xi,
$$
and therefore 
\begin{align*}
h(e^x+e^y)&=
\int_{-\infty}^\infty e^{-(e^x+e^y)e^{-\xi}}\mathfrak s(\xi)e^{-\xi}d\xi
\notag
\\
&=e^{-(x+y)/2}
\int_{-\infty}^\infty e^{-(e^{x-\xi}+e^{y-\xi})}\mathfrak s(\xi)e^{(x-\xi)/2}e^{(y-\xi)/2}d\xi
\notag
\\
&=e^{-(x+y)/2}\int_{-\infty}^\infty \beta(x-\xi)\mathfrak s(\xi)\beta(y-\xi)d\xi,
\end{align*}
where
$$
\beta(\xi)=e^{-e^\xi}e^{\xi/2}. 
$$
We conclude that the operator $N$ of the exponential change of variable transforms our Hankel operator into the integral operator $N\mathbf HN^*$ in $L^2(\bbR)$ with the integral kernel
$$
[N\mathbf HN^*](x,y)=\int_{-\infty}^\infty \beta(x-\xi)\mathfrak s(\xi)\beta(y-\xi)d\xi, \quad x,y\in\bbR.
$$
Thus, $N\mathbf HN^*$ is the product (right to left) of the convolution with $\beta(-x)$, the multiplication by $\mathfrak s(\xi)$ and the convolution with $\beta(x)$. 
Observe that 
\begin{align*}
\widehat \beta(u)&=\frac1{\sqrt{2\pi}}\int_{-\infty}^\infty \beta(x)e^{-iux}dx
=\frac1{\sqrt{2\pi}}\int_{-\infty}^\infty e^{-e^x}e^{x/2}e^{-iux}dx
\\
&=\frac1{\sqrt{2\pi}}\int_0^\infty e^{-\lambda}\lambda^{-\frac12-iu}d\lambda
=\frac1{\sqrt{2\pi}}\Gamma(\tfrac12-iu),
\end{align*}
and so the convolution with $\beta$ can be written as $\Gamma(\frac12-iD)$ (see Section~\ref{sec.d2}). 
Putting this together, we find 
\begin{equation}
N\mathbf HN^*=\Gamma(\tfrac12-iD)\mathfrak s(X)\Gamma(\tfrac12-iD)^*
\label{b10}
\end{equation}
in $L^2(\bbR)$. Applying the unitary map $U$ we arrive at 
$$
\calU\mathbf H\calU^*
=
UN\mathbf H N^*U^*
=
(U\Gamma(\tfrac12-iD)U^*)(U\mathfrak s(X)U^*)(U\Gamma(\tfrac12-iD)^*U^*).
$$
By Proposition~\ref{prp.b2}, this implies that the entries of the operator 
$H(k)$ are given by  
\begin{align}\label{eq:bs}
[H(k)]_{n, m} = \Gamma(\tfrac12 - i\omega n - ik) \,
\wt {\mathfrak s}_{n - m}\, \Gamma(\tfrac12 + i\omega m + ik). 
\end{align} 
This leads to \eqref{eq:bk}. 
Using the definition \eqref{eq:sfunct} we obtain \eqref{b1}.

As pointed out before the Theorem, the operator \eqref{eq:bk} is trace class 
and it depends continuously on $k\in\bbR$ in trace norm. It follows that $H(k)$ is a continuous function of $k\in\bbR$ with values in trace class.  
\end{proof}

\begin{remark*} 
The representation \eqref{b10} of a Hankel operator 
as a pseudodifferential operator is due to D.Yafaev \cite{yaf_apde_2015,Ya1}, who worked with different (non-periodic) classes of Hankel operators. 
\end{remark*}

\subsection{Extension to smooth periodic Hankel operators}\label{subsect:spho}

The proof of Theorem~\ref{thm.b3} is based on the following technical statement. 

\begin{lemma}\label{lem:bpoly}
Let $p$ be a trigonometric polynomial. Then the operator $H(k)$, defined by \eqref{b1} for all $k\in\bbR$, satisfies the trace class bound
\begin{equation}
\norm{H(k)}_{\mathbf S_1}
\leq 
C\sum_{\ell=-\infty}^\infty\abs{\wt p_\ell}\jap{\ell}^{1/2}
\label{b3}
\end{equation}
uniformly in $k\in\mathbb R$. 
\end{lemma}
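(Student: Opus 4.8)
The plan is to decompose $H(k)$ according to the Fourier modes of $p$ and estimate each piece separately in trace norm. Since $p$ is a trigonometric polynomial, only finitely many coefficients $\wt p_\ell$ are non-zero, and writing $\ell=n-m$ for the diagonal index we have $H(k)=\sum_\ell H_\ell(k)$, where $H_\ell(k)$ retains only the entries with $n-m=\ell$, namely
\[
[H_\ell(k)]_{n,m}=B\big(\tfrac12-i\omega n-ik,\tfrac12+i\omega m+ik\big)\,\wt p_\ell\,\delta_{n-m,\ell}.
\]
By the triangle inequality for the trace norm it suffices to bound each $\norm{H_\ell(k)}_{\mathbf S_1}$ by $C\abs{\wt p_\ell}\jap{\ell}^{1/2}$ uniformly in $k$ and then sum over the finitely many non-zero $\ell$.

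First I would observe that $H_\ell(k)$ is a weighted shift: in the notation \eqref{eq:defJS}, $H_\ell(k)=\calM[a^{(\ell)}(k)]\,S^\ell$ with diagonal weight $a^{(\ell)}_n(k)=B\big(\tfrac12-i\omega n-ik,\tfrac12+i\omega(n-\ell)+ik\big)\wt p_\ell$. Since $S^\ell$ is unitary and the trace norm of a diagonal operator in $\ell^2(\bbZ)$ equals the $\ell^1$-norm of its diagonal (its singular values being the moduli of the entries), this yields the exact identity
\[
\norm{H_\ell(k)}_{\mathbf S_1}=\abs{\wt p_\ell}\sum_{n\in\bbZ}\Abs{B\big(\tfrac12-i\omega n-ik,\tfrac12+i\omega(n-\ell)+ik\big)}.
\]
Writing the Beta function via \eqref{eq:beta} and noting that the two numerator arguments have the form $\tfrac12+i(\text{real})$ while the common denominator is $\Gamma(1-i\omega\ell)$, the formula \eqref{eq:gamco} turns this into
\[
\norm{H_\ell(k)}_{\mathbf S_1}=\frac{\pi\,\abs{\wt p_\ell}}{\abs{\Gamma(1-i\omega\ell)}}\sum_{n\in\bbZ}\frac{1}{\sqrt{\cosh\big(\pi(\omega n+k)\big)\,\cosh\big(\pi(\omega(n-\ell)+k)\big)}}.
\]

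Next I would estimate the remaining sum. Using $\cosh x\ge\tfrac12 e^{\abs x}$, it is controlled by $2\sum_n e^{-\frac\pi2(\abs{x_n}+\abs{x_n-\omega\ell})}$ with $x_n=\omega n+k$. The exponent $\abs{x_n}+\abs{x_n-\omega\ell}$ equals its minimal value $\omega\abs\ell$ for the $\sim\abs\ell$ lattice points $x_n$ lying between $0$ and $\omega\ell$, and grows linearly (with geometrically summable contributions) outside that interval; hence the sum is bounded by $C\jap\ell\,e^{-\pi\omega\abs\ell/2}$ uniformly in $k$. The case $\ell=0$ is immediate, since then $\Gamma(1-i\omega\ell)=1$ and $\sum_n 1/\cosh(\pi(\omega n+k))$ is a convergent series bounded uniformly in $k$, giving $\norm{H_0(k)}_{\mathbf S_1}\le C\abs{\wt p_0}$.

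The hard part — and the reason the exponent $1/2$ rather than $1$ appears in \eqref{b3} — is the precise balancing of the exponential factors. Using $\abs{\Gamma(1+iy)}^2=\pi y/\sinh(\pi y)$ together with $\sinh x\le\tfrac12 e^{x}$, the prefactor grows like $1/\abs{\Gamma(1-i\omega\ell)}\le e^{\pi\omega\abs\ell/2}/\sqrt{2\pi\omega\abs\ell}$, and this exactly cancels the decay $e^{-\pi\omega\abs\ell/2}$ of the sum. What survives is the polynomial ratio $\jap\ell/\sqrt{\abs\ell}$, which is $O(\jap\ell^{1/2})$ because $\abs\ell\le\jap\ell$; this produces $\norm{H_\ell(k)}_{\mathbf S_1}\le C\abs{\wt p_\ell}\jap\ell^{1/2}$ for $\ell\neq0$. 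Summing over the finitely many $\ell$ then gives \eqref{b3}. I expect the delicate point to be tracking the constants in the lattice-point count and the $\Gamma$-asymptotics carefully enough to confirm that the exponentials cancel and the leftover power is exactly $\jap\ell^{1/2}$.
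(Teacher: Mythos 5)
Your proof is correct, and it follows the same skeleton as the paper's: the paper likewise decomposes $H$ along diagonals as $H(s)=\sum_\ell \wt p_\ell\,\calM[b^{(\ell)}(s)]\,S^\ell$ (with $S$ from \eqref{eq:defJS}), uses that the trace norm of a diagonal operator times a unitary shift is the $\ell^1$-sum of the diagonal entries (its formula \eqref{eq:sp}), counts the $\sim\abs{\ell}$ lattice points in the intermediate region where the two Gamma factors balance, and cancels the exponential growth of $1/\abs{\Gamma(1-i\omega\ell)}$ against the decay $e^{-\pi\omega\abs{\ell}/2}$ of the sum, leaving the power $\jap{\ell}^{1/2}$. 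The genuine difference is in how the Gamma factors are estimated: the paper does not prove Lemma~\ref{lem:bpoly} directly but derives it as a corollary of Theorem~\ref{thm:btrace}, which works in complex strips $\abs{\Im s}<\tau_0$ and therefore must use Stirling-type two-sided bounds (its Lemma~\ref{lem:gamma}), producing the $\tau$-dependent weights $\rho_\tau$ — of which your exponent $1/2$ is the $\tau=0$ case. You instead exploit that for real $k$ the moduli are \emph{exact}: \eqref{eq:gamco} and $\abs{\Gamma(1+iy)}^2=\pi y/\sinh(\pi y)$ turn the trace norm into an explicit sum, so your argument is more elementary and self-contained for the lemma as stated; what it does not buy is the analytic continuation of $H(s)$ off the real axis, which is the reason the paper routes through the complex-variable theorem (this extension is essential later for the ellipticity of the secular determinant). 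One cosmetic slip: at the final step the justification should read $\jap{\ell}\leq\sqrt2\,\abs{\ell}$ for $\ell\neq0$ (so that $\jap{\ell}/\sqrt{\abs{\ell}}\leq 2^{1/4}\jap{\ell}^{1/2}$), rather than $\abs{\ell}\leq\jap{\ell}$, which points in the unhelpful direction; the conclusion is of course unaffected.
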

It will be convenient to postpone the proof of this lemma to Section~\ref{sec.c}, where the case of complex $k$ is also considered; see Remark~\ref{rem:bpoly}. Here, assuming this lemma, we complete the proof of Theorem~\ref{thm.b3}. 

\begin{proof}[Proof of Theorem~\ref{thm.b3}]
Recall that by assumption $\mathbf H$ is smooth, i.e. the series in \eqref{b3} converges. 
Let $p^{(N)}$ be the trigonometric polynomial obtained by truncating the series for $p$:
$$
p^{(N)}(\xi)=\sum_{\ell=-N}^N \wt p_\ell e^{i\omega\ell\xi}, 
$$
let $h^{(N)}(t) = p^{(N)}(\log t)/t$, and let $\mathbf H^{(N)}$ be the corresponding Hankel operator. For each $N$, by Theorem~\ref{thm.b1} we have the representation 
\begin{equation}
\calU\mathbf H^{(N)}\calU^*=\int_{\Omega}^\oplus  H^{(N)}(k)\, dk,
\label{b5}
\end{equation}
where $H^{(N)}(k)$ is the infinite matrix whose entries $[H^{(N)}(k)]_{n, m}$ are given by 
\eqref{b1} for all $m, n$ such that $|m-n|\le N$ and by $[H^{(N)}(k)]_{n, m} = 0$ otherwise. 
Consider both sides of \eqref{b5}. 
Observe that $p^{(N)}(\xi)\to p(\xi)$ uniformly over $\xi$ and therefore, by \eqref{eq:carleman1}, 
$$
\norm{\mathbf H^{(N)}-\mathbf H}\to0, \quad N\to\infty. 
$$
On the other hand, by \eqref{b3}, the operators $H^{(N)}(k)$ 
converge in trace norm uniformly over $k$. 
It follows that the integrals on the right-hand side 
of \eqref{b5} also converge in the operator norm. 

We conclude that the fiber operators $H(k)$ 
in the direct integral decomposition  \eqref{b4} 
for $\mathbf H$ are the trace norm limits of $H^{(N)}(k)$. 
In particular, $H(k)$ is trace class for every $k$ and is continuous (in trace norm) in $k\in\bbR$.
Finally, taking the limit of $[H^{(N)}(k)]_{n, m}$ as $N\to\infty$ 
we obtain formula \eqref{b1} for the matrix of $H(k)$.
\end{proof}

\subsection{Positive periodic Hankel operators: automatic smoothness}\label{sec.b6}
Here we discuss the ``automatic smoothness'' phenomenon: 
the kernel function $h(t)$ of a positive semi-definite periodic Hankel operator is infinitely differentiable, and therefore positive semi-definite periodic Hankel operators are automatically smooth.

First we need a statement which can be regarded as folklore, see e.g. \cite{Widom66,Ya1}. 
\begin{proposition}
\label{prop:positive}
Let $\mathbf H$ be a  bounded positive semi-definite Hankel 
operator with  a continuous real-valued kernel function $h(t), t\in (0, \infty)$.  
Then  
there exists a unique positive measure $d\sigma$ on $[0,\infty)$ such that 
\begin{align}\label{eq:positive}
h(t) = \int_0^\infty e^{-\lambda t} d\sigma(\lambda), \quad t>0,
\end{align}
and the following properties are satisfied:
\begin{enumerate}[\rm (i)]
\item \label{item:meas}
The measure $\sigma$ 
satisfies the bound $\sigma(\lambda) :=\sigma([0, \lambda])\le C\lambda$ for all $\lambda>0$. In particular, $h(t)$ is infinitely differentiable for $t >0$. 
\item \label{item:ker}
The kernel function $h$ satisfies the bound $|h(t)|\le Ct^{-1}$ for all $t >0$.
\end{enumerate}
\end{proposition}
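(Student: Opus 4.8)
The plan is to obtain the Laplace representation \eqref{eq:positive} first, and then to read off parts (i), (ii) and the smoothness of $h$ as consequences.

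\emph{Step 1: the representation.} Positivity of $\mathbf H$ says precisely that the quadratic form $\iint h(t+s) f(t)\overline{f(s)}\,dt\,ds$ is non-negative. I would first test it against normalized approximations of point masses at arbitrary points $x_1,\dots,x_n>0$; since $h$ is continuous, this shows that every matrix $\bigl(h(x_i+x_j)\bigr)_{i,j=1}^n$ is positive semi-definite, i.e. $h(t+s)$ is a positive-definite kernel on $(0,\infty)$. The shifted kernel $h(t+s+r)$, $r>0$, is the kernel of the (still positive) Hankel operator with kernel function $h(\cdot+r)$, so the matrices $\bigl(h(x_i+x_j+r)\bigr)$ are positive semi-definite as well. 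These are exactly the data needed to invoke the Hausdorff--Bernstein--Widder theorem and conclude that $h$ is completely monotone on $(0,\infty)$, hence admits a representation $h(t)=\int_0^\infty e^{-\lambda t}\,d\sigma(\lambda)$ with a positive measure $\sigma$ on $[0,\infty)$. Uniqueness of $\sigma$ is then immediate from the injectivity of the Laplace transform.

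\emph{Main obstacle.} The genuinely non-trivial point is bridging operator positivity to the representation. A symmetric positive semi-definite Hankel matrix yields directly only the \emph{even}-order difference inequalities (taking $c_i=(-1)^i\binom{n}{i}$ at equally spaced points gives $\Delta^{2n}_\eps h\ge 0$); the odd orders, and hence full complete monotonicity, are not visible from a single matrix and must be recovered from the \emph{shifted} positivity. Concretely, I expect the cleanest self-contained route to fix a step $\eps>0$ and apply the Stieltjes moment theorem to the pair of positive semi-definite Hankel matrices $\bigl(h((i+j)\eps)\bigr)$ and $\bigl(h((i+j+1)\eps)\bigr)$, obtaining a representation of $h$ on the grid $\eps\bbZ_{>0}$; refining $\eps\to 0$ and passing to a vague limit then yields $\sigma$ for all $t>0$. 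Here tightness and the confinement of the support to $\lambda\ge 0$ would be supplied by the boundedness of $\mathbf H$ (which rules out exponentially growing contributions) together with the finiteness of $h$ at a fixed point. Controlling this limit and uniqueness is the delicate part; it is exactly the content hidden behind the word ``folklore''.

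\emph{Step 2: the bounds and smoothness.} Granting the representation, part (i) follows from the boundedness of $\mathbf H$. Testing with the normalized functions $f_a(t)=\sqrt{2a}\,e^{-at}$, for which $\norm{f_a}_{L^2}=1$ and the Laplace transform equals $\sqrt{2a}/(a+\lambda)$, gives
\begin{align*}
\int_0^\infty \frac{2a}{(a+\lambda)^2}\,d\sigma(\lambda)=\jap{\mathbf H f_a,f_a}\le \norm{\mathbf H};
\end{align*}
bounding the integrand below by $1/(2a)$ on $[0,a]$ yields $\sigma([0,a])\le 2\norm{\mathbf H}\,a$, i.e. the linear bound with $C=2\norm{\mathbf H}$. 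This linear growth makes $\lambda^n e^{-\lambda t}$ integrable against $\sigma$ for every $t>0$, so one may differentiate \eqref{eq:positive} under the integral sign arbitrarily often, giving $h\in C^\infty(0,\infty)$. Finally, for (ii) I would write $e^{-\lambda t}=\int_\lambda^\infty t\,e^{-\mu t}\,d\mu$, apply Tonelli, and use $\sigma([0,\mu])\le C\mu$ to obtain
\begin{align*}
h(t)=\int_0^\infty t\,e^{-\mu t}\,\sigma([0,\mu])\,d\mu\le C\int_0^\infty t\mu\,e^{-\mu t}\,d\mu=\frac{C}{t},
\end{align*}
which together with $h\ge 0$ gives $\abs{h(t)}\le C/t$.
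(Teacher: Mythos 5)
Your Step 2 is correct and is essentially the paper's own argument: your computation with $f_a(t)=\sqrt{2a}\,e^{-at}$ is a clean variant of the paper's test with $u(t)=e^{-\mu t}$ (the paper integrates by parts and localizes to $[\mu,2\mu]$, you bound the integrand below on $[0,a]$; both give $\sigma([0,a])\le Ca$), and your Tonelli computation for (ii) is exactly the paper's integration by parts, with $h\ge0$ supplying the absolute value. The problem is Step 1. The assertion that positivity of the shifted kernels $h(t+s+r)$ supplies the missing odd-order information and licenses an appeal to Hausdorff--Bernstein--Widder is false. If $h(t)=\int_{\bbR} e^{-\lambda t}\,d\sigma(\lambda)$ with $\sigma\ge0$ having mass in $(-\infty,0)$, then every shifted kernel is still positive definite, because $h(\cdot+r)$ is the bilateral Laplace transform of the positive measure $e^{-\lambda r}\,d\sigma(\lambda)$: shifting the argument multiplies the measure by $e^{-\lambda r}>0$, which, unlike the index shift in the discrete Stieltjes problem (multiplication by the possibly negative variable $x$), carries no sign information. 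Concretely, $h(t)=e^{t}$ has all matrices $\bigl(h(x_i+x_j+r)\bigr)$ positive semi-definite of rank one, yet is not completely monotone. The same defect afflicts your grid version: the Stieltjes pair $\bigl(h((i+j)\eps)\bigr)\ge0$, $\bigl(h((i+j+1)\eps)\bigr)\ge0$ forces the representing variable $x=e^{-\eps\lambda}$ to lie in $[0,\infty)$ (ruling out oscillatory terms), not in $[0,1]$, so it cannot exclude $\lambda<0$; the sequence $h(n\eps)=e^{n\eps}$ is again a counterexample. The confinement of the support to $[0,\infty)$ can only come from the boundedness of $\mathbf H$ --- which you do say in the ``Main obstacle'' paragraph, but by then the HBW conclusion of Step 1 has already been asserted without justification, and what remains is the discretize-and-take-vague-limits programme whose hard points (tightness, identification of the limit on all of $(0,\infty)$ via continuity, uniqueness) you explicitly defer. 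As written, Step 1 is not a proof.

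For comparison, the paper's route avoids this entirely: it quotes the continuous Hamburger-type theorem (Widder, Ch.~VI, Theorem 21; Akhiezer, Theorem 5.5.4), which states that a \emph{continuous} $h$ on $(0,\infty)$ with all matrices $\bigl(h(t_j+t_k)\bigr)$ positive semi-definite is a \emph{bilateral} Laplace transform of a positive measure on $\bbR$ --- no shifted kernels are needed, and uniqueness is also a citation (Widder, Ch.~II, Theorem 6.3). Only then is boundedness of $\mathbf H$ used to kill any mass on $(-\infty,0)$: such mass makes $h$ grow exponentially, and the quadratic form on normalized bump functions supported near large $t$ then blows up. The fix for your write-up is therefore to delete the shifted-kernel/HBW bridge (the shifts add nothing), invoke the bilateral representation theorem for continuous positive-definite functions in place of HBW, and promote your parenthetical remark about boundedness to the actual support argument; with that substitution your Step 2 completes the proof exactly as in the paper.
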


Instead of the a priori assumption of continuity, in Proposition~\ref{prop:positive} 
one can start with  kernel functions $h(t)$ that are ``positive" in the sense of distributions, see e.g. 
\cite[Theorems 5.1 and 5.3]{Ya1}. For our purposes however 
it is sufficient to assume that $h$ is continuous 
on $(0, \infty)$. 
For completeness we provide a proof of Proposition~\ref{prop:positive} in the appendix.

For the case of positive \emph{periodic} Hankel operators we have the following corollary:

\begin{lemma}
\label{lma:positive}
Let $\mathbf H$ be a positive semi-definite Hankel operator with the kernel function
$h(t) = p(\log t)/t$, where $p$ is a $T$-periodic continuous function. 
Let $\sigma$ be the measure in \eqref{eq:positive}. Then:
\begin{enumerate}[\rm (i)]
\item
the Fourier coefficients $\wt p_\ell$ decay faster than any 
power of $\ell$ and therefore by Theorem~\ref{thm.b3}, 
$H(k)$ is trace class and has the matrix representation \eqref{b1};
\item
the measure $d\mu$ on $(-\infty, \infty)$ defined by 
$d\mu (\xi) =e^{\xi}  d\sigma(e^{-\xi})$, 
is $T$-periodic, i.e. ${\mu}((a, b)) = {\mu}((a+T, b+T))$ for all $a, b\in \mathbb R$;
\item
the Fourier coefficients $\wt{\mathfrak s}_\ell$ of the measure $\mu$ satisfy
$$
\wt{\mathfrak s}_\ell
=
\frac{\wt{p}_\ell}{\Gamma(1-i\omega\ell)},
$$
in agreement with our earlier notation \eqref{eq:sfunct}.
\end{enumerate}
\end{lemma}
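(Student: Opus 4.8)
The plan is to prove the three parts in the logical order (ii), (iii), (i), since the super-polynomial decay in (i) follows from the Fourier-coefficient identity in (iii), which in turn rests on the change of variables that defines $\mu$ in (ii).

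For part (ii), I would start from the Laplace representation $h(t)=\int_0^\infty e^{-\lambda t}\,d\sigma(\lambda)$ supplied by Proposition~\ref{prop:positive}. Note that the bound $\sigma([0,\lambda])\le C\lambda$ forces $\sigma(\{0\})=0$, so $\sigma$ lives on $(0,\infty)$ and the substitution $\lambda=e^{-\xi}$ is a genuine bijection onto $\bbR$, making $\mu$ a well-defined positive measure. The periodicity $e^T h(e^T t)=h(t)$, rewritten by the change of variable $\lambda\mapsto e^{-T}\lambda$ inside the integral, yields $\int_0^\infty e^{-\lambda t}\,d\sigma(\lambda)=\int_0^\infty e^{-\lambda t}\,d(e^T(D_T)_*\sigma)(\lambda)$ for all $t>0$, where $D_T(\lambda)=e^T\lambda$. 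The \emph{uniqueness} of the representing measure asserted in Proposition~\ref{prop:positive} (equivalently, injectivity of the Laplace transform on positive measures) then gives the scaling invariance $\int g(\lambda)\,d\sigma(\lambda)=e^T\int g(e^T\lambda)\,d\sigma(\lambda)$ for all test functions $g$. Translating intervals via $\mu((a,b))=\int_{(e^{-b},e^{-a})}\lambda^{-1}\,d\sigma(\lambda)$ and applying this invariance with $g=\mathbf 1_{(e^{-b},e^{-a})}\lambda^{-1}$ gives $\mu((a,b))=\mu((a+T,b+T))$. Along the way I would record that $\mu$ is locally finite: $\mu([0,T])=\int_{[e^{-T},1]}\lambda^{-1}\,d\sigma(\lambda)\le e^T\sigma([0,1])<\infty$.

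For part (iii), the key observation is that the same change of variables turns the Laplace representation into $h(t)=\int_\bbR e^{-t e^{-\xi}}e^{-\xi}\,d\mu(\xi)$, so that with $x=\log t$ the kernel satisfies $p(x)=e^x h(e^x)=(\psi*\mu)(x)$, a convolution of the periodic measure $\mu$ with the fixed $L^1(\bbR)$ function $\psi(u)=e^{u-e^u}$. I would then compute the Fourier coefficient $\wt{\mathfrak s}_\ell=\frac1T\int_0^T e^{-i\omega\ell\xi}\,d\mu(\xi)$ by folding the $\mu$-integral into one period (using the periodicity from (ii)) and unfolding the $x$-integral against the $T$-periodization of $\psi$; this is the convolution theorem $\wt p_\ell=\sqrt{2\pi}\,\wh\psi(\omega\ell)\,\wt{\mathfrak s}_\ell$ for the product of a periodic measure with an $L^1$ kernel. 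A direct computation with the substitution $\lambda=e^u$ gives $\sqrt{2\pi}\,\wh\psi(u)=\int_0^\infty e^{-\lambda}\lambda^{-iu}\,d\lambda=\Gamma(1-iu)$, yielding $\wt p_\ell=\Gamma(1-i\omega\ell)\,\wt{\mathfrak s}_\ell$, which is the claimed identity. (Equivalently, one may run the formal computation leading to \eqref{eq:gamma} in reverse and match Fourier coefficients of $p(\log t)/t$.)

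Finally, part (i) is an immediate consequence of (iii): from $|\wt p_\ell|=|\Gamma(1-i\omega\ell)|\,|\wt{\mathfrak s}_\ell|$, the factor $|\wt{\mathfrak s}_\ell|\le\frac1T\mu([0,T])$ is bounded, while $|\Gamma(1-i\omega\ell)|$ decays exponentially in $|\ell|$ by Stirling's asymptotics for the Gamma function along vertical lines. Hence $\wt p_\ell$ decays faster than any power of $\ell$, so condition \eqref{eq:pdec} holds and Theorem~\ref{thm.b3} applies to give the trace-class property and the matrix representation \eqref{b1}. The main obstacle throughout is the measure-theoretic bookkeeping: verifying that $\mu$ is a well-defined locally finite measure, justifying the uniqueness step and the Fubini/periodization interchanges for a genuine measure (rather than a density), and checking integrability of $\psi$ against $\mu$. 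This care is precisely the point of the lemma, since it must cover positive Hankel operators whose representing measure $\sigma$ need not be absolutely continuous.
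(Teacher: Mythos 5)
Your proposal is correct, and its parts (ii) and (iii) follow essentially the paper's own route: the paper likewise obtains periodicity of $\mu$ from the exponential change of variables $\lambda=e^{-\eta}$ in \eqref{eq:positive} together with the uniqueness of the representing measure $\sigma$, and proves (iii) by the same Gamma-function computation (the paper phrases it as ``a direct calculation using \eqref{eq:gamma}'', which is exactly your identity $\int_0^\infty e^{-\lambda}\lambda^{-iu}\,d\lambda=\Gamma(1-iu)$, i.e.\ your convolution argument $p=\psi*\mu$ with $\psi(u)=e^{u-e^u}$ made explicit). Where you genuinely diverge is part (i) and the logical ordering. The paper proves (i) first and independently of (ii)--(iii): Proposition~\ref{prop:positive}(i) (the bound $\sigma([0,\lambda])\le C\lambda$) already gives $h\in C^\infty(0,\infty)$, hence $p(\xi)=e^\xi h(e^\xi)$ is $C^\infty$ and its Fourier coefficients decay super-polynomially -- a two-line argument. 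You instead derive (i) as a corollary of (iii): $|\wt p_\ell|=|\Gamma(1-i\omega\ell)|\,|\wt{\mathfrak s}_\ell|$ with $|\wt{\mathfrak s}_\ell|\le \mu([0,T])/T$ bounded and $|\Gamma(1-i\omega\ell)|\sim \sqrt{2\pi\omega|\ell|}\,e^{-\pi\omega|\ell|/2}$ by Stirling (or by $|\Gamma(1+ib)|^2=\pi b/\sinh \pi b$). This costs you the reordering (ii)$\to$(iii)$\to$(i) and the extra measure-theoretic bookkeeping you flag (local finiteness of $\mu$, $\sigma(\{0\})=0$, Fubini for the periodization), all of which you handle correctly; in exchange it buys a strictly stronger conclusion, namely \emph{exponential} decay of $\wt p_\ell$ rather than merely faster-than-polynomial, consistent with the paper's remark in Section~\ref{sec.d4} that $\Gamma(1-i\omega n)$ decays exponentially. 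Both arguments are sound; the paper's is shorter because the smoothness of $h$ was already extracted in Proposition~\ref{prop:positive}, while yours is self-propelled from the Laplace representation and sharper in its output.
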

\begin{proof}
(i) Since $h$ is infinitely differentiable, 
the function $p(\xi) = e^\xi h(e^\xi)$ is also infinitely differentiable, which implies (i).

(ii) Similarly to the proof of Theorem~\ref{thm.b1}, after the  exponential change of  variables $\lambda = e^{-\eta}$, formula \eqref{eq:positive} rewrites as 
\begin{align*}
p(\xi) = \int_{-\infty}^\infty e^{-e^{\xi-\eta}} e^{\xi-\eta}\, d{\mu}(\eta). 
\end{align*}
Periodicity of $p$ implies that 
$$
\int_{-\infty}^\infty e^{-e^{\xi-\eta}} e^{\xi-\eta}\, d{\mu}(\eta)
=
\int_{-\infty}^\infty e^{-e^{\xi-\eta}} e^{\xi-\eta}\, d{\mu}(\eta+T)
$$
and by the uniqueness of $\sigma$ in \eqref{eq:positive} we find that $d{\mu}$ is $T$-periodic. 

(iii) is a direct calculation using \eqref{eq:gamma}.
\end{proof}

Clearly, if $p$ is a trigonometric polynomial (or, more generally, if $\sum_\ell\abs{\wt{\mathfrak s}_\ell}<\infty$), then the measure $d{\mu}$ is absolutely continuous and can be written as $d{\mu}(\xi)=\mathfrak s(\xi)d\xi$ with a non-negative $T$-periodic weight function $\mathfrak s$; this function was used in the proof of Theorem~\ref{thm.b1}.

\subsection{Positive periodic Hankel operator: example}
\label{sec.d9}
Fix $\alpha\in[0,T)$ and let $d\mu$ be the $T$-periodic  purely atomic measure giving weight one to each of the points $\alpha+Tn$, $n\in\bbZ$:
$$
d\mu(\xi)=\sum_{n=-\infty}^\infty \delta(\xi-\alpha-Tn), 
$$
where $\delta$ is the delta-function.
Then $\wt{\mathfrak s}_n=(1/T)e^{i\alpha\omega n}$ and 
\[
[H(k)]_{n,m} = \frac{1}{T}\,
\Gamma(\tfrac12-i\omega n-ik)e^{i\alpha\omega(n-m)}\Gamma(\tfrac12+i\omega m+ik),
\]
which is a rank one operator. It is easy to see that in this case the 
corresponding Hankel operator  has a single spectral band and an infinite-dimensional kernel. 
This construction can be easily extended to the case of $d\mu$ being a finite linear combination of point masses on a period. This gives rise to \emph{finite band operators}. We plan to discuss this interesting class of operators elsewhere.

\section{The analytic extension of $H(k)$}
\label{sec.c}

\subsection{Main statement}
Let $\mathbf H$ be a smooth periodic Hankel operator, i.e. the series \eqref{eq:pdec}
for the Fourier coefficients $\wt p_n$ converges. We extend the definition \eqref{b1} of $H(k)$ to the complex domain, i.e. for complex $s$ we set
\begin{align}\label{eq:bentr}
[H(s)]_{n,m} = 
B\big(\tfrac12-i\omega n-is, \tfrac12+i\omega m+is\big)\, \wt p_{n-m}, \quad n,m\in\bbZ.
\end{align}
Since $B(a,b)$ is well defined for  all $a,b\not=0,-1,-2,\dots$, this definition makes sense for all $s\in\bbC\setminus {\sf\Lambda}$, where ${\sf\Lambda}$ is the 
lattice defined in \eqref{eq:perl}.
In this technical section, our objective is to show that under appropriate assumptions, 
the operator $H(s)$ is analytic in $s\in\bbC\setminus{\sf\Lambda}$ with values in trace class.  

We need two pieces of notation. 
First, for $\tau>0$ and $\delta\geq0$ we introduce the ``punctured strip''
\begin{align}
K_{\tau, \delta} = \{s\in\bbC: |\Im s| < \tau, \ 
\dist\,(s, {\sf \Lambda})> \delta\}.
\label{eq:strip}
\end{align} 
Next,  for $\tau\in\bbR$ let us define the weight function $\rho_\tau$ on $\bbZ$ by  
\begin{align}\label{eq:weight}
\rho_\tau(n)=
\begin{cases}
\jap{n}^{\frac12}, &\text{ if $\abs{\tau}<1$,}
\\
\jap{n}^{\frac12}\log(2+n^2), &\text{ if $\abs{\tau}=1$,}
\\
\jap{n}^{\abs{\tau}-\frac12}, &\text{ if $\abs{\tau}>1$.}
\end{cases}
\end{align}
The main result of this section is
\begin{theorem}\label{thm:btrace}
Suppose that for some $\tau_0>0$ the series 
$$
\sum_{\ell=-\infty}^\infty\abs{\widetilde p_\ell}\rho_{\tau_0}(\ell)
$$
converges. Then the operator $H(s)$, defined by \eqref{eq:bentr}, is a trace class valued analytic function of $s\in K_{\tau_0,0}$. 
Moreover, in every punctured strip $K_{\tau_0, \delta}$ with $\delta>0$ we have the estimate 
\begin{align}\label{eq:nontrunc}
\norm{H(s)}_{\mathbf S_1}\leq C 
\sum_{\ell=-\infty}^\infty\abs{\widetilde p_\ell}\rho_{\tau_0}(\ell),
\end{align}
with a constant $C$ depending only on $\tau_0$ and $\delta$. 
\end{theorem}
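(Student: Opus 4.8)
The plan is to exploit the factorization of $H(s)$ into diagonals and to reduce the trace-norm estimate to a scalar sum governed by the asymptotics of the Gamma function. Writing the entries as in \eqref{eq:bs}, namely $[H(s)]_{n,m} = \Gamma(\tfrac12 - i\omega n - is)\,\wt{\mathfrak s}_{n-m}\,\Gamma(\tfrac12 + i\omega m + is)$ with $\wt{\mathfrak s}_\ell = \wt p_\ell/\Gamma(1 - i\omega\ell)$, I would split $H(s) = \sum_{\ell\in\bbZ} H_\ell(s)$, where $H_\ell(s)$ is the restriction of $H(s)$ to its $\ell$-th diagonal. Each $H_\ell(s)$ factors as $\calM[c^{(\ell)}(s)]\,S^\ell$ with $c^{(\ell)}_n(s) = \Gamma(\tfrac12 - i\omega n - is)\,\wt{\mathfrak s}_\ell\,\Gamma(\tfrac12 + i\omega(n-\ell) + is)$; since $S^\ell$ is unitary and the trace norm of a diagonal operator equals the $\ell^1$-norm of its diagonal (the sum converging by the exponential decay of the Gamma factors),
\[
\norm{H_\ell(s)}_{\mathbf S_1} = \abs{\wt{\mathfrak s}_\ell}\sum_{n\in\bbZ}\bigl|\Gamma(\tfrac12 - i\omega n - is)\bigr|\,\bigl|\Gamma(\tfrac12 + i\omega(n-\ell) + is)\bigr|.
\]
The triangle inequality $\norm{H(s)}_{\mathbf S_1}\le\sum_\ell\norm{H_\ell(s)}_{\mathbf S_1}$ then reduces everything to bounding these scalar quantities.

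Next I would insert the asymptotics of the Gamma function. With $s=\sigma+i\eta$, $\abs{\eta}<\tau_0$, the two Gamma arguments have real parts $\tfrac12+\eta$ and $\tfrac12-\eta$ and imaginary parts $-(\omega n+\sigma)$ and $\omega(n-\ell)+\sigma$. Using the standard bound $\abs{\Gamma(a+ib)}\le C(1+\abs{b})^{a-1/2}e^{-\pi\abs{b}/2}$, valid uniformly for $a$ in a compact interval bounded away from $\{0,-1,-2,\dots\}$, each factor contributes a power of, and exponential decay in, its imaginary part. Here the hypothesis $\dist(s,{\sf\Lambda})>\delta$ is exactly what keeps the finitely many factors with small imaginary part away from the poles of $\Gamma$ and produces the constant $C=C(\tau_0,\delta)$. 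The key point is a cancellation of exponentials: the growing factor $\abs{\wt{\mathfrak s}_\ell}\lesssim\abs{\wt p_\ell}\jap{\ell}^{-1/2}e^{\pi\omega\abs{\ell}/2}$ coming from $\abs{\Gamma(1-i\omega\ell)}^{-1}$ is exactly matched by $e^{-\pi\omega\abs{\ell}/2}$, which factors out of the sum because $\abs{\omega n+\sigma}+\abs{\omega(n-\ell)+\sigma}=\omega\abs{\ell}+2\dist(\omega n+\sigma,[0,\omega\ell])$ (where $[0,\omega\ell]$ is the segment with endpoints $0$ and $\omega\ell$). After this cancellation one is left with the polynomial sum
\[
\norm{H_\ell(s)}_{\mathbf S_1}\lesssim\frac{\abs{\wt p_\ell}}{\jap{\ell}^{1/2}}\,\Sigma_\ell,\qquad
\Sigma_\ell=\sum_{n}(1+\abs{\omega n+\sigma})^{\eta}(1+\abs{\omega(n-\ell)+\sigma})^{-\eta}e^{-\pi\dist(\omega n+\sigma,[0,\omega\ell])}.
\]

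The heart of the argument is the estimate of $\Sigma_\ell$ as a function of $\ell$ and $\eta$. The exponential localises the summation index to within $O(1)$ of the segment between $0$ and $\omega\ell$, on which the power factors behave like a discretised integral $\int_0^{\omega\ell}(1+x)^{\eta}(1+\omega\ell-x)^{-\eta}\,dx$. Evaluating this yields three regimes, $\Sigma_\ell\lesssim\jap{\ell}$ for $\abs{\eta}<1$, $\Sigma_\ell\lesssim\jap{\ell}\log\jap{\ell}$ for $\abs{\eta}=1$, and $\Sigma_\ell\lesssim\jap{\ell}^{\abs{\eta}}$ for $\abs{\eta}>1$; dividing by $\jap{\ell}^{1/2}$ reproduces exactly the three cases of the weight $\rho_{\tau_0}$ in \eqref{eq:weight}, using $\abs{\eta}<\tau_0$ and the monotonicity of the exponents in $\abs{\tau}$. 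Summing over $\ell$ against the convergent series $\sum_\ell\abs{\wt p_\ell}\rho_{\tau_0}(\ell)$ then gives \eqref{eq:nontrunc}. I expect the borderline case $\abs{\eta}=1$, which is responsible for the logarithmic correction in the definition of $\rho_\tau$, to be the most delicate point, and the regime analysis of $\Sigma_\ell$ is where the genuine work lies.

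Finally, for analyticity: each entry $[H(s)]_{n,m}$ is manifestly analytic on $\bbC\setminus{\sf\Lambda}$, so every truncation $\sum_{\abs{\ell}\le N}H_\ell(s)$ is a trace-class valued analytic function there. Any compact subset of $K_{\tau_0,0}$ lies in some $K_{\tau_0,\delta}$ with $\delta>0$, and on it the uniform estimate \eqref{eq:nontrunc} shows that the tails $\sum_{\abs{\ell}>N}H_\ell(s)$ are small in trace norm uniformly in $s$. Hence the truncations converge to $H(s)$ locally uniformly in $\mathbf S_1$, and a locally uniform limit of $\mathbf S_1$-valued analytic functions is analytic; this proves that $H(s)$ is trace-class valued analytic on $K_{\tau_0,0}$. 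As a by-product, specialising to real $s=k$ and $\tau_0<1$ recovers the bound of Lemma~\ref{lem:bpoly}.
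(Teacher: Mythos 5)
Your proposal is correct and follows essentially the same route as the paper's proof: the same decomposition of $H(s)$ into shifted diagonals $\calM[c^{(\ell)}(s)]\,S^\ell$ with trace norm equal to the $\ell^1$-norm of the diagonal, the same Stirling-type two-sided bounds on the Gamma factors (with $\dist(s,{\sf\Lambda})>\delta$ controlling the finitely many factors near the poles), the same three-regime estimate of the sum along the segment between $0$ and $\omega\ell$ producing exactly the weight $\rho_\tau$ of \eqref{eq:weight}, the triangle inequality over $\ell$, and the same truncation-plus-locally-uniform-$\mathbf S_1$-convergence argument for analyticity on $K_{\tau_0,0}$. The only cosmetic differences are that you package the exponential cancellation via the identity $\abs{\omega n+\sigma}+\abs{\omega(n-\ell)+\sigma}=\omega\abs{\ell}+2\dist(\omega n+\sigma,[0,\omega\ell])$ where the paper splits into the three cases $\omega n+k<0$, $0\leq\omega n+k\leq\omega\ell$, $\omega n+k>\omega\ell$, and that you handle $\Im s<0$ by the symmetry of the integrand in $\eta\mapsto-\eta$ rather than by the paper's adjoint relation $H(k+i\tau)^*=H(k-i\tau)$.
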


In the rest of this section, we give the proof.

\subsection{Auxiliary estimates for the Gamma function}
We need to prepare some simple estimates for the Gamma function. For these estimates we use Stirling's formula 
\begin{align}\label{eq:gamas}
\Gamma(z) = \sqrt{2\pi} z^{z-1/2} e^{-z} \big[ 1+ O(1/z)\big],\quad |\arg z|\le \pi-\varepsilon,
\quad |z|\to\infty,
\end{align}
where the constant in $O(1/z)$ depends only on $\varepsilon>0$.

\begin{lemma} \label{lem:gamma}
Assume that $a, b\in \bbR$ are such that $|a-1/2|\le A$ and $|a+ib+n|\ge \delta>0$ 
for all $n = 0, 1, \dots$. Then 
\begin{align}\label{eq:gm}
c \,\jap{b}^{a-1/2} e^{-\pi|b|/2}\le |\Gamma(a + ib)| \le C\, \jap{b}^{a-1/2} e^{-\pi|b|/2},
\end{align}
where the constants $c$ and $C$ depend only on $A$ and $\delta$. 
\end{lemma}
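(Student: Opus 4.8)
The plan is to split the analysis according to the size of $\abs{b}$, handling large $\abs{b}$ by Stirling's formula \eqref{eq:gamas} and bounded $\abs{b}$ by a compactness argument. First I would fix a threshold $B=B(A)>0$, to be taken large. For $\abs{b}\ge B$ I would apply Stirling's formula to $z=a+ib$. Since $\abs{a-1/2}\le A$ keeps $a$ bounded, once $\abs{b}$ is large the argument of $z$ is close to $\pm\pi/2$, so $z$ stays in a sector $\abs{\arg z}\le\pi-\eps$ and \eqref{eq:gamas} applies with an error uniform in $a$. Taking real parts of $\log\Gamma(z)=(z-\tfrac12)\log z-z+\tfrac12\log(2\pi)+O(1/z)$ gives
\[
\log\abs{\Gamma(a+ib)}=(a-\tfrac12)\log\abs{z}-b\arg z-a+\tfrac12\log(2\pi)+O(1/\abs{b}).
\]

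Using the expansion $\arg z=\operatorname{sgn}(b)\,\tfrac{\pi}{2}-\tfrac{a}{b}+O(1/\abs{b}^2)$, which gives $-b\arg z=-\pi\abs{b}/2+a+O(1/\abs{b})$, the term $+a$ cancels the $-a$ above; and $\log\abs{z}=\log\jap{b}+O(1)$, so that $(a-\tfrac12)\log\abs{z}=(a-\tfrac12)\log\jap{b}+O(1)$, all uniformly for $\abs{a-1/2}\le A$. The right-hand side therefore reduces to $(a-\tfrac12)\log\jap{b}-\pi\abs{b}/2+O(1)$ with an $O(1)$ bounded solely in terms of $A$. Exponentiating yields the two-sided bound \eqref{eq:gm} for $\abs{b}\ge B$ with constants depending only on $A$. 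Note that the hypothesis $\abs{a+ib+n}\ge\delta$ plays no role in this regime, since $z$ is automatically far from all poles of $\Gamma$ when $\abs{b}$ is large.

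For $\abs{b}\le B$ the point $z=a+ib$ ranges over the compact rectangle $R=\{\,\abs{a-1/2}\le A,\ \abs{b}\le B\,\}$. The hypothesis $\abs{a+ib+n}\ge\delta$ for all $n\ge0$ says precisely that $z$ stays at distance at least $\delta$ from every pole $0,-1,-2,\dots$ of $\Gamma$, so $z$ lies in the compact set $R'=R\setminus\bigcup_{n\ge0}\{\,\abs{w+n}<\delta\,\}$. On $R'$ the function $\Gamma$ is holomorphic and, having no zeros, satisfies $0<\min_{R'}\abs{\Gamma}\le\max_{R'}\abs{\Gamma}<\infty$, with these extrema depending only on $A,B,\delta$, and hence only on $A,\delta$ since $B=B(A)$. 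As $\jap{b}^{a-1/2}$ and $e^{-\pi\abs{b}/2}$ are likewise bounded above and below by positive constants on $R'$, the bound \eqref{eq:gm} follows in this regime as well, completing the proof.

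The only delicate point is the asymptotic bookkeeping in the large-$\abs{b}$ regime: one must verify that the leading terms $-\pi\abs{b}/2$ and $(a-\tfrac12)\log\jap{b}$ emerge correctly for both signs of $b$ and that every remainder is uniform in $a$ over the window $\abs{a-1/2}\le A$ (the cancellation of $\pm a$ and the bound $\log\abs{z}-\log\jap{b}=O(1)$ are the places to be careful). Everything in the bounded regime is routine continuity on a compact set. I note that the special value $a=1/2$ recovers the exact identity \eqref{eq:gamco}, which is a useful consistency check on the computation.
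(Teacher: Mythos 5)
Your proof is correct and follows essentially the same route as the paper: Stirling's formula \eqref{eq:gamas} for large $\abs{b}$ (where the hypothesis $\dist(z,-\bbN_0)\ge\delta$ is indeed vacuous) and continuity of the non-vanishing $\Gamma$ on a compact pole-free set for bounded $\abs{b}$. The only cosmetic differences are that the paper reduces to $b\ge0$ via $\Gamma(a+ib)=\overline{\Gamma(a-ib)}$ and extracts the factor $(ib)^{a+ib-1/2}$ multiplicatively, whereas you take real parts of $\log\Gamma$ directly and verify both signs of $b$ -- your cancellation of $\pm a$ and the bound $\log\abs{z}=\log\jap{b}+O(1)$ are carried out correctly and uniformly in $\abs{a-1/2}\le A$.
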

\begin{proof} Since $\Gamma(a + ib) = \overline{\Gamma(a - ib)}$ it suffices to 
consider the case $b \ge 0$. Furthermore, since $\Gamma(z)$ is continuous (and hence bounded) 
on the compact set
\begin{align*}
\{z = a+ib: |a-1/2|\le A,\, |b|\le 1,\, |a+ib+n|\ge \delta, \ n = 0, 1, \dots\},
\end{align*}
we may assume that $b\ge 1$. 
According to Stirling's formula \eqref{eq:gamas}, 
\begin{align*}
\Gamma(a + ib)
= &\ \sqrt{2\pi}\, \mathcal A(a, b) (ib)^{a+ib-1/2}\big(1+O(b^{-1})\big),\\
\mathcal A(a, b) = &\ \bigg[1+\frac{a}{ib}\bigg]^{a+ib-1/2} e^{-a-ib},
\end{align*}
where the constant in $O(b^{-1})$ may depend on $a$. 
We estimate $\mathcal A(a, b)$:
\begin{align}\label{eq:atwo}
c(a) \bigg|\bigg[1+\frac{a}{ib}\bigg]^{ib}\bigg|\le  |\mathcal A(a, b)|\le C(a)
 \bigg|\bigg[1+\frac{a}{ib}\bigg]^{ib}\bigg|,
\end{align}
with some positive constants $c(a), C(a)$.  
As 
\begin{align*}
\bigg|\bigg[1+\frac{a}{ib}\bigg]^{ib}\bigg|
= \exp\big(- b \arg (1-i a/b)\big)
\end{align*}
and 
$|\arg  (1-i a/b)|\le |a b^{-1}|$, the left-hand side and the right-hand side of 
\eqref{eq:atwo} are bounded and separated from zero, i.e. 
the function $\mathcal A(a, b)$ satisfies the bounds  
\begin{align*}
\wt c(a) \le |\mathcal A(a, b)| \le \wt C(a), 
\end{align*}
with some new positive constants $\wt c(a)$ and $\wt C(a)$. Furthermore,
\begin{align*}
(ib)^{ib} = \exp(ib\log (ib)) = \exp(ib \log b ) \exp(-\pi b/2), 
\end{align*}
so that 
\begin{align*}
|(ib)^{a+ib-1/2}| = b^{a-1/2} \exp(-\pi b/2).
\end{align*}
This leads to the required result. 
\end{proof}
 
\subsection{Proof of Theorem~\ref{thm:btrace}} 
We start by assuming that $p$ is a trigonometric polynomial. At the last step of the proof, we will lift this assumption. 

\emph{\underline{Step 1}: writing $H(s)$ as a sum of multiplication operators.}
We write the definition \eqref{eq:bentr} of $H(s)$ as
\begin{align*}
[H(s)]_{n,m} = &\sum_{\ell=-\infty}^\infty 
\widetilde p_\ell\,
B\big(\tfrac12-i(s+ \omega n),\,\tfrac12+i(s+ \omega m)\big)\,\delta_{n-m-\ell}
\\
= &\sum_{\ell=-\infty}^\infty 
\widetilde p_\ell\, \,{b}^{(\ell)}_n(s)\, \delta_{n-m-\ell},
\end{align*} 
where $\delta_n$ is the Kronecker symbol and 
\begin{align*}
{b}^{(\ell)}_n(s) = &\ 
B\big(\tfrac12-i(s+ \omega n),\, \tfrac12+i( s+ \omega(n-\ell))\big)\\[0.2cm]
= &\ \frac{\Gamma(\tfrac12-i(s+ \omega n))
\Gamma(\tfrac12+i( s+ \omega(n-\ell)))}{\Gamma(1-i\omega \ell)}. 
\end{align*}
Consequently, the operator $H(s)$ can be represented as the finite sum
\begin{align}\label{eq:mb}
H(s) = \sum_{\ell}\,\widetilde p_\ell
\, 
\calM[{b}^{(\ell)}(s)]\,S^\ell,
\end{align} 
where $\calM[{b}^{(\ell)}(s)]$ is the operator of multiplication by the sequence ${b}^{(\ell)}(s)$ and $S$ is the (unitary) shift operator, see \eqref{eq:defJS}. 

For every fixed $\ell$, the sequence ${b}^{(\ell)}_n(s)$ goes to zero exponentially fast as $n\to\infty$, see \eqref{eq:gm}. It follows that the diagonal operator $\calM[{b}^{(\ell)}(s)]$ belongs to the trace class and its trace norm can be found as follows:
\begin{equation}
\norm{\calM[{b}^{(\ell)}(s)]}_{\mathbf S_1}
=
\sum_{n=-\infty}^\infty \abs{{b}^{(\ell)}_n(s)}\,  .
\label{eq:sp}
\end{equation}
Furthermore, each ${b}^{(\ell)}_n(s)$ is a holomorphic function of $s\in\bbC\setminus{\sf\Lambda}$ and the series \eqref{eq:sp} converges locally uniformly in $s\in\bbC\setminus{\sf\Lambda}$, and therefore $\calM[{b}^{(\ell)}(s)]$ is a holomorphic trace class valued function of $s\in\bbC\setminus{\sf\Lambda}$. 

It remains to prove the estimate \eqref{eq:nontrunc}. In order to do this, below we carefully estimate the trace norm in \eqref{eq:sp} and use the triangle inequality in \eqref{eq:mb}. 

In the bounds below the positive 
constant $C$ may vary from line to line.

\emph{\underline{Step 2:} estimating the trace norm of $\mathcal M({b}^{(\ell)}(s))$.}
We write $s = k+i\tau$, with real $k$ and $\tau$, $\abs{\tau}<\tau_0$.
By the easily verifiable identity
$$
H(k+i\tau)^* = H(k-i\tau)
$$
it suffices to consider the case $0\leq\tau<\tau_0$; we assume this throughout the proof.

In what follows we write ${b}^{(\ell)}_n$ in place of ${b}^{(\ell)}_n(s)$ for readability.
Observe that if $s\in K_{\tau_0, \delta}$, then for any $n\in\bbZ$, 
the complex number $\tfrac12\pm i(s + \omega n) =a+ib$ satisfies the hypothesis of Lemma~\ref{lem:gamma}: 
$$
\abs{a-\tfrac12}=\abs{\Im s}=\tau\leq \tau_0, 
\quad
\dist(a+ib,\bbZ)=\dist(s \mp \tfrac{i}2+\omega n, i\bbZ)\geq \dist(s,{\sf\Lambda})>\delta.
$$
Thus according to \eqref{eq:gm}, we have  
\begin{align*}
\abs{{b}^{(\ell)}_n}
= &\ \frac{\abs{\Gamma(\tfrac12+\tau - i(k + \omega n))
\Gamma(\tfrac12 - \tau  + i(k + \omega(n-\ell)))}}{|\Gamma(1-i\omega \ell)|}
\\
\le &\ C \jap{\ell}^{-1/2} e^{\pi\omega |\ell|/2}
\jap{\omega n + k }^{\tau}e^{-\pi \abs{\omega n+k}/2}
\jap{\omega (n-\ell) + k}^{-\tau}e^{-\pi\abs{\omega(n-\ell)+k}/2}. 
\end{align*}
Let us assume that $\ell \ge 0$ (the case $\ell\leq0$ can be considered in the same way). We will rewrite the above bound for ${b}^{(\ell)}_n$ by considering separately the three cases $\omega n+k<0$, $0\leq \omega n+k\leq \omega \ell$ and $\omega n+k>\omega \ell$. 

Using the estimate $\jap{a+b}\leq2\jap{a}\jap{b}$, in the first case we find
$$
\jap{\omega n+k}^{\tau}\jap{\omega n+k-\omega\ell}^{-\tau}
=
\jap{\omega n+k-\omega\ell+\omega\ell}^{\tau}\jap{\omega n+k-\omega\ell}^{-\tau}
\leq
C\jap{\omega\ell}^\tau\leq C\jap{\ell}^\tau,
$$
and therefore  
$$
\abs{{b}^{(\ell)}_n}
\leq  
C \jap{\ell}^{\tau-1/2} 
e^{-\pi\abs{\omega n+k}},\quad\text{ if }\quad \omega n + k <0.
$$
The sum in the right-hand side of \eqref{eq:sp} over this set of indices can be estimated by
\begin{equation}
\sum_{\omega n+k<0}\abs{{b}_n^{(\ell)}}
\leq
C \jap{\ell}^{\tau-1/2} 
\sum_{\omega n+k<0}e^{-\pi\abs{\omega n+k}}
\leq 
C \jap{\ell}^{\tau-1/2}.
\label{c1}
\end{equation}
Similarly, in the third case we find 
$$
\abs{{b}^{(\ell)}_n}
\leq 
C \jap{\ell}^{\tau-1/2}
e^{-\pi\abs{\omega n+k-\omega\ell}},\quad\text{ if }\quad
\omega n + k > \omega\ell,
$$
and therefore
\begin{equation}
\sum_{\omega n+k>\omega\ell}\abs{{b}_n^{(\ell)}}
\leq 
C \jap{\ell}^{\tau-1/2} 
\sum_{\omega n+k>\omega\ell}e^{-\pi\abs{\omega n+k-\omega\ell}}
\leq
C \jap{\ell}^{\tau-1/2}.
\label{c2}
\end{equation}
Now consider the second (intermediate) case $0\leq \omega n+k\leq \omega \ell$. 
Here,  using the monotonicity of $\jap{\cdot}$ we find
$$
\jap{\omega n+k}^{\tau}\jap{\omega n+k-\omega\ell}^{-\tau}
\leq
\jap{\omega\ell}^\tau\jap{\omega n+k-\omega\ell}^{-\tau}
\leq
C\jap{\ell}^\tau\jap{\omega n+k-\omega\ell}^{-\tau}
$$
and therefore
$$
\abs{{b}^{(\ell)}_n}
\leq
C\jap{\ell}^{\tau-1/2}\jap{\omega n+k-\omega\ell}^{-\tau},
\quad\text{ if }\quad
0\leq \omega n+k\leq \omega \ell.
$$
It follows that 
$$
\sum_{0\leq \omega n+k\leq \omega \ell}\abs{{b}^{(\ell)}_n}
\leq
C\jap{\ell}^{\tau-1/2} 
\Sigma,
\quad
\Sigma=\sum_{0\leq \omega n+k\leq \omega \ell}
\jap{\omega n+k-\omega\ell}^{-\tau}. 
$$
The sum $\Sigma$ in the right hand side can be estimated as follows:
$$
\Sigma\, 
\le 
\jap{\omega\ell}^{-\tau}+
\int_0^{\ell}\, \langle \omega\ell -\omega x\rangle^{-\tau}\, dx 
\leq
\begin{cases}
C\jap{\ell}^{1-\tau}, & 0\leq\tau<1,
\\
C\log(2+\ell^2), & \tau=1,
\\
C, & \tau>1.
\end{cases}
$$
Recalling our definition 
\eqref{eq:weight} 
of the weight function $\rho_\tau$, we can write this as
$$
\jap{\ell}^{\tau-1/2}\Sigma\leq C \rho_\tau(\ell), 
$$
and therefore 
$$
\sum_{0\leq \omega n+k\leq \omega \ell}\abs{{b}^{(\ell)}_n}
\leq
C\rho_\tau(\ell).
$$
Coming back to the sums \eqref{c1} and \eqref{c2}, we see that they can be estimated by the same quantity, 
$$
\sum_{\omega n+k<0}\abs{{b}_n^{(\ell)}}
\leq 
C \jap{\ell}^{\tau-1/2}
\leq
C \rho_\tau(\ell), 
\quad
\sum_{\omega n+k>\omega\ell}\abs{{b}_n^{(\ell)}}
\leq
C \jap{\ell}^{\tau-1/2}
\leq
C \rho_\tau(\ell).
$$
Putting these bounds together, we obtain 
$$
\norm{\mathcal M({b}^{(\ell)}(s))}_{\mathbf S_1} 
=
\sum_{n=-\infty}^\infty \abs{{b}^{(\ell)}_n}
\leq
C \rho_\tau(\ell), \quad \ell \ge 0.
$$
The case $\ell<0$ is considered similarly, which finally yields
$$
\norm{\mathcal M({b}^{(\ell)}(s))}_{\mathbf S_1} 
\leq
C \rho_\tau(\ell),\quad \textup{for all} \quad \ell\in\bbZ.
$$

\emph{\underline{Step 3:} the triangle inequality.}
Using the triangle inequality to estimate the trace norm in \eqref{eq:mb}, we obtain
\begin{align*}
\norm{H(s)}_{\mathbf S_1}
\leq \sum_{\ell} \abs{\widetilde p_\ell} 
\, \|\mathcal M({b}^{(\ell)})\|_{\mathbf S_1}
\leq C \sum_{\ell}\abs{\widetilde p_\ell}\, \rho_\tau(\ell)
\leq C\sum_{\ell}\abs{\widetilde p_\ell}\, \rho_{\tau_0}(\ell),
\end{align*}
where the constant $C$ in the right hand side depends only on $\tau_0$ and $\delta>0$. 
From here we obtain \eqref{eq:nontrunc}. 

\emph{\underline{Step 4:} extension to general $p$.}
Finally, we extend the statement from the case of trigonometric polynomials $p$ to general $p$. 
Let $p$ be as in the hypothesis of the theorem, and let $p^{(N)}$, $N=1, 2, \dots$ be the approximating sequence of polynomials defined by 
\begin{equation}
p^{(N)}(\xi)=\sum_{\ell=-N}^N \wt p_\ell e^{i\ell \omega\xi}, \quad \xi\in\bbR. 
\label{b7}
\end{equation}
Let $H^{(N)}(s)$, $s\notin{\sf \Lambda}$, be the associated operator with entries \eqref{eq:bentr} if $|n-m|\le N$ and zero otherwise. By the already proven estimate \eqref{eq:nontrunc}, we see that the operator $H^{(N)}(s)$ is holomorphic in $s\in\bbC\setminus{\sf\Lambda}$, converges in trace norm as $N\to\infty$, and the convergence is uniform in every punctured strip $K_{\tau_0,\delta}$ with $\delta>0$. It is clear that the limit coincides with $H(s)$. Thus, $H(s)$ is also trace class valued, holomorphic in $K_{\tau_0,0}$ and the estimate \eqref{eq:nontrunc} holds true. 
The proof of Theorem~\ref{thm:btrace} is complete. 
\qed

\begin{remark}\label{rem:bpoly}
Since $\bbR\subset K_{\tau_0, \delta}$  for all $\tau_0 >0 $ and $\delta <1/2$, Theorem~\ref{thm:btrace} immediately implies Lemma~\ref{lem:bpoly}. 
\end{remark}

\section{The secular determinant $\Delta(k;\lambda)$}\label{sec.cc}

\subsection{Main statement}
Let $\mathbf H$ be a smooth periodic Hankel operator. 
Our main tool in the study of the band functions is the secular determinant 
\[
\Delta(k;\lambda):=\det(I-\lambda^{-1} H(k)),
\]
where $k\in\Omega$ and $\lambda\in\bbR$, $\lambda\not=0$.
By  Lemma~\ref{lem:bpoly}, $H(k)$ is trace class and so the determinant is well defined. 
It is clear that the zeros of the secular determinant determine all band functions.  

We will extend the secular determinant to the complex domain. By Theorem~\ref{thm:btrace}, the smoothness condition \eqref{eq:pdec} ensures that the operator $H(s)$, as defined by \eqref{eq:bentr}, is a trace class valued holomorphic function in the punctured strip $K_{1,0}$ (see \eqref{eq:strip}) of width 2, and so we can define
$$
\Delta(s;\lambda):=\det(I-\lambda^{-1}H(s)), \quad  s\in\bbC\setminus{\sf\Lambda},\quad \abs{\Im s}<1.
$$
Extending the secular determinant to this strip is not sufficient to our purposes; we wish to extend it to a meromorphic function in the whole complex plane as follows. 
Let $H^{(N)}(s)$ be the operator corresponding to the  polynomial $p^{(N)}$, see \eqref{b7}. Obviously, the polynomial $p^{(N)}$ satisfies the hypothesis of Theorem~\ref{thm:btrace} with any $\tau_0>0$, and therefore $H^{(N)}(s)$ extends to all $s\in\bbC\setminus{\sf\Lambda}$ as a trace class valued holomorphic function of $s$. Thus the determinant 
\[
\Delta^{(N)}(s; \lambda):= 
\det \big(I - \lambda^{-1} H^{(N)}(s)\big)
\]
is well defined for all $s\in\bbC\setminus{\sf\Lambda}$ as a holomorphic function of $s$. 
For the ``full" operator $H(s)$ we define the secular determinant 
as the limit 
\begin{align}\label{eq:deltagen}
\Delta(s; \lambda) := \lim_{N\to\infty} \Delta^{(N)}(s; \lambda),
\end{align}
if it exists. The main result of this section is:
\begin{theorem}
\label{thm:elliptic}
Let $\mathbf H$ be a smooth periodic Hankel operator and let $\lambda\in\bbC$, $\lambda\not=0$. 
Then:
\begin{enumerate}[\rm (i)]
\item\label{item:one}
The limit \eqref{eq:deltagen} exists for all $s\in\bbC\setminus{\sf \Lambda}$ uniformly in $s$ with $\dist(s,{\sf\Lambda})>\delta$. Furthermore, if $|\Im s|<1$, then the limit 
\eqref{eq:deltagen} coincides with $\det(I-\lambda^{-1}H(s))$.
\item 
The function $s\mapsto\Delta(s; \lambda)$ is even in $s$, doubly-periodic with the periods $2i$ and $\omega$ and satisfies the identities 
\begin{align}
\overline{\Delta(s;\lambda)}&=\Delta(\overline{s}; \overline{\lambda}),
\label{ee1}
\\
\Delta(s+i;\lambda)&= \Delta(s;-\lambda).
\label{ee21}
\end{align}
Furthermore, $\Delta(s; \lambda)$ is either constant for all $s\in\bbC$ or 
it is an elliptic function of order two with simple poles at the points of the lattice $\sf \Lambda$, and 
\begin{align}\label{eq:resd}
\Res\limits_{s=i/2}\Delta(s;\lambda)
= -\Res\limits_{s=-i/2}\Delta(s;\lambda).
\end{align}
\item
If $\Delta$ is non-constant, then 
the zeros of the derivative $\partial\Delta(s;\lambda)/\partial s$ are simple 
and the set of zeros is given by the points congruent to 
\[
\bigg\{0,\, \frac{\omega}{2},\, \frac{\omega}{2} + i,\, i\bigg\}
\]
with respect to the period lattice $\sf M$ defined in \eqref{eq:perm}.
\end{enumerate}
\end{theorem}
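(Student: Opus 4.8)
The plan is to reduce every assertion to algebraic identities for the matrix $H(s)$ in \eqref{eq:bentr} and its truncations $H^{(N)}(s)$, and then transfer these to the Fredholm determinant. Throughout I will use that, by Theorem~\ref{thm:btrace}, in the central strip $|\Im s|<1$ the operators $H^{(N)}(s)$ converge to $H(s)$ in trace norm, uniformly on $\{\dist(s,{\sf\Lambda})>\delta\}$; since the Fredholm determinant is trace-norm continuous, this already yields existence of the limit \eqref{eq:deltagen} and its coincidence with $\det(I-\lambda^{-1}H(s))$ on that strip. The four structural identities I will establish at the level of the operators $H(s)$ and $H^{(N)}(s)$ are $H(-s)=JH(s)^{\mathsf T}J$, $H(s+\omega)=S^{-1}H(s)S$, $H(\bar s)=H(s)^*$, and a ``shift by $i$'' relation below; here $J,S$ are the unitaries from \eqref{eq:defJS}. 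Each is a short computation with the Beta function and the reflection $\overline{\wt p_\ell}=\wt p_{-\ell}$.

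The three easy identities follow immediately. Because the transpose, the adjoint and conjugation by a unitary all preserve the Fredholm determinant (with complex conjugation in the adjoint case), $H(-s)=JH(s)^{\mathsf T}J$ gives evenness $\Delta(-s;\lambda)=\Delta(s;\lambda)$, $H(s+\omega)=S^{-1}H(s)S$ gives $\omega$-periodicity, and $H(\bar s)=H(s)^*$ gives \eqref{ee1}. The substantive identity is \eqref{ee21}. Using $B(a+1,b-1)=\frac{a}{b-1}B(a,b)$ with $a=\tfrac12-i\omega n-is$ and $b=\tfrac12+i\omega m+is$, I obtain the entrywise relation
\begin{equation*}
[H(s+i)]_{n,m}=-\frac{d_n(s)}{d_m(s)}\,[H(s)]_{n,m},\qquad d_n(s)=\tfrac12-i\omega n-is .
\end{equation*}
The conjugating diagonal $\calM[d(s)]$ is unbounded, so I will not invoke similarity-invariance of the determinant; instead I will work with the Fredholm minor expansion. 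Each $k\times k$ principal minor of $H(s+i)$, indexed by $\{n_1,\dots,n_k\}$, equals $(-1)^k$ times the corresponding minor of $H(s)$, since the row factors $d_{n_i}$ and column factors $d_{n_j}^{-1}$ cancel over a common index set. Summing the Fredholm series then gives $\det(I-\lambda^{-1}H(s+i))=\det(I+\lambda^{-1}H(s))=\Delta(s;-\lambda)$, which is \eqref{ee21}; the same computation applies verbatim to $H^{(N)}(s)$. Iterating yields the $2i$-periodicity, hence double periodicity with the lattice ${\sf M}$ of \eqref{eq:perm}.

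With \eqref{ee21} in hand I can finish part (i): applying it to the truncations propagates the central-strip convergence to the wider strip $\{-1<\Im s<2\}$ (using the known convergence for both $\pm\lambda$), and then $2i$-periodicity tiles all of $\bbC\setminus{\sf\Lambda}$, so the limit \eqref{eq:deltagen} exists everywhere, uniformly on $\{\dist(s,{\sf\Lambda})>\delta\}$, and defines a function holomorphic off ${\sf\Lambda}$ that is even and doubly periodic. For the pole structure I examine $s=i/2$: among the factors $\Gamma(\tfrac12+i\omega m+is)$ only the column $m=0$ becomes singular, so the residue of $H(s)$ there is rank one, whence $\Delta$ has at most a simple pole; translating by ${\sf M}$ shows the poles lie exactly on ${\sf\Lambda}$ and are simple, with $\{i/2,-i/2\}$ the two representatives in the fundamental cell. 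Thus $\Delta$ is either holomorphic and doubly periodic, hence constant, or elliptic of order two. The residue relation \eqref{eq:resd} follows from evenness: expanding $\Delta(s)=\Delta(-s)$ near $s=-i/2$ sends the principal Laurent coefficient at $i/2$ to its negative.

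Finally, part (iii) is an elliptic-function count. Since $\Delta$ is non-constant with two simple poles, $\Delta'$ is ${\sf M}$-elliptic of order four (a double pole at each of $i/2,-i/2$) and, by evenness of $\Delta$, odd. At each half-lattice point $s_0\in\{0,\omega/2,\omega/2+i,i\}$, none of which lies on ${\sf\Lambda}$, the evenness of $\Delta$ about $s_0$ forces its Taylor expansion to contain only even powers, so $\Delta'(s_0)=0$ and the zero has odd order. Four zeros of odd order summing to the order four of $\Delta'$ can only be four simple zeros with no others, which is exactly the claim. The step I expect to require most care is the rigorous passage from the entrywise relation to \eqref{ee21}: because the diagonal conjugation is unbounded, I must argue through the Fredholm minor expansion rather than operator similarity, and I must then use \eqref{ee21} itself to secure convergence on the exceptional lines $\Im s\in 1+2\bbZ$ that the central strip and its translates do not reach.
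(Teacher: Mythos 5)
Your proof is correct, and while its overall skeleton matches the paper's --- polynomial truncations, trace-norm convergence from Theorem~\ref{thm:btrace} in the central strip, propagation to all of $\bbC\setminus{\sf\Lambda}$ via \eqref{ee21}, a single-singular-column argument for the simple poles, and the order-two/order-four elliptic counts for parts (ii) and (iii) --- your mechanism for the key identities is genuinely different. The paper first uses cyclicity of the Fredholm determinant to replace $H(s)$ by $\mathfrak S\calM[w(s)]$ with $w_n(s)=\pi/\cosh\pi(s+\omega n)$ (see \eqref{eq:delta1}); then \eqref{ee21} is the one-line identity $w_n(s+i)=-w_n(s)$, and evenness and $\omega$-periodicity likewise reduce to elementary properties of $w$ conjugated by $J$ and $S$. (Cyclicity needs $\mathfrak S$ bounded, which is why the paper, like you, works through trigonometric polynomials.) You instead stay with the Beta-form matrix \eqref{eq:bentr}, derive $[H(s+i)]_{n,m}=-(d_n/d_m)[H(s)]_{n,m}$ from $B(a+1,b-1)=\tfrac{a}{b-1}B(a,b)$, and --- correctly recognizing that the diagonal similarity is unbounded --- cancel the factors inside each principal minor; this is legitimate since for trace-class $T$ the expansion $\det(I+T)=\sum_{k}\sum_{n_1<\cdots<n_k}\det\bigl(T_{n_i,n_j}\bigr)$ converges absolutely, so the entrywise relation transfers termwise. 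Your route buys independence from the cyclicity lemma and the factorization through $\mathfrak S$; the paper's buys shorter verifications once \eqref{eq:delta1} is in place. Two points deserve one extra line each in a write-up: (a) your step ``the residue of $H(s)$ at $i/2$ is rank one, whence $\Delta$ has at most a simple pole'', applied to the full (non-truncated) operator, needs the operator-valued Laurent split $H(s)=R/(s-i/2)+A(s)$ with $A$ trace-norm holomorphic --- justified, e.g., by noting that the coefficients $\tfrac{1}{2\pi i}\oint(z-i/2)^{k-1}H(z)\,dz$ for $k\ge 2$ have vanishing matrix entries and hence vanish, together with the fact that $\det(I+T+zK)$ is affine in $z$ for rank-one $K$; alternatively run the argument on $H^{(N)}$ and pass to the locally uniform limit via $\oint_{\mathcal C}(s-i/2)^n\Delta(s;\lambda)\,ds=0$ for $n\ge1$, which is exactly the paper's device; (b) your propagation of convergence through \eqref{ee21} uses the central-strip result for both $\lambda$ and $-\lambda$, which you do note, and it covers the lines $\Im s\in 1+2\bbZ$ because shifting by $i$ lands inside the open central strip.
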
 
In the rest of this section we prove this theorem. 
We note that \eqref{eq:resd} is a simple consequence of the fact that $\Delta(s;\lambda)$ is even in $s$.

\subsection{The secular determinant for trigonometric polynomials}
We start by establishing the properties of the secular determinant for trigonometric polynomials $p$. 
In this case the definition \eqref{eq:deltagen} rewrites directly as 
$$
\Delta(s;\lambda):=\det(I-\lambda^{-1}H(s)), \quad s\in\bbC\setminus{\sf\Lambda}.
$$
\begin{theorem}\label{thm.ee1}
Let $p$ be a trigonometric polynomial and let $\lambda\in\bbC$, $\lambda\not=0$. Then the conclusions of Theorem~\ref{thm:elliptic}(ii) hold true. 
\end{theorem}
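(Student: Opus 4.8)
The plan is to reduce all of Theorem~\ref{thm:elliptic}(ii) to exact symmetry identities for the matrix $H(s)$, one residue computation, and Liouville's theorem. Since $p$ is a trigonometric polynomial, $H(s)$ is trace-class valued and holomorphic on $\bbC\setminus{\sf\Lambda}$ (Theorem~\ref{thm:btrace}), so $\Delta(s;\lambda)=\det(I-\lambda^{-1}H(s))$ is an honest trace-class determinant, holomorphic on $\bbC\setminus{\sf\Lambda}$. First I would record three matrix identities read off from \eqref{b1} and the factored form \eqref{eq:bk}, with $S,J$ as in \eqref{eq:defJS}: (a) $H(s+\omega)=S^{*}H(s)S$ (shift both indices by one); (b) $JH(s)J=H(-s)^{T}$ (reflect both indices, using the symmetry of $B$); and (c) $\overline{H(s)}=H(\bar s)^{T}$ (entrywise conjugation, using $\overline{\wt p_\ell}=\wt p_{-\ell}$ since $p$ is real, and $\overline{\Gamma(\bar z)}=\Gamma(z)$). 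As $S,J$ are unitary and $\det$ is invariant under transposition and unitary conjugation, (a) gives $\omega$-periodicity, (b) gives evenness $\Delta(-s;\lambda)=\Delta(s;\lambda)$, and (c) gives \eqref{ee1}.

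The one genuinely new identity is \eqref{ee21}. Here I would use the factorisation $H(s)=\Gamma_-(s)\,\mathfrak S\,\Gamma_+(s)$, where $\Gamma_\pm(s)=\calM[\{\Gamma(\tfrac12\mp i(\omega n+s))\}_n]$ and $\mathfrak S$ is convolution by $\wt{\mathfrak s}$ (see \eqref{eq:bk}--\eqref{eq:sfunct}); by \eqref{eq:gamco} both $\Gamma_\pm(s)$ are trace class, which legitimises moving $\Gamma_-(s)$ to the back by cyclicity of the determinant, $\det(I-\lambda^{-1}\Gamma_-\mathfrak S\Gamma_+)=\det(I-\lambda^{-1}\mathfrak S\,G(s))$ with the diagonal $G(s)=\Gamma_+(s)\Gamma_-(s)$. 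The reflection formula $\Gamma(\tfrac12+w)\Gamma(\tfrac12-w)=\pi/\cos\pi w$ (the source of \eqref{eq:gamco}) gives $G(s)=\pi\,\calM[\{\cosh(\pi(\omega n+s))^{-1}\}_n]$, and since $\cosh(x+i\pi)=-\cosh x$ we get $G(s+i)=-G(s)$. Therefore $\Delta(s+i;\lambda)=\det(I+\lambda^{-1}\mathfrak S\,G(s))=\Delta(s;-\lambda)$, which is \eqref{ee21}; iterating it gives $2i$-periodicity, so $\Delta$ is doubly periodic with the lattice ${\sf M}$ of \eqref{eq:perm}. The residue identity \eqref{eq:resd} then follows from evenness alone: if $\Delta(s)\sim r/(s-\tfrac i2)$ near $i/2$, then $\Delta(s)=\Delta(-s)\sim -r/(s+\tfrac i2)$ near $-i/2$.

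It remains to pin down the poles, which I expect to be the main obstacle. In the fundamental cell of ${\sf M}$ symmetric about $\bbR$, the only points of ${\sf\Lambda}$ are $\pm i/2$. At $s_0=i/2$ one checks from \eqref{b1} that the factor $\Gamma(\tfrac12-i(\omega n+s))$ stays finite for all $n$, while $\Gamma(\tfrac12+i(\omega m+s))$ has a pole only for $m=0$; hence the principal part of $H(s)$ at $i/2$ is a rank-one matrix $R$, supported on the column $m=0$. Writing $H(s)=R/(s-s_0)+H_0(s)$ with $H_0$ holomorphic and $R=|u\rangle\langle e_0|$, and setting $A(s)=I-\lambda^{-1}H_0(s)$, the rank-one identity $\det\!\big(A-\tfrac{c}{s-s_0}|u\rangle\langle e_0|\big)=\det A-\tfrac{c}{s-s_0}\,\det(A)\,\langle e_0,A^{-1}u\rangle$ isolates the entire $s_0$-dependence in the explicit factor $1/(s-s_0)$, while the coefficient $\det(A)\langle e_0,A^{-1}u\rangle$ is holomorphic (being $-\partial_t\det(A-t|u\rangle\langle e_0|)\big|_{t=0}$); thus $\Delta$ has at most a simple pole at $i/2$, and likewise at $-i/2$. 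Consequently $\Delta$ extends to a meromorphic ${\sf M}$-periodic function whose only possible poles are simple and lie on ${\sf\Lambda}$, with at most two per cell, i.e. an elliptic function of order $\le2$. Finally, if $\Delta$ is non-constant it cannot be entire (Liouville), so at least one residue is nonzero; by \eqref{eq:resd} both residues at $\pm i/2$ are then nonzero, and $\Delta$ has order exactly two with simple poles precisely on ${\sf\Lambda}$. The two points needing care are the trace-class justification of cyclicity (supplied by \eqref{eq:gamco}) and the verification that each lattice residue is genuinely rank one, as this is precisely what forces the poles to be simple.
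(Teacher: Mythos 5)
Your proposal is correct and follows essentially the same route as the paper: the evenness, conjugation and periodicity identities rest on the same shift/reflection/transpose relations (the paper applies them after the cyclicity reduction to $\det(I-\lambda^{-1}\mathfrak S\calM[w(s)])$, you apply some of them directly to $H(s)$), identity \eqref{ee21} is obtained exactly as in the paper from $w_n(s+i)=-w_n(s)$ after cyclicity, and the constant-or-order-two dichotomy is the same Liouville argument combined with \eqref{eq:resd} from evenness. Your rank-one principal part at $\pm i/2$ together with the affine rank-one determinant expansion is a slightly more careful operator-theoretic rendering of the paper's cofactor expansion along the column $m=0$ (where the paper writes $\Delta(s;\lambda)=F_0(s)+F_1(s)w_0(s)$), but the underlying mechanism --- the singularity sits in a single column, so the determinant is affine in the singular factor, forcing at most a simple pole --- is identical.
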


Before embarking on the proof, we discuss a representation for the determinant that will be useful for us. 
First we represent the operator $H(s)$ as in 
\eqref{eq:bs}: 
\begin{align*}
H(s) = \calM[\gamma^{(-)}] \,\mathfrak S\, \calM[\gamma^{(+)}], 
\end{align*}
where $\mathfrak S$ is as in \eqref{eq:umn} and 
$\calM[\gamma^{(\pm)}]$ is the operator of multiplication by the sequence 
\begin{align*}
\gamma^{(\pm)}_n = \gamma^{(\pm)}_n(s) = \Gamma\big(\tfrac12 \pm i(\omega n + s)\big).
\end{align*}
Now we use the cyclicity property of the determinant: if $A$ and $B$ are bounded operators such that 
$AB\in \mathbf S_1$ and $BA\in \mathbf S_1$, then 
$\det(I+AB)=\det(I+BA)$, see e.g. \cite[Chapter IV]{Gohberg1969}.
Since $p$ is a polynomial, the operator $\mathfrak S$ is bounded, and the operators $\calM[\gamma^{(+)}]$ and $\calM[\gamma^{(-)}]$ are clearly trace class. It follows that 
$$
\det\bigl(I-\lambda^{-1}\calM[\gamma^{(-)}]\,\mathfrak S\, \calM[\gamma^{(+)}]\bigr)
=
\det\bigl(I-\lambda^{-1} \mathfrak S\, \calM[\gamma^{(+)}]\calM[\gamma^{(-)}]\bigr).
$$
Denoting 
\begin{align}\label{eq:wn}
w_n =& w_n(s) = \gamma^{(+)}_n(s) \gamma^{(-)}_n(s) = 
\Gamma(\tfrac12-i \omega n - is)\Gamma(\tfrac12+i\omega n+ is)
=\frac{\pi}{\cosh\pi(s+\omega n)}
\end{align}
for $n\in\bbZ$ and writing $\calM[w] =  \calM[\gamma^{(+)}]\,\calM[\gamma^{(-)}]$, we finally obtain 
\begin{align}\label{eq:delta1}
\Delta(s;\lambda)=\det(I-\lambda^{-1}\mathfrak S\calM[w(s)]), \quad s\in\bbC\setminus {\sf{\Lambda}}.
\end{align}

\begin{proof}[Proof of Theorem~\ref{thm.ee1}]
\emph{Step 1:} first we establish some algebraic relations involving the operators ${\mathfrak S}$ and $\calM[w(s)]$. 
Let $J$ and $S$ be the reflection and shift operators in $\ell^2(\bbZ)$, see \eqref{eq:defJS}.
We note that ${\mathfrak S}$ can be written as a finite sum
$$
\sum_{n=-\infty}^\infty\widetilde{\mathfrak s}_{n} S^n,
$$
where $S^{-n}=(S^*)^n$. 
It is straightforward to check that $JSJ=S^T$, where $S^T$ denotes the transposed matrix of $S$. 
It follows that 
$$
J{\mathfrak S}J={\mathfrak S}^T.
$$
Moreover, since ${\mathfrak S}$ is Hermitian, we have 
$$
{\mathfrak S}^T=\overline{{\mathfrak S}}.
$$
Furthermore, for any sequence $u$ we have
$$
J\calM[u]J=\calM[Ju].  
$$
Observe that $w_n(s)$ satisfies
$$
w_{-n}(-s)=w_n(s),\quad \overline{w_n(s)}=w_{n}(\overline{s}),
$$
and so $Jw(s)=w(-s)$, $\overline{w(s)} = w(\overline{s})$.

\emph{Step 2:} we prove the symmetry relations involving the secular determinant. 

\underline{Proof that $\Delta(s,\lambda)$ is even:} 
\begin{align*}
\Delta(-s;\lambda)&=\det\bigl(I-\lambda^{-1}{\mathfrak S}\calM[w(-s)]\bigr)
=\det\bigl(I-\lambda^{-1}{\mathfrak S}\calM[Jw(s)]\bigr)
\\
&=\det\bigl(I-\lambda^{-1}{\mathfrak S}J\calM[w(s)]J\bigr)
=\det\bigl(I-\lambda^{-1}J{\mathfrak S}J\calM[w(s)]\bigr)
\\
&=\det\bigl(I-\lambda^{-1}{\mathfrak S}^T\calM[w(s)]\bigr)
=\det\bigl(I-\lambda^{-1}\calM[w(s)]{\mathfrak S}^T\bigr)
\\
&=\det\bigl(I-\lambda^{-1}({\mathfrak S}\calM[w(s)])^T\bigr)
=\det\bigl(I-\lambda^{-1}({\mathfrak S}\calM[w(s)])\bigr)
\\
&=\Delta(s;\lambda),
\end{align*}
where we have used that $\det(I+A^T)=\det(I+A)$ at the last step. 

\underline{Proof of \eqref{ee1}:}
\begin{align*}
\overline{\Delta(s;\lambda)}&=\det\bigl(I-\overline{\lambda}^{-1}\overline{{\mathfrak S}}\calM[\overline{w(s)}]\bigr)
=\det\bigl(I-\overline{\lambda}^{-1}{\mathfrak S}^T\calM[w(\overline{s})]\bigr)
\\
&=\det\bigl(I-\overline{\lambda}^{-1}\calM[w(\overline{s})]{\mathfrak S}^T\bigr)
=\det\bigl(I-\overline{\lambda}^{-1}({\mathfrak S}\calM[w(\overline{s})])^T\bigr)
\\
&=\det\bigl(I-\overline{\lambda}^{-1}{\mathfrak S}\calM[w(\overline{s})]\bigr)
=\Delta(\overline{s};\overline{\lambda}).
\end{align*}

\underline{Proof of \eqref{ee21}:} 
by \eqref{eq:wn} we have
$$
w_n(s+i)=-w_n(s),
$$
and therefore
\begin{align*}
\Delta(s+i;\lambda)& = \det\bigl(I-\lambda^{-1}{\mathfrak S}\calM[w(s+i)]\bigr)
\\
&=\det\bigl(I+\lambda^{-1}{\mathfrak S}\calM[w(s)]\bigr)
=\Delta(s;-\lambda).
\end{align*}

\underline{Proof that $\Delta$ is periodic with periods $\omega$ and $2i$:} 
Applying \eqref{ee21} twice, we get
$$
\Delta(s+2i;\lambda)=\Delta(s+i;-\lambda)=\Delta(s;\lambda)
$$
and so $2i$ is a period. Let us prove that $\omega$ is a period. 
According to \eqref{eq:wn}, 
\begin{align*}
w_n(s+\omega) =  w_{n+1}(s).
\end{align*}
This implies that $\calM[w(s+\omega)] = S^* \calM[w(s)]S$ and so, using cyclicity,
$$
\Delta(s+\omega; \lambda)
=\det\bigl(I-\lambda^{-1}{\mathfrak S}S^* \calM[w(s)]S\bigr)
=\det\bigl(I-\lambda^{-1}S{\mathfrak S}S^* \calM[w(s)]\bigr).
$$
Since all powers of $S$ and $S^*$ commute, we get the invariance 
$$
S{\mathfrak S}S^* = {\mathfrak S}
$$
and so, finally, $\Delta(s+\omega; \lambda)=\Delta(s; \lambda)$. 

\emph{Step 3:} let us show that $\Delta(s; \lambda)$ has \emph{simple} poles at the points $-i/2$ and $i/2$. 
In fact, since $\Delta$ is even in $s$, it suffices to show this only for the point $s_0 = i/2$. 
Consider a small neighbourhood of this point. Out of all numbers $w_n(s)$ only $w_0(s)$ has a singularity 
at $s_0$; this singularity is a simple pole: 
\begin{align}\label{eq:pole}
w_0(s) = -\frac{i}{s-i/2} + O(1), \ s\to i/2.
\end{align}
Observe that $w_0(s)$ enters only one column of the matrix ${\mathfrak S}\calM[w(s)]$, 
and therefore, using the cofactor expansion along the column labeled ``0", 
 we can represent the determinant \eqref{eq:delta1} in the form 
\begin{align*}
\Delta(s;\lambda) = F_0(s) + F_1(s) w_0(s),
\end{align*}
where $F_0$ and $F_1$ are functions analytic in a neighbourhood of $s_0$. If $F_1(s_0) = 0$, then 
the function $\Delta(s;\lambda)$ is bounded and by Liouville's theorem it is therefore constant. If however, 
$F_1(s_0)\not = 0$, then due to \eqref{eq:pole}, the function $\Delta(s;\lambda)$ has a simple pole with residue 
$- i F_1(s_0)$. 
Identity \eqref{eq:resd} follows from the fact that $\Delta(s;\lambda)$ is even in $s$. 
\end{proof}

\subsection{Proof of Theorem~\ref{thm:elliptic}} 
(i)
Any $s\in\bbC\setminus{\sf \Lambda}$ can be represented as $s = i m  + s_0$, where $m\in\bbZ$ 
and $|\Im s_0|<1$. By \eqref{ee21},
\begin{align*}
\Delta^{(N)}(s; \lambda) = \Delta^{(N)}(s_0; (-1)^m\lambda).
\end{align*}
Since $|\Im s_0|<1$, by virtue of Theorem~\ref{thm:btrace}, under the smoothness condition \eqref{eq:pdec} the 
truncated operator $H^{(N)}(s_0)$ converges to $H(s_0)$ in trace norm, 
so that the limit \eqref{eq:deltagen} exists and coincides with 
$\det(I-(-1)^m\lambda^{-1}H(s_0))$. 
The uniformity of the convergence follows from the uniformity of the constant in the estimate \eqref{eq:nontrunc} of Theorem~\ref{thm:btrace}.
Finally, if $|\Im s|<1$, then $s_0=s$, and so the limit coincides with $\det(I-\lambda^{-1}H(s_0))$.

(ii)
The facts that $\Delta(\cdot;\lambda)$ is even, doubly-periodic and satisfies \eqref{ee1} 
and \eqref{ee21} follow  from Theorem~\ref{thm.ee1} by passing to the limit. 
Furthermore, the only possible singular points are congruent to $i/2$ or $-i/2$. 
Consider for example the point $i/2$ and let us prove that it is a simple pole of $\Delta(\cdot;\lambda)$. Since $i/2$ is a simple pole of $\Delta^{(N)}(\cdot;\lambda)$, for a small circular contour $\mathcal C$ around $i/2$ we find
\begin{align*}
\int_{\mathcal C} (s-i/2)^n \Delta(s; \lambda)\, ds 
= \lim_{N\to\infty}\int_{\mathcal C} (s-i/2)^n \Delta^{(N)}(s; \lambda)\, ds = 0, 
\end{align*}
for all $n\ge 1$. 
Therefore, the singularity of $\Delta$ at $i/2$ is a simple pole. The same 
applies to the point $-i/2$.
The relation \eqref{eq:resd}, as in the proof of Theorem~\ref{thm.ee1}, follows since $\Delta(s; \lambda)$ is even in $s$. 
 
(iii)
Denote for brevity $\Delta_s':=\partial \Delta/\partial s$. 
Since $\Delta$ is even, we find $\Delta_s'(0; \lambda)=0$. 
Similarly, from 
$$
\Delta(s; \lambda)=\Delta(-s;\lambda)=\Delta(\omega-s; \lambda)
$$
we find $\Delta_s'(\tfrac{\omega}2; \lambda) = 0$.
Because of \eqref{ee21} we also have $\Delta_s'(i; \lambda) = \Delta_s'(\tfrac{\omega}{2}+i; \lambda) = 0$. 
We have established that $\Delta_s'$ has zeros $0,\tfrac{\omega}{2},\tfrac{\omega}{2}+i,i$. 
For a non-constant $\Delta$, the function  $\Delta_s'(s; \lambda)$ is an elliptic function 
of order four, and hence it has exactly four zeros in the fundamental domain; we have found all of them. 

The proof of Theorem~\ref{thm:elliptic} is complete. \qed

\section{Analytic band functions I: Proof of Theorem~\ref{thm.branches}}
\label{sec.dd}

The aim of this section is to prove Theorem~\ref{thm.branches}. 

\subsection{Splitting away a unimodular factor}

By \eqref{eq:bk}, our fibre operator $H(k)$, $k\in{\bbR},$ has the form 
\begin{equation}
H(k) = V(k)^* H(0) V(k), 
\label{dd2}
\end{equation}
where $V(k)$ is the operator of multiplication by the sequence 
\begin{align*}
v_n(k) = \frac{\Gamma(\frac{1}{2} + i\omega n + ik)}{\Gamma(\frac{1}{2} + i\omega n)}\ .
\end{align*}
We give some heuristics for what comes next. 
The key ingredient of the proof of Theorem~\ref{thm.branches} is the differentiation of eigenvalues with respect to $k$ and using Gronwall's lemma. This requires differentiating $V(k)$ in \eqref{dd2} with respect to $k$. It turns out that the derivative of $v_n(k)$ is ``too large'' for this argument to work. The solution is to write $v_n(k)=\abs{v_n(k)}e^{i\varphi_n(k)}$; it turns out that the derivative of $\abs{v_n(k)}$ is ``not too large'', while the unimodular factor $e^{i\varphi_n(k)}$ does not affect the eigenvalues. 
Motivated by this and 
the formula \eqref{eq:gamco}, we denote
\begin{equation}
g_n(k) =\abs{v_n(k)}= \sqrt{\frac{\cosh(\pi \omega n)}{\cosh(\pi (\omega n + k))}}, 
\label{eq:v0}
\end{equation}
and write    
\[
v_n(k) = g_n(k) e^{i\varphi_n(k)}, 
\]
where $e^{i\varphi_n(k)}$ is a real analytic unimodular function.
Therefore for each $k\in\bbR$ we rewrite \eqref{dd2} as 
$$
H(k)=\Phi(k)^*G(k)H(0)G(k)\Phi(k), 
$$
where $\Phi(k)$ is the unitary operator of multiplication by $\{e^{i\varphi_n(k)}\}_{n\in\bbZ}$ and $G(k)$ is the operator of multiplication by $\{ g_n(k)\}_{n\in\bbZ}$ in $\ell^2(\bbZ)$. 
We denote 
$$
H_*(k):=G(k)H(0)G(k);
$$
since $H(k)$ and $H_*(k)$ are unitarily equivalent, it suffices to prove Theorem~\ref{thm.branches} for the eigenvalues of $H_*(k)$ instead of $H(k)$.

It is evident that the operator $G(k)$ is differentiable with respect to $k$.
For the derivative $G'(k)$, we have the following bound. 
\begin{lemma}\label{lma.gronwall}
For all $k\in\bbR$ and all $f\in \ell^2(\bbZ)$, we have 
$$
\Norm{G'(k)f}\leq \frac{\pi}{2}\norm{G(k)f}.
$$
\end{lemma}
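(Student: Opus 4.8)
The plan is to exploit the fact that both $G(k)$ and $G'(k)$ are diagonal (multiplication) operators on $\ell^2(\bbZ)$, so that the asserted operator inequality reduces to a pointwise bound on the diagonal entries. Concretely, since $(G'(k)f)_n = g_n'(k)f_n$ and $(G(k)f)_n = g_n(k)f_n$, I would observe that
\begin{equation*}
\Norm{G'(k)f}^2 = \sum_{n\in\bbZ}\abs{g_n'(k)}^2\abs{f_n}^2, \qquad \norm{G(k)f}^2 = \sum_{n\in\bbZ}g_n(k)^2\abs{f_n}^2,
\end{equation*}
so the desired estimate follows at once from the pointwise inequality $\abs{g_n'(k)}\le \tfrac{\pi}{2}\,g_n(k)$, holding for every $n\in\bbZ$ and every $k\in\bbR$ (note that $g_n(k)>0$).

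To establish this pointwise bound, I would compute the logarithmic derivative of $g_n$ directly from the explicit formula \eqref{eq:v0}. Taking logarithms gives $\log g_n(k) = \tfrac12\log\cosh(\pi\omega n) - \tfrac12\log\cosh(\pi(\omega n+k))$, and differentiating in $k$ yields
\begin{equation*}
\frac{g_n'(k)}{g_n(k)} = -\frac{\pi}{2}\tanh\big(\pi(\omega n+k)\big).
\end{equation*}
Since $\abs{\tanh x}\le 1$ for all real $x$, this immediately gives $\abs{g_n'(k)}\le \tfrac{\pi}{2}\,g_n(k)$, completing the argument.

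There is essentially no serious obstacle here: the key conceptual point is simply recognizing that an operator inequality between two diagonal operators is equivalent to the corresponding scalar inequality between their entries. The constant $\tfrac{\pi}{2}$ then emerges transparently, the factor $\tfrac12$ coming from the square root in \eqref{eq:v0} and the factor $\pi$ from the hyperbolic argument, with the uniform bound secured by $\abs{\tanh}\le 1$. The only mild care needed is to confirm that $g_n(k)$ never vanishes, which is clear since $\cosh$ is strictly positive, so that the logarithmic derivative is well defined throughout.
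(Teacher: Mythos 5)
Your proof is correct and follows essentially the same route as the paper: both compute $g_n'(k) = -\tfrac{\pi}{2}\tanh(\pi(\omega n+k))\,g_n(k)$ (you via the logarithmic derivative, the paper by direct differentiation), bound $\abs{\tanh}\le 1$, and pass from the entrywise inequality $\abs{g_n'(k)}\le\tfrac{\pi}{2}g_n(k)$ to the operator statement using diagonality. Your write-up merely makes the last reduction (summing the $\ell^2$ norms) explicit, which the paper leaves implicit.
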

\begin{proof}
We calculate the derivative
\begin{align*}
g'_n(k) =  - \frac{\pi}{2}\tanh (\pi(\omega n + k))  \, g_n(k).
\end{align*}
Consequently, the bound holds
\begin{align*}
\abs{g'_n(k)}\leq \frac{\pi}{2} g_n(k), \quad k\in{\bbR}.
\end{align*}
Since $G(k)$ is a diagonal operator, this gives the required bound. 
\end{proof}

\subsection{Proof of Theorem~\ref{thm.branches}}
Since $H(k)$ and $H_*(k)$ are unitarily equivalent, it suffices to prove the required statement for $H_*(k)$ in place of $H(k)$. First we need to check that $H_*(k)$ extends to a holomorphic function in a strip around the real line. We define this extension by defining $g_n(s)$ by the same formula \eqref{eq:v0} with $k$ replaced with $s = k+ i\tau$. 
Since 
\begin{align*}
\Re \cosh(\pi (\omega n + k+i\tau))
= \cosh(\pi (\omega n + k)) \, \cos (\pi\tau)>0,\quad |\tau| < \frac{1}{2},
\end{align*}
the functions $g_n(s)$ are well defined and holomorphic in the strip $\{s: \abs{\Im s}<1/2\}$. 
Because of the bound 
$$
\abs{g_n(s)}^2
\leq
\frac{\cosh(\pi\omega n)}{\abs{\cosh(\pi \omega n+\pi k+i\pi \tau)}}
\leq
\frac{\cosh(\pi\omega n)}{\cosh(\pi \omega n+\pi k)\cos(\pi \tau)}
\leq
\frac{e^{\pi\abs{k}}}{\cos(\pi \tau)}
$$
that holds for all $n\in\bbZ$, the operator $G(s)$ is uniformly bounded on compact 
subsets of this strip. Also, one checks directly that $G(s)^* = G(\overline s)$, 
so that  $(H_*(s))^* = H_*(\overline{s})$, since $H(0)$ is self-adjoint. Thus $H_*(s)$ is a self-adjoint analytic family in the strip $|\Im s\,|<1/2$ in the sense of \cite[Ch. VII, \S 3]{Kato}.

The rest of the proof follows closely F.~Rellich's argument in  
\cite[Satz 1]{Rellich1942}, see also \cite[Ch. VII, \S 3.5]{Kato}. 
Our starting point is the following well-known local result. 
Fix a $k_0\in {\bbR}$ and let 
$E_*\not=0$ be a (real) eigenvalue of $H_*(k_0)$ of multiplicity 
$n\geq1$. According to \cite[Theorem XII.13]{RS4}, in a sufficiently small 
neighbourhood of $k_0$ the operator $H_*(s)$ has exactly $n$ 
eigenvalues, and there exist $n$ not necessarily distinct 
single-valued functions $E_1(s),\dots,  E_n(s)$, 
analytic near $s=k_0$, with $E_n(k_0)=E_*$, such that 
$E_1(s),\dots, E_n(s)$ are the eigenvalues of $H_*(s)$ near $k_0$.

Our objective is to show that for real $k$, these analytic eigenvalue branches that  are initially defined only locally, can be continued as \underline{non-vanishing} real analytic functions to all $k\in{\bbR}$. Since $H_*(s)$ is analytic in the strip around the whole real line, each eigenvalue branch can be continued indefinitely \emph{as long as it does not cross zero}; crossing zero is the only possible obstruction to the analytic continuation. Let us show that the eigenvalue branches cannot cross zero. We use an argument based on Lemma~\ref{lma.gronwall} and on Gronwall's inequality. 

Let $ E= E(k)$ be an eigenvalue branch, defined on a \emph{maximal} interval $(a,b)$ 
and positive on this interval (the proof for a negative branch is exactly the same). 
By this we mean that $(a, b)$ is an interval such that $E(k)$ is positive for all 
$k\in (a, b)$ and it cannot be continued beyond any of the 
endpoints as a positive real analytic function.
We need to show that $(a, b) = \bbR$.  
To get a contradiction, suppose, for example, that $a$ is finite.  

Let $\phi=\phi(k)$ be the normalized eigenfunction corresponding to $E(k)$. 
According to \cite[Ch. II, \S4 and Ch. VII, \S 3]{Kato} or \cite[Section~XII.2]{RS4}, 
the eigenfunction $\phi$ can be chosen to be real analytic in $k$ as long as $E(k)$ 
remains  separated from zero. Thus for all $k\in (a, b)$ we can use the standard formula 
$$
E'(k)=\jap{H'_*(k)\phi,\phi},\quad \phi=\phi(k),
$$
for the derivative of an eigenvalue; here $\jap{\cdot,\cdot}$ is the inner product in $\ell^2$. 
(This formula can be derived in the same way as in \cite[Ch. II -(6.10)]{Kato}.)
We have
\begin{align*}
\jap{H'_*(k)\phi,\phi}&=\jap{G'(k)H_*(0)G(k)\phi,\phi}+\jap{G(k)H_*(0)G'(k)\phi,\phi}
\\
&=2\Re\jap{G'(k)H_*(0)G(k)\phi,\phi}
\end{align*}
and so by Lemma~\ref{lma.gronwall}
$$
\abs{E'(k)}
\leq 2\norm{G'(k)H_*(0) G(k)\phi}
\leq \pi \norm{ G(k)H_*(0) G(k)\phi}
=\pi E(k). 
$$
By Gronwall's inequality, for any $k,k_*\in(a,b)$ this implies that
$$
E(k_*)e^{-\pi\abs{k-k_*}}\leq E(k)\leq E(k_*)e^{\pi\abs{k-k_*}}.
$$
Let us fix some $k_*\in(a,b)$ here; we see that $E(k)$ is separated away from zero on $(a,k_*)$:
\[
E(k)\geq e^{-\pi(k_*-a)}E(k_*), \quad k\in(a,k_*).
\]
Now consider the interval $(\frac12e^{-\pi(k_*-a)}E(k_*), \infty)$. 
The operator $H_*(a)$ has only finitely many eigenvalues in this interval, 
and, as discussed above, near $k=a$ 
each of them can be represented by an analytic function. 
It follows that $E(k)$ coincides with one of these functions and so it can be continued analytically beyond $a$ as a positive function. 
This contradicts the maximality of $(a,b)$. Thus $(a, b) = \bbR$ and the proof is complete. 

To prove the last statement of the theorem, observe that each 
$E\in\mathcal E$ is a real 
analytic solution of the equation $\Delta(k; E(k)) = 0$, and, 
conversely, each real analytic solution of this equation is automatically 
a band function. 
By the symmetry $\Delta(k; \lambda) = \Delta(-k; \lambda)$ established in 
Theorem~\ref{thm:elliptic}, we have 
\begin{align*}
\Delta(k; E(-k))  = \Delta(-k; E(-k)) = 0 \quad \textup{for all} \quad k\in\bbR.
\end{align*}
Since $E(-k)$ is analytic, it is a band function. 
In the same way, using the periodicity $\Delta(k+\omega; \lambda) = \Delta(k; \lambda)$, one proves the 
equality $\Delta(k; E(k+\omega n)) = 0$ for all $n\in\bbZ$.
The proof of Theorem~\ref{thm.branches} is complete. 
\qed

\section{Analytic band functions II: proof of Theorem~\ref{thm.main}}
\label{sec.ddd}
\subsection{Overview}
In Section~\ref{sec.ddd2}, we introduce a set-up that allows us to consider the flat and non-flat band functions separately. In Section~\ref{subsect:abran}, we establish various properties of the non-flat band functions.  In Sections~\ref{sec.ddd4} and \ref{subsect:proofmain}, we prove Theorem~\ref{thm.main}.
Finally, in Section~\ref{sec.b8}, we illustrate the concept of band functions by considering the Carleman operator.

\subsection{Separating out the flat bands}
\label{sec.ddd2}
By Theorem~\ref{thm.branches} we have
\begin{equation}
\Delta(k;\lambda) = 
\prod_{E\in\mathcal E}(1-\lambda^{-1}  E(k)), \quad k\in\bbR.
\label{ddd1}
\end{equation}
We distinguish two types of band functions $E\in\mathcal E$: constant and non-constant (an example of spectrum containing both types will be discussed in Section~\ref{sec.e}). 
We will call $E$ a \emph{flat band function of $H(k)$} 
of multiplicity $m$, if it is constant and if it appears exactly 
$m$ times in the list $\mathcal E$. Accordingly, we will call the 
non-constant band functions \emph{non-flat}. 
Their multiplicity is defined in the same way. 
We will often use subscript $\flat$ for flat band functions, and subscript $\#$ for non-flat ones; 
accordingly, we write $\mathcal E$ as a disjoint union $\mathcal E=\mathcal E^\flat\cup\mathcal E^\#$, where $\mathcal E^\flat$ (resp. $\mathcal E^\#$) is the list of the flat (resp. non-flat) band functions.

The flat band functions possess the following symmetry.

\begin{lemma}\label{lma.d1}
If $E $ is a flat band function of $H(k)$, 
then so is $-E$, with the same multiplicity.
\end{lemma}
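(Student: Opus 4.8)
The plan is to read off both the \emph{location} and the \emph{multiplicity} of the flat bands directly from the secular determinant, and then transport this information from $E_0$ to $-E_0$ using the identity $\Delta(s+i;\lambda)=\Delta(s;-\lambda)$ from \eqref{ee21}. The starting observation is that a nonzero constant $E_0$ is a flat band function precisely when $\Delta(\,\cdot\,;E_0)\equiv0$: if $E_0\in\mathcal E^\flat$, then the factorization \eqref{ddd1} contains the vanishing factor $1-E_0^{-1}E_0$ for every real $k$, so $\Delta(k;E_0)=0$ on $\bbR$, and since $s\mapsto\Delta(s;E_0)$ is elliptic by Theorem~\ref{thm:elliptic}, it vanishes identically on $\bbC$. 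Applying \eqref{ee21} with $s$ replaced by $s+i$ then gives $\Delta(\,\cdot\,;-E_0)=\Delta(\,\cdot+i\,;E_0)\equiv0$, which is the first indication that $-E_0$ is again flat.

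To handle multiplicities I would encode the flat multiplicity of a value $v\neq0$ as the ``generic order in $\lambda$'' of $\Delta(s;\,\cdot\,)$ at $v$. Concretely, set $c_j^{(v)}(s)=\tfrac1{j!}\,\partial_\lambda^j\Delta(s;\lambda)\big|_{\lambda=v}$; each $c_j^{(v)}$ is elliptic in $s$, and real-analytic on $\bbR$ because the poles of $\Delta$ lie on ${\sf\Lambda}$ and avoid the real axis. For a real $k_0$ lying outside the countable exceptional set where some non-flat band function meets $\pm E_0$, the only eigenvalues of $H(k_0)$ equal to $v=\pm E_0$ arise from flat bands, so by Theorem~\ref{thm.branches} the order of the zero of $\lambda\mapsto\Delta(k_0;\lambda)=\det(I-\lambda^{-1}H(k_0))$ at $\lambda=v$ equals the flat multiplicity of $v$. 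As this holds on a co-countable subset of $\bbR$, the identity theorem yields a clean description: the flat multiplicity $m$ of $E_0$ is the least index with $c_m^{(E_0)}\not\equiv0$, with $c_j^{(E_0)}\equiv0$ for $j<m$, and the analogous statement with $c_j^{(-E_0)}$ computes the flat multiplicity of $-E_0$.

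The two families of coefficients are linked by differentiating \eqref{ee21} in $\lambda$: differentiating $\Delta(s+i;\lambda)=\Delta(s;-\lambda)$ a total of $j$ times and evaluating at $\lambda=-E_0$ gives the identity of meromorphic functions $c_j^{(-E_0)}(s+i)=(-1)^j c_j^{(E_0)}(s)$. Hence $c_j^{(-E_0)}\equiv0$ if and only if $c_j^{(E_0)}\equiv0$, so the least non-vanishing index is the same for both values; by the previous paragraph the flat multiplicities of $E_0$ and $-E_0$ coincide (and in particular $-E_0$ is indeed flat), which is exactly the assertion of the lemma. The step requiring the most care—the main obstacle—is the multiplicity bookkeeping: one must verify that the order of $\Delta$ at $\pm E_0$ is ``contaminated'' by non-flat band functions only on a countable set (each non-flat $E_\#\in\mathcal E^\#$, being non-constant and real-analytic, meets a fixed level only on a discrete set), so that the generic order genuinely equals the flat multiplicity, and then promote this equality from $\bbR$ to an identity of elliptic functions so that \eqref{ee21} can legitimately be invoked.
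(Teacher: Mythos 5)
Your proposal is correct and rests on exactly the same pivot as the paper's proof: the symmetry $\Delta(s+i;\lambda)=\Delta(s;-\lambda)$ from \eqref{ee21}, used to transport the order of vanishing of $\Delta$ at $\lambda=E$ to $\lambda=-E$. The only difference is bookkeeping: where the paper encodes the flat multiplicity $m$ in one line via the factorization $\Delta(s;\lambda)=(1-E/\lambda)^m\mathcal D(s;\lambda)$ with $\mathcal D(\cdot\,;E)\not\equiv0$ (read off from Theorem~\ref{thm.branches} and the product formula \eqref{ddd1}), you re-derive the same characterization through $\lambda$-Taylor coefficients and a genericity-in-$k$ argument, which is a valid, if more laborious, substitute.
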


\begin{proof}
By Theorem~\ref{thm.branches}, the constant 
$E$ is a flat band function of multiplicity $m\ge 1$ 
if and only if  
$$
\Delta(s; \lambda) = (1- E/\lambda)^m\mathcal D(s;\lambda),  
$$
where $\mathcal D(s; \lambda)$ is an entire function of $1/\lambda$ such that 
$\mathcal D(\,\cdot\,; E)$ does not vanish identically.
It follows from \eqref{ee21} that 
\begin{align*}
\Delta(s; \lambda) = \Delta(s+i; -\lambda) = 
 (1 + E/\lambda)^m\mathcal D(s+i;-\lambda).
\end{align*}
Since $\mathcal D(s+i; E)$ 
does not vanish identically, this means that $\lambda = - E$ is also a 
flat band function of $H(k)$ of multiplicity $m$, as claimed. 
\end{proof}

Let us factor out from the secular determinant 
\emph{all} flat band functions counting multiplicities, and represent $\Delta(s; \lambda)$  as the product 
\begin{align*}
\Delta(s;\lambda)&= \Delta^\#(s;\lambda)\Delta^\flat(\lambda),\notag
\\
\Delta^{\flat}(\lambda) &= \prod_{E\in\mathcal E^\flat}(1- E/\lambda)
= \prod_{E\in\mathcal E^\flat,\  E>0}(1-(E/\lambda)^2),
\end{align*}
where \emph{the residual determinant} 
$\Delta^\#(s; \lambda), s\in \bbC\setminus{\sf\Lambda}$,  
is an entire function of $1/\lambda$, and for each 
$\lambda\in\bbR$, $\lambda\not=0$, the function 
$s\mapsto  \Delta^\#(s;\lambda)$ does not vanish identically. 
If all band functions are flat, we have $\Delta^\#(s;\lambda)=1$ and conversely if there are no flat band functions, we have $\Delta^{\flat}(\lambda)=1$.

The following lemma (which mirrors Theorem~\ref{thm:elliptic}) is obvious but we state it explicitly for clarity.
 
\begin{lemma}\label{lem:resid}
Let $\lambda\in\bbC$, $\lambda\not=0$. Then:
\begin{enumerate}[\rm (i)]
\item 
The function $s\mapsto\Delta^\#(s; \lambda)$ is analytic on $\bbC\setminus{\sf\Lambda}$, 
even in $s$, doubly-periodic with the periods $2i$ and $\omega$ and satisfies the identities 
\begin{align}
\overline{\Delta^\#(s;\lambda)}& = \Delta^\#(\overline{s}; \overline{\lambda}),
\notag
\\
\Delta^\#(s+i;\lambda)& = \Delta^\#(s;-\lambda).
\label{ee21a}
\end{align}
Furthermore, $\Delta^\#(s; \lambda)$ is either a \underline{non-zero} constant for all $s\in\bbC$ or 
it is an elliptic function of order two with simple poles at the points of the lattice $\sf \Lambda$. 
\item\label{item:resid2}
If $\Delta^\#(\cdot;\lambda)$ is non-constant, then 
the zeros of $\partial \Delta^\#(s;\lambda)/\partial s$ are simple 
and the set of zeros is given by all points congruent to 
$$
\bigg\{0,\, \frac{\omega}{2},\, \frac{\omega}{2} + i,\, i\bigg\}
$$
with respect to the period lattice ${\sf M}$. 
\end{enumerate}
\end{lemma}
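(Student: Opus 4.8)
The plan is to derive the entire lemma from Theorem~\ref{thm:elliptic} by exploiting that the flat factor $\Delta^\flat(\lambda)$ is independent of $s$. For every $\lambda$ with $\Delta^\flat(\lambda)\neq 0$ I would simply write $\Delta^\#(s;\lambda)=\Delta(s;\lambda)/\Delta^\flat(\lambda)$, so that for fixed such $\lambda$ the function $\Delta^\#(\cdot;\lambda)$ is a nonzero, $s$-independent multiple of $\Delta(\cdot;\lambda)$. Analyticity on $\bbC\setminus{\sf\Lambda}$, evenness in $s$, double periodicity with periods $\omega$ and $2i$, and the simplicity of the poles on ${\sf\Lambda}$ are all preserved under multiplication by a constant and hence transfer verbatim from Theorem~\ref{thm:elliptic}(ii). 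The two relations that mix $s$ and $\lambda$ need one extra input: from the explicit product $\Delta^\flat(\lambda)=\prod_{E\in\mathcal E^\flat,\,E>0}(1-(E/\lambda)^2)$ one reads off that $\Delta^\flat$ is even in $\lambda$ --- this is exactly the $\pm E$ symmetry of Lemma~\ref{lma.d1} --- and that it has real coefficients as a function of $1/\lambda$ because the flat bands are real. Cancelling the equal factors $\Delta^\flat(\lambda)=\Delta^\flat(-\lambda)$ in \eqref{ee21} gives \eqref{ee21a}, and $\overline{\Delta^\flat(\lambda)}=\Delta^\flat(\bar\lambda)$ combined with \eqref{ee1} gives the conjugation identity.

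For the dichotomy in (i) I would not track the order of $\Delta$ but count poles directly. Once $\Delta^\#(\cdot;\lambda)$ is known (for every $\lambda\neq 0$) to be doubly periodic with periods $\omega,2i$ and to have at most simple poles on ${\sf\Lambda}$ and no other singularities, it is elliptic with at most two simple poles in a fundamental domain of ${\sf M}$. As no elliptic function has order one, it must have order $0$ or $2$; in the order-zero case it is constant, and that constant is nonzero because $s\mapsto\Delta^\#(s;\lambda)$ does not vanish identically by construction. For part (ii) I would simply repeat the argument of Theorem~\ref{thm:elliptic}(iii) with $\Delta^\#$ in place of $\Delta$: the evenness, periodicity and relation \eqref{ee21a} force $\partial_s\Delta^\#$ to vanish at $0,\omega/2,\omega/2+i,i$, and since $\partial_s\Delta^\#$ is elliptic of order four these four (distinct modulo ${\sf M}$) points are all of its zeros and they are simple.

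The only genuine obstacle is the finite or countable set of real values $\lambda=\pm E$, $E\in\mathcal E^\flat$, where $\Delta^\flat(\lambda)=0$ and the literal quotient is unavailable; note that here $\Delta(\cdot;\lambda)\equiv 0$, so nothing can be read off from $\Delta$ directly, and the properties above must be obtained by continuation. My plan is to dispatch all such values at once by analytic continuation in $w=1/\lambda$, using that $\Delta^\#$ is entire in $w$ by construction. For fixed $s\notin{\sf\Lambda}$ each identity in (i) is an equality of functions entire in $w$ that agree off the zeros of $\Delta^\flat$; since those zeros are $w=\pm 1/E$ and escape to infinity as $E\to 0$, their complement is connected and dense, so the identities persist for all $\lambda\neq 0$. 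The pole structure is handled the same way: the Laurent coefficients of $\Delta^\#(\cdot;\lambda)$ at $\pm i/2$ of order below $-1$ are, for a fixed contour, entire in $w$ and vanish for generic $\lambda$, hence vanish identically, so the poles remain at most simple at every $\lambda\neq 0$. Confirming that no higher-order pole sneaks in precisely at the exceptional flat-band values is the step I would write out most carefully.
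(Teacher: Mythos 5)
Your proposal is correct and, in substance, it is the paper's own proof. The paper likewise obtains the whole lemma by dividing the identities \eqref{ee1}, \eqref{ee21} and the conclusions of Theorem~\ref{thm:elliptic}(ii)--(iii) by the $s$-independent factor $\Delta^\flat(\lambda)$, using precisely the two symmetries $\Delta^\flat(-\lambda)=\Delta^\flat(\lambda)$ and $\overline{\Delta^\flat(\lambda)}=\Delta^\flat(\overline\lambda)$ that you extract from Lemma~\ref{lma.d1} and the realness of the flat bands. What you add beyond the paper's one-line ``the rest follows from Theorem~\ref{thm:elliptic}'' is the explicit continuation in $w=1/\lambda$ across the zeros of $\Delta^\flat$, where $\Delta(\cdot\,;\lambda)\equiv0$ and the literal quotient is unavailable; the paper leaves this implicit, and your treatment (identity theorem for entire functions of $w$ off the discrete zero set $\{\pm 1/E\}$, plus the vanishing of the contour integrals $\oint_{\mathcal C}(s-i/2)^n\Delta^\#(s;\lambda)\,ds$, $n\ge1$, as entire functions of $w$) is the right way to fill it in --- it is in fact the same device the paper itself uses in the proof of Theorem~\ref{thm:elliptic}(ii) to show that simple poles survive the limit $N\to\infty$. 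On the exceptional values $\lambda=\pm E$, $E\in\mathcal E^\flat$, your write-up is, if anything, more careful than the paper's.

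One spot where your justification is thinner than the paper's and genuinely needs a line: the non-vanishing of the constant in the dichotomy of part (i). You assert it ``by construction'', but the construction of $\Delta^\#$ only builds in the non-vanishing of $s\mapsto\Delta^\#(s;\lambda)$ for \emph{real} $\lambda\neq0$, whereas the lemma claims it for all $\lambda\in\bbC$, $\lambda\neq0$. For non-real $\lambda$ the paper supplies the missing observation: since all flat bands are real, $\Delta^\flat(\lambda)\neq0$ there, so your quotient representation applies directly, and $\Delta(\cdot\,;\lambda)$ cannot vanish identically, because $\Delta(k;\lambda)=\det(I-\lambda^{-1}H(k))=0$ for real $k$ would exhibit the non-real number $\lambda$ as an eigenvalue of the self-adjoint operator $H(k)$. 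With that sentence inserted, your argument is complete; part (ii) then goes through exactly as you describe, since a non-constant $\Delta^\#(\cdot\,;\lambda)$ must (order one being impossible) have simple poles at both points of ${\sf\Lambda}$ in a period cell, making $\partial_s\Delta^\#$ elliptic of order four with the four stated zeros, which are pairwise non-congruent modulo ${\sf M}$ and hence simple.
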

\begin{proof}
For $\lambda\in\bbR$, the determinant $\Delta^\#(\cdot;\lambda)$ cannot be identically zero by construction. 
Furthermore, for $\lambda\in\bbC\setminus\bbR$, the determinant $\Delta^\#(\cdot;\lambda)$ cannot be identically zero because in this case this would imply that the self-adjoint operator $H(k)$ has a complex eigenvalue $\lambda$.
The rest follows from Theorem~\ref{thm:elliptic}, taking into account that 
$\Delta^\flat(\lambda) = \Delta^\flat(-\lambda)$ and $\overline{\Delta^\flat(\lambda)} 
= \Delta^\flat(\overline\lambda)$.
\end{proof}

\subsection{The non-flat band functions}\label{subsect:abran} 
In this section, we work exclusively with non-flat (NF for short) band functions.
By \eqref{ddd1}, the set of all NF band functions coincides with the set of all 
real analytic solutions of the equation $\Delta^\#(k; E(k)) = 0$. 
By Theorem~\ref{thm.branches},  
along with $E(k)$ the functions $E(-k)$ and $E(k+n\omega)$ 
are also NF for all $n\in\mathbb Z$. One or both of these may happen to 
coincide with $E(k)$. 

In what follows we extensively use the following fact. 
If  $f$ is an elliptic function of order two, then each value $\mu\in\bbC$ is taken exactly twice in the fundamental domain of $f$, i.e. the equation $f(s) = \mu$ has either two distinct zeros or one double zero in the fundamental domain.

\begin{theorem} \label{thm:anbr}
\begin{enumerate}[{\rm (i)}]
\item
Every NF band function appears in the list $\mathcal E^\#$ exactly once. 
In other words, no two NF band functions are identically equal. 
\item \label{item:anbr1}
No more than two NF band functions may intersect at any given point. In other words, 
for any $k_*\in\bbR$ and any 
$E_*\not=0$, there are no more than two NF band functions $E$ satisfying $E(k_*)=E_*$. 
\item \label{item:anbr1a}
If two NF band functions intersect at a point $k_*$, then $k_*=0\mod\omega/2$. 
\item \label{item:anbr2}
Every NF band function satisfies $E'(k)\not=0$ for $k\not=0\mod \omega/2$. 
In particular, each NF band function is strictly monotone on $(-\omega/2,0)$ and on $(0,\omega/2)$. 
\item\label{item:anbr3}
Let $E_1$ and $E_2$ be two NF band functions, and let $k_1, k_2\in\mathbb R$ 
be such that $E_1(k_1) = E_2(k_2)$. Then $k_1 = k_2\mod \omega$ or $k_1 = -k_2\mod \omega$. 
\item\label{item:anbr4}
For any two NF band functions $E_1$ and $E_2$ 
and any pair $k_1, k_2\in\mathbb R$ we always have $E_1(k_1)\not = - E_2(k_2)$. 
\end{enumerate} 
\end{theorem}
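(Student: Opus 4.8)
The plan is to fix, for each relevant value $E_*\in\bbR\setminus\{0\}$, attention on the single elliptic function $s\mapsto\Delta^\#(s;E_*)$ and to read off all six assertions from its zero distribution. Since the theorem is vacuous unless there is at least one NF band function, I may assume $\Delta^\#$ is non-constant; then for every $E_*$ in the range of an NF band function the function $\Delta^\#(\,\cdot\,;E_*)$ is, by Lemma~\ref{lem:resid}, an elliptic function of order two with period lattice ${\sf M}$, and by \eqref{ee21a} and the evenness it is real-valued and even on $\bbR$. Three consequences will do all the work: (a) it has exactly two zeros (with multiplicity) per fundamental domain $\bbC/{\sf M}$; (b) evenness pairs the zeros under $s\mapsto-s$; and (c) by Lemma~\ref{lem:resid}(ii) its only critical points are congruent to $0,\omega/2,\omega/2+i,i$, and these are \emph{simple} zeros of $\partial_s\Delta^\#$, so a zero of $\Delta^\#(\,\cdot\,;E_*)$ sitting at such a point is automatically a double zero, and hence, by (a), the unique zero in the fundamental domain.

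First I would dispose of the regular points $k_*\not\equiv0\ (\mathrm{mod}\ \omega/2)$ all at once, recalling that the NF band functions are precisely the real-analytic solutions of $\Delta^\#(k;E(k))\equiv0$ (Theorem~\ref{thm.branches}). For such $k_*$ the point is not critical, so $\partial_s\Delta^\#(k_*;E_*)\neq0$. Differentiating the identity $\Delta^\#(k;E(k))\equiv0$ and using this immediately yields $E'(k_*)\neq0$, which is assertion (iv) together with the strict monotonicity on $(0,\omega/2)$ and $(-\omega/2,0)$. The implicit function theorem then represents the zero set near $(k_*,E_*)$ as a single analytic graph $s=s(\lambda)$ with $s(E_*)=k_*$; any band function $E$ through the point satisfies $s(E(k))\equiv k$, which on differentiation forces $s'(E_*)\neq0$, so $s$ is locally invertible and $E$ is its unique local inverse. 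Two distinct NF band functions therefore cannot meet at a regular point (the identity theorem would force them equal), giving assertion (iii); and the $\lambda$-zero of $\Delta^\#(k_*;\,\cdot\,)$ at $E_*$ is then simple, so the eigenvalue multiplicity is one, which gives assertion (i) (a twice-listed NF band function would violate this at any regular $k_*$) and the regular case of (ii).

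It remains to count branches at the special points $k_*\equiv0\ (\mathrm{mod}\ \omega/2)$, and this is where I expect the real work to lie. Here $k_*$ is a double (and, by order two, the only) zero of $\Delta^\#(\,\cdot\,;E_*)$ with $\partial_s^2\Delta^\#(k_*;E_*)\neq0$, so the implicit function theorem is unavailable and the zero curve is singular. The plan is to apply Weierstrass preparation in the variable $s$ near $(k_*,E_*)$, writing $\Delta^\#(s;\lambda)=U(s;\lambda)\bigl[(s-k_*)^2+\beta(\lambda)(s-k_*)+\gamma(\lambda)\bigr]$ with $U$ non-vanishing and $\beta(E_*)=\gamma(E_*)=0$. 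Since the bracket has degree two in $(s-k_*)$, at most two analytic branches of the form $\lambda=E(k)$ can pass through the point; this bounds the number of NF band functions through $(k_*,E_*)$ by two and completes (ii). The main obstacle is precisely this passage from the clean regular-point analysis to controlling the singular point, and the need to keep careful track of multiplicities so that ``number of branches'' really matches ``number of NF band functions counted through the list $\mathcal E^\#$''.

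Finally, (v) and (vi) follow by comparing zero locations. For (v), if $E_1(k_1)=E_2(k_2)=E_*$ then $\pm k_1$ and $\pm k_2$ are all zeros of $\Delta^\#(\,\cdot\,;E_*)$; the count (a) together with (b)--(c) forces $k_2\equiv\pm k_1\ (\mathrm{mod}\ \omega)$, i.e. $k_1\equiv k_2$ or $k_1\equiv-k_2\ (\mathrm{mod}\ \omega)$. For (vi) I would exploit the reflection $\Delta^\#(s+i;\lambda)=\Delta^\#(s;-\lambda)$ of \eqref{ee21a}: if $E_1(k_1)=E_*$ and $E_2(k_2)=-E_*$, then $\Delta^\#(\,\cdot\,;E_*)$ vanishes at $\pm k_1$ (imaginary part $0$) and, via the shift, also at $\pm k_2+i$ (imaginary part $1$). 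Because $i\notin{\sf M}$, these contribute at least three zeros modulo ${\sf M}$ (a double zero at a critical $k_1$ plus one at level $1$, or two at level $0$ plus one at level $1$), contradicting order two; hence $E_1(k_1)\neq-E_2(k_2)$. This last assertion is ultimately the source of the non-overlap $(-\sigma_n)\cap\sigma_m=\varnothing$ in Theorem~\ref{thm.main}.
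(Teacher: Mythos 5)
Your proposal is correct and runs on essentially the same engine as the paper's proof: every assertion is extracted from the zero count of the order-two elliptic function $s\mapsto\Delta^\#(s;E_*)$ (noting it cannot be constant, since the constant would be zero), from the simplicity of the zeros of $\partial_s\Delta^\#$ at the points congruent to $0,\omega/2,\omega/2+i,i$ (Lemma~\ref{lem:resid}), from evenness, and from the shift identity \eqref{ee21a} for part (vi). The one step you flag as delicate --- matching the Weierstrass branches at $k_*\equiv0 \bmod \omega/2$ to list multiplicities in $\mathcal E^\#$ --- is closed in the paper in one line, and your own setup already contains the closure: if $m$ list entries pass through $(k_*,E_*)$, the factorization \eqref{ddd1} gives $\Delta^\#(k;E_*)=O((k-k_*)^m)$ as $k\to k_*$, while your observation that a zero of $\Delta^\#(\cdot;E_*)$ at a critical point is exactly double (i.e. the simple zero of $\partial_s\Delta^\#$ forces $\partial_s^2\Delta^\#(k_*;E_*)\neq0$) yields $m\le2$ directly, so no separate branch-versus-multiplicity analysis of the degree-two Weierstrass polynomial is required.
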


\begin{proof} 
(i) will follow from (iii) because if $ E_1\equiv E_2$ 
then in particular $E_1(k_*) = E_2(k_*)$ for all $k_*\in(0,\omega/2)$. 

(ii) 
Suppose  that $m$ NF band functions $E_1, E_2, \dots, E_m$ intersect at a given point $k_*$:
\begin{align*}
E_* := E_1(k_*) = E_2(k_*) = \dots = E_m(k_*).
\end{align*}
In particular, this means that $\Delta^\#(\cdot; E_*)$ is not a constant function (otherwise this constant would be zero, which is not allowed by the construction of $\Delta^\#$).
Representing $\Delta^\#$ as 
\begin{align*}
 \Delta^\#(k; E_*) = \prod_{j=1}^m\big(1- E_j(k)/E_*\big)\mathcal D(k) 
\end{align*}
with a real analytic $\mathcal D$, we conclude that 
$\Delta^\#(k; E_*) = O((k-k_*)^m)$ as $k\to k_*$. For  
$m \ge 3$ this bound contradicts Lemma~\ref{lem:resid}\eqref{item:resid2} since the zeros of the derivative 
$(\Delta^\#)'_k$ must be simple. Thus $m\le 2$, as required.

(iii)
Let $E_1$ and $E_2$ be two distinct NF band functions such that 
$E_1(k_*) = E_2(k_*)=:E_*$. 
Thus, $k=k_*$ is a double zero of 
$\Delta^\#(\cdot;E_*)$. 
Since $\Delta^\#(\cdot; E_*)$ is even, $k=-k_*$ is another double zero. 
If $k_*\not=0 \mod \omega/2$, then $k_*$ and $-k_*$ are not congruent and 
so we obtain two double zeros in a period cell -- contradiction.

(iv) 
Let us differentiate the equation $\Delta^\#(k; E(k)) = 0$ with respect to $k$:
\begin{align*}
(\Delta^\#)'_k(k; E(k)) + (\Delta^\#)'_\lambda(k; E(k))\, E'(k)=0.
\end{align*}
We note that $\Delta^\#(\cdot; E(k))$ cannot be constant, otherwise this constant would be zero and this contradicts the definition of $\Delta^\#$ (see Lemma~\ref{lem:resid}(i)). 
Thus, by Lemma~\ref{lem:resid}\eqref{item:resid2}, for any $k\not=0 \mod \omega/2$ the first term on the left-hand side is non-zero, and hence so is the second term. 
In particular, $E'(k)\not = 0$, as claimed.

(v) Denote $E_* = E_1(k_1) = E_2(k_2)$, so that $k_1, k_2$ are 
two zeros of the determinant $\Delta^\#(k; E_*)$.  
Assume first that $k_2\not = 0\mod \omega/2$. 
Since $k\mapsto\Delta^\#(k; E_*)$ is even, the 
point $-k_2$ is also a zero non-congruent to $k_2$. 
Since the determinant $\Delta^\#$ has exactly two non-congruent zeros, 
the value $k_1$ must be congruent to $k_2$ or to $-k_2$. 

Suppose that $k_2 = 0\mod\omega/2$. By Lemma~\ref{lem:resid}\eqref{item:resid2} the value 
$k_2$ is a double zero 
of $\Delta^\#(k; E_*)$. Thus $k_1 = k_2\mod \omega$.  

(vi) 
We may assume that $E_1$ is positive and $E_2$ is negative. 
To get a contradiction, assume that for some $k_1,k_2$ we have 
$$
E_1(k_1)= - E_2(k_2)=:E_*, \quad \textup{and hence}\ \quad 
\Delta^\#(k_1; E_*) = \Delta^\#(k_2;-E_*) = 0.
$$
By \eqref{ee21a}, 
$\Delta^\#(k_2+i;E_*) = \Delta^\#(k_2;-E_*)=0$, and so 
$s=k_2+i$ is another zero of the determinant $\Delta^\#(s;E_*)$.
If $k_1\not = 0 \mod \omega/2$, then by the fact that 
$\Delta^\#(k;E_*)$ is even,
the value $s=-k_1$ is yet another non-congruent zero. Thus we have more than two zeros of 
$\Delta^\#$ in the fundamental domain, which gives a contradiction.

If $k_1 = 0\mod\omega/2$, then 
by Lemma~\ref{lem:resid}\eqref{item:resid2}
the zero at $s = k_1$ is double. Therefore, together 
with the zero at $k_2+i$, this gives  
more than two zeros in the fundamental domain, as before.   
 This leads to a contradiction which completes the proof. 
\end{proof}

The next theorem deals with the points $k_0 = 0\mod \omega/2$, where a pair of NF band functions may intersect. The two alternative scenarios discussed in this theorem are illustrated in Figure~\ref{fig:double}.

\begin{theorem} \label{thm:double}
Let $k_0 = 0\mod\omega/2$. 
\begin{enumerate}[{\rm (i)}]
\item\label{item:deg}
Let $E\in\mathcal E^\#$ be  such that no other 
NF band function intersects it at the point $(k_0, E(k_0))$. 
Then $E(k) = E(2k_0-k)$ (i.e. $E$ is even with respect to $k\mapsto k_0-k$) and the point 
$k = k_0$ is a non-degenerate extremum of 
$E(k)$, i.e. $E'(k_0) = 0$ and $E''(k_0)\not = 0$. 
\item\label{item:cross}
Suppose that two distinct NF band functions $E_1$ and $E_2$ intersect at $k_0$: 
$E_* = E_1(k_0) = E_2(k_0)$. Then 
$E_2(k) = E_1(2k_0-k)$, and 
$E_1$ and $E_2$ intersect transversally, i.e. 
$E_1'(k_0) = - E_2'(k_0) \not = 0$.
\end{enumerate}
\end{theorem}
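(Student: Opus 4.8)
The plan is to exploit a reflection symmetry of the residual determinant about the point $k_0$. Since $k_0 = 0 \mod \omega/2$, the reflection $R_{k_0}\colon k \mapsto 2k_0 - k$ equals $k\mapsto -k$ when $k_0 = 0\mod\omega$ and $k\mapsto \omega - k$ when $k_0 = \omega/2\mod\omega$; in either case the evenness $\Delta^\#(-s;\lambda) = \Delta^\#(s;\lambda)$ together with the $\omega$-periodicity from Lemma~\ref{lem:resid}(i) gives $\Delta^\#(2k_0-k;\lambda) = \Delta^\#(k;\lambda)$ for all $k$ and $\lambda$. Consequently, if $E$ is an NF band function, then so is its reflection $\tilde E(k):=E(2k_0-k)$, because $\Delta^\#(k;\tilde E(k)) = \Delta^\#(2k_0-k;E(2k_0-k)) = 0$. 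This reflected band function is the main device in both parts. The second ingredient is that, because $k_0$ is congruent to $0$ or $\omega/2$, Lemma~\ref{lem:resid}\eqref{item:resid2} makes $k_0$ a \emph{simple} zero of $\partial \Delta^\#(\cdot;\lambda)/\partial s$ whenever $\Delta^\#(\cdot;\lambda)$ is non-constant. Writing $E_* = E(k_0)$ (which renders $\Delta^\#(\cdot;E_*)$ non-constant, since it vanishes at $k_0$ but is not identically zero), I conclude that $k\mapsto \Delta^\#(k;E_*)$ vanishes to order \emph{exactly two} at $k_0$: one has $\Delta^\#(k_0;E_*) = (\partial\Delta^\#/\partial s)(k_0;E_*) = 0$ while $(\partial^2\Delta^\#/\partial s^2)(k_0;E_*)\neq0$.

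For part~\eqref{item:deg}, the function $\tilde E$ is an NF band function with $\tilde E(k_0) = E(k_0)$; by hypothesis no NF band function other than $E$ passes through $(k_0,E(k_0))$, so $\tilde E = E$, which is exactly the claimed symmetry $E(k) = E(2k_0-k)$ and gives $E'(k_0) = 0$ upon differentiation. For the non-degeneracy, I factor $\Delta^\#(k;E_*) = (1 - E(k)/E_*)\,\mathcal D(k)$ near $k_0$ with $\mathcal D(k_0)\neq0$, as in the proof of Theorem~\ref{thm:anbr}: only $E$ passes through the point, and by Theorem~\ref{thm:anbr}(i) it appears once in $\mathcal E^\#$. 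Since $\mathcal D(k_0)\neq0$ and $E_*\neq0$, the order of vanishing of $\Delta^\#(\cdot;E_*)$ at $k_0$ equals that of $k\mapsto E(k)-E_*$; the former being exactly two forces $E''(k_0)\neq0$.

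For part~\eqref{item:cross}, the reflection $\tilde E_1(k) = E_1(2k_0-k)$ is again an NF band function passing through $(k_0,E_*)$. By Theorem~\ref{thm:anbr}\eqref{item:anbr1} at most two NF band functions pass through this point, and $E_1, E_2$ already do, so $\tilde E_1\in\{E_1,E_2\}$. If $\tilde E_1 = E_1$, then $E_1$ would be symmetric about $k_0$, forcing $E_1'(k_0) = 0$; but in the factorization $\Delta^\#(k;E_*) = (1-E_1(k)/E_*)(1-E_2(k)/E_*)\,\mathcal D(k)$, $\mathcal D(k_0)\neq0$, the factor from $E_1$ would then vanish to order $\geq2$, making the total order at $k_0$ at least three and contradicting the exact order two established above. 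Hence $\tilde E_1 = E_2$, i.e.\ $E_2(k) = E_1(2k_0-k)$. The same order count shows each of the two factors vanishes to order exactly one, so $E_1'(k_0)\neq0\neq E_2'(k_0)$; differentiating $E_2(k)=E_1(2k_0-k)$ at $k_0$ yields $E_2'(k_0) = -E_1'(k_0)$, the transversality claim.

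The main obstacle is pinning down the exact order-two statement and using it consistently: one must combine the simple-zero property of $\partial\Delta^\#/\partial s$ (Lemma~\ref{lem:resid}\eqref{item:resid2}) with the local factorization of $\Delta^\#$ into band-function factors, and verify at each step that $\Delta^\#(\cdot;E_*)$ is genuinely non-constant so that the lemma applies. A minor but necessary point is to treat the cases $k_0 = 0\mod\omega$ and $k_0 = \omega/2\mod\omega$ uniformly, which the reflection identity $\Delta^\#(2k_0-k;\lambda) = \Delta^\#(k;\lambda)$ achieves.
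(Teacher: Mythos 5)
Your proof is correct and follows essentially the same route as the paper: you introduce the reflected band function $\wt E(k)=E(2k_0-k)$, identify it via Theorem~\ref{thm:anbr}\eqref{item:anbr1}, and control the local behaviour through Lemma~\ref{lem:resid}\eqref{item:resid2} together with the factorization of $\Delta^\#(\cdot;E_*)$ into band-function factors. Your ``exact order two'' formulation is just a repackaging of the paper's explicit second-derivative computations $(\Delta^\#)''_k(k_0;E_*)=-(E''(k_0)/E_*)\mathcal D(k_0)$ and $(\Delta^\#)''_k(k_0;E_*)=2E_1'(k_0)E_2'(k_0)\mathcal D(k_0)/E_*^2$, so the two arguments coincide in substance.
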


\begin{figure}[h!]
\begin{tikzpicture}
\node at (0,0){\includegraphics[width=6cm]{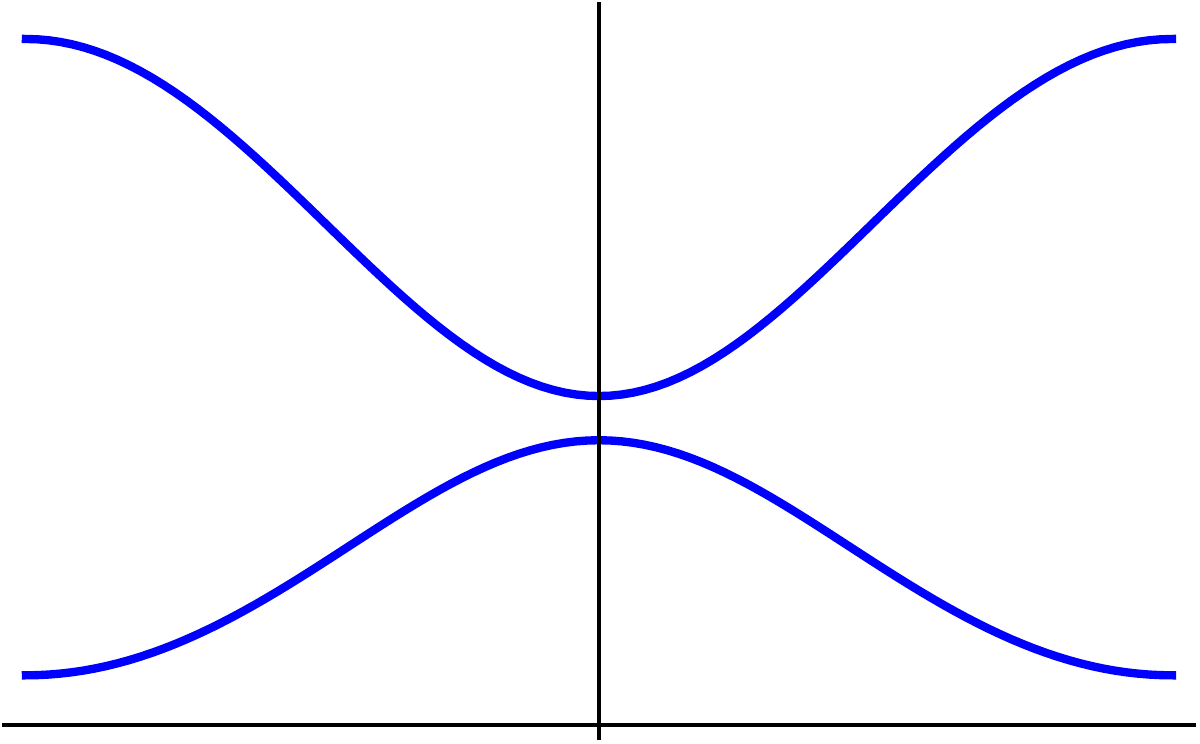}};
\draw (0.3,1.8) node{$E$};
\draw (0.1,-2.1) node{$k_0$};
\draw (3,-2) node{$k$};
\end{tikzpicture}
\quad
\begin{tikzpicture}
\node at (0,0){\includegraphics[width=6cm]{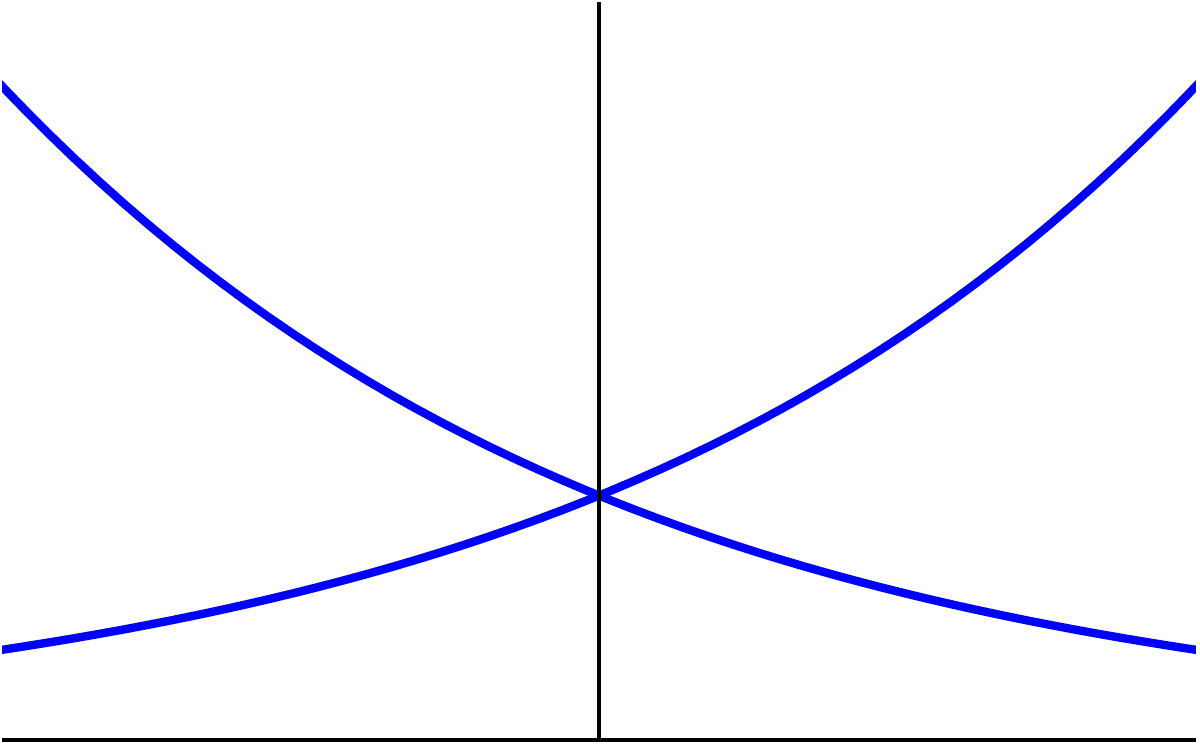}};
\draw (0.3,1.8) node{$E$};
\draw (0.1,-2.1) node{$k_0$};
\draw (3,-2) node{$k$};
\end{tikzpicture}
\caption{Behaviour of band functions near $k_0$.}
\label{fig:double}
\end{figure}

\begin{proof} By Theorem~\ref{thm.branches}, 
along with $E$, the functions $E(-k)$ and 
$E(k+\omega n)$, $n\in\mathbb Z$, are also NF band functions 
(not necessarily distinct from $E$).  We single out one branch of this type:
\begin{align*}
\wt {E}(k) = E(2k_0-k),
\end{align*} 
that satisfies $\wt {E}(k_0) = E(k_0)$.

{\rm (i)} 
By assumption, $E(k)$ is the only NF band function 
passing through the point 
$(k_0, E_*),\, E_* = E(k_0)$. 
On the other hand, $E(k_0) = \wt{E}(k_0)$. Hence 
the function $E(k)$ coincides with $\wt{E}(k)$, that is 
$E(2k_0-k) = E(k)$, which implies that $E'(k_0) = 0$.
To prove the non-degeneracy, 
rewrite the residual secular determinant as
\begin{align*}
\Delta^\#(k; E_*)=(1 - E(k)/E_*)\mathcal D(k),
\end{align*}
where $\mathcal D$ is analytic near $k=k_0$ and  
$\mathcal D(k_0)\not=0$. 
Remembering that $E'(k_0) = 0$, we differentiate twice:
\begin{align*}
(\Delta^\#)''_k(k_0; E_*) 
=&-(E''(k_0)/E_*)\mathcal D(k_0)-2(E'(k_0)/E_*)\mathcal D'(k_0)
\\ &+(1 - E(k_0)/E_*)\mathcal D''(k_0)
\\ =& -(E''(k_0)/E_*)\mathcal D(k_0).
\end{align*}
By Lemma~\ref{lem:resid}\eqref{item:resid2} 
the point $k_0$ is a non-degenerate zero of $(\Delta^\#)'_k$, so that 
$(\Delta^\#)''_k(k_0; E_*)\not = 0$. This implies that $E''(k_0) \not = 0$ as well. 

{\rm(ii)} 
We represent the residual secular determinant as 
$$
\Delta^\#(k; E_*)=(1 - E_1(k)/E_*)(1- E_2(k)/E_*)\mathcal D(k),
$$
where $\mathcal D(k)$ is analytic near $k_0$ with $\mathcal D(k_0)\not=0$. 
We easily compute that 
\begin{align*}
(\Delta^\#)''_k(k_0;E_*) = \frac{2 E_1'(k_0)\, E_2'(k_0)}{E_*^2}\,\mathcal D(k_0).
\end{align*}
By Lemma~\ref{lem:resid}\eqref{item:resid2} 
again, the left-hand side does not vanish, and hence 
$E_1'(k_0) \, E_2'(k_0)\not = 0$. 
Thus, both these derivatives are non-zero. 

Since $\wt{E}_1(k_0)=E_1(k_0)$, we see that the band function 
$\wt{E}_1$ also intersects $E_1$ at the same point. 
By Theorem~\ref{thm:anbr}\eqref{item:anbr1}, it follows that either $\wt{E}_1=E_1$ or $\wt{E}_1=E_2$. 
In the first case we find $E_1'(k_0)=0$, so this possibility is ruled out. Thus we find that $\wt{E}_1=E_2$, as claimed. 
\end{proof}

\subsection{Reduction to multiplication operators}
\label{sec.ddd4}
The following theorem is a preliminary step towards the proof of Theorem~\ref{thm.main}.
\begin{theorem}\label{thm:osum}
Let $\mathbf H$ be a smooth periodic Hankel operator, and let $\mathcal E$ 
be the collection of the band functions of $\mathbf H$ from Theorem~\ref{thm.branches}. 
Then $\mathbf H|_{(\Ker \mathbf H)^\perp}$ is unitarily equivalent to the orthogonal sum
\begin{equation}
\bigoplus_{E\in\mathcal E} \calM[E; \Omega],
\label{eq.osum}
\end{equation}
where $\calM[E; \Omega]$ is the operator of multiplication by $E(k)$ in 
$L^2(\Omega)$. 
\end{theorem}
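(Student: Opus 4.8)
The plan is to diagonalise the direct integral \eqref{b4} fibrewise, using the globally analytic eigenstructure supplied by Theorem~\ref{thm.branches}.

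\emph{First, reduction to a fibrewise diagonalisation.} Under $\calU$ we have $\calU\mathbf H\calU^*=\int_\Omega^\oplus H(k)\,dk$ with compact self-adjoint fibres $H(k)$ in $\ell^2(\bbZ)$. Since this operator is decomposable, I would observe that a vector $F\in\mathfrak H$ lies in its kernel if and only if $H(k)F(k)=0$ for almost every $k$, i.e. if and only if $F(k)\in\Ker H(k)$ a.e.; hence $\calU(\Ker\mathbf H)=\int_\Omega^\oplus\Ker H(k)\,dk$. Consequently $\mathbf H|_{(\Ker\mathbf H)^\perp}$ is unitarily equivalent to $\int_\Omega^\oplus H(k)|_{(\Ker H(k))^\perp}\,dk$, where $(\Ker H(k))^\perp=\overline{\operatorname{ran}H(k)}$ is the closed linear span of the eigenvectors of $H(k)$ with non-zero eigenvalue. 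It then remains to diagonalise this restricted direct integral.

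\emph{Next, a global orthonormal eigenbasis.} As noted after \eqref{eq:perl}, $H(k)$ is a self-adjoint analytic family in the strip $|\Im k|<1/2$, so Rellich's theorem for analytic families of a single real variable (the result already used in the proof of Theorem~\ref{thm.branches}, see \cite{Rellich1942} and \cite[Ch.~VII, \S3.5]{Kato}) applies near every non-zero eigenvalue, where the perturbation analysis is finite-dimensional. It yields real-analytic eigenvalue branches together with real-analytic eigenvectors that remain orthonormal even through eigenvalue crossings. By Theorem~\ref{thm.branches} the list of non-zero eigenvalue branches is exactly $\mathcal E$, counted with multiplicity. I would thus obtain, for each $E\in\mathcal E$, a real-analytic unit eigenvector $\phi_E(k)\in\ell^2(\bbZ)$ with $H(k)\phi_E(k)=E(k)\phi_E(k)$, such that $\{\phi_E(k)\}_{E\in\mathcal E}$ is orthonormal for every $k$. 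Since $H(k)$ is compact and self-adjoint its non-zero eigenvectors span $(\Ker H(k))^\perp$, and since Theorem~\ref{thm.branches} accounts for all non-zero eigenvalues with their exact multiplicities (and each band function is non-vanishing), $\{\phi_E(k)\}_{E\in\mathcal E}$ is in fact an orthonormal basis of $(\Ker H(k))^\perp$ for every $k\in\bbR$.

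\emph{Then, the diagonalising unitary.} I would define $W$ from $\int_\Omega^\oplus(\Ker H(k))^\perp\,dk$ to $\bigoplus_{E\in\mathcal E}L^2(\Omega)$ by $(WF)_E(k)=\jap{F(k),\phi_E(k)}$, the inner product in $\ell^2(\bbZ)$. Each component is measurable because $k\mapsto\phi_E(k)$ is analytic, and Parseval's identity in the basis $\{\phi_E(k)\}$ gives $\sum_{E\in\mathcal E}\int_\Omega|\jap{F(k),\phi_E(k)}|^2\,dk=\int_\Omega\|F(k)\|^2\,dk$, so $W$ is isometric; it is onto, with inverse $(g_E)_E\mapsto F(k)=\sum_{E\in\mathcal E}g_E(k)\phi_E(k)$. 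Finally, using $H(k)\phi_E(k)=E(k)\phi_E(k)$ and the reality of $E(k)$,
\begin{align*}
\big(W\,\textstyle\int_\Omega^\oplus H(k)\,dk\,F\big)_E(k)
&=\jap{H(k)F(k),\phi_E(k)}
=\jap{F(k),H(k)\phi_E(k)}\\
&=E(k)\,\jap{F(k),\phi_E(k)}
=E(k)\,(WF)_E(k),
\end{align*}
so $W$ conjugates the restricted direct integral into $\bigoplus_{E\in\mathcal E}\calM[E;\Omega]$. Combined with the first step, this proves the claim. The hard part will be the global, mutually consistent, orthonormal choice of the eigenvectors $\phi_E$ labelled by $\mathcal E$, across the eigenvalue crossings located at $k=0\bmod\omega/2$ (that is, $k\in\{-\omega/2,0\}$ in $\Omega$) and in the presence of flat bands of multiplicity greater than one; this is exactly where the one-real-parameter Rellich theorem is indispensable, since it keeps the eigenvectors analytic and orthonormal through such crossings. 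If one prefers to avoid global analyticity, it is enough to note that only measurability of $k\mapsto\phi_E(k)$ is needed for $W$ to be a well-defined unitary, and measurability follows from the local analytic selection away from the finite, hence null, crossing set.
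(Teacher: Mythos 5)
Your proposal is correct and follows essentially the same route as the paper: the paper likewise uses the decomposition \eqref{b4} together with standard analytic perturbation theory (Rellich/Kato) to choose real-analytic normalized eigenvectors $\psi_n(k)$ for each band function, and then observes that the resulting subspaces $\mathfrak H_n=\{f(k)\psi_n(k): f\in L^2(\Omega)\}$ are pairwise orthogonal, invariant, carry multiplication by $E_n$, and have orthogonal complement equal to $\Ker\wt{\mathbf H}$. Your explicit unitary $W$ and the Parseval verification are just a more detailed packaging of the paper's subspace argument, and your extra care about crossings and flat bands of higher multiplicity is exactly the point silently absorbed into the citation of Rellich's theorem in the paper.
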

\begin{proof}
This is a standard argument, compare e.g. \cite[Theorem~XIII.86]{RS4}. 
We use the representation \eqref{b4}, i.e. we consider the operator
$$
\wt{\mathbf H}:=\calU\mathbf H\calU^*=\int_{\Omega}^\oplus H(k)\, dk
\quad\text{ in }\quad
\mathfrak H=\int_{\Omega}^\oplus \ell^2\, dk.
$$
It suffices to prove the stated unitary equivalence for $\wt{\mathbf H}|_{(\Ker \wt{\mathbf H})^\perp}$.

Let us enumerate the band functions $E\in \mathcal E$ in an arbitrary way,  so that  
$\mathcal E=\{E_n\}_{n=1}^\infty$. For each $n\geq1$, let $\psi_n(k)$, $k\in\bbR$, 
be the normalized eigenvector corresponding to $E_n(k)$. 
According to the standard analytic perturbation theory, 
see e.g. \cite[Ch. II,\S4 and Ch. VII, \S 3]{Kato} or \cite[Section~XII.2]{RS4}, 
the eigenvectors $\psi_n(k)$ can be taken real analytic in $k$. Consider the subspaces 
$$
\mathfrak H_n=\{\psi\in\mathfrak H: \psi(k)=f(k)\psi_n(k),\  f\in L^2(\Omega)\}, \quad n\geq1.
$$
Then $\mathfrak H_n$ are pairwise orthogonal closed subspaces in $\mathfrak H$. 
It is clear that each subspace $\mathfrak H_n$ is invariant for 
$\wt{\mathbf H}$, and the restriction of 
$\wt{\mathbf H}$ to $\mathfrak H_n$ 
is unitarily equivalent to $\calM[E_n;\Omega]$. 
Finally, it is clear that the orthogonal complement to $\oplus_{n\geq1}\mathfrak H_n$ in $\mathfrak H$ coincides with the kernel of $\wt{\mathbf H}$. This completes the proof. 
\end{proof}

\subsection{Proof of Theorem~\ref{thm.main}}
\label{subsect:proofmain}
Let us consider the orthogonal sum \eqref{eq.osum} for $E\in \mathcal E^\flat$ and $E\in \mathcal E^\#$ 
separately. 
 
 (i) 
The terms $\calM[E; \Omega]$ in \eqref{eq.osum} with 
$E\in{\mathcal E}^\flat$ give eigenvalues of infinite multiplicity. These eigenvalues are symmetric with respect to zero by Lemma~\ref{lma.d1}. If there are infinitely many flat band functions, they must accumulate to zero (and to no other point) because they are eigenvalues of a compact operator $H(k)$ (for any $k\in\bbR$). This proves part (i) of the theorem. 

 (ii) 
If ${\mathcal E}^\#$ is empty, part (ii) of the theorem is trivially satisfied. 
Assume that it is not empty. The terms $\calM[E; \Omega]$ with $E\in{\mathcal E}^\#$ 
in \eqref{eq.osum} yield intervals of 
the absolutely continuous spectrum. In order to describe them, it is convenient to write 
\begin{equation}
\bigoplus_{E\in{\mathcal E}^\#} \calM[E; \Omega]
=
\biggl(\bigoplus_{E\in{\mathcal E}^\#} \calM[ E; (-\omega/2,0)]\biggr)
\bigoplus
\biggl(\bigoplus_{E\in{\mathcal E}^\#} \calM[E; (0,\omega/2)]\biggr),
\label{eq.osum2}
\end{equation}
where we split every operator of multiplication in $L^2(\Omega)$ into the orthogonal sum of the operators of multiplication in $L^2(-\omega/2,0)$ and $L^2(0,\omega/2)$. 
By the last statement of Theorem~\ref{thm.branches}, the two terms in the right-hand side of \eqref{eq.osum2} are unitarily equivalent to each other. Thus, it suffices to consider just one of them, bearing in mind that the multiplicity of the absolutely continuous spectrum has to be doubled. We consider the second term in \eqref{eq.osum2}.

Let us enumerate arbitrarily the NF band functions and write 
${\mathcal E}^\# = \{E_n\}$.
Then for every index $n$ 
the spectrum of $\calM[E_n, \Omega]$ coincides with the interval (NF spectral band)  
\begin{align}\label{eq:bandsgen}
\sigma_n=\{ E_n(k): k\in[0,\omega/2]\}.
\end{align}
By Theorem~\ref{thm:anbr}\eqref{item:anbr2},  
each real analytic function $E_n$ satisfies $E_n'(k)\not = 0$ 
on $(0,\omega/2)$, and hence the spectrum $\sigma_n$ 
is absolutely continuous of multiplicity one.
As all band functions are non-vanishing, $\sigma_n$ does not contain zero. 

Let us check that the intersection of any two NF bands can contain at most one point. Suppose, to get a contradiction, that for some $n\not=m$, the intersection of the interior of $\sigma_n$ with the interior of $\sigma_m$ is non-empty. This means that $E_n(k_1) = E_m(k_2)$ for some $k_1,k_2\in(0,\omega/2)$. By Theorem~\ref{thm:anbr}\eqref{item:anbr1a}, we have $k_1\not=k_2$ because two NF band functions cannot intersect inside $(0,\omega/2)$. On the other hand, by Theorem~\ref{thm:anbr}\eqref{item:anbr3}, either $k_1 = k_2\mod\omega$ or $k_1 = -k_2\mod \omega$. This is incompatible with $k_1,k_2\in(0,\omega/2)$. Thus the intersection of the interiors of bands is empty, as claimed.  

From here it follows that the multiplicity of the absolutely continuous 
spectrum of the second term in \eqref{eq.osum2} is one. 
Therefore, the multiplicity of spectrum of the whole sum in \eqref{eq.osum2} is two, as required. 

The intersection $(-\sigma_n)\cap\sigma_m$ is empty by 
Theorem~\ref{thm:anbr}\eqref{item:anbr4}.

Suppose the set $\mathcal E^\#$ is infinite. Fix some $k_*\in(0,\omega/2)$; 
since all values $E_n(k_*)$ are eigenvalues of the compact operator 
$H(k_*)$, we have $E_n(k_*)\to0$ as $n\to\infty$. 
Thus there exists a sequence of points each of which lies in the interior of 
$\sigma_n$, that converges to zero.  
As the interiors are disjoint, the spectral bands $\sigma_n$ cannot have 
any other accumulation point. 

 (iii) 
None of the terms in \eqref{eq.osum} has singular continuous spectrum, which proves part (iii).

The proof of Theorem~\ref{thm.main} is complete. 
\qed

\begin{remark}\label{rem:bands}
\begin{enumerate}[{\rm (i)}]
\item 
The $n$'th band $\sigma_n$ is given by $[E_n(0), E_n(\omega/2)]$ or $[E_n(\omega/2), E_n(0)]$, 
depending on the monotonicity of $E_n$. 
\item 
According to Theorem~\ref{thm.main}, any two spectral bands \eqref{eq:bandsgen} do not have common interior points, and hence they can only touch (i.e. have one common endpoint). We claim that if two bands touch, then the corresponding band functions necessarily intersect (see second scenario in Figure~\ref{fig:double}).
To see this, assume that two bands $\sigma_<$ and $\sigma_>$ touch and $\sigma_>$ is above $\sigma_<$, i.e. the corresponding band functions satisfy $E_<(k_1)\le E_>(k_2)$ for all $k_1, k_2\in [0, \omega/2]$. Suppose for definiteness that $E_<$ is decreasing on $(0,\omega/2)$, so that $\sigma_< = [E_<(\omega/2), E_<(0)]$. 
Since the bands $\sigma_<$ and $\sigma_>$ touch, we have $E_<(0) = E_>(k_*)$ where $k_* = 0$ or $k_*=\omega/2$. By Theorem~\ref{thm:anbr}\eqref{item:anbr3}, the only possible choice is $k_* =0$. Thus, $E_<$ and $E_>$ intersect at $k_*=0$. 
\end{enumerate}
\end{remark}

\subsection{The Carleman operator}\label{sec.b8} 
Let us illustrate the structure of band functions by considering the 
Carleman operator, i.e. the Hankel operator with the kernel function
$h(t)=1/t$. The Carleman operator has purely absolutely continuous spectrum $[0, \pi]$ of multiplicity two (see e.g. \cite[Section~10.2]{Peller}).
In this section we consider this operator as 
a periodic Hankel operator with the kernel function $p(\log t)/t$, where the function $p(x) = 1$ is thought of as periodic with an arbitrary period $T>0$. 
In this case the band functions and hence the spectral bands can be found explicitly. 
For this operator we have 
${\wt p}_0=1$ and ${\wt p}_\ell=0$ for 
$\ell\not=0$, so the matrix of $H(k)$ is diagonal. 
In other words, $H(k)$ is the operator of multiplication by the sequence
\[
E_n(k)=\Gamma(\tfrac12-i \omega n - ik)\Gamma(\tfrac12+i\omega n+ ik)
=\frac{\pi}{\cosh\pi(k+\omega n)},  \quad n\in\bbZ.
\]
The collection $\mathcal E = \{E_n\}_{n\in\bbZ}$ is the family 
of band functions of the Carleman operator.  
Clearly, $E_n(k)>0$ for all $n$ and $k$, 
which reflects the fact that the operator is positive semi-definite. 
Furthermore, we have $E_n(-k) = E_{-n}(k)$ and 
$E_n(k+\omega m) = E_{n+m}(k)$, i.e. the family 
$\mathcal E$ is invariant with respect to the reflection 
$k\mapsto -k$ and to the shifts $k\mapsto k+\omega m$, which  agrees with 
Theorem~\ref{thm.branches}. 
\begin{figure}
\begin{tikzpicture}
\node at (0,0){\includegraphics[width=0.7\textwidth]{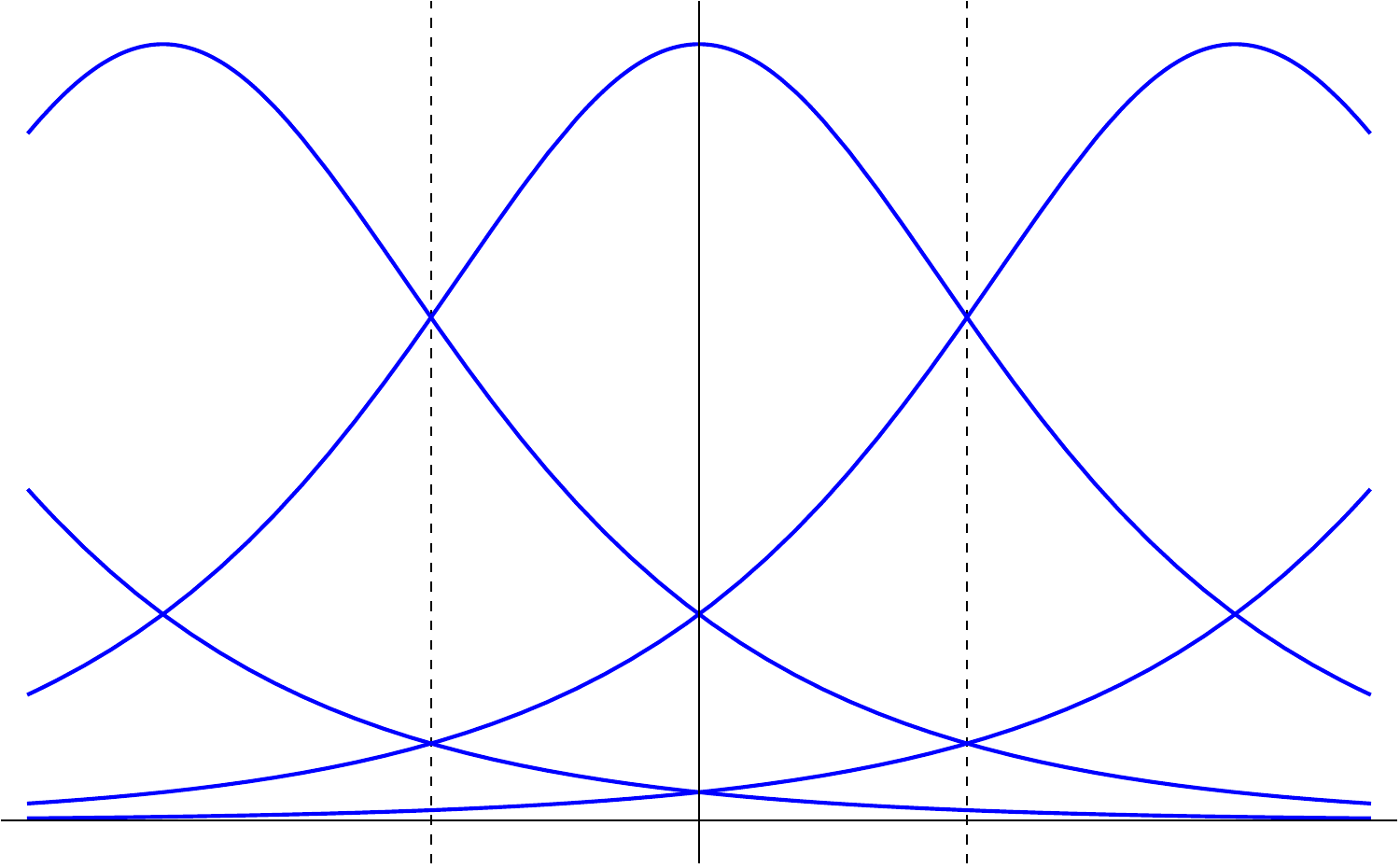}};
\draw (1.2,2.7) node{$\widetilde{E}_0$};
\draw (1.2,0.5) node{$\widetilde{E}_1$};
\draw (1.2,-1.5) node{$\widetilde{E}_2$};
\draw (0.2,3.5) node{$E$};
\draw (5.5,-3.2) node{$k$};
\draw (-0.3,-3.2) node{$0$};
\draw (2.4,-3.2) node{$\omega/2$};
\draw (-2.6,-3.2) node{$-\omega/2$};
\end{tikzpicture}
\caption{Band functions of the Carleman operator.}
\label{figure4} 
\end{figure}

To find the spectral bands we enumerate the band functions 
in a different way. 
Let $\{\widetilde{E}_n\}_{n=0}^\infty$ be the enumeration of the family $\mathcal E$ such that
$$
\widetilde{  E}_0(k)>\widetilde{  E}_1(k)>\cdots>0, \quad k\in(0,\omega/2).
$$
In this case, the sequence 
$\{\widetilde{E}_n(k)\}_{n=0}^\infty$, $k\in(0,\omega/2)$, 
coincides with the set of eigenvalues of $H(k)$, listed in non-decreasing order. 
It is straightforward to conclude that 
\begin{align*}
\widetilde{E}_{2n}(k)&=\frac{\pi}{\cosh\pi(k+\omega n)}, \quad n\geq0,
\\
\widetilde{E}_{2n+1}(k)&=\frac{\pi}{\cosh\pi (k-\omega (n+1))}, \quad n\geq0.
\end{align*}
We see that all even band functions 
$\widetilde{ E}_{2n}$ are decreasing and all 
odd band functions $\widetilde{ E}_{2n+1}$ 
are increasing on $(0,\omega/2)$, which agrees with Theorem~\ref{thm:alterpos} below. 
The spectral bands (enumerated from the top down) are 
\begin{align*}
\sigma_{2n}&=[\wt E_{2n}(\omega/2),\wt E_{2n}(0)],\quad n\geq0,
\\
\sigma_{2n+1}&=[\wt E_{2n+1}(0),\wt E_{2n+1}(\omega/2)], \quad n\geq0.
\end{align*}
Observe that 
\begin{align*}
\widetilde E_{0}(0) = \pi, \  
&\ \widetilde E_{2n}(\omega/2) = \widetilde E_{2n+1}(\omega/2) = \frac{\pi}{\cosh \pi\omega (n+1/2)},\\
\widetilde E_{2n+2}(0) = &\ \widetilde E_{2n+1}(0) = \frac{\pi}{\cosh \pi\omega (n+1)},
\end{align*}
for all $n\geq0$.
Thus (in line with Theorem~\ref{thm.main}), the spectral bands have disjoint interiors and accumulate at zero, and there are no gaps between consecutive bands. As a consequence, the absolutely continuous spectrum given by \eqref{eq:ac}, coincides with $[0,\pi]$, as expected.  

\subsection{Degenerate flat bands}
\label{sec.ddd-flat}

As we saw earlier, properties of elliptic functions provide a powerful tool for the analysis of non-flat band functions. Unfortunately, this approach 
is not applicable when studying flat bands. 
For example, can a flat band function intersect a non-flat one? 
Can a flat band be degenerate? 
We do not have an answer to the first question. 
On the other hand, we can 
claim that in contrast to the non-flat bands, the flat ones can be degenerate. 
Below we explain how flat bands of multiplicity $>1$ can occur. 
 
We start with a $T$-periodic Hankel operator $\mathbf H$ that has a flat band $E_*$. (One can take, for example, the Mathieu-Hankel operator for a suitable choice of the parameter $A$, see Section~\ref{sec.e}.) Let us consider the same operator $\mathbf H$ as a Hankel operator with the period $2T$. Let us denote by $H^{(2T)}(k)$, $\widetilde{p}_n^{(2T)}$, $\widetilde{\mathfrak s}^{(2T)}_n$ etc. the same objects corresponding to the period $2T$. 

Clearly, we have $\omega^{(2T)}=\frac12\omega$ and 
\[
\widetilde{p}_n^{(2T)}
=
\begin{cases}
\widetilde{p}_{n/2}
&\text{ if $n$ even,}
\\
0
&\text{ if $n$ odd.}
\end{cases}
\]
It follows that $\widetilde{\mathfrak s}^{(2T)}_n=0$ if $n$ is odd and 
\[
\widetilde{\mathfrak s}^{(2T)}_{2n}
=
\frac{\widetilde{p}_n}{\Gamma(1-i\omega^{(2T)}2n)}
=
\frac{\widetilde{p}_n}{\Gamma(1-i\omega n)}
=
\widetilde{\mathfrak s}_n.
\]
According to \eqref{eq:bk}, we find
\[
[H^{(2T)}(k)]_{n,m} = \overline{\gamma_n^{(2T)}(k)} \widetilde{\mathfrak s}_{n-m}^{(2T)}\gamma_m^{(2T)}(k), 
\]
where
\[
\gamma_n^{(2T)}(k)  
= \Gamma\big(\tfrac12 + i(\omega^{(2T)} n + k)\big)
= \Gamma\big(\tfrac12 + i(\tfrac12\omega n + k)\big).
\]
So we can write
\[
\gamma_{2n}^{(2T)}(k)=\gamma_n(k), 
\quad
\gamma_{2n+1}^{(2T)}(k)=\gamma_n(k+\tfrac12\omega).
\]
Since $\widetilde{\mathfrak s}^{(2T)}_n=0$ for odd $n$, we find
\[
[H^{(2T)}(k)]_{n,m} =0
\]
if $n-m$ is odd. It follows that the two subspaces of $\ell^2(\bbZ)$ corresponding to even and odd indices are invariant for $H^{(2T)}(k)$. Let us write this more formally as follows: 
\[
H^{(2T)}(k)
=
\begin{pmatrix}
H^{(2T)}_{\mathrm{even}}(k) & 0
\\
0 &H^{(2T)}_{\mathrm{odd}}(k)
\end{pmatrix}
\]
with respect to the decomposition
\[
\ell^2(\bbZ)=\ell^2_{\mathrm{even}}(\bbZ)\oplus\ell^2_{\mathrm{odd}}(\bbZ), 
\]
where
\begin{align*}
\ell^2_{\mathrm{even}}(\bbZ)=&\ \{x\in\ell^2(\bbZ): x_{2n+1}=0, \quad n\in\bbZ\},\\
 \ell^2_{\mathrm{odd}}(\bbZ)=&\ \{x\in\ell^2(\bbZ): x_{2n}=0, \quad n\in\bbZ\}.
\end{align*}
%
%
For the matrix entries of $H^{(2T)}_{\mathrm{even}}(k)$ we have
\begin{align*}
[H^{(2T)}_{\mathrm{even}}(k)]_{2n,2m}
&=
[H^{(2T)}(k)]_{2n,2m} 
= 
\overline{\gamma_{2n}^{(2T)}(k)} \widetilde{\mathfrak s}_{2n-2m}^{(2T)}\gamma_{2m}^{(2T)}(k)
\\
&=
\overline{\gamma_{n}(k)} \widetilde{\mathfrak s}_{n-m}\gamma_{m}(k)
=
[H(k)]_{n,m} 
\end{align*}
and similarly 
\begin{align*}
[H^{(2T)}_{\mathrm{odd}}(k)]_{2n+1,2m+1}
&=
[H^{(2T)}(k)]_{2n+1,2m+1} 
= 
\overline{\gamma_{2n+1}^{(2T)}(k)} \widetilde{\mathfrak s}_{2n-2m}^{(2T)}\gamma_{2m+1}^{(2T)}(k)
\\
&=
\overline{\gamma_{n}(k+\tfrac12\omega)} \widetilde{\mathfrak s}_{n-m}\gamma_{m}(k+\tfrac12\omega)
=
[H(k+\tfrac12\omega)]_{n,m}.
\end{align*}
We conclude that $H^{(2T)}(k)$ is unitarily equivalent to the orthogonal sum
\[
H(k)\oplus H(k+\tfrac12\omega). 
\]
It follows that list of the band functions of $H^{(2T)}(k)$ is given by the union of two lists: $\mathcal E=\{E(k)\}$ and $\{E(k+\tfrac12\omega)\}$, where $\mathcal E$ is the list of the band functions of $H(k)$. It follows that if $E_*$ is a flat band function of $H(k)$ with multiplicity $m$, then it is a flat band function of $H^{(2T)}(k)$ with multiplicity $2m$.  In particular, the multiplicity of a flat band can be made as large as we wish by multiplying the period by a large integer.

\section{The alternation pattern for band functions}
\label{sec.alternation}

\subsection{Main statement}
Let $\mathbf H$ be a smooth periodic Hankel operator, and let $\mathcal E^\#$ be the list of the corresponding non-flat band functions. By Theorem~\ref{thm:anbr}\eqref{item:anbr2}, each band function $E\in\mathcal E^\#$ is either strictly increasing or strictly decreasing on $(0,\omega/2)$. For an individual band function, can one tell if it is increasing or decreasing? In this section, we give an answer. 
We establish a global property of the band functions of $\mathbf H$ which we call an \emph{alternation pattern}. It says that the band function $E$ is increasing or decreasing depending on the parity of its order in the list of all band functions, ordered by decreasing absolute value.

Recall that the band functions $E(k)$ are real analytic in $k\in\bbR$ and do not vanish. 
It follows that the absolute values $|E(k)|$ are also real analytic in $k\in\bbR$. 
Consider the non-flat band functions $E\in\mathcal E^\#$ and let us enumerate them in such a way that 
\begin{align}\label{eq:moden}
 |E_0(k)| > |E_1(k)| > \dots,\quad k\in (0, \omega/2), \quad E_n\in\mathcal E^\#.
\end{align}
This is possible due to Theorem~\ref{thm:anbr}\eqref{item:anbr1a}, \eqref{item:anbr4}, since the band functions do not intersect for these values of $k$.
Now we can establish the global alternation pattern for the band functions $E_n\in\mathcal E^\#$. 

\begin{theorem}\label{thm:mod}
Let $\mathbf H$ be a smooth periodic Hankel operator and let $\{E_n\}$ be its non-flat band functions enumerated as stated above. 
Then for each $k\in (0, \omega/2)$ we have
\begin{align}\label{eq:alter}
(-1)^{n+1} \frac{d}{d k} \, |E_n(k)| > 0, \quad n = 0, 1, \dots,
\end{align}
i.e. the even band functions $|E_0(k)|$, $|E_2(k)|$, 
etc. attain a maximum at $k=0$ and a minimum at $k=\omega/2$. 
Conversely, the odd band functions $|E_1(k)|$, $|E_3(k)|$,
etc. attain a minimum at $k=0$ and a maximum at $k=\omega/2$. 
\end{theorem}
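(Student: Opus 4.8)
The plan is to read off the monotonicity of each $|E_n|$ from the motion of the zeros of the residual secular determinant $\Delta^\#(\cdot;\lambda)$ as the real parameter $\lambda$ decreases. Recall from Lemma~\ref{lem:resid} that for each fixed $\lambda\neq0$ the function $s\mapsto\Delta^\#(s;\lambda)$ is elliptic of order two with poles only at the points congruent to $\pm i/2$, so it has exactly two zeros per cell of ${\sf M}$. First I would show that both zeros lie on the \emph{real locus}
\[
\mathcal G=\{\Im s=0\}\cup\{\Im s=1\}\cup\{\Re s=0\}\cup\{\Re s=\omega/2\},
\]
the grid of four lines joining the four points $0,\omega/2,\omega/2+i,i$ of Lemma~\ref{lem:resid}\eqref{item:resid2}. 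Indeed, by the evenness and reality of $\Delta^\#$ (Lemma~\ref{lem:resid}(i)) together with the shift relation \eqref{ee21a}, $\Delta^\#(\cdot;\lambda)$ is real-valued on each of these four lines; and since (for real $\lambda$) its zero set is invariant under both $s\mapsto-s$ and $s\mapsto\overline s$, while containing only two points, each zero must have $\Re s\in\{0,\omega/2\}$ or $\Im s\in\{0,1\}$. On $\mathcal G$ the bottom line $\{\Im s=0\}$ carries the \emph{positive} bands, the top line $\{\Im s=1\}$ the \emph{negative} bands (again by \eqref{ee21a}), and the two vertical lines carry the spectral gaps. The rectangle $R=[0,\omega/2]\times[0,1]$ contains exactly one of the two zeros, lying on its boundary $\partial R$, a $4$-cycle through the corners $0,\omega/2,\omega/2+i,i$; I track this single zero $s(\lambda)$.

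The decisive geometric observation is that a \emph{horizontal} edge of $\partial R$ joins a $k=0$ corner ($0$ or $i$) to a $k=\omega/2$ corner ($\omega/2$ or $\omega/2+i$), whereas each \emph{vertical} edge joins two corners of the \emph{same} type; moreover the two poles $\pm i/2$ sit on the vertical line $\Re s=0$ attached to the two $k=0$ corners. For the base case I would note that as $\lambda\to+\infty$ the zero approaches the pole $i/2$, hence the first corner it reaches as $\lambda$ decreases is a $k=0$ corner; this launches the top band $E_0$ with $|E_0|$ maximal at $k=0$, so $|E_0|$ is decreasing, as required by \eqref{eq:alter} for $n=0$.

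Next I would follow $s(\lambda)$ along $\partial R$ through all of $(0,\infty)$. As $\lambda$ sweeps across the $|E|$-range $[\min,\max]$ of a single band (in the ordering \eqref{eq:moden}), the zero traverses one horizontal edge from the corner where $|E_n|$ is maximal to the corner where it is minimal; a maximum at a $k=0$ (resp.\ $k=\omega/2$) corner means $|E_n|$ is decreasing (resp.\ increasing) on $(0,\omega/2)$. At each corner the behaviour is governed by Theorem~\ref{thm:double}: a transversal intersection (case~\eqref{item:cross}) reflects the path back along the same horizontal edge onto the next band, while a non-degenerate extremum with an adjacent gap (case~\eqref{item:deg}) routes it onto the perpendicular vertical edge, across the gap, and then back to a corner of the \emph{same} $k$-type. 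Thus every band traversal flips the $k$-type of the extremal corner, while every gap traversal preserves it. Consequently the $k$-type at which $|E_n|$ attains its maximum alternates with $n$, which together with the base case yields $(-1)^{n+1}\frac{d}{dk}|E_n(k)|>0$.

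The main obstacle will be the rigorous bookkeeping of this trajectory near the corners and poles: one must verify that $s(\lambda)$ is a genuinely continuous, single-valued path that meets the bands precisely in the order \eqref{eq:moden}, that across a gap it re-emerges on a corner of the same $k$-type (so no type-flip is lost), and that it never stalls. Here the simplicity of the zeros of $\partial_s\Delta^\#$ from Lemma~\ref{lem:resid}\eqref{item:resid2}, the bound of two on the number of intersecting bands from Theorem~\ref{thm:anbr}\eqref{item:anbr1}, and the strict monotonicity of each $E_n$ on $(0,\omega/2)$ from Theorem~\ref{thm:anbr}\eqref{item:anbr2} are exactly the inputs needed to rule out pathological corner behaviour and to certify the reflection-versus-turn dichotomy.
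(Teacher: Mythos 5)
Your proposal is correct in outline, and it is a genuinely different argument from the paper's. The paper's proof (Section~\ref{sec.d5a}) is a short computation: writing $e_n(k)=|E_n(k)|$, it forms the symmetrized determinant $d(k;\lambda)=\Delta^\#(k;\lambda)\,\Delta^\#(k;-\lambda)=\prod_n\bigl(1-e_n(k)^2/\lambda^2\bigr)$, which by the representation of Theorem~\ref{thm.c9} equals $-\alpha(\lambda)^2\mathcal P(k)^2+\beta(\lambda)^2$; implicit differentiation of $d(k;e_n(k))=0$ gives $e_n'(k)=-d_k'/d_\lambda'$, where $(-1)^n d_\lambda'(k;e_n(k))>0$ because the zeros $e_n(k)$ of $\lambda\mapsto d(k;\lambda)$ are simple and $d\to1$ as $\lambda\to\infty$, while $d_k'(k;e_n(k))=-2\alpha(e_n(k))^2\mathcal P(k)\mathcal P'(k)>0$ by $\alpha(e_n(k))\neq0$ (Theorem~\ref{thm.c9}\eqref{item:nonzero}) together with $\mathcal P>0$ and $\mathcal P'<0$ on $(0,\omega/2)$ (Lemma~\ref{prop:elf}). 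You instead track the Floquet zero of $\Delta^\#(\cdot;\lambda)$ along the boundary of the quarter cell as $\lambda$ decreases --- a Hill-discriminant-like picture in which band traversals occur on horizontal edges (joining corners of opposite $k$-type) and gap traversals on vertical edges (same $k$-type), so the extremal corner type alternates. Your supporting claims check out: the grid lemma is sound (a zero off the grid would generate a four-point orbit under $s\mapsto-s$ and $s\mapsto\overline s$, contradicting order two); the corner dichotomy is exactly what Theorem~\ref{thm:double} supplies; the order-two count automatically excludes the one scenario your parity argument must rule out, namely a zero-length gap whose two band edges sit at corners of opposite type (that would produce two non-congruent double zeros); and the bands are met in the order \eqref{eq:moden} by the pointwise ordering together with the disjointness of band interiors and of $\sigma_n$ from $-\sigma_m$. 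Two steps in your plan still need to be pinned down rather than asserted: the base case requires $\Delta^\#(\cdot;\lambda)\to1$ locally uniformly on $\bbC\setminus{\sf\Lambda}$ as $\lambda\to\infty$ (available from the uniform bound \eqref{eq:nontrunc} plus $\Delta^\flat(\lambda)\to1$), which by a Hurwitz argument forces the zero in the rectangle onto the left vertical edge near the pole $i/2$; and the continuity and single-valuedness of the tracked zero through corners should be justified by Rouch\'e's theorem combined with the local analysis of Theorem~\ref{thm:double}. In terms of trade-offs: the paper's computation is much shorter and sidesteps all trajectory bookkeeping, whereas your argument is more conceptual --- it never uses the explicit coefficients $\alpha,\beta$ beyond order-two ellipticity, and it exhibits the alternation as the geometric statement that band edges are precisely the parameter values at which the Floquet zero leaves one line of the real locus.
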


In Section~\ref{sec.alternation2} we present simple corollaries of this theorem for positive operators and for operators bounded from below. 
Next, in Section~\ref{sec.alternation3} we discuss a standard Jacobi elliptic function which plays a key role in the proof of Theorem~\ref{thm:mod}.
After some preparations, the proof of Theorem~\ref{thm:mod} is given in Section~\ref{sec.d5a}. The proof is based on a representation \eqref{eq:repr} for the (residual) secular determinant which may be interesting in its own right.

\subsection{Spectral bands for positive Hankel operators} 
\label{sec.alternation2}

Let  $\mathbf H$ be a positive semi-definite periodic Hankel operator. 
For such an operator there are no negative flat bands, and therefore, by the symmetry of 
eigenvalues (see Theorem~\ref{thm.main}(ii)), there are no positive flat bands either, so all spectral 
bands are positive and non-flat. 
In this case, the 
ordering \eqref{eq:moden} translates into the standard enumeration of band functions in decreasing order:
$$
E_0(k)> E_1(k)>\cdots>0, \quad k\in(0,\omega/2), \quad E_n\in\mathcal E.
$$
Therefore Theorem~\ref{thm:mod} immediately leads to the 
following global alternation property.

\begin{theorem}\label{thm:alterpos}
Assume that 
$\mathbf H$ is a positive semi-definite smooth periodic Hankel operator.  
Then the non-zero spectrum of $\mathbf H$ does not have any flat bands and hence consists entirely of the 
non-flat bands. Furthermore, for each $k\in (0, \omega/2)$ we have 
\begin{align}\label{eq:alterpos}
(-1)^{n+1} \frac{d}{dk} E_n(k) > 0, \quad n = 0, 1, \dots, 
\end{align}
i.e. 
the even band functions $E_0(k)$, $E_2(k)$, 
etc. attain a maximum at $k=0$ and a minimum at $k=\omega/2$. 
Conversely, the odd band functions $E_1(k)$, $E_3(k)$, 
etc. attain a minimum at $k=0$ and a maximum at $k=\omega/2$. 
\end{theorem}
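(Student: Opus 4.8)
The plan is to obtain this statement as an essentially immediate corollary of Theorem~\ref{thm:mod}: the only new ingredient needed is the observation that positivity of $\mathbf H$ eliminates all flat bands and makes every band function strictly positive, so that the enumeration by decreasing modulus in \eqref{eq:moden} becomes the stated enumeration $E_0(k) > E_1(k) > \cdots > 0$ and the absolute value signs in \eqref{eq:alter} may simply be dropped.

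First I would show that every band function is strictly positive. Since $\mathbf H \ge 0$, the operator $\calU\mathbf H\calU^* = \int_{\Omega}^\oplus H(k)\, dk$ from \eqref{b4} is non-negative, which forces $H(k) \ge 0$ for almost every $k$; as $H(k)$ is continuous in $k$ in the trace norm by Theorem~\ref{thm.b3}, this upgrades to $H(k) \ge 0$ for \emph{every} $k\in\bbR$. Hence all eigenvalues of $H(k)$ are non-negative, and since the band functions are non-vanishing by Theorem~\ref{thm.branches}, each $E\in\mathcal E$ satisfies $E(k) > 0$ for all $k$.

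Next I would rule out flat bands. If a flat band function $E_* > 0$ existed, then by Lemma~\ref{lma.d1} the constant $-E_* < 0$ would also be a band function, contradicting the positivity just established. Therefore $\mathcal E^\flat = \varnothing$ and $\mathcal E = \mathcal E^\#$, so the non-zero spectrum consists entirely of non-flat bands; this is the first assertion of the theorem.

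Finally, with every $E_n$ positive on $(0,\omega/2)$ we have $|E_n(k)| = E_n(k)$ there, so the ordering \eqref{eq:moden} coincides with $E_0(k) > E_1(k) > \cdots > 0$ and $\frac{d}{dk}|E_n(k)| = \frac{d}{dk}E_n(k)$; substituting these into \eqref{eq:alter} gives \eqref{eq:alterpos} at once. I do not anticipate a real obstacle, since the substantive monotonicity-alternation content is entirely supplied by Theorem~\ref{thm:mod}; the single point deserving a little care is upgrading the almost-everywhere positivity of the fibers $H(k)$ to positivity for all $k$ via their continuity, which is what guarantees that the band functions are strictly positive everywhere rather than merely almost everywhere.
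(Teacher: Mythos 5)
Your proposal is correct and follows essentially the same route as the paper: there, Theorem~\ref{thm:alterpos} is likewise deduced as an immediate corollary of Theorem~\ref{thm:mod}, with flat bands excluded via the $\pm$-symmetry (the paper invokes the eigenvalue symmetry of Theorem~\ref{thm.main}, which rests on the same Lemma~\ref{lma.d1} you cite) and with positivity turning the enumeration \eqref{eq:moden} into $E_0(k)>E_1(k)>\cdots>0$ so that the absolute values in \eqref{eq:alter} can be dropped. Your extra care in upgrading $H(k)\ge 0$ from almost every $k$ to every $k$ via the trace-norm continuity of the fibers is a detail the paper leaves implicit but is entirely consistent with its argument.
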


Theorem~\ref{thm:alterpos} should be compared with the corresponding result for the band functions of the Schr\"odinger operator, see e.g. \cite[Theorem XIII.89]{RS4}.

Let us relax the condition $\mathbf H\ge 0$ to the condition 
\[
\mathbf H+aI\geq0\quad \text{ for some $a>0$.}
\]
Since $\mathbf H$ has no spectrum below $-a$, using Lemma~\ref{lma.d1} we conclude that 
$\mathbf H$ has no flat bands above $a$. 
Assume that there are 
exactly $M+1\ge 1$ (non-flat) band functions of $\mathbf H$ above $a$. 
For these we have $E(k) = |E(k)|$, 
and as in Section~\ref{sec.d9}, the enumeration \eqref{eq:moden} rewrites as 
\begin{equation}
E_0(k)> E_1(k)>\dots> E_M(k)>a, \quad k\in(0,\omega/2). 
\label{eq:nml}
\end{equation} 
Thus we immediately obtain the following corollary of Theorem~\ref{thm:mod}.

\begin{theorem}\label{thm:noman}
Let $\mathbf H$ be a smooth periodic Hankel operator satisfying $\mathbf H+aI\geq0$ and such that for some $M\ge0$ there are exactly $M+1$ spectral band functions satisfying \eqref{eq:nml}. 
Then the inequality \eqref{eq:alterpos} holds for $n = 0, 1, \dots, M$. 
\end{theorem}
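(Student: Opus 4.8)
The plan is to deduce this directly from Theorem~\ref{thm:mod}, whose conclusion is stated for the modulus enumeration \eqref{eq:moden}; the only real content is to verify that the $M+1$ band functions appearing in \eqref{eq:nml} are precisely the first $M+1$ entries of \eqref{eq:moden}, after which one reads off \eqref{eq:alterpos} and observes that $E_n=|E_n|$ because $E_n(k)>a>0$.

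First I would record that the functions in \eqref{eq:nml} are genuinely non-flat, so that Theorem~\ref{thm:mod} applies to them. The hypothesis $\mathbf H+aI\geq0$ gives $\sigma(\mathbf H)\subseteq[-a,\infty)$, so $\mathbf H$ has no eigenvalue below $-a$; since flat bands occur in symmetric pairs $\pm E_*$ by Lemma~\ref{lma.d1}, a flat band above $a$ would force a flat band below $-a$, a contradiction. Hence $E_0,\dots,E_M\in\mathcal E^\#$, as already noted in the lead-up to the theorem.

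The heart of the argument is to show that on $(0,\omega/2)$ the functions $E_0,\dots,E_M$ have strictly larger modulus than every other non-flat band function, so that they occupy the top $M+1$ slots of \eqref{eq:moden}. Let $F\in\mathcal E^\#$ be any band function distinct from $E_0,\dots,E_M$. Since band functions never vanish (Theorem~\ref{thm.branches}), $F$ has a constant sign on $(0,\omega/2)$. If $F<0$, then $F(k)\geq-a$ (again from $\sigma(\mathbf H)\subseteq[-a,\infty)$) yields $|F(k)|\leq a<E_M(k)=|E_M(k)|$. If $F>0$, then by the non-crossing property on $(0,\omega/2)$ in Theorem~\ref{thm:anbr}\eqref{item:anbr1a} the band functions are totally ordered there and do not touch; because $F$ is not among the $M+1$ functions exceeding $a$, it must lie strictly below $E_M$, so $|F(k)|=F(k)<E_M(k)=|E_M(k)|$. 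In both cases $|F(k)|<|E_M(k)|\leq|E_n(k)|$ for all $n\leq M$, and since $E_0(k)>\cdots>E_M(k)>0$ the value ordering and the modulus ordering coincide on these functions. Thus \eqref{eq:nml} is exactly the top segment of \eqref{eq:moden}.

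It then remains to invoke Theorem~\ref{thm:mod}, which gives $(-1)^{n+1}\frac{d}{dk}|E_n(k)|>0$ for $n=0,\dots,M$; since each $E_n(k)>a>0$ we have $|E_n|=E_n$, and \eqref{eq:alterpos} follows. I expect the only genuine subtlety to be the identification of the value-ordering with the modulus-ordering for the top $M+1$ bands: this is precisely where the lower bound furnished by $\mathbf H+aI\geq0$ (controlling the negative bands) and the non-crossing of non-flat band functions are both needed, while the rest is bookkeeping.
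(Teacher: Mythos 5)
Your proof is correct and takes essentially the same route as the paper: there, Theorem~\ref{thm:noman} is obtained as an immediate corollary of Theorem~\ref{thm:mod}, using exactly your two observations --- that $\mathbf H+aI\geq0$ together with the flat-band symmetry of Lemma~\ref{lma.d1} excludes flat bands above $a$, and that the modulus enumeration \eqref{eq:moden} restricted to its top $M+1$ entries coincides with \eqref{eq:nml}. Your explicit verification of the latter (negative bands have modulus at most $a$ since $\sigma(\mathbf H)\subseteq[-a,\infty)$, and any further positive band lies pointwise below $E_M$ by the non-crossing property on $(0,\omega/2)$) merely fills in details the paper leaves implicit in the sentence ``the enumeration \eqref{eq:moden} rewrites as \eqref{eq:nml}.''
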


\subsection{A reference elliptic function}\label{sec.alternation3}
We remind that the lattices $\sf \Lambda$ and $\sf M$ are 
defined by \eqref{eq:perl} and \eqref{eq:perm} respectively. 

For $s\in\bbC\setminus{\sf \Lambda}$, we set
\begin{align}\label{eq:ellip}
\mathcal P(s) = \sum_{n\in\mathbb Z} \frac{\pi}{\cosh\pi(n\omega + s)}. 
\end{align}
In fact, $\mathcal P(s)$ is the standard Jacobi elliptic function ${\rm dn}(s)$ (up to appropriate scaling).  However, for the sake of transparency and in order to make this text self-contained, we prefer to work with the explicit series \eqref{eq:ellip}  instead of referring to the properties of ${\rm dn}(s)$. 

\begin{lemma}\label{prop:elf}
The function $\mathcal P$ is an even $\sf M$-periodic elliptic function with simple poles at the points of the lattice ${\sf \Lambda}$ with the residues
\begin{align}\label{eq:res}
\Res\limits_{s = i/2} \mathcal P(s) = - \Res\limits_{s = -i/2} \mathcal P(s) = -i
\end{align}
satisfying the condition
\begin{equation}
\mathcal P(s+i)=-\mathcal P(s).
\label{d5a}
\end{equation}
These properties uniquely specify $\mathcal P$. 
The zeros of $\mathcal P$ are congruent to the points $\tfrac{\omega+i}2$ and $\tfrac{\omega-i}2$. 
The restriction of $\mathcal P$ onto the real line is strictly decreasing on the interval $[0, \omega/2]$ with $\mathcal P'(k) < 0$ for all $k\in (0, \omega/2)$. 
\end{lemma}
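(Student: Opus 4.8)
\emph{The plan is to extract every stated property of $\mathcal P$ directly from the series \eqref{eq:ellip}, using only the elementary identities $\cosh(z+2\pi i)=\cosh z$, $\cosh(z+\pi i)=-\cosh z$ and the evenness of $\cosh$.} First I would observe that $\cosh\pi(n\omega+s)$ grows exponentially in $|n|$ away from its zeros, so the series converges locally uniformly on $\bbC\setminus{\sf\Lambda}$ and represents a meromorphic function; its singularities are exactly the zeros of the summands, which together make up ${\sf\Lambda}$, and each point of ${\sf\Lambda}$ is hit by a single term, so all poles are simple. A shift of the summation index gives $\omega$-periodicity, $\cosh(z+2\pi i)=\cosh z$ gives $2i$-periodicity (hence ${\sf M}$-periodicity), $n\mapsto -n$ together with evenness of $\cosh$ gives that $\mathcal P$ is even, and $\cosh(z+\pi i)=-\cosh z$ gives \eqref{d5a}. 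For the residues I would isolate the $n=0$ summand near $s=i/2$ and use $\cosh(\pi s)\sim \pi i\,(s-i/2)$ to obtain $\Res_{s=i/2}\mathcal P=-i$; the companion value in \eqref{eq:res} then follows from evenness. Since a fundamental cell of ${\sf M}$ contains exactly the two simple poles $\pm i/2$, the function $\mathcal P$ is elliptic of order two.

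Uniqueness is then immediate: any $\mathcal Q$ with the listed properties has the same simple poles and residues as $\mathcal P$, so $\mathcal P-\mathcal Q$ is entire and ${\sf M}$-periodic, hence a constant $c$ by Liouville; applying $\mathcal P(s+i)=-\mathcal P(s)$ (shared by $\mathcal Q$) gives $c=-c$, so $c=0$. The two zeros I would locate by symmetry alone: evenness and $\omega$-periodicity yield the reflection identity $\mathcal P(\tfrac\omega2+t)=\mathcal P(\tfrac\omega2-t)$, and evaluating this at $t=i/2$ while also applying \eqref{d5a} to the shift $(\tfrac\omega2-\tfrac i2)+i$ gives $\mathcal P(\tfrac{\omega-i}2)=-\mathcal P(\tfrac{\omega-i}2)$, whence $\mathcal P(\tfrac{\omega\pm i}2)=0$. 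Because $\tfrac{\omega+i}2$ and $\tfrac{\omega-i}2$ differ by $i\notin{\sf M}$, they are non-congruent and therefore exhaust the two zeros of an order-two elliptic function.

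The monotonicity on $[0,\omega/2]$ is the one genuinely delicate point, since the differentiated series for $\mathcal P'$ has terms of mixed sign and resists a direct estimate; I would handle it in two stages. Qualitatively, $\mathcal P'$ is elliptic of order four (two double poles per cell) and hence has exactly four zeros per cell; the same symmetries used above force zeros at $0$ and $\omega/2$, while $\mathcal P'(s+i)=-\mathcal P'(s)$ forces further zeros at $i$ and $\tfrac\omega2+i$. These four points are pairwise non-congruent mod ${\sf M}$, so they are all the zeros, each simple; in particular $\mathcal P'$ has no zero on the real interval $(0,\omega/2)$, and since $\mathcal P$ is real there it keeps a constant sign. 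To fix that sign I would compare the endpoint values: splitting the integers by parity gives $\mathcal P(0)-\mathcal P(\omega/2)=\pi\sum_{m\in\bbZ}(-1)^m\operatorname{sech}(bm)$ with $b=\pi\omega/2$, and Poisson summation together with the Fourier transform $\int_{\bbR}\operatorname{sech}(y)\,e^{-ity}\,dy=\pi\operatorname{sech}(\tfrac{\pi t}2)$ rewrites this twisted sum as $\tfrac{\pi^2}{b}\sum_{k\in\bbZ}\operatorname{sech}\big(\tfrac{\pi^2(2k-1)}{2b}\big)$, a sum of strictly positive terms. Hence $\mathcal P(0)>\mathcal P(\omega/2)$, and together with the constant-sign conclusion this forces $\mathcal P'(k)<0$ for all $k\in(0,\omega/2)$, completing the proof.
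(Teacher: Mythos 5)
Your proposal is correct, and it follows the paper's own proof step for step until the very last point: the locally uniform convergence of the series, the symmetry identities obtained from index shifts and $\cosh(z+\pi i)=-\cosh z$, the residue computation at $\pm i/2$ via the $n=0$ term, the Liouville-plus-\eqref{d5a} uniqueness argument, the location of the zeros at $\tfrac{\omega\pm i}{2}$ (your reflection identity $\mathcal P(\tfrac\omega2+t)=\mathcal P(\tfrac\omega2-t)$ is just a rearrangement of the paper's $\mathcal P(s+\omega+i)=-\mathcal P(-s)$), and the order-four count of zeros of $\mathcal P'$ at $0,\tfrac\omega2,i,\tfrac\omega2+i$ forcing $\mathcal P'\neq0$ on $(0,\omega/2)$ --- all of these are exactly the paper's steps. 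Where you genuinely depart is in fixing the \emph{sign} of $\mathcal P'$, which is indeed the one delicate point. The paper uses a soft deformation-in-$\omega$ argument: writing $\mathcal P(k;\omega)$, it observes that for fixed $k>0$ the derivative of the $n=0$ term is negative while the remaining terms' derivatives vanish as $\omega\to\infty$, so $\mathcal P'(k;\omega)<0$ for large $\omega$; a sign change at some $(k_0,\omega_0)$ would then, by the intermediate value theorem in $\omega$, produce a zero of $\mathcal P'(k_0;\cdot)$ at some $\omega_*>\omega_0$ with $k_0\in(0,\omega_*/2)$, contradicting the zero count. Your replacement is an endpoint comparison via Poisson summation, and the computation checks out: with $b=\pi\omega/2$ one has $\mathcal P(0)-\mathcal P(\omega/2)=\pi\sum_{m\in\bbZ}(-1)^m\operatorname{sech}(bm)$, and the transform $\int_\bbR\operatorname{sech}(y)e^{-ity}\,dy=\pi\operatorname{sech}(\pi t/2)$ converts this into $\tfrac{\pi^2}{b}\sum_{k\in\bbZ}\operatorname{sech}\bigl(\pi^2(2k-1)/(2b)\bigr)$, a manifestly positive sum. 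Your route is more quantitative: it produces a closed-form, term-by-term positive expression for the gap $\mathcal P(0)-\mathcal P(\omega/2)$, and in passing shows this gap is exponentially small as $\omega\to0$ --- which is precisely why a naive alternating-series estimate (which you rightly avoided) cannot work in that regime --- at the modest cost of invoking the sech Fourier transform and Poisson summation for a Schwartz-quality summand. The paper's argument is computation-free but non-constructive, relying only on continuity of $\mathcal P'(k;\omega)$ in $\omega$ and the already-established non-vanishing on $(0,\omega/2)$ for every $\omega$. Both proofs are complete; yours trades a qualitative continuity argument for an explicit identity.
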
 
\begin{proof} 
The facts that $\mathcal P$ is even and $\sf M$-periodic are straightforward. 
Since $\cosh \pi s$ has the expansions $\cosh \pi s = i\pi(s-i/2) + \cdots$ and $\cosh\pi s = -i\pi (s+i/2)+ \cdots$, we find the residues \eqref{eq:res}. Since $\cosh(a+\pi i)=-\cosh a$, we obtain \eqref{d5a}. 

Uniqueness: if $\widetilde{\mathcal P}$ is another function satisfying the same properties, then by Liouville's theorem we find that $\widetilde{\mathcal P}(s)=\mathcal P(s)+\text{const}$. Condition \eqref{d5a} implies that $\text{const}=0$. 

Zeros: from \eqref{d5a} and periodicity we find $\mathcal P(s+\omega+i)=-\mathcal P(s)$. Since $\mathcal P$ is even, this gives $\mathcal P(s+\omega+i)=-\mathcal P(-s)$. Taking $s=-\tfrac{\omega+i}2$, we find $\mathcal P(\tfrac{\omega+i}{2})=0$. Combining with \eqref{d5a} gives $\mathcal P(\tfrac{\omega-i}{2})=0$. Since $\mathcal P$ is an elliptic function of order two, it has no other zeros on the period cell.

Let us prove the monotonicity of $\mathcal P$ on the interval $[0, \omega/2]$. 
The derivative $\mathcal P'$ is an $\sf M$-periodic function of order four and therefore vanishes at exactly four points on a period cell. Let us check that these are the points congruent to $0,i,\tfrac{\omega}{2},\tfrac{\omega}{2}+i$. As $\mathcal P$ is even, we find $\mathcal P'(0)=0$. Combining periodicity and evenness of $\mathcal P$, we find $\mathcal P(\tfrac{\omega}2+s)=\mathcal P(\tfrac{\omega}2-s)$ and so, differentiating, we find $\mathcal P'(\tfrac{\omega}2)=0$. By \eqref{d5a} we also find $\mathcal P'(i)=\mathcal P'(\tfrac{\omega}2+i)=0$.

The reasoning above shows that 
$\mathcal P(k)$ is strictly monotone in $k\in(0,\omega/2)$, 
with a non-vanishing derivative on this interval. 
We only need to check that $\mathcal P'(k)<0$ on this interval. 
We use the deformation in $\omega$ argument. Let us temporarily 
write $\mathcal P(k;\omega)$ to indicate the dependence on $\omega$ explicitly: 
$$
\mathcal P(k;\omega)=\frac{\pi}{\cosh \pi k}+\sum_{n\not=0} \frac{\pi}{\cosh\pi(n\omega + k)},
$$
and observe that the derivative (with respect to $k$) of the first term in the right-hand 
side  is negative on $(0,\infty)$, while the derivative of 
the second term goes to zero for any fixed $k$ as 
$\omega\to\infty$. 
It follows that for any fixed $k>0$, we have 
$\mathcal P'(k;\omega)<0$ for all sufficiently large $\omega>0$. 

Now suppose, to get a contradiction, that for some $\omega_0>0$ and some $k_0\in(0,\omega_0/2)$ we have $\mathcal P'(k_0;\omega_0)>0$. Since $\mathcal P'(k_0;\omega)<0$ for sufficiently large $\omega$, by the intermediate value theorem we find $\mathcal P'(k_0;\omega_*)=0$ for some $\omega_*>\omega_0$.
This is a contradiction with the fact that $\mathcal P'(k, \omega)\not = 0$ 
for $k\in (0, \omega/2)$ and all $\omega>0$.
\end{proof}

\subsection{A representation for $\Delta^\#(s;\lambda)$}
Our proof of Theorem~\ref{thm:mod} relies on a representation of the residual secular determinant $\Delta^\#$ via the reference elliptic function $\mathcal P(s)$.

\begin{theorem}\label{thm.c9}
Let $\mathbf H$ be a smooth periodic Hankel operator and let $\Delta^\#(s;\lambda)$ be the corresponding secular determinant. Then 
\begin{align}
\Delta^\#(s;\lambda) =-\alpha(\lambda)\mathcal P(s) + \beta(\lambda), 
\label{eq:repr}
\end{align}
where 
\begin{equation}
\alpha(\lambda)=-i\Res\limits_{s=i/2}\Delta^\#(s;\lambda)
\quad\text{ and }\quad
\beta(\lambda)=\Delta^\#(\tfrac{\omega+i}{2};\lambda).
\label{eq:alphabeta}
\end{equation}
The functions $\alpha$ and $\beta$ have the following properties: 
\begin{enumerate}[\rm (i)]
\item
$\alpha$ and $\beta$ are entire in $1/\lambda$; 
\item
$\overline{\alpha(\lambda)} = \alpha(\overline{\lambda})$ and 
$\overline{\beta(\lambda)} = \beta(\overline{\lambda})$, and in particular, 
$\alpha$ and $\beta$ are real-valued for real $\lambda\not = 0$;
\item \label{item:albe}
$\alpha(-\lambda) = - \alpha(\lambda)$ and $\beta(-\lambda) = \beta (\lambda)$;
\item \label{item:nonzero}
for any $E\in\mathcal E^\#$ and any $k\in\bbR$, we have $\alpha(E(k))\not=0$. 
\end{enumerate}
\end{theorem}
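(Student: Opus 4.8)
The plan is to derive the representation \eqref{eq:repr} by a Liouville argument that matches the pole structure of $\Delta^\#(\cdot;\lambda)$ against the reference elliptic function $\mathcal P$ of Lemma~\ref{prop:elf}, and then to read off the properties (i)--(iv) of $\alpha$ and $\beta$ from the known symmetries of $\Delta^\#$ (Lemma~\ref{lem:resid}) and of $\mathcal P$. First I would note that if $s\mapsto\Delta^\#(s;\lambda)$ is a non-zero constant, then $\Res_{s=i/2}\Delta^\#=0$, so $\alpha(\lambda)=0$, while $\beta(\lambda)=\Delta^\#(\tfrac{\omega+i}{2};\lambda)$ equals that constant, and \eqref{eq:repr} holds trivially; so the substance is the non-constant case, where by Lemma~\ref{lem:resid} the function $\Delta^\#(\cdot;\lambda)$ is elliptic of order two with period lattice ${\sf M}$ and simple poles exactly on ${\sf\Lambda}$ — precisely the lattice and poles of $\mathcal P$.

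In that case I would form $D(s):=\Delta^\#(s;\lambda)+\alpha(\lambda)\mathcal P(s)$ and show it is entire. Modulo ${\sf M}$ the only poles sit at $\pm i/2$. The definition \eqref{eq:alphabeta} gives $\Res_{s=i/2}\Delta^\#=i\alpha(\lambda)$, while $\Res_{s=i/2}\mathcal P=-i$ by \eqref{eq:res}, so the residues cancel at $i/2$; and by evenness of $\Delta^\#$ and of $\mathcal P$ the residues at $-i/2$ are the negatives of those at $i/2$, so they cancel there as well. Hence $D$ is an entire ${\sf M}$-periodic function, so it is constant by Liouville's theorem. Evaluating at the zero $s=\tfrac{\omega+i}{2}$ of $\mathcal P$ (Lemma~\ref{prop:elf}) identifies the constant as $\beta(\lambda)$, which is exactly \eqref{eq:repr}.

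Next I would establish (i)--(iii). For (i), $\beta(\lambda)=\Delta^\#(\tfrac{\omega+i}{2};\lambda)$ is entire in $1/\lambda$ because $\Delta^\#(s;\cdot)$ is (Section~\ref{sec.ddd2}), and $\alpha(\lambda)=-i(2\pi i)^{-1}\oint_{\mathcal C}\Delta^\#(s;\lambda)\,ds$ over a small circle $\mathcal C$ about $i/2$ inherits analyticity in $1/\lambda$ after interchanging the integral with the locally uniform limit \eqref{eq:deltagen}. For (ii) and (iii) the cleanest route is to push symmetries through \eqref{eq:repr} and compare coefficients, using that $\mathcal P$ and the constant function $1$ are linearly independent; since $\{\alpha\neq0\}$ is open and dense and $\alpha,\beta$ are analytic in $1/\lambda$, identities proven there extend to all $\lambda\neq0$. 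Using $\overline{\mathcal P(s)}=\mathcal P(\overline s)$ (immediate from the real series \eqref{eq:ellip}) and $\overline{\Delta^\#(s;\lambda)}=\Delta^\#(\overline s;\overline\lambda)$, conjugating \eqref{eq:repr} yields $\overline{\alpha(\lambda)}=\alpha(\overline\lambda)$ and $\overline{\beta(\lambda)}=\beta(\overline\lambda)$, giving (ii). For (iii), apply $\mathcal P(s+i)=-\mathcal P(s)$ from \eqref{d5a} and $\Delta^\#(s+i;\lambda)=\Delta^\#(s;-\lambda)$ from \eqref{ee21a} to \eqref{eq:repr}: the left side becomes $\alpha(\lambda)\mathcal P(s)+\beta(\lambda)$ and the right side $-\alpha(-\lambda)\mathcal P(s)+\beta(-\lambda)$, so matching coefficients gives $\alpha(-\lambda)=-\alpha(\lambda)$ and $\beta(-\lambda)=\beta(\lambda)$.

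Finally, property (iv) is the conceptually important output, and I would argue it by contradiction. Suppose $\alpha(E(k_0))=0$ for some $E\in\mathcal E^\#$ and some $k_0\in\bbR$, and set $\lambda_0=E(k_0)\neq0$. By \eqref{eq:repr}, the function $s\mapsto\Delta^\#(s;\lambda_0)$ is then the constant $\beta(\lambda_0)$. But $E$ is a non-flat band function, so $\Delta^\#(k_0;E(k_0))=0$, which forces $\beta(\lambda_0)=0$ and hence $\Delta^\#(\cdot;\lambda_0)\equiv0$; this contradicts Lemma~\ref{lem:resid}(i), by which $\Delta^\#(\cdot;\lambda)$ is never identically zero for $\lambda\neq0$. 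I expect the Liouville step to be the engine of the argument, and the only genuinely delicate analytic point to be the interchange of residue and limit used for the analyticity of $\alpha$ in (i); property (iv), though short, is where the non-vanishing of the residual determinant is essential, and it is the statement that will be used later.
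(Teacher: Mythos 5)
Your proposal is correct and follows essentially the same route as the paper: a Liouville argument matching residues of $\Delta^\#(\cdot;\lambda)$ against $\mathcal P$ at $\pm i/2$, evaluation at the zero $s=\tfrac{\omega+i}{2}$ of $\mathcal P$ to identify $\beta$, symmetry-pushing through \eqref{eq:repr} for (ii)--(iii), and the same contradiction via $\Delta^\#(\cdot;E(k))\equiv0$ for (iv). The only cosmetic difference is that for the analyticity of $\alpha$ in (i) you use a contour-integral representation of the residue, whereas the paper avoids any interchange of limits by solving \eqref{eq:repr} algebraically for $\alpha$ at a fixed point $s_0$, namely $\alpha(\lambda)=\bigl(\Delta^\#(s_0;\lambda)-\beta(\lambda)\bigr)/\mathcal P(s_0)$.
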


\begin{proof}
Fix $\lambda\in\bbC$, $\lambda\not=0$, and let $\alpha(\lambda)$ be as defined by \eqref{eq:alphabeta}.
Recall that, by Lemma~\ref{lem:resid}, the function $\Delta^\#(\cdot;\lambda)$ is even and elliptic with period $\sf M$ and simple poles at the points congruent to $\pm i/2$. 
Using \eqref{eq:res}, we see that the $\sf M$-periodic elliptic function  
$$
\Delta^\#(s;\lambda)+\alpha(\lambda)\mathcal P(s)
$$
has no poles, as hence it is constant. By Lemma~\ref{prop:elf}, we have $\mathcal P(\tfrac{\omega\pm i}{2})=0$, and so the constant can be determined as 
\begin{equation}
\beta(\lambda)=\Delta^\#(\tfrac{\omega\pm i}{2};\lambda).
\label{d1a}
\end{equation}
This gives the representation \eqref{eq:repr}. 
Furthermore, with arbitrary $s_0\not = \frac{\omega\pm i}{2}$ or $\pm\frac{i}{2}$ we find
\begin{align}\label{d1}
\alpha(\lambda) = \frac{\Delta^\#(s_0; \lambda)-\beta(\lambda)}{\mathcal P(s_0)}. 
\end{align}
Since $\Delta^\#(s_0; \lambda)$ is an entire function in $1/\lambda$ 
we obtain that $\beta(\lambda)$ and hence $\alpha(\lambda)$ are 
also entire in $1/\lambda$, which proves 
(i). 

The relations $\overline{\alpha(\lambda)}=\alpha(\overline{\lambda})$ and 
$\overline{\beta(\lambda)} = \beta(\overline{\lambda})$ follow from \eqref{d1a} and \eqref{d1} 
and the equalities $\overline{\mathcal P(s_0)} = \mathcal P(\overline{s_0})$ and 
$\overline{\Delta^\#(s; \lambda)} = \Delta^\#(\overline{s}; \overline{\lambda})$ 
(see Lemma~\ref{lem:resid}). 

Using the identities $\Delta^\#(s+i; \lambda) = \Delta^\#(s; -\lambda)$ and $\mathcal P(s+i)=-\mathcal P(s)$, we find
$$
-\alpha(-\lambda)\mathcal P(s)+\beta(-\lambda)
=\Delta^\#(s;-\lambda)=\Delta^\#(s+i;\lambda)=\alpha(\lambda)\mathcal P(s)+\beta(\lambda), 
$$
and (iii) follows. 

Let us prove (iv) by contradiction. Suppose $E\in\mathcal E^\#$,  $k\in\bbR$, and $\alpha(E(k))=0$. Then 
$$
0=\Delta^\#(k;E(k))=-\alpha(E(k))\mathcal P(k)+\beta(E(k))=\beta(E(k)). 
$$
Thus, $\alpha(E(k))=\beta(E(k))=0$ and so $\Delta^\#(\cdot;E(k))=0$,
which contradicts the definition of the residual determinant $\Delta^\#$. 
\end{proof}

Combining this theorem with the representation $\Delta(s;\lambda)=\Delta^\#(s;\lambda)\Delta^\flat(\lambda)$, we find
$$
\Delta(s;\lambda)=(-\alpha(\lambda)\mathcal P(s)+\beta(\lambda))\Delta^\flat(\lambda).
$$
We will not use the last relation.

\subsection{Proof of Theorem~\ref{thm:mod}}\label{sec.d5a} 
Denote 
$$
d(k; \lambda) = 
\Delta^\#(k; \lambda)\, \Delta^\#(k; -\lambda).
$$
Writing for brevity  $e_n(k) := |E_n(k)|$, according to \eqref{ddd1} we have
\begin{align}\label{eq:defndsh}
d(k; \lambda)
= 
\prod_{n=0}^\infty \big(1- e_n(k)^2/\lambda^2\big).
\end{align}
By Theorem~\ref{thm.c9}, we have
\begin{align}\label{eq:dshrep}
d(k; \lambda) =  
-\alpha(\lambda)^2 \mathcal P(k)^2 + \beta(\lambda)^2.
\end{align} 
Let us fix a number $k\in (0, \omega/2)$ 
and  
differentiate the relation $d(k; e_n(k))=0$ with respect to $k$:
\begin{equation}\label{eq:alter1}
d'_k(k; e_n(k)) + 
d'_\lambda(k; e_n(k)) e_n'(k)=0.
\end{equation}
By \eqref{eq:defndsh}, the function $\lambda\mapsto d(k;\lambda)$ 
has positive zeros exactly at the points $e_n(k)$, $n=0,1,\dots$, 
and by Theorem~\ref{thm:anbr} these zeros are simple. 
Since $d(k;\lambda)\to 1$ as $\lambda\to \infty$, it follows that 
\begin{align}\label{eq:derd}
(-1)^n\, d'_\lambda(k; e_n(k))>0,\quad n = 0, 1, \dots.
\end{align}
Furthermore, by representation \eqref{eq:dshrep}, we have
$$
d'_k(k; e_n(k)) 
= - 2 \alpha(e_n(k))^2 \mathcal P(k) \mathcal P'(k). 
$$
By Theorem~\ref{thm.c9}\eqref{item:nonzero}, we have $\alpha(e_n(k)) \not = 0$. 
By definition \eqref{eq:ellip} and Lemma~\ref{prop:elf}, we have $\mathcal P(k) >0$ and
$\mathcal P'(k)<0$ for $k\in(0,\omega/2)$. 
Thus, $d'_k(k; e_n(k)) >0$.
Using this and \eqref{eq:derd} 
together with \eqref{eq:alter1}, we arrive at \eqref{eq:alter}.
\qed

\section{The Mathieu-Hankel operator}
\label{sec.e}
\subsection{The setup}
In this section we consider the example where the function $p$ is a trigonometric polynomial of the form
$$
p(\xi)=A+B\cos\omega(\xi-\xi_0), 
$$
with  $A,B,\xi_0\in \bbR$. 
Then the corresponding function $\mathfrak s(\xi)$ is, according to \eqref{eq:sfunct} and \eqref{eq:sfunct1}, 
\begin{align*}
\mathfrak s(\xi) &= A + 
\frac{B}{2\Gamma(1-i\omega)}e^{i\omega(\xi-\xi_0)}
+
\frac{B}{2\Gamma(1+i\omega)}e^{-i\omega(\xi-\xi_0)}
\\
&=A+\frac{B}{\abs{\Gamma(1-i\omega)}}\cos(\omega(\xi-\xi_0-\xi_*)),
\end{align*} 
where $\xi_*=(\arg\Gamma(1-i\omega))/\omega$. 
In order to make the formulas below more readable, we would like to get rid of the inessential factor $B/\abs{\Gamma(1-i\omega)}$ and the phase shift $\xi_0+\xi_*$. 
This can be achieved by a scaling and a unitary transformation. Skipping the details of this reduction, we pass straight to the case
$$
\mathfrak s(\xi)=A+\cos\omega\xi
$$
with $A\in\bbR$. 
In this case we have
\[
\wt{\mathfrak s}_{0} = A,\quad  
\wt{\mathfrak s}_{1} =\wt{\mathfrak s}_{-1}= \frac{1}{2}
\]
and $\wt{\mathfrak s}_{n}=0$ for all other $n$. 

Our main focus below will be the dependence of the spectral characteristics of the corresponding Mathieu-Hankel operator $\mathbf H$ on the parameter $A\in\bbR$, and therefore we will indicate the dependence on $A$ explicitly in our notation, i.e. we will write $\mathbf H(A)$, $H(k;A)$, $E(k;A)$ etc. 

In line with this, we denote by $\mathfrak S(A)$ the tri-diagonal matrix \eqref{eq:umn}, i.e. 
\[
[\mathfrak S(A)]_{n,m}=\widetilde{\mathfrak s}_{n-m}. 
\]
As in \eqref{eq:bk}, we write the fiber operator $H(k; A)$ in the form 
\begin{align}\label{eq:hka}
H(k;A) = \calM[\gamma]^*{\mathfrak S}(A)\calM[\gamma]
= H(k; 0) + A\, \calM[\gamma]^*\,\calM[\gamma], 
\end{align}
where $\gamma = \gamma(k) = \{\gamma_n(k)\}$, 
$\gamma_n(k) = \Gamma(\tfrac12+i\omega n+ ik)$, $n\in\mathbb Z$.

\subsection{All bands are flat for $A = 0$} 

\begin{theorem}\label{thm:flat}
If $A = 0$, then the non-zero spectrum of the Mathieu-Hankel operator $\mathbf H(0)$ consists of infinitely many eigenvalues of infinite multiplicity only. 
\end{theorem}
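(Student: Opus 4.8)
The plan is to show first that, for $A=0$, the secular determinant $\Delta(s;\lambda)$ does not depend on $s$, which forces every band function to be flat, and then to show that there are infinitely many such flat bands. Since $p$ is a trigonometric polynomial, I would start from the representation \eqref{eq:delta1}, $\Delta(s;\lambda)=\det\bigl(I-\lambda^{-1}\mathfrak S(0)\calM[w(s)]\bigr)$, where for $A=0$ we have $\mathfrak S(0)=\tfrac12(S+S^*)$. The observation specific to $A=0$ is that $\mathfrak S(0)$ anticommutes with conjugation by the unitary $K$ of \eqref{eq:defJS}: from $KSK=-S$ and $KS^*K=-S^*$ we get $K\mathfrak S(0)K=-\mathfrak S(0)$. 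Since $K$ is diagonal it commutes with $\calM[w(s)]$, hence $K\bigl(\mathfrak S(0)\calM[w(s)]\bigr)K=-\mathfrak S(0)\calM[w(s)]$, and invariance of the determinant under unitary conjugation gives $\Delta(s;\lambda)=\Delta(s;-\lambda)$. Combining this with the identity \eqref{ee21}, $\Delta(s+i;\lambda)=\Delta(s;-\lambda)$, yields $\Delta(s+i;\lambda)=\Delta(s;\lambda)$, so that $\Delta(\cdot;\lambda)$ acquires the extra period $i$.

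Next I would rule out the elliptic (order-two) alternative of Theorem~\ref{thm:elliptic}(ii). If $\Delta(\cdot;\lambda)$ were non-constant, it would be elliptic with period lattice ${\sf M}$ and simple poles only at the points congruent to $\pm i/2$. The extra period $i$ makes $i/2$ and $-i/2$ congruent, so there would be a single simple pole per fundamental cell; but \eqref{eq:resd} together with the $i$-periodicity forces $\Res\limits_{s=i/2}\Delta=\Res\limits_{s=-i/2}\Delta=-\Res\limits_{s=i/2}\Delta$, so the residue vanishes and the pole disappears. A pole-free elliptic function is constant, so $\Delta(\cdot;\lambda)$ is constant in $s$ for every $\lambda\neq 0$. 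Consequently each band function, being a real analytic solution of $\Delta(k;E(k))=0$ with $\Delta$ independent of $k$, takes values in the discrete zero set of $\lambda\mapsto\Delta(\,\cdot\,;\lambda)$ and is therefore constant; equivalently $\mathcal E^\#=\varnothing$. By Theorem~\ref{thm.main} this means that the non-zero spectrum of $\mathbf H(0)$ consists solely of flat bands, i.e. eigenvalues of infinite multiplicity.

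It remains to produce infinitely many of them. The flat band functions coincide with the non-zero eigenvalues of the compact self-adjoint fiber operator $H(k;0)=\calM[\gamma]^*\mathfrak S(0)\calM[\gamma]$ of \eqref{eq:hka}, for any fixed $k$, and by the above these values are independent of $k$. I would argue that $H(k;0)$ is injective: the diagonal operator $\calM[\gamma]$ has non-vanishing entries $\gamma_n(k)=\Gamma(\tfrac12+i\omega n+ik)\neq 0$, while $\mathfrak S(0)$ is unitarily equivalent to multiplication by $\mathfrak s(\xi)=\cos\omega\xi$ in $L^2(0,T)$, which is injective since $\cos\omega\xi$ vanishes only on a null set. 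An injective compact self-adjoint operator on the infinite-dimensional space $\ell^2(\bbZ)$ necessarily has infinitely many non-zero eigenvalues (accumulating at $0$). Each of these is a flat band, giving infinitely many flat bands and completing the proof.

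I would flag the $s$-independence step as the heart of the argument: once the two symmetries $\Delta(s;\lambda)=\Delta(s;-\lambda)$ and $\Delta(s+i;\lambda)=\Delta(s;-\lambda)$ are combined with the residue identity \eqref{eq:resd} to kill the poles, everything else is routine. The only remaining subtlety is the injectivity of $H(k;0)$, which is what ensures the count is infinite rather than finite.
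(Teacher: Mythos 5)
Your proof is correct, and its first half coincides with the paper's: the anticommutation $K\mathfrak S(0)K=-\mathfrak S(0)$ giving $\Delta(s;\lambda)=\Delta(s;-\lambda)$, combined with \eqref{ee21} to produce the extra period $i$, is exactly the paper's Lemma~\ref{lem:Asym} specialised to $A=0$; your way of killing the pole (the $i$-periodicity forces $\Res_{s=i/2}\Delta=\Res_{s=-i/2}\Delta$, which together with \eqref{eq:resd} makes the residue vanish) is an equivalent rephrasing of the paper's appeal to the nonexistence of elliptic functions of order one. Where you genuinely diverge is the infinitude step. The paper invokes its Lemma~\ref{lem:inf}, a variational argument with explicitly constructed trial subspaces, which yields the stronger, sign-resolved statement that $H(k;A)$ has infinitely many \emph{positive} eigenvalues for all $A>-1$ and infinitely many \emph{negative} ones for all $A<1$ --- information the paper needs later (e.g.\ in Theorem~\ref{thm:coexist}). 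You instead prove injectivity of $H(k;0)=\calM[\gamma]^*\mathfrak S(0)\calM[\gamma]$, using that $\gamma_n(k)\neq0$ and that $\mathfrak S(0)$ is unitarily equivalent to multiplication by $\cos\omega\xi$, which vanishes only on a null set, and then conclude via compactness; this is lighter and entirely sufficient for Theorem~\ref{thm:flat} (it gives infinitely many non-zero eigenvalues without sign information, though for $A=0$ the symmetry $\mathbf H(0)\cong-\mathbf H(0)$ recovers both signs anyway). Amusingly, your injectivity mechanism is essentially the one the paper itself deploys later, in the proof of Theorem~\ref{thm:kernel}, so as a bonus your argument also shows $\Ker H(k;0)=\{0\}$. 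The one step worth writing out carefully is your passage from ``$\Delta$ is $s$-independent'' to ``every band function is constant'': since $\det(I-\lambda^{-1}H(k))\to1$ as $\lambda\to\infty$, the function $\lambda\mapsto\Delta(\,\cdot\,;\lambda)$ is not identically zero and its zeros accumulate only at $\lambda=0$, so a continuous non-vanishing $E(k)$ taking values in this set is indeed locally constant, hence constant; this is the content of your discreteness remark and it holds as you stated it.
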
 

In other words, for $A=0$ \emph{all spectral bands of $\mathbf H(0)$ are flat}.
Of course, in accordance with Theorem~\ref{thm.main}(i) the eigenvalues of $\mathbf H(0)$ are symmetric with respect to $0$, i.e. $\lambda$ is an eigenvalue if and only if $-\lambda$ is an eigenvalue.

The proof is based on the symmetry of the secular determinant $\Delta(s; \lambda; A)$ of $\mathbf H(A)$, which is described in the following lemma. 

\begin{lemma}\label{lem:Asym}
The secular determinant of the the Mathieu-Hankel operator satifies the relation 
\begin{align}\label{eq:syma}
\Delta(s; \lambda; A) = \Delta(s; -\lambda; -A)
\end{align} 
for all $s\notin {\sf\Lambda}$, $\lambda\not = 0$ and $A\in\mathbb R$.
Consequently, the operator ${\mathbf H}(-A)$ is unitarily equivalent to $-{\mathbf H}(A)$. Furthermore, the determinant $\Delta(s; \lambda; 0)$ is doubly periodic in $s$ with periods $\omega$ and $i$. 
\end{lemma}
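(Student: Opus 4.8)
The plan is to exploit the parity operator $K$ from \eqref{eq:defJS}, defined by $(Ku)_n=(-1)^nu_n$, which is a self-adjoint unitary that anticommutes with the shift and hence flips the sign of the off-diagonal part of $\mathfrak S(A)$ while leaving all multiplication operators fixed. First I would record the algebraic relation for $\mathfrak S(A)$. Since $\wt{\mathfrak s}_0=A$, $\wt{\mathfrak s}_{\pm1}=\tfrac12$ and all other $\wt{\mathfrak s}_n$ vanish, the matrix \eqref{eq:umn} is $\mathfrak S(A)=AI+\tfrac12(S+S^*)$. A direct computation gives $KSK=-S$ and $KS^*K=-S^*$, so that
$$
K\mathfrak S(A)K=AI-\tfrac12(S+S^*)=-\mathfrak S(-A).
$$
Because $K$ is diagonal it commutes with every multiplication operator, in particular with $\calM[\gamma^{(\pm)}(s)]$. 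Using the factorization $H(s;A)=\calM[\gamma^{(-)}(s)]\,\mathfrak S(A)\,\calM[\gamma^{(+)}(s)]$ valid for complex $s$ (see the proof of Theorem~\ref{thm.ee1}), conjugation by $K$ then yields the key fiber identity
$$
KH(s;A)K=-H(s;-A),\quad s\in\bbC\setminus{\sf\Lambda}.
$$

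From here \eqref{eq:syma} is immediate. Since $p$ is a trigonometric polynomial, $H(s;A)$ is trace class for every $s\in\bbC\setminus{\sf\Lambda}$, so the determinant is defined directly and is invariant under the unitary conjugation by $K$:
$$
\Delta(s;\lambda;A)=\det\bigl(I-\lambda^{-1}H(s;A)\bigr)=\det\bigl(I-\lambda^{-1}KH(s;A)K\bigr)=\det\bigl(I+\lambda^{-1}H(s;-A)\bigr)=\Delta(s;-\lambda;-A).
$$
For the unitary equivalence I would lift the fiber identity (at real $k$) to the direct integral \eqref{b4}: replacing $A$ by $-A$ gives $KH(k;-A)K=-H(k;A)$, and letting $\mathcal K$ act fiberwise as $K$ on $\mathfrak H=\int_\Omega^\oplus\ell^2\,dk$ we obtain $\mathcal K\,\calU\mathbf H(-A)\calU^*\,\mathcal K^*=-\calU\mathbf H(A)\calU^*$. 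Thus $W=\calU^*\mathcal K\calU$ is unitary and satisfies $W\mathbf H(-A)W^*=-\mathbf H(A)$, as claimed.

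Finally, for the double periodicity at $A=0$ I would combine \eqref{eq:syma} with the identity \eqref{ee21} of Theorem~\ref{thm:elliptic}. Setting $A=0$ in \eqref{eq:syma} shows $\Delta(s;\lambda;0)=\Delta(s;-\lambda;0)$, i.e. the determinant is even in $\lambda$; then \eqref{ee21} gives
$$
\Delta(s+i;\lambda;0)=\Delta(s;-\lambda;0)=\Delta(s;\lambda;0),
$$
so that $i$ is a period, which together with the period $\omega$ from Theorem~\ref{thm:elliptic}(ii) furnishes double periodicity with periods $\omega$ and $i$. The steps are all short; the only point demanding care is the sign bookkeeping in the $K$-conjugation and using the correct factors $\gamma^{(\pm)}$ (rather than $\calM[\gamma]^*,\calM[\gamma]$) when $s$ is genuinely complex, so that the fiber identity holds throughout $\bbC\setminus{\sf\Lambda}$ and not merely on the real axis.
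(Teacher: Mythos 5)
Your proposal is correct and follows essentially the same route as the paper: the same parity operator $K$ with the key relation $K\mathfrak S(A)K=-\mathfrak S(-A)$, invariance of the determinant under conjugation by $K$, and the combination of \eqref{eq:syma} with \eqref{ee21} to upgrade the period $2i$ to $i$ when $A=0$. The only (harmless) differences are cosmetic: you conjugate $H(s;A)$ directly in its three-factor form rather than the $\mathfrak S(A)\calM[w]$ representation \eqref{eq:delta1}, and for the global equivalence you exhibit the explicit fiberwise unitary $W=\calU^*\mathcal K\calU$, which is a slightly more constructive version of the paper's argument via coincidence of band functions.
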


\begin{proof}
Consider the unitary operator $K$ of multiplication by the sequence $(-1)^n$ in $\ell^2(\bbZ)$, that is $(K u)_n = (-1)^n u_n$ for $u\in \ell^2(\bbZ)$. 
We use the representation \eqref{eq:delta1}, which in our current notation reads
$$
\Delta(s; \lambda; A)=\det\bigl(I-\lambda^{-1}{\mathfrak S}(A)\calM[w]\bigr).
$$
By inspection we find $K {\mathfrak S}(A)K  = - {\mathfrak S}(-A)$. 
Furthermore, as  
$K$ and $\calM[w]$ commute, we can write 
\begin{align*}
\Delta(s; \lambda; A) = &\ \det(I - \lambda^{-1} (K {\mathfrak S}(A)K ) K \calM[w]K)\\
= &\ \det(I + \lambda^{-1}{\mathfrak S}(-A) \, \calM[w]) = \Delta(s; -\lambda; -A),
\end{align*}
as claimed. It follows that the band functions of ${\mathbf H}(-A)$ coincide with the band functions of $-{\mathbf H}(A)$. Thus, for every $k$ the operators $H(k;-A)$ and $-H(k;A)$ are unitarily equivalent. It follows that the corresponding direct integrals are also unitarily equivalent. We conclude that ${\mathbf H}(-A)$ is unitarily equivalent to $-{\mathbf H}(A)$.

Furthermore, using identity \eqref{eq:syma} and the symmetry \eqref{ee21} we obtain that 
\begin{align*}
\Delta(s+i; \lambda; 0) = \Delta(s; -\lambda; 0) = \Delta(s; \lambda; 0). 
\end{align*}
Since we already know that $\Delta(s; \lambda; 0)$ is doubly-periodic with periods $2i$ and $\omega$, from here we find that it is in fact doubly-periodic with periods $i$ and $\omega$, as claimed. The proof is complete.
\end{proof}

\begin{proof}[Proof of Theorem~\ref{thm:flat}]
By Theorem~\ref{thm:elliptic}(ii) the determinant $\Delta(s; \lambda; 0)$ may have at most  
one pole $s = i/2$ in the 
fundamental domain of its periodicity lattice 
\begin{align*}
\{z = m\omega +in:\quad  m, n\in \mathbb Z\},
\end{align*}
and this pole is simple. 
Since there are no elliptic functions of order one, we find that for all $\lambda\not=0$ the function $s\mapsto\Delta(s; \lambda;0)$ is constant. It follows that the residual determinant $\Delta^\#(\cdot;\lambda;0)$ is also constant for all $\lambda\not=0$. This is only possible if  $\Delta^\#(\cdot;\lambda;0)$ is identically equal to $1$, i.e. there are no non-flat bands. We have proved that all spectral bands of $\mathbf H(A)$ for $A=0$ are flat. The fact that the number of flat bands is infinite follows from Lemma~\ref{lem:inf} below. 
\end{proof}

\begin{remark}
It is easy to construct a more general family of periodic Hankel operators with no non-flat bands. 
Assume that ${\mathfrak s}(\xi)$ is a trigonometric polynomial of the form 
\begin{align*}
\mathfrak s(\xi) = \sum_{j} \wt {\mathfrak s}_j e^{i(2j+1)l \omega},
\end{align*}
where $l$ is a fixed natural number and 
$\overline{\wt{\mathfrak s}_j} = \wt {\mathfrak s}_{-j-1}$, 
so that $\mathfrak s$ is real-valued. 
Taking $l=1$, $\wt{\mathfrak s}_{-1} = \wt{\mathfrak s}_0 = 1/2$ 
and zero for the rest of the coefficients recovers the Mathieu-Hankel operator with $A = 0$.  
Define $K_l$ as the operator of multiplication by the sequence $e^{in\pi/l}$ in $\ell^2(\bbZ)$. Since 
$K_l {\mathfrak S}K_l^*=-{\mathfrak S}$, we obtain, as in the proof of Lemma~\ref{lem:Asym}, that 
$\Delta(s; \lambda) = \Delta(s; -\lambda)$, for all $s$ and $\lambda\not = 0$. Thus the secular determinant is doubly periodic with periods $i$ and $\omega$, and hence it is a constant function. This  implies that the spectrum of the corresponding Hankel operator consists of flat bands only.
\end{remark}
 
Note that flat bands also appear for $A\not = 0$, see Theorem~\ref{thm:coexist} below. 

In the rest of this section, we consider he Mathieu-Hankel operator $\mathbf H(A)$ for arbitrary $A\in \bbR$.

\subsection{Infinitude of band functions}

\begin{lemma}\label{lem:inf}
If $A\geq1$ (resp. $A\leq-1$), 
then for all $k\in\bbR$ the operator $H(k;A)$ is positive (resp. negative) semi-definite.
If $A>-1$ (resp. $A<1$), then $H(k;A)$ has infinitely many positive (resp. negative) 
eigenvalues for all $k\in\bbR$.
\end{lemma}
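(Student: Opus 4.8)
The first assertion is essentially immediate. Recalling from \eqref{eq:hka} that $H(k;A) = \calM[\gamma]^*\,\mathfrak S(A)\,\calM[\gamma]$, and that the tri-diagonal matrix $\mathfrak S(A)$ is unitarily equivalent to the operator of multiplication by $\mathfrak s(\xi) = A + \cos\omega\xi$ on $L^2(0,T)$, I would first note that the spectrum of $\mathfrak S(A)$ is the essential range of its symbol, namely the interval $[A-1, A+1]$. Hence $\mathfrak S(A)\geq0$ when $A\geq1$ and $\mathfrak S(A)\leq0$ when $A\leq-1$. The sign-definiteness of $H(k;A)$ then follows by conjugation, since for every $u\in\ell^2(\bbZ)$ one has $\jap{H(k;A)u,u} = \jap{\mathfrak S(A)\calM[\gamma]u,\calM[\gamma]u}$, which inherits the sign of $\mathfrak S(A)$.

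For the second assertion I would treat the positive eigenvalues under the hypothesis $A>-1$; the negative eigenvalues under $A<1$ then follow from the unitary equivalence $H(k;-A)\cong -H(k;A)$ of Lemma~\ref{lem:Asym}, since the negative eigenvalues of $H(k;A)$ correspond to the positive eigenvalues of $H(k;-A)$ and $-A>-1$. As $H(k;A)$ is compact and self-adjoint (being trace class), it suffices, by the min-max principle, to exhibit for each $N$ an $N$-dimensional subspace on which the form $\jap{H(k;A)u,u}$ is positive definite; this forces at least $N$ positive eigenvalues, so arbitrarily large $N$ gives infinitely many. Using once more $\jap{H(k;A)u,u} = \jap{\mathfrak S(A)\calM[\gamma]u,\calM[\gamma]u}$ together with the injectivity of $\calM[\gamma]$ (all the factors $\gamma_n(k)=\Gamma(\tfrac12+i\omega n+ik)$ are non-zero, since $\Gamma$ has no zeros), I would reduce the task to finding an $N$-dimensional subspace $W$ contained in the range of $\calM[\gamma]$ on which $\mathfrak S(A)$ is positive definite; its preimage under $\calM[\gamma]$ is then the desired subspace.

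The key observation is that the range of $\calM[\gamma]$ contains every finitely supported sequence, because each $\gamma_n(k)$ is non-zero; under the Fourier identification these are exactly the trigonometric polynomials, which are dense in $L^2(0,T)$. Since $A>-1$, the set $U_\epsilon = \{\xi\in(0,T): A+\cos\omega\xi>\epsilon\}$ has positive measure for small $\epsilon>0$, so $L^2(U_\epsilon)$ is infinite-dimensional and $\mathfrak S(A)\geq\epsilon$ there. I would choose $N$ orthonormal functions in $L^2(U_\epsilon)$ and approximate them in $L^2$-norm by trigonometric polynomials; because $\mathfrak S(A)$ is bounded, the associated Gram matrix and form matrix depend continuously on the approximants, so for a sufficiently accurate approximation the $\phi_i$ remain linearly independent and the form $\jap{\mathfrak S(A)\,\cdot\,,\cdot\,}$ stays positive definite on their span. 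This span is the required subspace $W$ of trigonometric polynomials, lying in the range of $\calM[\gamma]$.

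The main obstacle is precisely this last point. The spectral subspace of $\mathfrak S(A)$ on which it is bounded below by $\epsilon$ need not intersect the range of $\calM[\gamma]$ in anything beyond $\{0\}$, so one cannot simply restrict $\mathfrak S(A)$ to it and transport the estimate. What saves the argument is that the range of $\calM[\gamma]$, while a proper dense subspace, still contains all trigonometric polynomials, and combining this density with the continuity of the bounded quadratic form lets the approximation go through. Everything else — the identification of the spectrum of $\mathfrak S(A)$, the conjugation identities, and the min-max reduction — is routine.
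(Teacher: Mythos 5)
Your proof is correct, but the argument for the second assertion takes a genuinely different route from the paper's. The paper works entirely on the $\ell^2$ side with an explicit, quantitative construction: it fixes $N>(1+A)^{-1}$, sets $f_n=\gamma_n^{-1}$ on the block $1\le n\le N$ (so that $\calM[\gamma]F$ is the indicator of the block), and takes $M$ copies shifted by $N+2$; the tri-diagonal structure of $\mathfrak S(A)$ makes the supports of $H(k;A)F_j$ and $F_m$ disjoint for $j\ne m$, so the cross terms vanish exactly, and the form on each copy evaluates in closed form to $(A+1)N-1>0$. Your proof replaces this computation by a soft approximation argument: positivity of the symbol on the set $U_\eps$ of positive measure, infinite-dimensionality of $L^2(U_\eps)$, density of trigonometric polynomials, continuity of the bounded form, and the observation (which you correctly identify as the crux, and which also underlies the paper's choice $f_n=\gamma_n^{-1}$) that the range of $\calM[\gamma]$ contains all finitely supported sequences because $\Gamma$ never vanishes. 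What the paper's approach buys is a short, self-contained, fully explicit verification that exploits tri-diagonality and needs no perturbation of Gram matrices; what yours buys is generality and robustness — it only uses that the symbol $\mathfrak s$ is bounded with $\operatorname{ess\,sup}\mathfrak s>0$, so it would apply verbatim to any smooth periodic Hankel operator with such a symbol, not just the Mathieu case, at the cost of a routine but fiddly stability argument for the approximating form and Gram matrices. Your treatment of the first assertion and the reduction of the negative case to the positive one via the unitary equivalence $H(k;-A)\cong -H(k;A)$ of Lemma~\ref{lem:Asym} coincide with the paper's.
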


Heuristically, the conclusions of Lemma~\ref{lem:inf} can be ``deduced" from spectral properties of $\mathfrak S(A)$. Since the operator $\mathfrak S(A)$ in $\ell^2(\bbZ)$ is unitarily equivalent to the operator of multiplication by $A+\cos x$ in $L^2(0,2\pi)$, its spectrum is absolutely continuous and it coincides with the interval $[-1+A,1+A]$. If $A>-1$ (resp. $A < 1$), the positive (resp. negative) part of the spectrum is non-empty, which translates into infinitude of the positive (resp. negative) discrete spectrum for the operator $H(k; A)$. 

\begin{proof}
If $A\geq1$, then $\mathfrak S(A)$ is trivially positive semi-definite. 
Since $H(k;A) = \calM[\gamma]^*{\mathfrak S}(A)\calM[\gamma]$, it follows that $H(k;A)$ is also positive semi-definite, as claimed. 

Assume $A>-1$. Let us fix an arbitrary $k\in\bbR$ and prove that the number of positive eigenvalues of $H(k;A)$ is infinite.
To this end  for each natural number $M$ we will construct an $M$-dimensional subspace $\mathcal L_M\subset\ell^2(\mathbb Z)$ such that 
$\jap{H(k;A) F, F}_{\ell^2} > 0$ for all $F\in \mathcal L_M$.

Below $k$ is fixed and $\gamma_n=\Gamma(\frac12+i(\omega n+k))$. 
Let us fix a natural number $N>(1+A)^{-1}$  and define 
\begin{align*}
f_n = 
\begin{cases}
0, & \textup{if} \ n \le 0\ \textup{or}\ n\ge N+1,
\\
\gamma_n^{-1}, & \textup{if}\ 1\le n\le N.
\end{cases}
\end{align*} 
Now for each $j=1,\dots,M$, we define the element $F_j\in\ell^2(\bbZ)$ by specifying its coordinates as follows:
$$
[F_j]_n=f_{n - (j-1)(N+2)}, \quad n\in\bbZ.
$$
Observe that by construction, the supports of the sequences $F_j$ are pairwise disjoint and therefore the vectors $F_j$ are pairwise orthogonal in $\ell^2(\bbZ)$. Defining $\mathcal L_M$ as the span of the vectors $F_j$, $j = 1, \dots, M$, we find that  $\dim \mathcal L_M = M$.

Furthermore, by construction for each pair of indices $m\not = j$ 
the supports of $H(k;A) F_j$ and $F_m$ are disjoint and therefore $H(k;A) F_j\perp F_m$. 
Thus, in order to show that $\jap{H(k;A) F, F}_{\ell^2}>0$ for all $F\in \mathcal L_M$, it suffices to check that $\jap{H(k;A) F_j, F_j}_{\ell^2}>0$ for each $j = 1, \dots, M$. 
Consider, for example, $j=1$. Then 
\begin{align*}
\jap{H(k;A) F_1, F_1}_{\ell^2}&=\sum_{n, m} 
\wt{\mathfrak s}_{n-m} \gamma_n f_{n}\overline{\gamma_m f_m} 
= \frac12\sum_{\genfrac{}{}{0pt}{1}{1\leq n,m\leq N}{\abs{n-m}=1}}\, 1  
+ A\sum_{n=1}^N \abs{\gamma_n}^2\abs{f_n}^2
\\
& = \frac12\sum_{\genfrac{}{}{0pt}{1}{1\leq n,m\leq N}{\abs{n-m}=1}}1
+
A\sum_{n=1}^N 1
= (N-1)+AN= (A+1)N - 1.
\end{align*}
By our choice $N>(1+A)^{-1}$, the right-hand side is positive.  
Thus we have $\jap{H(k;A)F, F}_{\ell^2}>0$ for all $F\in \mathcal L_M$, as claimed.
Since $\dim \mathcal L_M=M$ can be taken arbitrarily large, the 
number of negative eigevalues is infinite, as claimed.

The remaining statements of the Lemma follow from the unitary equivalence of $H(k; A)$ and $-H(k; -A)$ (see Lemma~\ref{lem:Asym}) combined with the conclusions made above. For example, for $A <1$ the number of negative eigenvalues is infinite because $H(k; -A)$ has infinitely many positive eigenvalues. This concludes the proof.
\end{proof}

\subsection{All spectral gaps are open}
Let us define the spectral bands as in \eqref{eq:bandsgen}, i.e. 
\begin{align}\label{eq:mhbands}
\sigma = \{E(k): k\in [0, \omega/2]\},
\end{align}
for each band function $E\in\mathcal E$.
\begin{theorem}\label{prop:simple} 
Let $A\in\bbR$ and let $E_1$, $E_2$ be any two distinct band functions (either flat or non-flat) of the Mathieu-Hankel operator $\mathbf H(A)$. 
Then $E_1$ and $E_2$ do not intersect, and 
the corresponding spectral bands are disjoint. 
Moreover, each band function $E$ of $\mathbf H(A)$ is an even $\omega$-periodic function of $k\in\bbR$, 
so that \eqref{eq:mhbands} translates into $\sigma = \{E(k): k\in \bbR\}$.
 \end{theorem}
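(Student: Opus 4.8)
The plan is to reduce the whole statement to a single spectral fact special to the Mathieu--Hankel case: \emph{for every $A\in\bbR$ and every $k\in\bbR$, all non-zero eigenvalues of the fiber operator $H(k;A)$ are simple.} Granting this, non-intersection of band functions is immediate from Theorem~\ref{thm.branches}: if two distinct band functions $E_1\not=E_2$ satisfied $E_1(k_*)=E_2(k_*)=E_*\not=0$ at some $k_*$, then $E_*$ would be an eigenvalue of $H(k_*;A)$ of multiplicity at least two, contradicting simplicity. So the bulk of the work is the simplicity lemma, and I expect that to be the main obstacle.

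The key point is that, unlike the general smooth case, here $\mathfrak S(A)$ is \emph{tridiagonal}, so the eigenvalue equation becomes a genuine second-order recurrence. Writing an eigenvector as $\phi=\{\phi_n\}$ and setting $u_n=\gamma_n(k)\phi_n$ with $\gamma_n(k)=\Gamma(\tfrac12+i\omega n+ik)$, the relation \eqref{eq:hka} together with $\wt{\mathfrak s}_0=A$, $\wt{\mathfrak s}_{\pm1}=\tfrac12$ turns $H(k;A)\phi=E\phi$ into
\[
\tfrac12 u_{n-1}+Au_n+\tfrac12 u_{n+1}=E\,\abs{\gamma_n(k)}^{-2}u_n,\qquad n\in\bbZ,
\]
which is a second-order recurrence precisely because the off-diagonal coefficient $\tfrac12$ does not vanish. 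By \eqref{eq:gamco} we have $\abs{\gamma_n(k)}^{-2}=\cosh(\pi(\omega n+k))/\pi\to+\infty$ as $n\to\pm\infty$. Now I would argue as follows. Since $\phi\in\ell^2$ and $\abs{\gamma_n(k)}\leq\sqrt{\pi}$, any eigenvector yields $u_n=\gamma_n(k)\phi_n\to0$ as $\abs{n}\to\infty$. For two solutions $u,u'$ of this recurrence the discrete Wronskian $W_n=u_nu'_{n+1}-u_{n+1}u'_n$ is independent of $n$ (the coefficient of $u_{n-1}$ is the constant $-1$); if both come from $\ell^2$-eigenvectors, then $u_n,u'_n\to0$ force $W_n\equiv0$, and hence $u$ and $u'$ are proportional. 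Therefore the eigenspace of any $E\not=0$ is at most one-dimensional, proving simplicity. I expect the only delicate step to be the clean justification that $u_n\to0$ (rather than any Poincar\'e--Perron asymptotics), but the Wronskian identity sidesteps the asymptotic analysis entirely.

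With simplicity in hand, disjointness of the spectral bands follows quickly. If two bands \eqref{eq:mhbands} met, i.e.\ $E_1(k_1)=E_2(k_2)$ with $k_1,k_2\in[0,\omega/2]$, then for non-flat $E_1,E_2$ Theorem~\ref{thm:anbr}\eqref{item:anbr3} gives $k_1=k_2\bmod\omega$ or $k_1=-k_2\bmod\omega$, which for $k_1,k_2\in[0,\omega/2]$ forces $k_1=k_2$; this is an intersection, excluded by the previous paragraph. A flat band is a single point $\{E_*\}$, so its meeting another band would again produce an intersection of the corresponding band functions; two distinct flat bands are distinct constants and are trivially disjoint.

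Finally I would deduce evenness and $\omega$-periodicity of each band function from non-intersection. Flat band functions are constant, hence trivially even and $\omega$-periodic. For a non-flat $E$, Theorem~\ref{thm.branches} guarantees that $E(-\,\cdot\,)$ is again a band function, and it agrees with $E$ at $k=0$; since distinct band functions never intersect, we conclude $E(-k)=E(k)$, i.e.\ $E$ is even. Likewise $E(\,\cdot\,+\omega)$ is a band function, and using the evenness just established, $E(-\tfrac{\omega}{2}+\omega)=E(\tfrac{\omega}{2})=E(-\tfrac{\omega}{2})$, so $E(\,\cdot\,+\omega)$ and $E$ agree at $k=-\omega/2$ and must therefore coincide, giving $E(k+\omega)=E(k)$. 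Evenness and $\omega$-periodicity together yield $\{E(k):k\in\bbR\}=\{E(k):k\in[0,\omega/2]\}$, which is the last assertion of the theorem.
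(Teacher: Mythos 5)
Your proposal is correct and follows essentially the same route as the paper: the paper's proof likewise rests on simplicity of all eigenvalues of $H(k;A)$ via a constant discrete Wronskian (the paper works with $W_m(f,g)=\gamma_m\gamma_{m+1}\bigl(f_mg_{m+1}-f_{m+1}g_m\bigr)$ for eigenvectors $f,g$, which is exactly your Wronskian after the substitution $u_n=\gamma_n\phi_n$), and then deduces non-intersection, disjointness of bands, evenness and $\omega$-periodicity just as you do. The only cosmetic difference is that the paper invokes Remark~\ref{rem:bands}(ii) for the disjointness of the spectral bands, whereas you argue directly from Theorem~\ref{thm:anbr}\eqref{item:anbr3}; both reduce to the same fact about zeros of the elliptic determinant.
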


This theorem shows that any pair of neighbouring spectral bands 
is separated by an open interval, called \emph{spectral gap}. Using the terminology of the spectral theory of Schr\"odinger operators, we say that \emph{all spectral gaps are open}, i.e. no spectral gap degenerates into a point, or in other words, the bands do not touch. This picture is 
in a sharp contrast with the one for the Carleman operator (see Section~\ref{sec.b8}), where all gaps are closed. Furthermore, none of the band functions for the Carleman operator is periodic. 

\begin{proof}[Proof of Theorem~\ref{prop:simple}]
Let $A\in\bbR$ and $k\in\bbR$; 
let us prove that all  eigenvalues of $H(k;A)$ are simple. 
This is an elementary and well-known fact of the theory of Jacobi matrices, see e.g. \cite{Teschl2000}. 
Indeed, suppose that some eigenvalue $E_*$ of $H(k;A)$ has multiplicity $\geq2$. Then there exist 
two linearly independent eigenvectors $f,g\in\ell^2(\bbZ)$ of $H(k;A)$ corresponding to the eigenvalue $E_*$. 
Consider the Wronskian
\begin{align*}
W_m(f, g) = \gamma_m\gamma_{m+1} \big(f_{m} g_{m+1} - f_{m+1} g_m \big).
\end{align*} 
Since both $f$ and $g$ satisfy the same eigenvalue equation, one easily checks that $W_m(f,g)$ is independent of $m$. On the other hand, since both $f$ and $g$ are in $\ell^2$, we find that $W_m(f,g)\to0$ as $m\to\infty$. We conclude that the Wronskian vanishes identically. From here it follows that $f$ and $g$ are linearly dependent -- contradiction. 

From the simplicity of eigenvalues for all $k\in\bbR$ we conclude that the band functions cannot intersect. 
By Remark~\ref{rem:bands}(ii), this means that the bands cannot touch, and hence all gaps are open, as required.

Let us now prove that each band function is even and $\omega$-periodic. 
By Theorem~\ref{thm.branches} the functions $E(-k)$ and $E(k+\omega m)$, $m\in\bbZ$, are also band functions. Since $E(k) = E(-k)$ for $k = 0$ and the distinct band functions cannot intersect, this means that $E(k) = E(-k)$ for all $k\in\bbR$, i.e. $E$ is even. As a consequence, $E(-k-\omega m)$ is also a band function. Since  $E(-k-\omega m) = E(k)$ for $k = -\omega m/2$, we have $E(k) = E(-k-\omega m) = E(k+\omega m)$ for all $k\in\bbR$, so the function $E$ is periodic with period $\omega$, as claimed.  
The formula for the associated spectral band immediately follows from \eqref{eq:mhbands}. 
The proof of Theorem~\ref{prop:simple} is complete. 
\end{proof}

\subsection{Coexistence of flat and non-flat band functions}

\begin{theorem}\label{thm:coexist}
There exist $A_*\in(0,1)$ such that at least one band function of $\mathbf H(A_*)$ is flat while at least one band function is non-flat. 
\end{theorem}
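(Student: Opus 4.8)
The plan is to exploit the two-term representation of the secular determinant as an affine function of the reference elliptic function. Applying Theorem~\ref{thm.c9} to the full determinant (multiplying $\Delta^\#$ by the $s$-independent factor $\Delta^\flat$), for each fixed $\lambda\neq0$ and each $A$ one has $\Delta(s;\lambda;A)=-\mathsf a(\lambda;A)\mathcal P(s)+\mathsf b(\lambda;A)$, with $\mathsf a=-i\Res_{s=i/2}\Delta$ and $\mathsf b=\Delta(\tfrac{\omega+i}{2};\lambda;A)$. The keystone is a characterization of flat bands: a value $E_*\neq0$ is a flat band of $\mathbf H(A)$ if and only if $E_*$ is a common eigenvalue of the two matrices $H(0;A)$ and $H(\omega/2;A)$. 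Indeed, if $E_*$ is an eigenvalue of both, then $\Delta(0;E_*;A)=\Delta(\omega/2;E_*;A)=0$; subtracting these and using $\mathcal P(0)\neq\mathcal P(\omega/2)$ (strict monotonicity, Lemma~\ref{prop:elf}) forces $\mathsf a(E_*;A)=\mathsf b(E_*;A)=0$, hence $\Delta(\cdot;E_*;A)\equiv0$, i.e. $E_*$ is a flat band; the converse is immediate.

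First I would record an auxiliary fact (Claim~1): for every $A>0$ the operator $\mathbf H(A)$ has at least one non-flat band. If not, all bands are flat, so $\Delta(s;\lambda;A)$ is independent of $s$ and the spectrum of $H(k;A)$ is symmetric about $0$ for every $k$ (flat bands come in $\pm$ pairs by Lemma~\ref{lma.d1}). But $H(k;A)=H(k;0)+A\,\calM[\gamma]^*\calM[\gamma]>H(k;0)$ strictly, since $\calM[\gamma]^*\calM[\gamma]>0$, while $\sigma(H(k;0))$ is symmetric; comparing $\lambda_{\max}$ and $\lambda_{\min}$ then yields $\lambda_{\min}(H(k;A))>-\lambda_{\max}(H(k;A))$, contradicting symmetry.

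The core is an intermediate-value argument on the width of the second positive band. Let $E_1^+(k;A)$ be the second-largest positive eigenvalue of $H(k;A)$, well defined and continuous for $A\in(0,1]$ by Lemma~\ref{lem:inf}, and set $\delta(A)=E_1^+(0;A)-E_1^+(\omega/2;A)$. Assume, for contradiction, that coexistence fails for every $A\in(0,1)$; by Claim~1 this means $\mathbf H(A)$ has \emph{no} flat band for any such $A$, so $E_1^+$ is non-flat throughout, hence strictly monotone on $(0,\omega/2)$ by Theorem~\ref{thm:anbr}(iv) and $\delta(A)\neq0$ on $(0,1)$. At $A=1$ the operator is positive semi-definite and the alternation pattern of Theorem~\ref{thm:alterpos} makes the second band increasing, so $\delta(1)<0$ and, by continuity, $\delta<0$ near $A=1$. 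For small $A>0$, first-order perturbation around $A=0$ (where $\sigma(H(k;0))=\{\pm c_n\}$ is $k$-independent and symmetric, $c_0>c_1>\cdots>0$) shows that, the perturbation being positive, the positive eigenvalues increase while the magnitudes of the negative ones decrease; hence the three largest eigenvalues in absolute value are $E_0^+,E_0^-,E_1^+$, placing $E_1^+$ at even index $2$ in the enumeration of Theorem~\ref{thm:mod}. That theorem then forces $|E_1^+|$ to decrease, i.e. $\delta(A)>0$ for small $A>0$. Thus $\delta$ is continuous and nowhere zero on $(0,1)$ yet changes sign — a contradiction. Consequently coexistence holds at some $A_*\in(0,1)$, where $E_1^+$ is flat (its coinciding edges give a common eigenvalue of $H(0;A_*)$ and $H(\omega/2;A_*)$, hence a flat band by the characterization) while Claim~1 supplies a non-flat band.

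The main obstacle is pinning down the sign of $\delta$ near $A=0$. The determinant representation makes the flat-band/common-eigenvalue dictionary exact, but it does not by itself say whether the second band opens upward or downward at small $A$; this is exactly what the global alternation pattern (Theorem~\ref{thm:mod}) supplies, once the absolute-value index of $E_1^+$ is correctly identified. Getting that index right is what needs the short first-order computation, together with the non-crossing property $(-\sigma_n)\cap\sigma_m=\varnothing$ of Theorem~\ref{thm:anbr}(vi) to keep the enumeration unambiguous. The pleasant point is that the alternation holding at $A=1$ (ordering by value) and at small $A$ (ordering by absolute value) forces the second band to reverse its monotonicity in between, and such a reversal can only pass through a flat band.
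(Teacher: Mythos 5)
Your proof is correct, but it takes a genuinely different route from the paper's. The paper fixes $k$ and tracks two eigenvalue branches in the $A$-variable: by Lemma~\ref{lem:monotone}, $E^+_1(k;A)$ increases while $-E^-_0(k;A)$ decreases to $0$ as $A\to1_-$, so the two cross at some $A_*\in(0,1)$; Theorem~\ref{thm:anbr}\eqref{item:anbr4} (no solutions of $E_1(k_1)=-E_2(k_2)$ for non-flat band functions) then forces at least one of them to be flat, and non-flatness of the top band follows from Lemma~\ref{lma.d1} since the spectrum of $\mathbf H(A_*)$ lies above $E^-_0(A_*)$. You instead run the intermediate value theorem on the signed bandwidth $\delta(A)=E^+_1(0;A)-E^+_1(\omega/2;A)$: positive for small $A$ via the absolute-value enumeration and Theorem~\ref{thm:mod} (you are essentially reproving Theorem~\ref{thm.order} for $n=1$; since its proof in the paper does not use Theorem~\ref{thm:coexist}, there is no circularity), negative at $A=1$ via Theorem~\ref{thm:alterpos}, and nonvanishing on $(0,1)$ under the no-flat-band hypothesis by strict monotonicity of non-flat band functions. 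Your flat-band detector --- $E_*\neq0$ is flat iff it is a common eigenvalue of $H(0;A)$ and $H(\omega/2;A)$, read off from $\Delta=-\mathsf a(\lambda)\mathcal P(s)+\mathsf b(\lambda)$ and $\mathcal P(0)\neq\mathcal P(\omega/2)$ --- is sound; to close the step $\Delta(\cdot\,;E_*)\equiv0\Rightarrow E_*$ flat, just note $\Delta^\#(\cdot\,;E_*)\not\equiv0$ (Lemma~\ref{lem:resid}), so $\Delta^\flat(E_*)=0$. (The same dictionary also follows directly from Theorem~\ref{thm:anbr}\eqref{item:anbr3}, since $k_1=0$, $k_2=\omega/2$ satisfy neither $k_1=k_2$ nor $k_1=-k_2 \bmod\omega$.) Your Claim~1 works, provided you note that for $0<A<1$ both extreme eigenvalues are attained (Lemma~\ref{lem:inf} gives infinitely many eigenvalues of each sign), so the strict increase of $\lambda_{\max}$ and $\lambda_{\min}$ is legitimate; it replaces the paper's positivity argument for the non-flat band. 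In terms of trade-offs: the paper's crossing argument is lighter --- it needs no alternation machinery from Section~\ref{sec.alternation} --- and pinpoints the crossing as the unique $A$ where the two specific bands go flat; your argument makes rigorous and explicit the mechanism the paper only describes informally after Theorem~\ref{thm.order}, namely that a band can reverse its direction of monotonicity in $k$ only by passing through a flat configuration. One cosmetic overstatement: as framed by contradiction, your argument yields \emph{some} flat band at \emph{some} $A_*$, not that $E^+_1$ itself is flat there; the parenthetical about the coinciding edges of $E^+_1$ belongs to the direct version of your argument, where IVT produces $\delta(A_*)=0$ explicitly.
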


Our immediate aim is to study the dependence  of the band functions of $\mathbf H(A)$ on $A\in(0,1)$; at the heart of the proof is the use of the intermediate value theorem for a concrete pair of these band functions as $A$ varies from $0$ to $1$. 

For $A\in(0,1)$, we enumerate the positive $E^+_n(k; A)$ and negative $E^-_n(k; A)$ 
band functions of $\mathbf H(A)$ in such a way that for all $k\in\bbR$, we have
\begin{align}
E^+_0(k;A) > E^+_1(k;A) > \dots>0,
\label{eq:enumplus}
\\
E^-_0(k;A) < E^-_1(k;A) < \dots<0.
\label{eq:enumminus}
\end{align}
This ordering is possible due to Theorem~\ref{prop:simple}.  By Lemma~\ref{lem:inf}, the number of both positive and negative band functions is infinite for $A\in(0,1)$. 
Recall for completeness that the associated spectral bands $\sigma^\pm_n(A)$ 
(as defined in \eqref{eq:mhbands}) are all 
pairwise disjoint, by Theorem~\ref{prop:simple}.

\begin{lemma}\label{lem:monotone}
\begin{enumerate}[\rm (i)]
\item
For all $k\in \bbR$ the eigenvalues $E^+_n(k; A)$ and $E^-_n(k; A)$
are strictly increasing real analytic functions of  $A\in (0, 1)$, with non-vanishing first derivatives. 
\item
We have $E^-_n(k; A)\to0_-$ as  $A\to1_-$ for all $n\geq0$ and $k\in\bbR$. 
\end{enumerate}
\end{lemma}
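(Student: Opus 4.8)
The plan is to exploit the fact that, by \eqref{eq:hka}, the fiber operator is an \emph{affine} function of the coupling constant,
\begin{align*}
H(k;A)=H(k;0)+A\,D(k),\qquad D(k):=\calM[\gamma]^*\calM[\gamma],
\end{align*}
where $D(k)$ is the diagonal operator of multiplication by $\abs{\gamma_n(k)}^2=\pi/\cosh(\pi(\omega n+k))>0$ (using \eqref{eq:gamco}). Thus $D(k)$ is a \emph{strictly positive} trace class operator independent of $A$, and $A\mapsto H(k;A)$ is an entire self-adjoint family. By Theorem~\ref{prop:simple} all eigenvalues of $H(k;A)$ are simple for every $A\in\bbR$, so the analytic perturbation theory already invoked above (see \cite[Ch.~VII, \S3]{Kato}) produces globally defined real-analytic eigenvalue branches that neither cross one another nor develop branch points.

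First I would show that no branch crosses zero on $(0,1)$, which is exactly what makes the labelling \eqref{eq:enumplus}--\eqref{eq:enumminus} consistent. Writing $H(k;A)=\calM[\gamma]^*\,{\mathfrak S}(A)\,\calM[\gamma]$ and using that $\calM[\gamma]$ is injective (since $\gamma_n\not=0$), and that ${\mathfrak S}(A)$, being unitarily equivalent to multiplication by $A+\cos x$ in $L^2(0,2\pi)$, is injective whenever $\abs{A}\le1$ (its symbol vanishes only on a set of measure zero), I obtain $\Ker H(k;A)=\{0\}$ for $A\in(0,1)$. Hence each branch keeps a constant sign on $(0,1)$, the numbers of positive and of negative eigenvalues are constant (both infinite, by Lemma~\ref{lem:inf}), and therefore each $E^\pm_n(k;\cdot)$ is a single global analytic branch. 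To finish part (i), I differentiate by the Feynman--Hellmann formula (derived as in the proof of Theorem~\ref{thm.branches}): if $\psi=\{\psi_j\}_{j\in\bbZ}=\psi(A)$ is the normalized analytic eigenvector for $E^\pm_n(k;A)$, then
\begin{align*}
\frac{\partial}{\partial A}E^\pm_n(k;A)=\jap{D(k)\psi,\psi}=\sum_{j\in\bbZ}\abs{\gamma_j(k)}^2\abs{\psi_j}^2>0,
\end{align*}
because $\psi\not=0$ and every $\abs{\gamma_j(k)}^2>0$; this yields strict monotonicity with non-vanishing derivative.

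For part (ii) I would run a continuity argument at the endpoint $A=1$. Fix $n$ and $k$. By part (i) the branch $A\mapsto E^-_n(k;A)$ is strictly increasing and, by the constant-sign argument above, strictly negative on $(0,1)$; hence the limit $L:=\lim_{A\to1^-}E^-_n(k;A)\le0$ exists. Since $\norm{H(k;A)-H(k;1)}=\abs{A-1}\,\norm{D(k)}\to0$, the spectra vary continuously: $\dist\bigl(E^-_n(k;A),\sigma(H(k;1))\bigr)\le\abs{A-1}\,\norm{D(k)}$, and letting $A\to1^-$ gives $L\in\sigma(H(k;1))$ (the spectrum being closed). By Lemma~\ref{lem:inf}, $H(k;1)\ge0$, so $\sigma(H(k;1))\subseteq[0,\infty)$ and $L\ge0$; combining with $L\le0$ gives $L=0$, i.e. $E^-_n(k;A)\to0_-$.

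The main obstacle is the bookkeeping that guarantees the labels $E^\pm_n$ track single global analytic branches rather than jumping between eigenvalues; the crux of this is the injectivity argument ruling out zero crossings on $(0,1)$, after which simplicity (Theorem~\ref{prop:simple}) and the locally constant sign do the rest. Everything else is a routine consequence of the affine structure of $H(k;A)$ and the norm-continuity of the family.
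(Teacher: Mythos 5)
Your proof is correct, and it splits naturally against the paper's. For part (i) you follow essentially the paper's route: analytic perturbation theory for the affine family $H(k;A)=H(k;0)+A\,\calM[\gamma]^*\calM[\gamma]$, plus the Feynman--Hellmann identity $\tfrac{d}{dA}E=\norm{\calM[\gamma]\psi}^2>0$, which is verbatim the paper's computation. The extra bookkeeping you supply --- simplicity of all eigenvalues from Theorem~\ref{prop:simple}, and triviality of $\Ker H(k;A)$ via injectivity of $\calM[\gamma]$ and of ${\mathfrak S}(A)$ (whose symbol $A+\cos\omega\xi$ vanishes only on a null set) --- is sound and is actually more careful than the paper, which leaves the consistency of the labelling $E^\pm_n$ with the analytic branches implicit; your kernel argument also anticipates, by a cleaner multiplication-operator route, what the paper proves separately as Theorem~\ref{thm:kernel}. (To fully rule out a positive branch collapsing into the essential-spectrum point $0$ at an interior value of $A$, one also uses Lemma~\ref{lem:inf} together with the Lipschitz continuity of the min-max values $\abs{\lambda^\pm_n(T)-\lambda^\pm_n(S)}\le\norm{T-S}$; you invoke Lemma~\ref{lem:inf} for exactly this purpose, so this is covered.) For part (ii) you take a genuinely different route: the paper uses the quantitative operator bound ${\mathfrak S}(A)\ge A-1$, hence $H(k;A)\ge (A-1)\norm{\calM[\gamma]}^2$, which sandwiches \emph{all} negative eigenvalues simultaneously between $(A-1)\norm{\calM[\gamma]}^2$ and $0$, uniformly in $n$; you instead combine the monotone limit of a single branch with the spectral inclusion $\dist\bigl(E^-_n(k;A),\sigma(H(k;1))\bigr)\le(1-A)\norm{D(k)}$ and the positivity of $H(k;1)$ from Lemma~\ref{lem:inf}. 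Both arguments are valid; the paper's sandwich is more direct and yields the explicit rate $O(1-A)$ uniformly in $n$, while your soft argument needs only norm continuity of the family at the endpoint and nonnegativity of the limiting operator, and so would generalize to situations where no explicit lower bound on the symbol is available.
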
  

\begin{proof}  
Let us prove (i). For definiteness, let us consider the positive band functions. 
Since $A\mapsto H(k; A)$ is a self-adjoint holomorphic family in a neighbourhood of each $A\in\bbR$, by the standard analytic perturbation theory (see e.g. \cite[Theorem XII.13]{RS4}),
each eigenvalue $E^+_n(k; A)$ is a real analytic function of $A\in (0,1)$. 
Moreover, we can apply the standard formula 
\begin{align*}
\tfrac{d}{d\, A} E^+_n(k; A) 
= \big\langle\tfrac{d}{dA}H(k; A) \phi, \phi\big\rangle_{\ell^2} = \norm{\calM[\gamma]\phi}^2>0, 
\end{align*} 
where $\phi$ is the normalized eigenfunction associated with the eigenvalue 
$E^+_n(k; A)$ (we have already used such a formula in the proof of Theorem~\ref{thm.branches}).
Thus each function $A\mapsto E^{+}_n(k;A)$ is strictly increasing with a non-vanishing first derivative.

Let us prove (ii). Fix $k\in\bbR$ and $n\geq0$. Since $\mathfrak S(A)\ge A-1$, we have 
\begin{align*}
H(k; A)\ge (A-1) \calM[\gamma]^* \calM[\gamma]\ge (A-1)\|\calM[\gamma]\|^2.
\end{align*}
Remembering also that $\inf H(k; A) \le 0$ for all $A\in\bbR$, we conclude that 
$\inf H(k; A)\to 0_-$ as $A\to 1_-$. 
Therefore $E^-_n(k;A)\to 0$, as claimed.
\end{proof}

\begin{proof}[Proof of Theorem~\ref{thm:coexist}]
Fix $k\in\bbR$ and consider 
the second band function $E^+_1(k;A)$ and the ``reflected'' band function $-E^-_0(k;A)$. 
For $A=0$, all band functions are flat and symmetric around zero, and in 
particular $-E^-_0(k;0)=E^+_0(k;0)$ 
and 
\[
-E^-_0(k;0)>E^+_1(k;0).
\]  
Let us now see  
how the left and right hand sides of this inequality change as $A$ increases from $0$ to $1$. 
By Lemma~\ref{lem:monotone}(i),  
as $A$ grows, the function $E^+_1(k;A)$ increases, while $-E^-_0(k;A)$ 
decreases  (see Figure~\ref{figure1}).
Furthermore, by Lemma~\ref{lem:monotone}(ii), as $A\to1_-$, we have $-E^-_0(k;A)\to0_+$. 
Therefore, there exists a (unique) point $A_*\in(0,1)$ such that 
$$
-E^-_0(k;A_*)=E^+_1(k;A_*).
$$
By Theorem~\ref{thm:anbr}\eqref{item:anbr4}, at least one of the band functions $E^-_0(\cdot;A_*)$ and $E^+_1(\cdot;A_*)$ is flat. (In fact, by Lemma~\ref{lma.d1} both of them are flat.)
Finally, let us show that the top band function $E^+_0(\cdot;A_*)$ is non-flat. The situation here is precisely as in Theorem~\ref{thm:noman}: the bottom of the spectrum of ${\mathbf H}(A_*)$ is $E^-_0(A_*)$, while the whole of the range of the band function $E^+_0(\cdot;A_*)$ is above the value $-E^-_0(A_*)$, and therefore it is non-flat. The proof is complete.
\end{proof}
\begin{figure}
\begin{tikzpicture}
\node at (0,0){\includegraphics[width=0.7\textwidth]{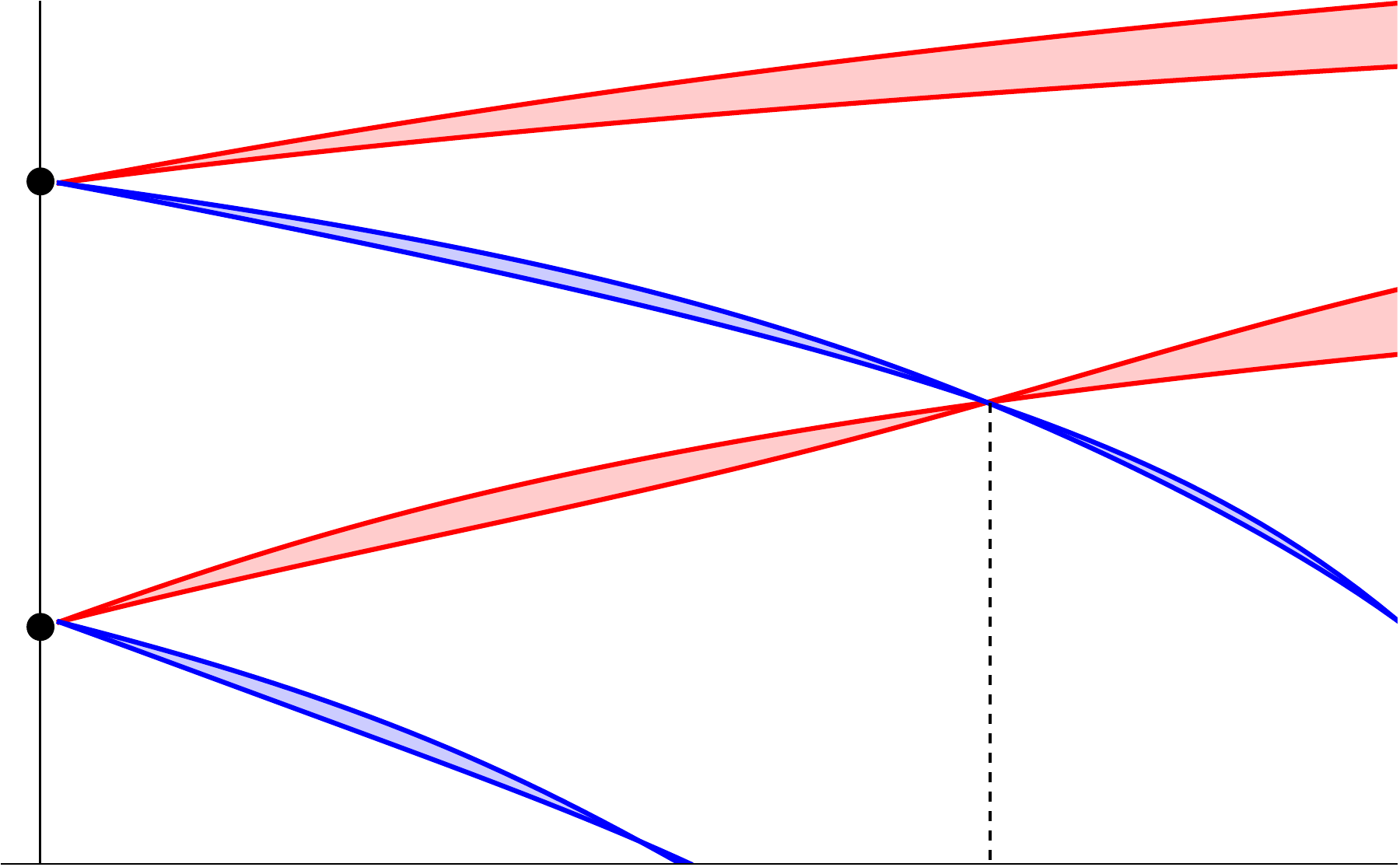}};
\draw (-1.5,2.7) node{$\sigma^+_0$};
\draw (0,1.3)  node{$-\sigma^-_0$};
\draw (-1.5,0) node{$\sigma^+_1$};
\draw (-1.5,-2) node{$-\sigma^-_1$};
\draw (2.2,-3.5) node{$A_*$};
\draw (5,-3.5) node{$A$};
\draw (-5.2,3.2) node{$E$};
\draw (-4.9,-3.5) node{$0$};
\end{tikzpicture}
\caption{
Spectral bands as functions of the parameter $A$ in logarithmic scale,  obtained by a numerical computation in \emph{Mathematica}. The bands 
$\sigma_1^+(A)$ and $-\sigma_0^-(A)$ both become flat at the point $A_*\approx0.48$ where the corresponding band functions intersect. }
\label{figure1} 
\end{figure}
\begin{remark*}
It is easy to develop the argument in the proof of Theorem~\ref{thm:coexist} to get a more complete picture of the dependence of spectral bands on the parameter $A\in(0,1)$. As $A$ increases, every ``reflected'' band function $-E^-_n$ will intersect exactly once each of the band functions $E^+_{n+1}$, $E^+_{n+2}$, $\cdots$. At each point of intersection, both band functions become flat. Thus, we have an infinite countable set of points $A$ where $\mathbf H(A)$ has flat bands. Between these intersection points, the band function $E^-_n$ remains non-flat. 
\end{remark*}

\subsection{Monotonicity of band functions}
Let us now address the following question: how does the monotonicity of the band functions, as functions of $k\in(0,\omega/2)$, change for different values of the parameter $A$? We keep the enumeration of the band functions as in \eqref{eq:enumplus}, \eqref{eq:enumminus}. 

As a warm-up, we consider the case $A>1$. In this case the Mathieu-Hankel operator $\mathbf H(A)$ is positive semi-definite (see Lemma~\ref{lem:inf}) and so Theorem~\ref{thm:alterpos} applies: the top band function $E_0^+(k;A)$ is decreasing in $k\in(0,\omega/2)$, and then the sign of the derivative of $E_n^+(k;A)$ alternates with $n$:
\[
(-1)^{n+1}\frac{d}{dk}E_n^+(k;A)>0
\]

This contrasts with the situation for small $A>0$: 

\begin{theorem}\label{thm.order}
Let $n\geq0$; then for all sufficiently small $A>0$, the band function $E^+_n(k;A)$ is decreasing in $k\in(0,\omega/2)$.
\end{theorem}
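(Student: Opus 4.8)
The plan is to run first-order analytic perturbation theory in $A$ around the fully flat operator $\mathbf H(0)$, and to read off the sign of the $k$-dispersion from the reference elliptic function $\mathcal P$. Writing the fibre operator as in \eqref{eq:hka}, $H(k;A) = H(k;0) + A\,W(k)$ with $W(k) = \calM[\gamma]^*\calM[\gamma] = \calM[w(k)] \ge 0$ and $w_n(k) = \abs{\gamma_n(k)}^2 = \pi/\cosh\pi(\omega n + k) > 0$, so that $\sum_n w_n(k) = \mathcal P(k)$ by \eqref{eq:ellip}. By Theorem~\ref{thm:flat} the eigenvalues of $H(k;0)$ are flat (constant in $k$), and by Theorem~\ref{prop:simple} they are simple for every $k$; enumerate the positive ones as $E_0^+(0) > E_1^+(0) > \dots > 0$. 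Standard perturbation theory then gives, locally uniformly in $k$,
\[
E_n^+(k;A) = E_n^+(0) + A\,\rho_n(k) + O(A^2), \qquad \rho_n(k) = \jap{W(k)\phi_n(k),\phi_n(k)},
\]
where $\phi_n(k)$ is the normalised real analytic eigenvector of $H(k;0)$. Since $W(k) \ge 0$, we have $\rho_n(k) > 0$ for every $k$.

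The key step is to show that $\rho_n(k)$ is a \emph{positive multiple of} $\mathcal P(k)$, with no additive constant. By Theorem~\ref{thm:elliptic} the function $s\mapsto\Delta(s;\lambda;A)$ is even, doubly-periodic with lattice ${\sf M}$ and has at most simple poles at $\pm i/2$; subtracting a suitable multiple of $\mathcal P$ (exactly as in the proof of Theorem~\ref{thm.c9}) yields $\Delta(s;\lambda;A) = -\alpha(\lambda;A)\mathcal P(s) + \beta(\lambda;A)$, where $\alpha(\lambda;A) = -i\Res\limits_{s=i/2}\Delta(s;\lambda;A)$ and $\beta(\lambda;A) = \Delta(\tfrac{\omega+i}{2};\lambda;A)$ are both analytic in $A$, and $\alpha(\cdot;0)\equiv0$ because $\Delta(\cdot;\lambda;0)$ is constant in $s$. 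The crucial input is the symmetry $\Delta(s;\lambda;A) = \Delta(s;-\lambda;-A)$ of Lemma~\ref{lem:Asym}: evaluating it at $s = \tfrac{\omega+i}{2}$ gives $\beta(\lambda;A) = \beta(-\lambda;-A)$, and since $\beta(\cdot;A)$ is even (as in Theorem~\ref{thm.c9}\eqref{item:albe}), this forces $\beta(\lambda;A) = \beta(\lambda;-A)$, i.e. $\beta$ is even in $A$ and hence $\partial_A\beta(\lambda;0) = 0$. Therefore $\partial_A\Delta(k;\lambda;0) = -\alpha_1(\lambda)\mathcal P(k)$ with $\alpha_1 := \partial_A\alpha(\cdot;0)$ and no constant term. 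On the other hand, differentiating $\Delta(k;\lambda;A) = \prod_j(1-\lambda^{-1}E_j(k;A))$ in $A$ and evaluating at the simple zero $\lambda = E_n^+(0)$ (only the matching factor survives) gives $\partial_A\Delta(k;E_n^+(0);0) = -\rho_n(k)\,\Delta_0'(E_n^+(0))$, where $\Delta_0 := \Delta(\cdot;\cdot;0)$. Comparing the two expressions yields $\rho_n(k) = c_n\,\mathcal P(k)$ with $c_n = \alpha_1(E_n^+(0))/\Delta_0'(E_n^+(0))$; since $\rho_n > 0$ and $\mathcal P > 0$ we conclude $c_n > 0$.

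It then remains to differentiate in $k$: from $E_n^+(k;A) = E_n^+(0) + A c_n\mathcal P(k) + O(A^2)$ we get $\partial_k E_n^+(k;A) = A c_n\mathcal P'(k) + O(A^2)$, and by Lemma~\ref{prop:elf} we have $\mathcal P'(k) < 0$ on $(0,\omega/2)$. Since $c_n > 0$ and $\mathcal P$ is non-constant, the expansion itself shows that for all sufficiently small $A>0$ the band $E_n^+(\cdot;A)$ is non-constant, hence non-flat, so Theorem~\ref{thm:anbr}\eqref{item:anbr2} guarantees $\partial_k E_n^+(k;A)\neq0$ for all $k\in(0,\omega/2)$. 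Fixing one interior point $k_0$, we have $\partial_k E_n^+(k_0;A) = A c_n\mathcal P'(k_0) + O(A^2) < 0$ for small $A>0$; as the derivative keeps a constant sign on $(0,\omega/2)$, it is negative throughout, i.e. $E_n^+(\cdot;A)$ is strictly decreasing.

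The main obstacle is precisely the identification $\rho_n = c_n\mathcal P$ with vanishing additive constant: everything hinges on the symmetry $\Delta(s;\lambda;A) = \Delta(s;-\lambda;-A)$ forcing $\beta$ to be even in $A$, after which the positivity of $W(k)$ pins down the sign $c_n>0$ for free. A secondary technical point is the non-uniformity of the $O(A^2)$ remainder near the endpoints $k=0,\omega/2$ (where $\mathcal P'\to0$); I avoid it by verifying the sign of the derivative at a single interior point and propagating it via the non-vanishing of $E_n'$ on the open interval from Theorem~\ref{thm:anbr}.
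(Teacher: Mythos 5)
Your proposal is correct, but it takes a genuinely different route from the paper's. The paper argues softly: using Lemma~\ref{lem:monotone} (strict increase of all eigenvalues in $A$), the symmetry $\mathbf E^-_n=-\mathbf E^+_n$ at $A=0$, and the uniform bound $\norm{H(k;A)-H(k;0)}\le\pi A$, it shows that for small $A>0$ the bands $\sigma^+_0,-\sigma^-_0,\sigma^+_1,-\sigma^-_1,\dots,\sigma^+_n,-\sigma^-_n$ are pairwise disjoint and ordered as in \eqref{eq:list}, so that the positive band functions occupy the \emph{even} slots of the modulus ordering \eqref{eq:moden}; the alternation pattern of Theorem~\ref{thm:mod} then gives the decrease at once. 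You instead compute the first-order dispersion explicitly: the representation $\Delta=-\alpha\,\mathcal P+\beta$ holds for the full determinant (this is exactly the relation displayed after Theorem~\ref{thm.c9}, which the paper remarks it ``will not use''), and combining it with Lemma~\ref{lem:Asym} to get $\beta$ even in $A$ (while $\alpha(\cdot;0)\equiv0$ since $\Delta(\cdot;\lambda;0)$ is constant) yields $\partial_A\Delta(k;\lambda;0)=-\alpha_1(\lambda)\mathcal P(k)$; matching this against the first-order perturbation formula at the simple zero $\lambda=\mathbf E^+_n$ gives $\rho_n(k)=c_n\mathcal P(k)$ with $c_n>0$, and the decrease follows from $\mathcal P'<0$ (Lemma~\ref{prop:elf}) checked at one interior point and propagated by Theorem~\ref{thm:anbr}\eqref{item:anbr2}. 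Both arguments ultimately rest on $\mathcal P'<0$ --- in the paper this enters through the proof of Theorem~\ref{thm:mod} --- but yours buys more: the expansion $E^+_n(k;A)=\mathbf E^+_n+A\,c_n\mathcal P(k)+O(A^2)$ shows that to first order \emph{every} band has the same profile $\mathcal P(k)$, and it yields the nontrivial eigenvector identity $\jap{\calM[w(k)]\phi_n(k),\phi_n(k)}=c_n\mathcal P(k)$; the paper's route is shorter given Theorem~\ref{thm:mod} and handles positive and negative bands simultaneously. Two routine points you should make explicit: (a) differentiating the infinite product \eqref{ddd1} term by term in $A$ is legitimate because $\sum_j\jap{\calM[w(k)]\phi_j,\phi_j}\le\operatorname{tr}\calM[w(k)]=\mathcal P(k)<\infty$ (or bypass it via the implicit function theorem applied to $\Delta$, analytic in $(1/\lambda,A)$, at the simple zero --- simplicity, i.e. $\Delta_0'(\mathbf E^+_n)\ne0$, coming from Theorem~\ref{prop:simple}); and (b) the identification of the perturbed analytic branch with the $n$-th positive band function of the enumeration \eqref{eq:enumplus} for small $A$ requires the gap separation at $A=0$ together with $\norm{H(k;A)-H(k;0)}\le\pi A$, exactly as in the paper's own proof. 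Neither point is a gap, just bookkeeping to be recorded.
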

\begin{proof}
Recall that by Theorem~\ref{thm:flat}, for $A=0$ all band functions are flat; we will denote these flat band functions by 
\[
{\mathbf E}^\pm_n:=E^\pm_n(k;0),
\]
with the enumeration as in \eqref{eq:enumplus}, \eqref{eq:enumminus},
and bear in mind that ${\mathbf E}^-_n=-{\mathbf E}^+_n$. 

For a fixed $n$, let us consider $E^+_n(k;A)$ and $E^-_n(k;A)$ for sufficiently small $A>0$. 
By the strict monotonicity in $A$, we have
\[
E_n^+(k;A)>{\mathbf E}^+_n \quad\text{ and }\quad E_n^-(k;A)>{\mathbf E}^-_n
\]
for all $k$ and therefore
\[
E_n^+(k;A)>{\mathbf E}^+_n>-E_n^-(k;A).
\]
It follows that the spectral band $\sigma^+_n(A)$ 
is strictly above ${\mathbf E}^+_n$, while 
the reflected band $-\sigma^-_n(A)$ 
is strictly below ${\mathbf E}^+_n$, see Figure~\ref{figure1}. In particular, these two bands
are disjoint from one another. 
Now, by \eqref{eq:hka} and \eqref{eq:gamco}, $\| H(k; A) - H(k; 0)\|\le \pi A$, 
and hence, by a straightforward perturbation argument, 
 for all $A\in(0,A_n)$ with a sufficiently small $A_n$ the bands 
$\sigma^+_n(A)$  and $-\sigma^-_n(A)$ 
 are also 
disjoint from all the other positive and reflected negative bands. 
As a consequence,
for $A\in(0,A_n)$ both $E^+_n(\cdot;A)$ and $E_n^-(\cdot;A)$ and non-flat. 
Applying this argument to each of the band functions 
\begin{equation}
E^-_0,\dots,E^-_n,E^+_0,\dots,E^+_n,
\label{eq:0-n}
\end{equation}
considered simultaneously, we obtain that
for $0<A<\min\{A_0,\dots,A_n\}$
the band functions \eqref{eq:0-n} form the list
\begin{equation}
\abs{E^+_0}>\abs{E^-_0}>\abs{E^+_1}>\abs{E^-_1}> \cdots > |E^+_n|>|E^-_n|,
\label{eq:list}
\end{equation}
(see Figure~\ref{figure1}) in which the positive band functions $E^+_0$,\dots,$E^+_n$ have \emph{even} indices. By Theorem~\ref{thm:mod} we conclude that these positive band functions are decreasing in $k\in(0,\omega/2)$. 
\end{proof}

\begin{figure}[h!] 
\begin{tikzpicture}
\node at (0,0){\includegraphics[width=0.4\textwidth,height=0.3\textwidth]{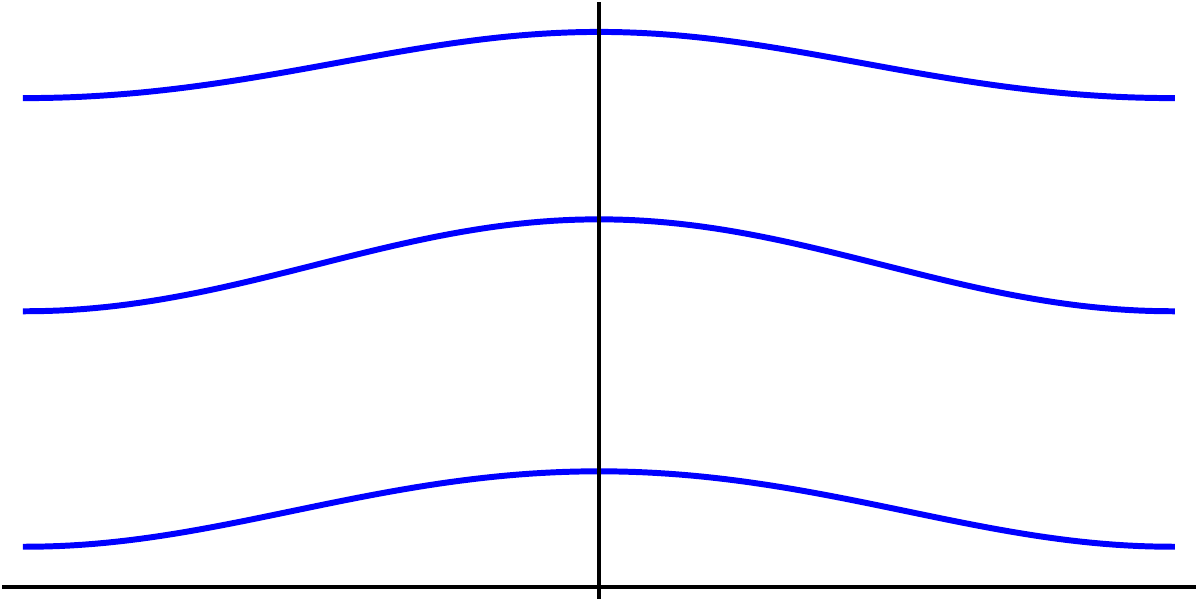}};
\draw (0.3,2.3) node{$E$};
\draw (0.1,-2.5) node{$0$};
\draw (3,-2.5) node{$k$};
\end{tikzpicture}
\quad
\begin{tikzpicture}
\node at (0,0){\includegraphics[width=0.4\textwidth,height=0.3\textwidth]{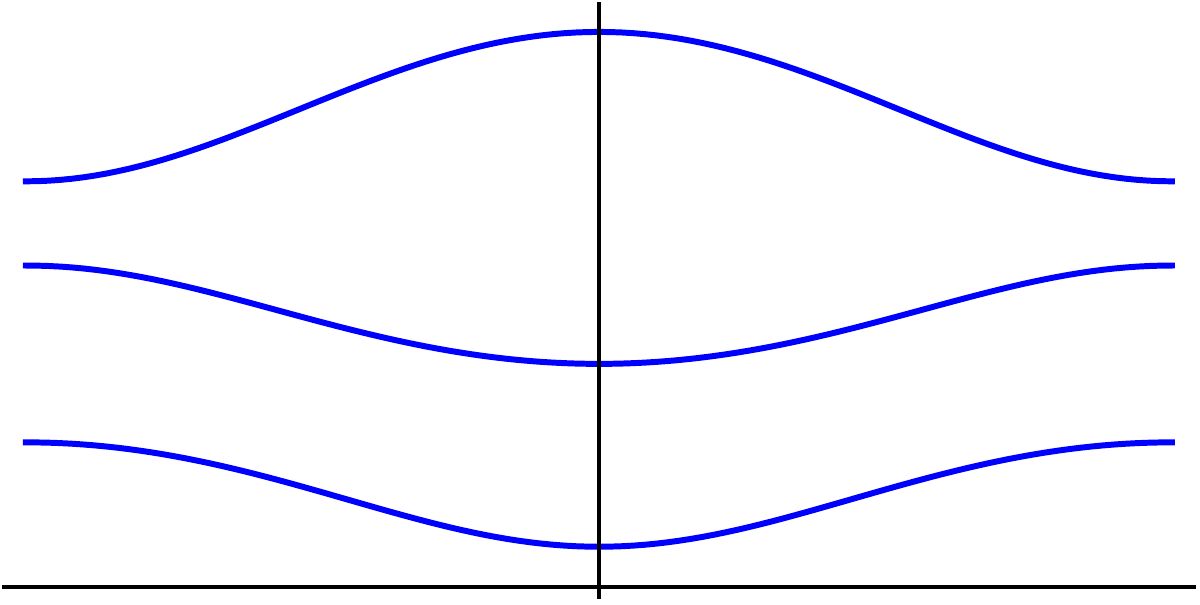}};
\draw (0.3,2.3) node{$E$};
\draw (0.1,-2.5) node{$0$};
\draw (3,-2.5) node{$k$};
\end{tikzpicture}
\caption{The top three band functions (not to scale) on the period for $A=0.2$ and $A=0.6$.}
    \label{figure3a}
\end{figure}

Now let us comment on the scenario depicted in Figure~\ref{figure1}.
The top band function $E^+_0(\cdot;A)$ is decreasing in $k\in(0,\omega/2)$ for all $A>0$ by Theorem~\ref{thm:alterpos}. 
Let $A_*$ be as in the proof of Theorem~\ref{thm:coexist}. 
For $0<A<A_*$ we have the ordering as in \eqref{eq:list}, i.e. 
\[
\abs{E^+_0}>\abs{E^-_0}>\abs{E^+_1},
\]
while for for $A>A^*$ we have
\[
\abs{E^+_0}>\abs{E^+_1}>\abs{E^-_0}.
\]
By Theorem~\ref{thm:mod}, it follows that the second band function $E^+_1(\cdot;A)$ is decreasing for  $0<A<A_*$ and increasing for $A>A_*$. Thus, the direction of monotonicity of $E^+_1(\cdot;A)$ changes precisely at the point $A_*$ where $E^+_1(\cdot;A)$ becomes flat.

It is not difficult to generalise this observation 
to the $n$'th band function $E^+_n(\cdot;A)$. It is decreasing in $k\in(0,\omega/2)$ for small $A>0$, and then, as $A$ increases, the sign of monotonicity changes every time when the $n$'th band becomes flat. As already discussed, this happens precisely 
when $E^+_n(k;A)$ intersects $-E^-_j(k;A)$ with $j<n$. 
For the top three positive band functions this change is illustrated in Figure~\ref{figure3a}.

\subsection{ The kernel of $\mathbf H(A)$ is trivial}
\label{sec.trivker}

\begin{theorem}\label{thm:kernel}
For any $A\in\bbR$, the kernel of $\mathbf H(A)$ is trivial.
\end{theorem}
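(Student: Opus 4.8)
The plan is to reduce the statement to the injectivity of the fiber operators $H(k;A)$ and then exploit the factorisation \eqref{eq:hka}. By the Floquet--Bloch decomposition \eqref{b4}, the operator $\calU\mathbf H(A)\calU^*$ acts fibrewise, so a vector $F\in\mathfrak H$ lies in its kernel if and only if $H(k;A)F(k)=0$ for almost every $k\in\Omega$ (this is the standard description of the kernel of a direct integral, used already in the proof of Theorem~\ref{thm:osum}). Since $\calU$ is unitary, $\Ker\mathbf H(A)=\{0\}$ will therefore follow once we show that $H(k;A)$ is injective for every $k\in\bbR$.

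Fix $k\in\bbR$ and recall from \eqref{eq:hka} that
\[
H(k;A)=\calM[\gamma]^*\,{\mathfrak S}(A)\,\calM[\gamma],\qquad \gamma_n=\gamma_n(k)=\Gamma(\tfrac12+i\omega n+ik).
\]
The key observation is that each of the three factors is injective. The diagonal operator $\calM[\gamma]$, and its adjoint $\calM[\gamma]^*=\calM[\overline\gamma]$, are injective because the Gamma function has no zeros, so $\gamma_n\not=0$ for every $n\in\bbZ$; thus $\calM[\gamma]f=0$ forces $f=0$. For the middle factor I would use that ${\mathfrak S}(A)$, being the convolution operator \eqref{eq:umn} with symbol $\mathfrak s(\xi)=A+\cos\omega\xi$, is unitarily equivalent to multiplication by $\mathfrak s$ in $L^2(0,T)$. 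Since $A+\cos\omega\xi$ vanishes only on a set of Lebesgue measure zero (empty if $\abs{A}>1$, and finite in a period otherwise), this multiplication operator, and hence ${\mathfrak S}(A)$, is injective.

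Composing these three injectivity statements finishes the fibrewise claim: if $H(k;A)f=0$, then $\calM[\gamma]^*\big({\mathfrak S}(A)\calM[\gamma]f\big)=0$, so injectivity of $\calM[\gamma]^*$ gives ${\mathfrak S}(A)\calM[\gamma]f=0$, injectivity of ${\mathfrak S}(A)$ gives $\calM[\gamma]f=0$, and injectivity of $\calM[\gamma]$ gives $f=0$. Hence $H(k;A)$ is injective for all $k$, and the reduction above yields $\Ker\mathbf H(A)=\{0\}$. I do not expect a genuine obstacle: all three factors are bounded (indeed $\calM[\gamma]$ is trace class by \eqref{eq:gamco}), so no domain issues arise, and the only point needing a word of care is the passage from ``$H(k;A)$ injective for every $k$'' to ``$\mathbf H(A)$ injective'', which is the routine measurable-field argument for direct integrals. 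Pleasingly, the proof is uniform in $A\in\bbR$ and makes no distinction between the definite regime $\abs{A}>1$ and the indefinite regime $\abs{A}\le1$.
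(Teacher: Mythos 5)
Your proposal is correct and takes essentially the same approach as the paper: reduce via the Floquet--Bloch decomposition to injectivity of each fiber $H(k;A)$, then exploit the factorisation \eqref{eq:hka} together with the non-vanishing of the Gamma factors and the almost-everywhere non-vanishing of $\mathfrak s(\xi)=A+\cos\omega\xi$. The paper's Fourier-series computation (dividing out $\overline{\gamma_n}\not=0$, observing that all Fourier coefficients of $G\mathfrak s$ vanish, and concluding $G\equiv 0$) is precisely your three-factor injectivity argument written out concretely, so there is no gap.
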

\begin{proof}
Assume, to get a contradiction, that for some $A\in\bbR$ the kernel of $\mathbf H(A)$ is non-trivial. 
By the Floquet-Bloch decomposition, the kernel of $H(k;A)$ must be non-trivial for $k$ on a 
set of positive measure. 
Fix one such $k$ and consider $f\in\Ker H(k;A)$. 
Denote
$$
g_m=\Gamma(\tfrac12+i\omega m+ik)f_m
$$ 
and let 
\[
G(\xi)=\sum_{n\in\bbZ}g_n e^{i\omega \xi n};
\]
since $g_n$ decay exponentially as $\abs{n}\to\infty$, the non-zero 
function $G(\xi)$ is continuous (and even real-analytic) in $\xi$.
From the condition $H(k;A)f=0$ we find
$$
\sum_{m\in\bbZ}\widetilde{\mathfrak s}_{n-m}g_m=0
$$
for all $n\in\bbZ$. Denoting (as in \eqref{eq:sfunct1})
$$
{\mathfrak s}(\xi)=\wt{\mathfrak s}_{-1}e^{-i\omega\xi}+\wt{\mathfrak s}_{0}+\wt{\mathfrak s}_{1}e^{i\omega\xi}=A+\cos(\omega\xi),
$$
this rewrites as
$$
\int_0^{T} G(\xi) {\mathfrak s}(\xi)e^{-i\omega\xi n}d\xi=0
$$
for all $n$. 
Therefore $G(\xi){\mathfrak s}(\xi)=0$ for almost all $\xi$, and hence $G(\xi) = 0$, 
which gives  a contradiction.
\end{proof}

Clearly, the conclusion of Theorem \ref{thm:kernel} still holds for Hankel operators 
$\mathbf H$ such that $\wt{\mathfrak s}_n\to 0$ as $n\to\infty,$ sufficiently fast. For example, 
the absolute convergence of the series \eqref{eq:sfunct1} will suffice.

\appendix

\section{Proof of Proposition~\ref{prop:positive}}

We rely on the following standard result, see 
\cite[Theorem 5.5.4]{Akhiezer2021} and \cite[Ch. VI, Theorem 21]{Widder1941}:

\begin{proposition} 
Suppose that the function $h$ is continuous on $(0,\infty)$ and positive-definite, i.e. 
\begin{align}\label{eq:pos}
\sum_{j, k = 1}^n h(t_j+t_k) \xi_j\xi_k\ge 0,\quad \textup{for all}\quad 
(\xi_1, \xi_2, \dots, \xi_n)\in \bbR^n, 
\end{align}
for all $n$-tuples of points $t_j>0$, $j = 1, 2, \dots, n$.
Then there exists a positive measure $d\sigma$ on $\bbR$ 
such that 
\begin{align}\label{eq:genrep}
h(t) = \int_{-\infty}^\infty e^{-t\lambda} \,d\sigma(\lambda), \quad t>0,
\end{align}
and the integral converges for all $t>0$.
\end{proposition}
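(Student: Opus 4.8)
The plan is to read hypothesis \eqref{eq:pos} as the statement that $h$ is \emph{exponentially convex} on the additive semigroup $(0,\infty)$, and then to reduce the representation \eqref{eq:genrep} to the classical solution of the Stieltjes moment problem. The crucial elementary observation is that \eqref{eq:pos} is stable under a simultaneous shift of both indices: for any fixed $\tau>0$ the points $t_j+\tfrac\tau2$ are again admissible, so that $\sum_{j,k} h(t_j+t_k+\tau)\xi_j\xi_k=\sum_{j,k} h\big((t_j+\tfrac\tau2)+(t_k+\tfrac\tau2)\big)\xi_j\xi_k\ge 0$; thus the shifted kernel is positive-definite as well.

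The first concrete step is to sample $h$ on the arithmetic lattice $t_j=j\tau$. Then $\sum_{j,k}h((j+k)\tau)\xi_j\xi_k\ge0$ and $\sum_{j,k}h((j+k+1)\tau)\xi_j\xi_k\ge0$ for all finitely supported real $\xi$, i.e. both the Hankel matrix and the once-shifted Hankel matrix of the sequence $a_m:=h(m\tau)$ are positive semidefinite. Up to an index shift this is precisely a Stieltjes moment sequence, so by the solution of the Stieltjes moment problem (see \cite{Akhiezer2021}) there is a positive measure $d\nu_\tau$ on $[0,\infty)$ whose moments are the $a_m$. After the substitution $x=e^{-\lambda\tau}$ (any atom at $x=0$ contributes nothing to the moments of positive order and may be discarded), this yields a Laplace representation $h(m\tau)=\int_{\bbR}e^{-\lambda m\tau}\,d\sigma_\tau(\lambda)$ valid on the lattice $\tau\bbZ_{>0}$, with $\sigma_\tau$ a positive measure on $\bbR$.

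The second, and main, step is to promote these lattice representations to a single measure $d\sigma$ reproducing $h$ at \emph{every} $t>0$. I would refine the lattice along $\tau=2^{-N}\tau_0$, use the continuity of $h$ to fix its values on the dense set $\cup_N 2^{-N}\tau_0\,\bbZ_{>0}$, and extract a vaguely convergent subsequence of the $\sigma_\tau$ by a Helly-type compactness argument; the limit then represents $h$ on the dense set, and by continuity on all of $(0,\infty)$. A cleaner but essentially equivalent route is to mollify first: averaging $h\mapsto\int_0^\infty h(\,\cdot\,+2u)\rho(u)\,du$ with $\rho\ge0$ smooth and compactly supported in $(0,\infty)$ preserves \eqref{eq:pos} (because the increment $2u=u+u$ shifts both indices equally), and produces a smooth exponentially convex function whose Taylor coefficients $g^{(m)}(t_0)$ form a Hamburger moment sequence; summing the associated series against $e^{\lambda x}$ gives the representation directly, after which one removes the mollification as $\rho\to\delta_0$.

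The hard part is exactly this passage from the discrete (or mollified) representations to a global one. One must show the limiting measure is locally finite and rule out escape of mass toward $\lambda=\pm\infty$ that would spoil the test integrals $\int e^{-\lambda t}\,d\sigma$ for each fixed $t>0$ — equivalently, one must manage the possible non-determinacy of the moment problems at the various scales so that the representations at different $\tau$ are mutually consistent in the limit. Note that $\sigma$ need not be finite (for $h(t)=1/t$ it is Lebesgue measure), so only local finiteness and tail control are available, and these are what the continuity of $h$ together with the uniform positivity in \eqref{eq:pos} must supply. This tightness analysis is the technical core of the classical arguments in \cite{Widder1941} and \cite{Akhiezer2021}, which we are content to cite.
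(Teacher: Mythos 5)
Your proposal concerns a statement the paper deliberately does not prove: it is quoted as a standard classical result, with pointers to \cite[Theorem 5.5.4]{Akhiezer2021} and \cite[Ch.~VI, Theorem 21]{Widder1941}, and only its application (Proposition~\ref{prop:positive}) is proved in the appendix. So there is no in-paper argument to match; the comparison is with the classical proofs that you, too, end up citing. Within that caveat, your sketch is a sound outline and its elementary steps check out: the shift stability of \eqref{eq:pos} is correct, and sampling at $t_j=j\tau$ does make both Hankel forms positive, hence (after the index shift $s_n:=h((n+2)\tau)$, since $j+k\ge 2$ on the lattice) a Stieltjes moment sequence with a representing measure on $[0,\infty)$ in the variable $x=e^{-\lambda\tau}$ --- note this gives the lattice representation only for $m\ge 2$, a bookkeeping point absorbed when you refine $\tau$. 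You also correctly locate the genuine technical core: local finiteness and tail control of the $\sigma_\tau$ uniformly in the scale, needed precisely because the moment problems may be indeterminate; citing the classical texts for this is legitimate, and for the record the standard device is the two-sided bound
\begin{equation*}
\int_{\lambda>R} e^{-\lambda t}\,d\sigma_\tau(\lambda)\le e^{-R\delta}\int_{\bbR} e^{-\lambda(t-\delta)}\,d\sigma_\tau(\lambda),
\end{equation*}
evaluated at nearby lattice points (and its mirror image with $t+\delta$ for $\lambda<-R$), which yields the uniform tail smallness that upgrades vague convergence along a Helly subsequence to equality in \eqref{eq:genrep} on the dense set, and then everywhere by continuity. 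One caution on your mollified alternative: ``summing the Taylor series against $e^{\lambda x}$'' presupposes that the smoothed function is analytic with controllable radius of convergence; this is true (all even derivatives are nonnegative, so a Bernstein-type theorem applies), but it is itself a nontrivial classical fact, so that route does not avoid the hard step --- it relocates it.
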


\begin{proof}[Proof of Proposition~\ref{prop:positive}]
Since the operator $\mathbf H$ is positive semi-definite, we have  
\[
\int_0^\infty\int_0^\infty h(t_1+t_2) u(t_1) u(t_2) \, dt_1 \, dt_2 \ge 0,
\]
 for any continuous real-valued function $u$ supported on $(0, \infty)$. Since $h$ is continuous on 
 $(0, \infty)$, this condition is equivalent to condition \eqref{eq:pos}, 
see e.g.  \cite[Ch. VI, Theorem 20]{Widder1941}. 
Therefore $h$ can be represented in the form \eqref{eq:genrep}. 

It is easy to see that if the support of $d\sigma$ contains an interval in $(-\infty,0)$, then $h(t)$ grows exponentially as $t\to\infty$. In this case the operator $\mathbf H$ cannot be bounded. This shows that the support of $d\sigma$ must be confined to $[0,\infty)$. 
 The uniqueness follows from \cite[Ch. II, Theorem 6.3]{Widder1941}.

The rest of the proof follows Widom's proof of \cite[Theorem 3.1]{Widom66} for Hankel matrices.

Proof of \eqref{item:meas}. Since $\mathbf H$ is bounded, we have 
\begin{align*}
\jap{\mathbf H u, u}\le C\|u\|^2
\end{align*}
for all $u\in L^2(\bbR_+)$. Take $u(t) = e^{-\mu t}$ with arbitrary $\mu>0$. Then 
\begin{align*}
\jap{\mathbf H u, u} = &\ \iiint e^{-\lambda(t_1+t_2)} e^{-\mu(t_1+t_2)} \, dt_1\, dt_2 \, d\sigma(\lambda)\\
= &\ \int_0^\infty  \frac{1}{(\lambda+\mu)^2} \, d\sigma(\lambda) \le C\|u\|^2 = \frac{C}{2\mu}.
\end{align*}
Therefore, integrating by parts again and denoting $\sigma(\lambda) :=\sigma([0, \lambda])$, 
\begin{align*}
\frac{C}{2\mu}
\ge &\ 2 \int_0^\infty  \frac{1}{(\lambda+\mu)^3} \, \sigma(\lambda)\, d\lambda\\
\ge &\ 2 \int_{\mu}^{2\mu}
\frac{\sigma(\lambda)}{(\lambda+\mu)^3} \, d \lambda
\ge \frac{2}{(3\mu)^3} \,\sigma(\mu) \int_{\mu}^{2\mu} d\lambda\\
= &\ 2 \frac{1}{3^3 \mu^2}\, \sigma(\mu),
\end{align*}
and consequently, $\sigma(\mu)\le C' \mu$.
The infinite differentiability of $h$ for $t >0$ follows from here. 

Proof of \eqref{item:ker}. Integrating by parts we get 
from \eqref{item:meas} that 
\begin{align*}
h(t) = t \int_0^\infty e^{-\lambda t} \sigma(\lambda) \, d\lambda
\le C t \int_0^{\infty} e^{-\lambda t} \lambda \, d\lambda
\le \frac{C'}{t},
\end{align*}
as required. 
The proof of Theorem~\ref{prop:positive}  is now complete. 
\end{proof}


\begin{thebibliography}{14}

\bibitem{Akhiezer2021}
N.~I. Akhiezer, \emph{The classical moment problem and some related questions in analysis}. Society for Industrial and Applied Mathematics  (SIAM), Philadelphia, PA, 2021.

  

\bibitem{MPT}
A.~V.~Megretski\u{\i}, V.~V.~Peller, S.~R.~Treil', 
\emph{The inverse spectral problem for self-adjoint Hankel operators,}
Acta Math. \textbf{174} (1995), no.~2, 241--309.


\bibitem{Gohberg1969}
I.~Gohberg and M.~G. Krein, \emph{Introduction to the Theory of Linear
  Nonselfadjoint Operators in Hilbert Space}. American Mathematical Society, 1969.

\bibitem{Howland}
J.~S.~Howland, 
\emph{Spectral theory of operators of Hankel type}, I, II, Indiana Univ. Math. J. \textbf{41} (1992), no. 2, 409--426 and 427--434.

\bibitem{Kato}
T.~Kato, \emph{Perturbation Theory for Linear Operators}. 
Springer-Verlag, Berlin-New York, 1976.

\bibitem{Kuchment}
P. Kuchment, \emph{Floquet Theory for Partial Differential Equations},  
Birkh\"auser Basel, 1993. 

\bibitem{Nikolski}
N.~K.~Nikolski, \emph{Operators, Functions and Systems: An Easy Reading, Volume I: Hardy, Hankel, and Toeplitz,}
Amer. Math. Soc. 2002. 

\bibitem{Peller}
V.~V.~Peller,
\emph{Hankel operators and their applications,}
Springer 2003.

\bibitem{Power}
S.~R.~Power,
\emph{Hankel operators with discontinuous symbols,}
Proc. Amer. Math. Soc. \textbf{65} (1977), 77-79. 

\bibitem{PYaf}
A.~Pushnitski, D.~Yafaev,
\emph{Spectral and scattering theory of self-adjoint Hankel operators with piecewise continuous symbols,} J. Operator Theory \textbf{74} (2015), no.2, 417--455.


\bibitem{Rellich1942}
F.~Rellich, 
\emph{St\"{o}rungstheorie der {S}pektralzerlegung. {V}}. Math. Ann. \textbf{118} (1942), 462--484.



\bibitem{RS4}
M.~Reed, B.~Simon, 
\emph{Methods of modern mathematical physics. IV. Analysis of operators,}
Academic Press, New York-London, 1978. 

 
\bibitem{Teschl2000}
G.~Teschl, \emph{Jacobi operators and completely integrable nonlinear lattices}. American
  Mathematical Society, Providence, RI, 2000.


\bibitem{Widder1941}
D.~V. Widder, \emph{The {L}aplace {T}ransform}. 
Princeton University Press, Princeton, N. J., 1941.
    
\bibitem{Widom66}
H.~Widom, \emph{Hankel matrices}. Trans. Amer. Math. Soc. \textbf{121} (1966), 1--35. 

\bibitem{yaf_apde_2015} 
D. Yafaev, 
\emph{Criteria for Hankel operators to be sign-definite},
Analysis \& PDE, 
\textbf{8} (2015), No. 1, 183--221.

\bibitem{Ya1}
D.~Yafaev,
\emph{Quasi-diagonalization of Hankel operators,}
Journal d'Analyse Mathematique, \textbf{133} (2017), 133--182.


\end{thebibliography}
\end{document}